\theoremstyle{plain}
\newtheorem{alphathm}{Theorem}
\newtheorem*{corollary*}{Corollary}
\newtheorem*{proposition*}{Proposition}
\theoremstyle{definition}
\newtheorem*{defn*}{Definition}
\theoremstyle{plain}
\newtheorem{thm}{Theorem}[section]
\newtheorem{prop}[thm]{Proposition}
\newtheorem{lem}[thm]{Lemma}
\newtheorem{cor}[thm]{Corollary}
\newtheorem{obs}[thm]{Observation}
\newtheorem*{conj*}{Conjecture}
\theoremstyle{definition}
\newtheorem{defn}[thm]{Definition}
\newcounter{pcounter}
\theoremstyle{remark}
\newtheorem{remark}[thm]{Remark}
\newtheorem{example}{Example}
\numberwithin{equation}{section}
\newcommand\eps{\varepsilon}
\newcommand\C{\mathbb{C}}
\newcommand\R{\mathbb{R}}
\newcommand\N{\mathbb{N}}
\newcommand\Z{\mathbb{Z}}
\newcommand\E{\mathbb{E}}
\newcommand\F{\mathbb{F}}
\newcommand\cA{\mathcal{A}}
\newcommand\cM{\mathcal{M}}
\newcommand\cN{\mathcal{N}}
\newcommand\cP{\mathcal{P}}
\newcommand\cQ{\mathcal{Q}}
\newcommand\cD{\mathcal{D}}
\newcommand\cU{\mathcal{U}}
\newcommand\cR{\mathcal{R}}
\newcommand{\actson}{\curvearrowright}
\DeclareMathOperator{\id}{id}
\DeclareMathOperator{\Tr}{Tr}
\DeclareMathOperator{\Lip}{Lip}
\DeclareMathOperator{\Span}{Span}
\DeclareMathOperator{\ev}{ev}
\DeclareMathOperator{\diag}{diag}
\DeclareMathOperator{\re}{Re}
\DeclarePairedDelimiter{\norm}{\lVert}{\rVert}
\DeclarePairedDelimiter{\ip}{\langle}{\rangle}
\DeclarePairedDelimiter{\floor}{\lfloor}{\rfloor}
\def\smallint{\begingroup\textstyle \int\endgroup}
\begin{document}
\title{A Random Matrix Approach to Absorption in Free Products}

\author{Ben Hayes}
\author{David Jekel}
\author{Brent Nelson}
\author{Thomas Sinclair}

\thanks{B. Hayes was partially supported by NSF grants DMS-1600802 and DMS-1827376. D.~Jekel was partially supported by NSF grant DMS-1762360. B.~Nelson was partially supported by NSF grant DMS-1856683. T.~Sinclair was partially supported by NSF grant DMS-1600857.}

\address[B.~Hayes]{Department of Mathematics,
University of Virginia, 141 Cabell Drive, Kerchof Hall, Charlottesville, VA, 22904}
\email{brh5c@virginia.edu}

\address[D.~Jekel]{Department of Mathematics, University of California, Los Angeles, Box 951555,
Los Angeles, CA 90095-1555}
\email{davidjekel@math.ucla.edu}

\address[B.~Nelson]{Department of Mathematics, Michigan State University, 619 Red Cedar Road, C212 Wells Hall, East Lansing, MI 48824-3402}
\email{brent@math.msu.edu}

\address[T.~Sinclair]{Department of Mathematics, Purdue University, 150 N. University St., West Lafayette, IN 47907-2067}
\email{tsincla@purdue.edu}
%\date{\today}

% This paper gives a free probabilistic perspective on  maximal amenability/amenable absorption results for free group factors, as well as other amalgamated free products. In particular, we give the first free entropy dimension proof of Popa's famous result on maximal amenability of the generator MASA in free group factors. We also partially recover the amenable absorption results and Gamma stability results for free products due to Houdayer. Moreover, we give a unified approach to these results using $1$-bounded entropy, by studying diffuse subalgebras that are maximal with respect to having $1$-bounded entropy zero, which we call \emph{Pinsker algebras}.  We give sufficient conditions for an subalgebra $\cP \leq \cM$ of $1$-bounded entropy zero to be Pinsker, which require random matrix models for $\cM$ that are asymptotically concentrated on microstate spaces ``relative" to this subalgebra, have exponential concentration of measure, and ``simulate" the conditional expectation onto this subalgebra.

\begin{abstract}
This paper gives a free entropy theoretic perspective on amenable absorption results for free products of tracial von Neumann algebras. In particular, we give the first free entropy proof of Popa's famous result that the generator MASA in a free group factor is maximal amenable, and we partially recover Houdayer's results on amenable absorption and Gamma stability.  Moreover, we give a unified approach to all these results using $1$-bounded entropy.  We show that if $\cM = \cP * \cQ$, then $\cP$ absorbs any subalgebra of $\cM$ that intersects it diffusely and that has $1$-bounded entropy zero (which includes amenable and property Gamma algebras as well as many others).  In fact, for a subalgebra $\cP \leq \cM$ to have this absorption property, it suffices for $\cM$ to admit random matrix models that have exponential concentration of measure and that ``simulate'' the conditional expectation onto $\cP$.
\end{abstract}

\maketitle

\section*{Introduction}

A foundational discovery of Popa \cite{Popa1983} showed that the generator MASA $\mathcal{A} = L(\Z)$ in a free group factor $\cM = L(\Z*\F_{d-1})$ is a maximal amenable subalgebra (or equivalently a maximal hyperfinite subalgebra by \cite{Connes}). This provided the first example of an abelian subalgebra of a $\mathrm{II}_1$ factor that is maximal amenable (it is even maximal property Gamma), and it answered in the negative the question, stated by Kadison at the 1967 Baton Rouge conference, of whether any self-adjoint operator in a II$_1$ factor is contained in a hyperfinite subfactor.
%This answered in the negative a conjecture of Kadison stated during the Baton Rouge conference in 1967.
The fundamental insight of Popa was a detailed analysis of the $\mathcal{A}$-central sequences in $\cM$ through his \emph{asymptotic orthogonality property}. The approach of Popa is very fruitful and many authors have built on his work to establish maximal amenability in several cases, see \cite{Ge96, Shen06, FangMaximalAmena, CFRW, Gao10, Brothier, Houdayerexotic, CyrilSAOP,Bleary}. Several other celebrated structural results for free group factors can be shown using Popa's breakthrough deformation/rigidity theory initiated in \cite{Po01a, PopaL2Betti, PopaStrongRigidity, PopaStrongRigidtyII}. Moreover, deformation/rigidity theory allows one to prove  structural results not just for free group factors, but also von Neumann algebras of more general groups.  We refer the reader to \cite{PopaICM,Va06a,Va10a,Io12b,Io17c} for a survey of these results, many of which include solutions to long-standing open problems. This theory has been particularly fruitful in proving absence of Cartan subalgebras, as well as uniqueness of Cartan subalgebras for crossed product algebras. See, e.g., \cite{OzPopaCartan, OzPopaII, ChifanPeterson, ChifanSinclair,  PopaVaesFree, PopaVaesHyp, CartanAFP}.

In this paper, we present an approach to structural results for free products through Voiculescu's free entropy dimension theory.
%which, roughly speaking, quantifies how many matrix approximations there are for the generators of a von Neumann algebra and relates this to various other von Neumann algebraic properties.
Free entropy dimension theory was initiated by Voiculescu in a series of papers \cite{VoiculescuFreeEntropy2, Voiculescu1996}, and gives powerful tools to prove various indecomposability and structural results on free group factors. In one celebrated achievement, Voiculescu gave the first proof of absence of Cartan subalgebras for free group factors using free entropy methods \cite[Theorem 5.2]{Voiculescu1996}. Shortly thereafter Ge \cite{GePrime} and Ge and Popa \cite{PopaGeThin} used this machinery to prove that free group factors are prime and \emph{thin} (cannot be decomposed as a tensor product of hyperfinite factors), respectively. See \cite{DykemaFreeEntropy, Jung2007,HoudShlStrongSolid} for other applications of free entropy dimension to the structure of free group factors.
%We remark here that Popa's deformation/rigidity theory  applies to a wider class of algebras than free products, and does not require that the algebras have the Connes approximate embedding property as one needs by the free entropy dimension approach.  On the other hand, if we do assume Connes embeddability, the tools of free entropy dimension are well-suited for studying free group factors and free products because these von Neumann algebras have natural random matrix models, thanks to Voiculescu's asymptotic freeness theorem \cite{VoicAsyFree, Voiculescu1998}.
While Popa's deformation/rigidity theory applies to a wider class of algebras than free products, and does not require that the algebras have the Connes approximate embedding property, the tools of free entropy dimension are well-suited for studying free group factors and free products of Connes embeddable algebras due to the presence of natural random matrix models that come from Voiculescu's asymptotic freeness theorem \cite{VoicAsyFree, Voiculescu1998}. Additionally, there are certain indecomposability results for free group factors that can be proved using free entropy techniques which are currently out of reach by other methods. (See, e.g., \cite[Theorem 1.3, Corollary 1.4, Corollary 1.7]{Hayes2018}, as well as \cite{DykemaFreeEntropy, PopaGeThin}.)

Given the success of free entropy theory and random matrix theory in proving other structural results for free products, it is reasonable to suppose that the maximal amenability of the generator MASA in a free group factor can be proved using random matrices and free entropy dimension. We provide such a proof using the notion of $1$-bounded entropy, which is implicit in \cite{Jung2007} and explicitly defined in \cite{Hayes2018}. The $1$-bounded entropy is defined similarly to free entropy dimension, but it is useful specifically for studying von Neumann algebras with free entropy dimension $1$; the $1$-bounded entropy is less than $+\infty$ if and only if the algebra is strongly $1$-bounded in the sense of Jung \cite{Jung2007}.  The $1$-bounded entropy has the advantage that it is known to be an invariant for tracial von Neumann algebras and can be computed on any set of generators. This invariant provides a unified  and efficient way to recast the proofs of Voiculescu and Ge on structural properties of free group factors, see Section \ref{subsec: 1 bounded entropy properties}.

%The $1$-bounded entropy also provides a conceptual picture for Jung's result that being strongly $1$-bounded is a von Neumann algebra invariant, since being strongly $1$-bounded is just equivalent to having $1$-bounded entropy less than $\infty$.

We now state the main result of the paper. As a convention, when the context is clear we will write ``$\cN\leq \cM$'' to indicate that $\cN$ is a von Neumann subalgebra of $\cM$.

\begin{alphathm} \label{thm:main2}
Let $(\cM_1,\tau_{\cM_1})$ and $(\cM_2,\tau_{\cM_2})$ be tracial $\mathrm{W}^*$-algebras such that every separable subalgebra is embeddable into $\mathcal{R}^\omega$.  Let $(\mathcal{D},\tau_\mathcal{D})$ be a common atomic subalgebra of $\cM_1$ and $\cM_2$ with $\tau_{\cD} = \tau_{\cM_1}|_{\cD} = \tau_{\cM_2}|_{\cD}$, and consider the amalgamated free product $\cM = \cM_1 *_\mathcal{D} \cM_2$. Then for any $\cN\leq \cM$ with $h(\cN: \cM) = 0$ and $\cN\cap \cM_1$ diffuse, one has $\cN \subseteq \cM_1$.
\end{alphathm}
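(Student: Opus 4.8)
\emph{Overview and model.}
The plan is to build a random matrix model for $\cM=\cM_1*_\cD\cM_2$ that enjoys exponential concentration of measure and in which the conditional expectation onto $\cM_1$ appears as a Haar average, and then to pit the smallness of $\cN$ (coming from $h(\cN:\cM)=0$) against that concentration. I use the standard formal properties of $1$-bounded entropy: monotonicity in both arguments, computability on any generating set, and the fact that $h(\cN:\cM)=0$ means that for every finite generating tuple $X$ of $\cN$ the space of microstates of $X$ that extend to microstates of $\cM$ admits, for every $\eps>0$, a cover by $e^{o(k^2)}$ balls of $\norm{\cdot}_2$-radius $\eps$ modulo unitary conjugation. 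After reducing to the separable case — here $\cM$ itself embeds into $\cR^\omega$, since $\cD$ is atomic, hence hyperfinite, and amalgamated free products over hyperfinite subalgebras preserve Connes-embeddability — fix the block model $P^{(k)}\in M_k(\C)$ of $\cD$ (ranks of the minimal projections proportional to their traces), choose microstate sequences $\pi_1^{(k)},\pi_2^{(k)}$ for $\cM_1,\cM_2$ that each restrict to $P^{(k)}$ on $\cD$, and set
\[
\Sigma_k=\pi_1^{(k)}\ \text{on}\ \cM_1,\qquad \Sigma_k=U_k\,\pi_2^{(k)}(\,\cdot\,)\,U_k^*\ \text{on}\ \cM_2,\qquad U_k\ \text{Haar on}\ \mathcal U_k:=\{U\in\mathrm U(k):UP^{(k)}=P^{(k)}U\},
\]
extended multiplicatively. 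By Voiculescu's asymptotic freeness with amalgamation over $\cD$, with probability $\to 1$ the map $\Sigma_k$ is an arbitrarily good microstate of $\cM$, and by construction $\Sigma_k|_{\cM_1}=\pi_1^{(k)}$ is \emph{deterministic}, so $\Sigma_k(v)$ does not depend on $U_k$ for $v\in\cM_1$.

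\emph{Two properties of the model.}
(i) For bounded $x\in\cM$ the map $U\mapsto\Sigma_k(x)$ is Lipschitz from $(\mathcal U_k,\norm{\cdot}_2)$ to $(M_k,\norm{\cdot}_2)$ with constant independent of $k$; since $\mathcal U_k$ is a product of unitary groups of ranks proportional to $k$, its log-Sobolev constant is $O(1/k)$, so every $1$-Lipschitz function on $\mathcal U_k$ concentrates around its mean at rate $\exp(-\Omega(k^2t^2))$. (ii) A Weingarten computation identifies the mean: $\E_{U_k}\Sigma_k(x)\to\pi_1^{(k)}(E_{\cM_1}(x))$ in $\norm{\cdot}_2$ for every $x\in\cM$, so the model ``simulates'' $E_{\cM_1}$, and the spread of $\Sigma_k(x)$ over the Haar randomness is asymptotically $\norm{x-E_{\cM_1}(x)}_2$.

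\emph{Rigidity from $h(\cN:\cM)=0$.}
Suppose for contradiction that some $x_0\in\cN$ has $\eta:=\norm{x_0-E_{\cM_1}(x_0)}_2>0$. Pick a Haar unitary $v\in\cN\cap\cM_1$ (diffuseness) and a finite generating tuple $X$ of $\cN$ containing $v$, $x_0$, and the conjugates $v^nx_0v^{-n}$, $n<N$. Since $\Sigma_k|_X$ lands in the microstate space of $X$ relative to $\cM$ with probability $\to1$, and that space has only $e^{o(k^2)}$ orbit-$\eps$-balls, some fixed ball — with center $\mu_k$ an $\cM$-extendable microstate of $\cN$ — is hit with probability $\ge e^{-o(k^2)}$; because the function on $\mathcal U_k$ measuring the distance from $\Sigma_k|_X$ to the conjugation orbit of $\mu_k$ is Lipschitz and hence concentrates at rate $\exp(-\Omega(k^2))$, its median must be $\le\eps$. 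Thus \emph{with probability $\to1$} there is a unitary $W_k=W_k(U_k)$ with $\norm{\Sigma_k^{U_k}(x)-W_k\mu_k(x)W_k^*}_2<2\eps$ for every $x\in X$. Applying this to $x=v$, whose microstate $\pi_1^{(k)}(v)$ is deterministic, shows that all the $W_k$ conjugate $\mu_k(v)$ to the same approximate Haar unitary, so any two of them differ by an element of the near-commutant of $\pi_1^{(k)}(v)$, essentially an element of $\pi_1^{(k)}(W^*(v))$. Hence the microstates $\Sigma_k^{U_k}(x_0)$ — genuinely spread at scale $\eta$ about $\pi_1^{(k)}(E_{\cM_1}(x_0))$ as $U_k$ varies, by (ii) — are all mutually unitarily conjugate via conjugators lying, modulo the $\cM_1$-model, in $\pi_1^{(k)}(W^*(v))$.

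\emph{Closing the argument (the main obstacle).}
The remaining step is to derive $\eta=0$ from this rigidity. The idea is that the $\Sigma_k^{U_k}(x_0)$ are, on the one hand, genuinely spread as $U_k$ varies, yet, on the other hand, all mutually conjugate by conjugators that are tightly controlled; combining this with the $E_{\cM_1}$-simulation (ii) and a Popa-type asymptotic-orthogonality computation — conjugation by the Haar unitary $v\in\cM_1$ sends the letters of $x_0-E_{\cM_1}(x_0)$ lying in $\cM_2\ominus\cD$ to mutually orthogonal vectors, so that $v$ together with the $v^nx_0v^{-n}$ behave, inside $\cM$, like a free family over $\cM_1$, which is incompatible with the $e^{o(k^2)}$ orbit-ball cover — one is led to conclude that $x_0$ must commute with $v$ up to $L^2$-error tending to $0$. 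Since $\{v\}'\cap\cM\subseteq\cM_1$ for any diffuse $v\in\cM_1$ in a free product, this yields $x_0\in\cM_1$, i.e.\ $\eta=0$, a contradiction; hence $\cN\subseteq\cM_1$. I expect the technical heart to be exactly this last step — making the orthogonality/commutation argument quantitative for an \emph{arbitrary} $x_0$, in particular handling the word types of $x_0-E_{\cM_1}(x_0)$ that begin or end with letters from $\cM_1\ominus\cD$, where a genuine asymptotic-orthogonality-property argument à la Popa is needed in place of the naive orthogonality; by contrast, the construction of the model, its two properties, and the implication ``$h(\cN:\cM)=0\Rightarrow$ few orbit-balls'' are routine.
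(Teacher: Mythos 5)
Your random matrix model is the right one and matches the paper's: Brown--Dykema--Jung microstates compatible with a block representation of the atomic $\cD$, a Haar unitary from the commutant of that block algebra, exponential concentration via log-Sobolev on the product of unitary groups, and the identification of the Haar average with $E_{\cM_1}$ via asymptotic freeness with amalgamation. The pigeonhole step (a sub-exponential cover plus exponential concentration forces almost all the mass into a single $2\eps$-ball) is also the right mechanism. However, the final section of your proposal is a genuine gap, and you say so yourself: you do not close the argument, but instead defer to a hoped-for Popa-type asymptotic orthogonality computation to rule out $\eta>0$. No such computation is needed, and the difficulty you run into --- tracking the $U_k$-dependent conjugators $W_k$ and their near-commutation with $\pi_1^{(k)}(v)$ --- is an artifact of working with covers \emph{modulo unitary conjugation}.

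The missing idea is to anchor the microstates before covering, using the diffuse element of $\cN\cap\cM_1$ as the anchor. Take $z\in\cN\cap\cM_1$ self-adjoint with diffuse spectrum and write $z=g(\mathbf{x})$. Because $z\in\cM_1$, the external averaging property (your property (ii)) already forces $g(\mathbf{X}^{(k)})$ to be close in $L^2$ of the probability space to a \emph{deterministic} matrix $C^{(k)}$; one checks that $C^{(k)}\rightsquigarrow z$ and that the law of the model is asymptotically supported on the microstate spaces of $\mathbf{x}$ \emph{relative to} $C^{(k)}\rightsquigarrow z$. Now $h(\cN:\cM)=0$ is computed with covering numbers of these relative microstate spaces by plain $(F,\eps)$-balls --- no unitary orbits. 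Pigeonhole gives a single ball of measure at least $\tfrac12 e^{-o(k^2)}$, which exceeds the concentration function, so its $2\eps$-neighborhood has measure $1-e^{-\Omega(k^2)}$; hence each $f_i(\mathbf{X}^{(k)})$ (for generators $y_i=f_i(\mathbf{x})$ of $\cN$) is within $2\eps$ of a deterministic matrix with probability tending to $1$. Since, conversely, $f(\mathbf{x})\in\cM_1$ if and only if $f(\mathbf{X}^{(k)})$ is asymptotically approximable by deterministic matrices (a direct consequence of the external averaging property, as the variance of $f(\mathbf{X}^{(k)})$ converges to $\norm{f(\mathbf{x})-E_{\cM_1}[f(\mathbf{x})]}_2^2$), each $y_i$ lies in $\cM_1$ and $\cN\subseteq\cM_1$. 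In short: the diffuseness of $\cN\cap\cM_1$ is not used to run an orthogonality argument against $v^nx_0v^{-n}$; it is used to supply a deterministic reference microstate that kills the conjugation ambiguity at the outset, after which the collapse is to a genuine point and membership in $\cM_1$ is immediate.
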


When $\cM$ is Connes-embeddable, the condition $h(\cN:\cM)=0$ is satisfied whenever $\cN$ is diffuse and amenable, non-prime, has a Cartan subalgebra, or all nonzero direct summands have property Gamma (see \S \ref{subsec: 1 bounded entropy properties}). Thus, for instance, we obtain maximal amenability of the generator MASA in a free group factor since the above result implies that it is a maximal subalgebra with $1$-bounded entropy zero.  At the same time, our argument shows that the generator MASA is a maximal subalgebra with property Gamma.
% The maximal amenability of the generator MASA in a free group factor is obtained since the above result implies that it is a maximal subalgebra with $1$-bounded entropy zero, and amenable von Neumann algebras always have $1$-bounded entropy zero.  This proof will have other consequences besides maximal amenability, since there are several other classes of von Neumann algebras that have $1$-bounded entropy $\leq 0$ (zero if Connes-embeddable and $-\infty$ otherwise).  Thus, for instance, our proof implies that the generator MASA is a maximal subalgebra with property Gamma.
In this way, our approach gives a unified proof for amenability, property Gamma, having Cartan subalgebras, and so forth.  However, a significant limitation is that we can only handle free absorption results when each side is finite, and so for instance, our methods cannot reach type III algebras. Moreover, unlike the asymptotic orthogonality or deformation/rigidity approaches, we have to assume $\cM_{1},\cM_{2}$ both embed into an ultrapower of $\cR$, and our results do not yet have concrete applications beyond free products.

We remark that if $\mathcal{D}$ is diffuse, then the conclusions of Theorem \ref{thm:main2} can fail for rather trivial reasons (in contrast to the results of Brown--Dykema--Jung \cite{BDJ2008} on free entropy dimension for free products with amalgamation over any amenable $\mathcal{D}$).  For instance, suppose that $\mathcal{D}$ is diffuse abelian, let $\mathcal{K}_1$ and $\mathcal{K}_2$ be finite-dimensional algebras, and let $\cM_1 = \mathcal{D} \otimes \mathcal{K}_1$ and $\cM_2 = \mathcal{D} \otimes \mathcal{K}_2$.  Then, in the algebra $\cM$, the intersection $\cM_1 \cap \cM_2$ is diffuse and $h(\cM_2:\cM) = 0$, yet $\cM_2$ is not contained in $\cM_1$.

The conclusion of Theorem \ref{thm:main2} relates to previous results about amenable absorption. For example, in \cite{CyrilAOP} Houdayer defined a generalization of Popa's asymptotic orthogonal property (see \cite[Theorem 3.1]{CyrilAOP}) now called the \emph{strong asymptotic orthogonality property} from which one can deduce an amenable absorption property: if $\mathcal{Q}\leq \cM$ is amenable and $\mathcal{Q}\cap \cA$ is diffuse, then $\mathcal{Q} \subseteq \cA$. This strengthening of the asymptotic orthogonality property was then used in \cite{WenAOP, 2AuthorsOneCup, ParShiWen} to give other examples of situations where one has amenable absorption for maximal amenable subalgebras of free group factors. We remark in passing that it is a conjecture of Peterson and Thom (see the discussion following \cite[Proposition 7.7]{PetersonThom}) that every maximal amenable subalgebra of a free group factor has the amenable absorption property.
There is also another recent framework for analyzing maximal amenability in terms of singular states due to Boutonnet--Carderi \cite{BC2015}, which was modified by Ozawa to give a short proof of maximal amenability of the generator MASA in \cite{Ozawa2015}. This was carried out to great effect by Boutonnet--Houdayer in \cite{BH2018} to give complete results on amenable absorption in amalgamated free products.

Since amenable algebras always have $1$-bounded entropy zero, Theorem \ref{thm:main2} recovers these amenable absorption results for certain amalgamated free products. Moreover, since factors with property Gamma also have $1$-bounded entropy zero, we recover some of the Gamma stability results shown in \cite{CyrilAOP}, and more. For example: if $\cQ\leq \cM_{1}*\cM_{2}$ and $\cQ\cap \cM_{1}$ is diffuse, and if $\cQ$ has $1$-bounded entropy zero, then $\cQ\subseteq \cM_{1}$. So if $\cM_{1}$ is non-prime then $\cM_{1}$ absorbs any  $\cQ$ that is non-prime, has the property that every nonzero direct summand has property Gamma, or has a Cartan, provided that $\cQ$ intersects $\cM_{1}$ diffusely. In the spirit of Gamma stability, we can replace the assumption that $\cQ$ is non-prime, or that every nonzero direct summand has property Gamma, or that it has a Cartan, with the requirement that $\cQ'\cap \cM^{\omega}$ is diffuse and still conclude that $\cQ \subseteq \cM_{1}$ (provided $\cQ\cap \cM_{1}$ is diffuse).
Additionally, by \cite{Houdayerexotic,HoudShlStrongSolid,Hayes2018} one can find von Neumann algebras which are strongly solid, have the Haagerup property, and the complete metric approximation property and yet still have $1$-bounded entropy zero, and so similar remarks apply to these examples.

From Theorem \ref{thm:main2}, we deduce new indecomposability results for certain amalgamated free products. This relates to various weakenings of the normalizer, such as the quasi-normalizer defined in \cite{IzumiLongoPopa,PimnserPopa,PopaQR} (building off of ideas in \cite{PopaOrthoPairs}), or the wq-normalizer as defined in \cite{PopaStrongRigidity, PopaCohomologyOE, IPP, GalatanPopa}. We use $qN_{\cM}(\cN)$ for the quasi-normalizer of $\cN$ inside $\cM$. See also \cite[Definition 1.1]{Hayes2018} for the definition of the ``singular subspace" which is larger than the normalizer, quasi-normalizer, and the wq-normalizer (see \cite[Proposition 3.2]{Hayes2018}). The next corollary is an immediate consequence of Theorem \ref{thm:main2} and \cite[Theorem 3.8]{Hayes2018}.

\begin{corollary*}\label{cor:quasinormal absorption}
Let $(\cM_1,\tau_{\cM_1})$ and $(\cM_2,\tau_{\cM_2})$ be tracial $\mathrm{W}^*$-algebras such that every separable subalgebra is embeddable into $\mathcal{R}^\omega$.  Let $(\mathcal{D},\tau_\mathcal{D})$ be a common atomic subalgebra of $\cM_1$ and $\cM_2$ with $\tau_{\cM_1}|_{\cD} = \tau_{\cM_2}|_{\cD} = \tau_{\cD}$.  Set $\cM=\cM_{1}*_{\mathcal D}\cM_{2}$. If $\cN\leq \cM$ is diffuse and $h(\cN:\cM)\leq 0$, and if $W^{*}(qN_{\cM}(\cN))\cap \cM_{1}$ is diffuse, then $\cN\subseteq \cM_{1}$. The same results holds if $qN_{\cM}(\cN)$ is replaced with the wq-normalizer, or the singular subspace of $\cN\subseteq\cM$.
\end{corollary*}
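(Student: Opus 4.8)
The plan is to deduce the Corollary directly from Theorem \ref{thm:main2} by enlarging $\cN$ to the von Neumann algebra generated by its singular subspace in $\cM$. Write $\cP$ for the von Neumann subalgebra of $\cM$ generated by the singular subspace of $\cN\subseteq\cM$. By \cite[Proposition 3.2]{Hayes2018} the singular subspace contains the normalizer, the quasi-normalizer, and the wq-normalizer of $\cN$, so $W^{*}(qN_{\cM}(\cN))\leq\cP$ and likewise for the wq-normalizer; in particular $\cP\cap\cM_{1}$ contains the given diffuse algebra $W^{*}(qN_{\cM}(\cN))\cap\cM_{1}$ (resp.\ its wq-normalizer analogue) and is therefore diffuse. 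Since the singular subspace is the largest of these objects, it suffices to prove the statement with $\cP$ in place of $W^{*}(qN_{\cM}(\cN))$, and I also note for later that $\cN\leq\cP$.

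First I would apply \cite[Theorem 3.8]{Hayes2018}, which (using that $\cN$ is diffuse) gives $h(\cP:\cM)=h(\cN:\cM)$. Combined with the hypothesis $h(\cN:\cM)\leq 0$ this yields $h(\cP:\cM)\leq 0$; and since $\cP$ is diffuse (it contains the diffuse algebra $\cN$), one has $h(\cP:\cM)\geq 0$, so in fact $h(\cP:\cM)=0$. Now $\cP\leq\cM=\cM_{1}*_{\cD}\cM_{2}$ satisfies the hypotheses of Theorem \ref{thm:main2}: it has $1$-bounded entropy zero in $\cM$, and $\cP\cap\cM_{1}$ is diffuse. Hence $\cP\subseteq\cM_{1}$, and a fortiori $\cN\subseteq\cP\subseteq\cM_{1}$, as desired. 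The cases of the wq-normalizer and of the quasi-normalizer then follow at once, since both generate subalgebras of $\cP$ and the argument used only that $\cP\cap\cM_{1}$ is diffuse.

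I do not expect a genuine obstacle here: all of the substance is already contained in Theorem \ref{thm:main2} and in the identification of the $1$-bounded entropy of the singular subspace in \cite[Theorem 3.8]{Hayes2018}. The only point that needs a moment's attention is the passage from the stated hypothesis $h(\cN:\cM)\leq 0$ to the equality $h(\cP:\cM)=0$ that Theorem \ref{thm:main2} requires, which rests on the standard fact that a diffuse tracial von Neumann algebra has nonnegative $1$-bounded entropy relative to any ambient algebra.
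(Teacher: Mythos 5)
Your argument is correct and is exactly the route the paper intends: the corollary is stated as an immediate consequence of Theorem \ref{thm:main2} together with \cite[Theorem 3.8]{Hayes2018} (Property P9 and the remark following it), which is precisely your combination of $h(W^*(\text{singular subspace}):\cM)=h(\cN:\cM)$ with the absorption theorem applied to the enlarged algebra $\cP$. The one imprecision is your closing remark attributing $h(\cP:\cM)\geq 0$ to the diffuseness of $\cP$: nonnegativity is Property P1 and rests on $\mathcal{R}^\omega$-embeddability of the separable subalgebras of $\cM$ (diffuseness alone does not suffice, since $h=-\infty$ for non-embeddable algebras), but as embeddability is part of the hypotheses here the step stands.
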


In fact, by \cite[Theorem 3.8, Corollary 4.1]{Hayes2018}, we can start with a diffuse, amenable $\cN$ and iterate the process of taking $W^{*}(qN_{\cM}(\cN))$ (or even the singular subspace of $\cN$ inside $\cM$) across all ordinals and if for  some ordinal the algebra we obtain has diffuse intersection with $\cM_{1}$, then it follows that $\cN\leq \cM_{1}$.

The proof of Theorem \ref{thm:main2} proceeds (modulo a quick reduction to the separable case) by constructing random matrix models that have certain properties relative to the inclusion $\cM_1 \leq \cM_1 * \cM_2$.  In fact, all our conclusions hold for an inclusion $\cP \leq \cM$ if we only assume the existence of random matrix models satisfying these properties. We state this result as a theorem in its own right.  Here $\tau_n$ denotes the normalized trace $\frac1n \Tr$ on the matrix algebra $M_n(\C)$.  Also, $\norm{\cdot}_2$ denotes the $2$-norm $\norm{x}_2 = \tau(x^*x)^{1/2}$ associated to any tracial $\mathrm{W}^*$-algebra $(\cM,\tau)$, including $(M_n(\C),\tau_n)$.

\begin{alphathm} \label{thm:main}
Suppose that $\cP \subseteq \cM$ is an inclusion of tracial $\mathrm{W}^*$-algebras.  Let $\mathbf{x} = (x_i)_{i \in I}$ be a tuple of bounded self-adjoint generators for $\cM$.  Suppose $n(k) \to \infty$ and that $\mathbf{X}^{(k)} = (X_i^{(k)})_{i \in I}$ is a tuple of random $n(k) \times n(k)$ self-adjoint matrices satisfying the following properties:
\begin{enumerate}
	\item We have $\norm{X_i^{(k)}} \leq R_i$ for some $R_i$ independent of $k$. \label{I:operator norm bound thm main}
	\item For every non-commutative polynomial $p \in \C\ip{t_i: i \in I}$, we have $\tau_{n(k)}(p(\mathbf{X}^{(k)})) \to \tau(p(\mathbf{x}))$ in probability. \label{I:convergence in law thm main}
	\item The probability distributions of $(X_i^{(k)})_{i \in F}$ exhibit exponential concentration of measure at the scale $n(k)^2$ as $k \to \infty$ for each finite $F \subseteq I$.  (See \S \ref{subsec:concentration} for the precise definition.) \label{I:ECM thm main}
	\item For each non-commutative polynomial $p \in \C\ip{t_i: i \in I}$, we have
	\[
	\lim_{k \to \infty} \norm{ \E[p(\mathbf{X}^{(k)})] }_2 = \norm{E_{\cP}[p(\mathbf{x})]}_2.
	\]
	Here $E_{\cP}$ is the trace-preserving conditional expectation of $\cM$ onto $\cP$. \label{I:EAP thm main}
\end{enumerate}
Then for any $\mathrm{W}^*$-algebra $\cN \leq \cM$ with $h(\cN: \cM)=0$ and $\cN\cap \cP$ diffuse, one has $\cN \subseteq \cP$.
\end{alphathm}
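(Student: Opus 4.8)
The plan is to show that the three hypotheses "$\cN\cap\cP$ diffuse", "$\cN\not\subseteq\cP$", and "$h(\cN:\cM)=0$" are incompatible. After the routine reduction to the separable setting, use monotonicity of $h(\,\cdot\,:\cM)$ to shrink $\cN$: choose $y=y^*\in\cN$ with $E_\cP(y)\ne y$, choose a diffuse abelian $\cA=W^*(a)\leq\cN\cap\cP$ with $a=a^*$ of diffuse spectral distribution, and replace $\cN$ by $W^*(a,y)$. Thus it suffices to assume $\cN=W^*(a,y)$ with $a\in\cP$, $\eta:=\norm{y-E_\cP(y)}_2>0$, and $h(\cN:\cM)=0$; since $h$ is an invariant computable on any generators, I compute $h(\cN:\cM)$ using the tuple $(a,y)$ for $\cN$ together with $\mathbf{x}$ for $\cM$.

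Next I would build random matrix microstates for $\cN$ from the given model. Fix polynomials with $\norm{a_m(\mathbf{x})-a}_2,\norm{b_m(\mathbf{x})-y}_2\to0$ and uniformly bounded norms (truncate by functional calculus), and put $A^{(k)}=a_m(\mathbf{X}^{(k)})$, $Y^{(k)}=b_m(\mathbf{X}^{(k)})$. For $m$ large relative to the microstate precision, $(A^{(k)},Y^{(k)})$ together with $\mathbf{X}^{(k)}$ lies in the microstate space of $\cM$ with probability $\to 1$ by \eqref{I:operator norm bound thm main}--\eqref{I:convergence in law thm main}, so $(A^{(k)},Y^{(k)})$ lies in the relevant microstate space for $\cN\subseteq\cM$ with high probability. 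The decisive structural inputs come from \eqref{I:ECM thm main}--\eqref{I:EAP thm main}: since $a\in\cP$, property \eqref{I:EAP thm main} forces $\norm{\E[A^{(k)}]}_2\to\norm{a}_2$, which is the limit in probability of $\norm{A^{(k)}}_2$, so $\norm{A^{(k)}-\E[A^{(k)}]}_2\to 0$ (for $m$ large) and $A^{(k)}$ is essentially a \emph{deterministic} microstate $D^{(k)}$ for the diffuse element $a$, of spread-out spectrum; whereas $\norm{\E[Y^{(k)}]}_2\to\norm{E_\cP(y)}_2$ is strictly smaller than $\lim_k\norm{Y^{(k)}}_2=\norm{y}_2$, so $Y^{(k)}$ fluctuates about its mean with $\norm{\cdot}_2$-size $\approx\eta$, and by \eqref{I:ECM thm main} every $\norm{\cdot}_2$-Lipschitz functional of $\mathbf{X}^{(k)}$ concentrates, so the covariance of $Y^{(k)}$ has small operator norm but trace $\approx\eta^2$, i.e.\ the fluctuation is genuinely spread over $\gtrsim\delta\,n(k)^2$ orthogonal directions for some $\delta>0$. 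This is the random-matrix shadow of the fact that $L^2(\cM)\ominus L^2(\cP)$ is a coarse $\cP$-bimodule, hence a coarse $\cA$-bimodule, which carries no nonzero almost-$\cA$-central vectors since $\cA$ is diffuse.

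The heart of the argument is then to contradict $h(\cN:\cM)=0$, e.g.\ via the following pigeonhole scheme. Fix a small scale $\eps>0$; since $h(\cN:\cM)=0$ I may fix a moment order $m$ (also large enough for the previous step) and a tolerance so that the microstate space for $\cN\subseteq\cM$ is covered by $e^{o(n(k)^2)}$ orbital $\eps$-balls. Hence with high probability $(A^{(k)},Y^{(k)})$ is within $\eps$ of a unitary conjugate of one of $e^{o(n(k)^2)}$ centers; since $A^{(k)}\approx D^{(k)}$ and the $O(\eps)$-approximate commutant of the diffuse-spectrum matrix $D^{(k)}$ has subexponential covering number (here diffuseness of $\cN\cap\cP$ is essential), the conjugating unitary $W^{(k)}$ itself lies with high probability in a set of subexponential covering number, and therefore — using concentration once more — $Y^{(k)}$ lies with high probability within $O(\eps)$ of a set of subexponential covering number. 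For $\eps$ and the entropy tolerance small this contradicts the "spread over $\gtrsim\delta\,n(k)^2$ dimensions" conclusion of the preceding paragraph. Hence $\cN\subseteq\cP$, and intersecting the separable pieces proves Theorem~\ref{thm:main}.

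I expect the main obstacle to be precisely this last estimate, and in particular its uniformity in the parameters entering the definition of $h$. One must: (a) make precise, and uniform in $k$, the statement that a microstate for the diffuse algebra $\cN\cap\cP$ has approximate commutant of negligible (orbital) covering number — this is where diffuseness is used, and it dictates how small $\eps$ must be taken; (b) control the spread parameter $\delta$ arising from \eqref{I:ECM thm main} in terms of the polynomial degree needed to realize $\cN$ inside $W^*(\mathbf{x})$, so that it beats the subexponential covering number forced by $h=0$; and (c) reconcile the order of quantifiers (the scale $\eps$, the moment order, the tolerance, and $k\to\infty$) so that the two bounds can be played against each other. Properties \eqref{I:ECM thm main} and \eqref{I:EAP thm main} do the essential work: \eqref{I:EAP thm main} pins the $(\cN\cap\cP)$-part of every microstate to a nearly deterministic matrix with tiny stabilizer, while \eqref{I:ECM thm main} guarantees that the $L^2(\cP)^\perp$-fluctuation of the $y$-part is high-dimensional and cannot be absorbed by conjugating within that stabilizer.
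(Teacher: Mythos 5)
Your overall mechanism is the right one and matches the paper's: the external averaging property pins the $(\cN\cap\cP)$-part of the matrix model to an essentially deterministic matrix and yields the variance identity $\E\norm{Y^{(k)}-\E Y^{(k)}}_2^2 \to \norm{y-E_\cP(y)}_2^2$, and then $h(\cN:\cM)=0$ plus concentration plus pigeonhole localizes the model for $\cN$ near a deterministic tuple. However, your implementation has a genuine gap at exactly the step you flag, and it does not resolve itself. First, the ``spread over $\gtrsim\delta\,n(k)^2$ orthogonal directions'' claim does not follow from hypothesis (\ref{I:ECM thm main}) as defined in the paper: exponential concentration is asserted only for each \emph{fixed} $\eps$, so a $1$-Lipschitz functional of $\mathbf{X}^{(k)}$ has variance $o(1)$ rather than $O(n(k)^{-2})$, and the covariance of $Y^{(k)}$ need only be spread over a number of directions tending to infinity, not a positive fraction of $n(k)^2$. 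Second, and more seriously, the $\eps$-approximate unitary stabilizer of a deterministic matrix $D^{(k)}$ with diffuse limiting spectral distribution is \emph{not} of subexponential covering number at scale $n(k)^2$: the eigenvalues of $D^{(k)}$ cluster in spectral windows of width $\eps'$ containing roughly $\eps' n(k)$ of them, so the approximate stabilizer contains a product of unitary groups of total dimension about $\eps' n(k)^2$, giving covering number $e^{c(\eps)n(k)^2}$ with $c(\eps)>0$. Your pigeonhole then needs $c(\eps)$ plus the entropy tolerance to be beaten by the concentration rate coming from (\ref{I:ECM thm main}), and since both quantities may tend to $0$ as $\eps\to 0$ there is no a priori comparison; this is the quantifier clash you anticipate in (c), and as stated the contradiction is not established.

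The paper's proof avoids both problems. It never invokes the orbital formulation of $h$: it computes $h(\cN:\cM)$ using microstates \emph{relative to} a fixed deterministic sequence $C^{(k)}\rightsquigarrow z$ for a diffuse $z\in\cN\cap\cP$, and property (\ref{I:EAP thm main}) is exactly what supplies such a deterministic $C^{(k)}$ compatible with $\mu^{(k)}$ (namely $C^{(k)}=\E[g(\mathbf{X}^{(k)})]$, which is $L^2$-close to $g(\mathbf{X}^{(k)})$ precisely because $z\in\cP$). With the unitary ambiguity removed this way, only plain $(F,\eps)$-coverings enter the pigeonhole and no stabilizer estimate is needed. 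Likewise no dimension count for the fluctuation is needed: the collapse of the microstate space onto a single $(F,\eps)$-neighborhood of a deterministic tuple gives $\norm{Y^{(k)}-\E Y^{(k)}}_2\to 0$ in probability, and the variance identity then forces $\norm{y-E_\cP(y)}_2=0$ directly. I recommend restructuring your argument accordingly: use the deterministic microstate for $a$ as the conditioning datum in the definition of $h(\cN:\cM)$, rather than as a constraint to be enforced on the conjugating unitary after the fact.
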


The hypotheses (\ref{I:operator norm bound thm main}) and (\ref{I:convergence in law thm main}) are standard descriptions of how random matrix models simulate the given operators $\mathbf{x}$ in the large $k$ limit.  Exponential concentration (\ref{I:ECM thm main}) is also familiar in random matrix theory (at least in the case when $I$ is finite); see for instance \cite[\S 2.3 and \S 4.4]{AGZ2009}.  The hypothesis (\ref{I:EAP thm main}) is new to our work, and it says intuitively that the probabilistic expectation for the random matrix models simulates the (trace-preserving) conditional expectation of $\cM$ onto $\cP$ rather than the trace.

In the case of Theorem \ref{thm:main2} where $\cM = \cM_1 *_{\cD} \cM_2$ and $\cP = \cM_1$, such random matrix models will be constructed by adapting the methods of Brown--Dykema--Jung \cite{BDJ2008}, who gave a version of Voiculescu's asymptotic freeness results for amalgamation over a finite-dimensional algebra.

The application of Theorem \ref{thm:main} in the case where $h(\cP:\cM) = 0$ naturally motivates the following definition.

\begin{defn*}
Let $(\cM,\tau)$ be a tracial von Neumann algebra. We say that a diffuse $\mathcal{P}\leq \cM$ is a \emph{Pinsker algebra} if $h(\cP:\cM) = 0$, and given any $\mathcal{P}\leq \mathcal{Q}\leq \mathcal{M}$ with $h(\mathcal{Q}:\cM) = 0$ we have $\mathcal{Q}=\mathcal{P}$.
\end{defn*}

It follows from general properties of $1$-bounded entropy that if $\cQ\leq \cM$ is diffuse and $h(\cQ:\cM)\leq 0$, then there is a unique Pinsker algebra $\cP\leq \cM$ with $\cP\supseteq \cQ$. In particular, two Pinsker algebras which intersect diffusely are equal. We refer the reader to Section \ref{subsec: 1 bounded entropy properties} for a more detailed discussion. We also discuss in Section \ref{subsec: 1 bounded entropy properties} the motivation for the terminology ``Pinsker algebra," which comes from entropy in ergodic theory. Hypotheses (\ref{I:convergence in law thm main})-(\ref{I:EAP thm main}) of Theorem \ref{thm:main} have natural interpretations in ergodic theory. For example, exponential concentration in the ergodic theory context naturally leads to the ``almost blowing up property," a condition on a probability measure-preserving action which is equivalent to the action being Bernoulli. We refer the reader to \cite{MartonShieldsABUP} as well as \cite[Section III.4]{ShieldsBook} for more information.
% In fact, part of the proof of Theorem \ref{thm:main} is inspired by notes of L.~Bowen \cite{LewisECM} which give an approach in terms of exponential concentration of measure to prove that actions of sofic groups are Bernoulli. Austin also used such methods in \cite{AustinWeakPinsker} to solve the weak Pinsker conjecture for actions of $\Z$, which was a long-standing open problem.
We remark that \cite[Theorem 3.8]{Hayes2018} (which is the main result of that paper) can be rephrased as saying that if $\cP\leq \cM$ is Pinsker, then $\cP\leq \cM$ is coarse in the sense of Popa \cite{PopaWeakInter}. So Pinsker algebras connect to coarse embeddings, and thus also to the coarseness conjecture independently formulated by the first named author \cite[Conjecture 1.12]{Hayes2018} and Popa  \cite[Conjecture 5.2]{PopaWeakInter}.
As a corollary of Theorem \ref{thm:main}, we obtain examples of Pinsker algebras in amalgamated free products.

\begin{corollary*}
Let $\cP \leq \cM$ be as in Theorem \ref{thm:main} and suppose that $h(\cP: \cM) = 0$.  Then $\cP$ is a Pinsker algebra.  In particular, if $\cM = \cM_1 *_{\cD} \cM_2$ is as in Theorem \ref{thm:main2} and $h(\cM_1: \cM) = 0$, then $\cM_1$ is a Pinsker algebra in $\cM$.
\end{corollary*}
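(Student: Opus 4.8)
The plan is to derive both assertions essentially directly from Theorem~\ref{thm:main}; the only substantive point is the maximality clause in the definition of a Pinsker algebra, and once the hypotheses of Theorem~\ref{thm:main} are set up correctly this becomes a one-line observation. Note first that for the statement to be meaningful $\cP$ must be diffuse: this is built into the definition of ``Pinsker algebra'', and it is also exactly what allows Theorem~\ref{thm:main} to be invoked with $\cN$ an intermediate subalgebra. In the free-product situation one records separately that $\cM_1$ is diffuse (if $\cM_1$ were, say, finite-dimensional it could not be a Pinsker algebra and the statement would be vacuous), so that $\cM_1$ is a genuine candidate.

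For the first statement we must verify the two conditions in the definition: $h(\cP:\cM)=0$, which is the standing hypothesis, and maximality. For the latter, let $\cP \le \cQ \le \cM$ be any intermediate $\mathrm{W}^*$-algebra with $h(\cQ:\cM)=0$. Since $\cP \subseteq \cQ$, we have $\cQ \cap \cP = \cP$, which is diffuse. Hence $\cN := \cQ$ satisfies the hypotheses of Theorem~\ref{thm:main} (namely $h(\cN:\cM)=0$ and $\cN \cap \cP$ diffuse), and that theorem yields $\cQ = \cN \subseteq \cP$. Combined with $\cP \subseteq \cQ$ this forces $\cQ = \cP$, which is precisely the maximality property; together with diffuseness of $\cP$ and $h(\cP:\cM)=0$ this says $\cP$ is a Pinsker algebra.

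For the ``in particular'' clause, I would invoke the random matrix model construction carried out in the proof of Theorem~\ref{thm:main2}: when $\cM = \cM_1 *_{\cD} \cM_2$ with $\cD$ atomic and each $\cM_i$ having all separable subalgebras embeddable into $\cR^\omega$, the methods adapted from Brown--Dykema--Jung~\cite{BDJ2008} produce, for any tuple of bounded self-adjoint generators of $\cM$, random matrices $\mathbf{X}^{(k)}$ satisfying conditions (\ref{I:operator norm bound thm main})--(\ref{I:EAP thm main}) of Theorem~\ref{thm:main} with $\cP = \cM_1$. Thus the inclusion $\cM_1 \le \cM$ is ``as in Theorem~\ref{thm:main}'', and if moreover $h(\cM_1:\cM)=0$ then the first part of the corollary applies and shows that $\cM_1$ is a Pinsker algebra in $\cM$.

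I do not anticipate any real obstacle here: the force of the argument is entirely contained in Theorem~\ref{thm:main}, which is stated strongly enough to absorb \emph{every} $\cN$ with $h(\cN:\cM)=0$ meeting $\cP$ diffusely (not merely amenable ones), so the maximality of $\cP$ among intermediate algebras of $1$-bounded entropy zero is immediate from the trivial identity $\cQ \cap \cP = \cP$. The only place demanding a little care is the implicit diffuseness of $\cP$ (equivalently of $\cM_1$ in the free-product case), both because ``Pinsker algebra'' is defined only for diffuse subalgebras and because Theorem~\ref{thm:main} has no content otherwise; in all the intended applications this is automatic.
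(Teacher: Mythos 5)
Your proposal is correct and matches the paper's (implicit) argument exactly: the paper likewise treats the corollary as immediate from Theorem \ref{thm:main} via the observation that any intermediate $\cQ \supseteq \cP$ with $h(\cQ:\cM)=0$ satisfies $\cQ \cap \cP = \cP$ diffuse, hence $\cQ \subseteq \cP$, and the free-product case follows because the Brown--Dykema--Jung-type models built in the proof of Theorem \ref{thm:main2} verify hypotheses (\ref{I:operator norm bound thm main})--(\ref{I:EAP thm main}) for $\cP = \cM_1$. Your remark about the implicit diffuseness of $\cP$ is a correct and appropriate piece of care.
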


We remark that the proofs of Theorems \ref{thm:main2} and \ref{thm:main} are designed to work even with infinite generating sets (that is, the index set $I$ in Theorem \ref{thm:main} may be infinite).  Of course, the statement and proof would be marginally simpler to write down in the finitely generated case, and random matrix results often use finite generating sets.  However, some of the most natural applications of $1$-bounded entropy and Pinsker algebras deal with cases where we do not know a priori that the $\mathrm{W}^*$-algebras are finitely generated (and it is unknown whether every tracial $\mathrm{W}^*$-algebra with separable predual is finitely generated).  That is why we have taken care to set up our machinery to handle infinite generating sets.  We also show that the existence of random matrix models satisfying the hypotheses of Theorem \ref{thm:main} is independent of the choice of generators for $\cM$; see Proposition \ref{prop:independentofgenerators} and the discussion preceding it.

The rest of the paper is organized as follows.  In Section \ref{sec:background}, we review background, definitions, and notation.  In particular, we summarize the properties of $1$-bounded entropy that are needed for the applications of our main theorems discussed in the introduction.  In Section \ref{sec:functionalcalculus}, we present a multivariable functional calculus, based on the work of the second author in \cite{Jekel2018, Jekel2019}, that will be used as a technical tool to transform between microstate spaces for different choices of generators.  In Section \ref{sec:Theorem1}, we prove Theorem \ref{thm:main}.  In Section \ref{sec:examples}, we apply Theorem \ref{thm:main} to the generator MASA in free group factors and we describe the relationship with previous work on random matrix models with convex potentials.  In Section \ref{sec:Theorem2}, we prove Theorem \ref{thm:main2} which handles the case of free products with amalgamation over atomic subalgebras.  In Section \ref{sec:remarks}, we further discuss the random matrix models used in Theorem \ref{thm:main}, relating them with other ideas from von Neumann algebras and ergodic theory.

\subsection*{Acknowledgements.} The initial stages of this work were carried out at the Hausdorff Research Institute for Mathematics during the 2016 trimester program ``Von Neumann Algebras." The first, third, and last named authors thank the Hausdorff institute for their hospitality. The first named author thanks Tim Austin and Lewis Bowen for discussions related to exponential concentration of measure in the sofic entropy context which were inspirational for this work. In particular, he thanks Bowen for sharing notes on exponential concentration of measure in the sofic entropy context which led to the initial stages of this paper.  All the authors thank the Banff International Research Station for their hospitality during the workshop ``Classification Problems in von Neumann Algebras'' in Fall 2019. The authors thank Ionut Chifan for useful comments and for suggesting the corollary following Theorem \ref{thm:main2}.

\section{Background} \label{sec:background}

\subsection{Algebras, Traces, and Laws}

We begin by recalling some basic definitions, concepts, and notations from operator algebras.

A \emph{tracial $\mathrm{W}^*$-algebra} is pair $(\cM, \tau)$, where $\cM$ is a $\mathrm{W}^*$-algebra and $\tau$ is a faithful, normal, tracial state.  This is equivalent (in the separable case) to a finite von Neumann algebra with a designated choice of faithful normal trace.  We denote by $\cM_{sa}$ the set of self-adjoint elements of $\cM$.  We denote $\norm{x}_2 = \tau(x^*x)^{1/2}$.

We will view $\cM$ as an algebra of ``bounded random variables'' and $\tau$ as the ``expectation.''  Given $(\cM, \tau)$ and a $\mathrm{W}^*$-subalgebra $\cN$, there is a unique trace-preserving conditional expectation $E_{\cN}: \cM \to \cN$.  In this paper, the default meaning of ``conditional expectation $\cM \to \cN$'' will be this trace-preserving conditional expectation.

Given a (possibly infinite) index set $I$, we want to discuss the non-commutative law of a tuple $(x_i)_{i \in I} \in \cM_{sa}^I$, as well as a topology on the space of non-commutative laws.  These laws are defined in terms of the trace of non-commutative polynomials in the variables $(x_i)_{i \in I}$.

Let $\C\ip{t_i:i\in I}$ be the algebra of non-commutative complex polynomials in $(t_i)_{i \in I}$ (i.e. the free $\C$-algebra on the set $I$). We give $\C\ip{t_i:i\in I}$ the unique $*$-algebra structure which makes the $t_i$ self-adjoint.  If $\cM$ is a $\mathrm{W}^*$-algebra and $\mathbf{x} = (x_i)_{i \in I}\in \cM_{sa}^I$% is a family of self-adjoint elements
, then there is a unique $*$-homomorphism $\ev_\mathbf{x}: \C\ip{t_i: i \in I} \to \cM$ such that $\ev_{\mathbf{x}}(t_i) = x_i$.  For a non-commutative polynomial $p \in \C\ip{t_i: i \in I}$, we define $p(\mathbf{x}) = p((x_i)_{i \in I})$ to be $\ev_\mathbf{x}(p)$.

A tracial \emph{non-commutative law} of a self-adjoint $I$-tuple is a linear functional $\lambda: \C\ip{t_i: i \in I} \to \C$ that is
\begin{enumerate}
	\item unital, that is, $\lambda(1) = 1$;
	\item positive, that is, $\lambda(p^*p) \geq 0$;
	\item tracial, that is, $\lambda(pq) = \lambda(qp)$;
	\item exponentially bounded, that is, for some $(R_i)_{i \in I} \in (0,+\infty)^I$, we have
	\[
	|\lambda(t_{i(1)} \dots t_{i(\ell)})| \leq R_{i(1)} \dots R_{i(\ell)}
	\]
	for all $\ell$ and all $i(1)$, \dots, $i(\ell) \in I$.
\end{enumerate}
Given $\mathbf{R} = (R_i)_{i \in I} \in (0,+\infty)^I$, we define $\Sigma_{\mathbf{R}} = \Sigma_{(R_i)_{i \in I}}$ to be the set of non-commutative laws satisfying (4) for our given choice of $(R_i)_{i \in I}$.  We equip $\Sigma_{\mathbf{R}}$ with the topology of pointwise convergence on $\C\ip{t_i: i \in I}$.
In this topology, $\Sigma_{\mathbf{R}}$ is a compact Hausdorff space, and if $I$ is countable, then $\Sigma_{\mathbf{R}}$ is metrizable.

Given a tracial $\mathrm{W}^*$-algebra $(\cM, \tau)$, a tuple $\mathbf{x} = (x_i)_{i \in I} \in \cM_{sa}^I$ and $\mathbf{R} = (R_i)_{i \in I} \in (0,+\infty)^I$ satisfying $\norm{x_i} \leq R_i$, we define the \emph{non-commutative law of $\mathbf{x}$} as the map
\[
\lambda_{\mathbf{x}}: \C\ip{t_i: i \in I} \to \C: \quad p \mapsto \tau(p(x)).
\]
It is straightforward to verify that $\lambda_{\mathbf{x}}$ is in $\Sigma_{\mathbf{R}}$.  Conversely, given any $\lambda \in \Sigma_{\mathbf{R}}$, there exists some $(\cM, \tau)$ and $\mathbf{x} \in \cM_{sa}^I$ such that $\lambda_{\mathbf{x}} = \lambda$ and $\norm{x_i} \leq R_i$ for all $i \in I$.  This follows from a version of the GNS construction; the proof is the same as in \cite[Proposition 5.2.14(d)]{AGZ2009}.
We also remark that $(\cM, \tau)$ could be $M_n(\C)$ with the normalized trace $\tau_n = (1/n) \Tr$.  Thus, if $\mathbf{x} \in M_n(\C)_{sa}^I$, then $\lambda_{\mathbf{x}}$ is a well-defined non-commutative law.

At several points in the paper we will use the following folklore result, whose proof we leave as a exercise to the reader. See, e.g., \cite[Lemma 2.9]{JungTubularity} or \cite[Lemma 1.10 and Proposition 1.7]{Atkinson2019}.

\begin{lem} \label{lem:hyper folklore}
Let $(\cM,\tau)$ be a hyperfinite tracial von Neumann algebra with separable predual, and let $I$ be a countable index set. Let $\mathbf{x} \in \cM_{sa}^{I}$. Suppose that $(n(k))_{k}$ is a sequence of positive integers with $n(k) \to \infty$. If $\mathbf{A}^{(k)}, \mathbf{B}^{(k)}\in M_{n(k)}(\C)_{sa}^{I}$ satisfy
    $\sup_{k}\|A^{(k)}_{i}\|,\sup_{k}\|B^{(k)}_{i}    \|<\infty$  for all $i\in I$,
and $\lambda_{\mathbf{A}^{(k)}}\to\lambda_{\mathbf{x}}$,  $\lambda_{\mathbf{B}^{(k)}}\to \lambda_{\mathbf{x}}$, then there is a sequence $U^{(k)}\in M_{n(k)}(\C)$ with
\[\|U^{(k)}A^{(k)}_{i}(U^{(k)})^{*}-B^{(k)}_{i}\|_{2}\to_{k\to\infty} 0,%\mbox{ for all $i\in I$.}
\]
for all $i\in I$.
\end{lem}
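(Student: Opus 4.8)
The plan is to reduce the statement to the production of a single conjugating unitary in a tracial ultrapower, where the matricial data glue into a pair of tuples with the same noncommutative law, and then to invoke the uniqueness of embeddings of hyperfinite algebras into $\cR^\omega$ up to unitary conjugacy. As a first step I would reduce to finitely many indices: enumerating $I=\{i_1,i_2,\dots\}$, it suffices to show that for every finite $F\subseteq I$ and every $\eps>0$ there is a $K$ so that for all $k\ge K$ some unitary $U\in M_{n(k)}(\C)$ satisfies $\max_{i\in F}\norm{UA_i^{(k)}U^*-B_i^{(k)}}_2<\eps$; a routine diagonalization over the sets $F_m=\{i_1,\dots,i_m\}$ with tolerances $\eps_m=1/m$ then assembles the desired single sequence $(U^{(k)})_k$, since for each fixed index $i_j$ the error is eventually bounded by $1/m$ for all $m\ge j$.

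To prove this finitary claim I would argue by contradiction. If it fails for some $(F,\eps)$, the set $S$ of indices $k$ admitting \emph{no} good unitary is infinite; fix a free ultrafilter $\omega$ on $\N$ with $S\in\omega$ and form $\cR^\omega:=\prod_{k\to\omega}(M_{n(k)}(\C),\tau_{n(k)})$, the uniform operator-norm bounds ensuring that the sequences $\mathbf A^{(k)}$ and $\mathbf B^{(k)}$ determine tuples $\hat{\mathbf a},\hat{\mathbf b}\in(\cR^\omega)_{sa}^I$. Because $\lambda_{\mathbf A^{(k)}}\to\lambda_{\mathbf x}$ and $\lambda_{\mathbf B^{(k)}}\to\lambda_{\mathbf x}$, both $\hat{\mathbf a}$ and $\hat{\mathbf b}$ have noncommutative law $\lambda_{\mathbf x}$, so $\hat a_i\mapsto\hat b_i$ extends to a trace-preserving $*$-isomorphism $W^*(\hat{\mathbf a})\to W^*(\hat{\mathbf b})$, and both algebras are isomorphic (as tracial $\mathrm{W}^*$-algebras) to $W^*(\mathbf x)\le\cM$, hence hyperfinite with separable predual. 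Now comes the key step: applying the fact that a separably-acting hyperfinite algebra embeds into $\cR^\omega$ uniquely up to unitary conjugacy gives a unitary $\hat u\in\cR^\omega$ with $\hat u\hat a_i\hat u^*=\hat b_i$ for all $i$. Lifting $\hat u$ to a sequence of unitaries $U^{(k)}\in M_{n(k)}(\C)$ (take the unitary part of the polar decomposition of a bounded lift, after a harmless perturbation to invertibility) yields $\norm{U^{(k)}A_i^{(k)}(U^{(k)})^*-B_i^{(k)}}_2\to 0$ along $\omega$; hence $\{k:\max_{i\in F}\norm{U^{(k)}A_i^{(k)}(U^{(k)})^*-B_i^{(k)}}_2<\eps\}\in\omega$, and intersecting with $S\in\omega$ produces an index in $S$ that nonetheless admits a good unitary, a contradiction.

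The main obstacle is the input to the key step: uniqueness, up to unitary conjugacy, of trace-preserving embeddings of a separable hyperfinite von Neumann algebra into $\cR^\omega$. I would either cite this as classical or assemble it from (i) the fact that any two trace-preserving embeddings of an amenable tracial von Neumann algebra into $\cR^\omega$ are \emph{approximately} unitarily equivalent — obtained by exhausting the algebra by finite-dimensional subalgebras and using that two finite-dimensional $*$-subalgebras of a $\mathrm{II}_1$ factor with the same trace are unitarily conjugate — together with (ii) the countable saturation of the ultrapower $\cR^\omega$, which upgrades approximate unitary equivalence of separable subalgebras to an honest conjugating unitary. This is precisely the point where hyperfiniteness of $\cM$ is used essentially: for non-amenable $W^*(\mathbf x)$ such a matrix-level uniqueness statement is false, which is exactly what free entropy dimension and $1$-bounded entropy detect. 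A more hands-on alternative, closer to the cited references, would bypass the ultrapower: approximate $\mathbf x$ in $\norm{\cdot}_2$ by a tuple $\mathbf y$ inside a finite-dimensional subalgebra, express the matrix units of $W^*(\mathbf y)$ as noncommutative polynomials in $\mathbf y$, apply those polynomials to $\mathbf A^{(k)}$ and $\mathbf B^{(k)}$ to produce near-matrix-units, perturb these to genuine matrix units, and conjugate the two resulting finite-dimensional models using that the traces of corresponding projections agree asymptotically — at the cost of carefully tracking how the accuracy required depends on the chosen finite-dimensional algebra.
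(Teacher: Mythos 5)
Your proposal is correct, and it follows essentially the route the paper intends: the lemma is left as an exercise with pointers to Jung's tubularity paper and Atkinson--Kunnawalkam Elayavalli, whose content is precisely the uniqueness (up to unitary conjugacy, via finite-dimensional exhaustion plus saturation of the tracial matrix ultraproduct) of embeddings of separable hyperfinite algebras that your key step invokes. The finitary reduction, the ultrafilter/contradiction setup, and the lifting of the conjugating unitary through polar decomposition are all sound as written.
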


\subsection{Microstate Spaces and $1$-Bounded Entropy}\label{subsec: 1 bounded entropy properties}

Microstates free entropy as well as $1$-bounded  entropy are defined by looking at the exponential growth rate of volumes/covering numbers of so-called matricial microstate spaces, that is, spaces of matrices that have approximately the same non-commutative law as a given tuple $\mathbf{y}$.

Let $\mathbf{R} \in (0,+\infty)^J$, let $\mathbf{y} \in \cM_{sa}^J$ be a self-adjoint tuple with $\norm{y_j} \leq R_j$, and let $\mathcal{U} \subseteq \Sigma_{\mathbf{R}}$ be a neighborhood of $\lambda_{\mathbf{y}}$.  Then we define the microstate space
\[
\Gamma_{\mathbf{R},n}(\mathbf{y}; \mathcal{U}) := \{\mathbf{B} \in M_n(\C)_{sa}^J: \norm{B_j} \leq R_j \text{ for all } j \text{ and } \lambda_{\mathbf{B}} \in \mathcal{U}\}.
\]

These microstate spaces are invariant under conjugating a tuple $\mathbf{B}$ by a fixed $n \times n$ unitary matrix.  In order to remove the inherent ambiguity of unitary equivalence, we will often fix a self-adjoint $z \in \cM$ and fix a sequence $(C^{(n)})_{n \in \N}$ such that $C^{(n)} \in M_n(\C)$, $\norm{C^{(n)}}$ is bounded, and $\lambda_{C^{(n)}} \to \lambda_z$. For shorthand, we will often denote this as $C^{(n)}\rightsquigarrow z$.  Thus, we define \emph{microstate spaces for $\mathbf{y}$ relative to $C^{(n)} \leadsto z$} as follows: For each $\mathbf{R} \in (0,+\infty)^{J \sqcup \{0\}}$ with $\norm{y_j} < R_j$ and $\norm{z} < R_0$, for each sequence of microstates $C^{(n)}$ for $z$ as above with $\norm{C^{(n)}} < R_0$, and for each neighborhood $\mathcal{U}$ of $\lambda_{(\mathbf{y}, z)} \in \Sigma_{\mathbf{R}}$, define
\begin{align*}
\Gamma_{\mathbf{R},n}&(\mathbf{y} | C^{(n)} \rightsquigarrow z; \mathcal{U}):= \{\mathbf{B} \in M_n(\C)_{sa}^J: \norm{B_j} \leq R_j, \lambda_{(\mathbf{B},C^{(n)})} \in \mathcal{U}\}.
\end{align*}

Finally, we define microstate spaces for $\mathbf{y}$ relative to $C^{(n)} \leadsto z$ which take into account the presence of another tuple $\mathbf{x} \in (\cM)_{sa}^I$ from the ambient algebra $\cM$.  Specifically, the \emph{microstate spaces for $\mathbf{y}$ in the presence of $\mathbf{x}$ relative to $C^{(n)} \leadsto z$} are defined as follows:  For each $\mathbf{R} \in (0,+\infty)^{I \sqcup J \sqcup \{0\}}$ with $\norm{x_i} < R_i$ and $\norm{y_j} < R_j$ and $\norm{z} < R_0$, for each sequence of microstates $C^{(n)}$ for $z$ as above with $\norm{C^{(n)}} < R_0$, and for each neighborhood $\mathcal{U}$ of $\lambda_{(\mathbf{x}, \mathbf{y}, z)} \in \Sigma_{\mathbf{R}}$, define
\begin{align*}
\Gamma_{\mathbf{R},n}&(\mathbf{y}: \mathbf{x} | C^{(n)} \rightsquigarrow z; \mathcal{U})\\
    &:= \{\mathbf{B} \in M_n(\C)_{sa}^J: \exists \mathbf{A} \in M_n(\C)_{sa}^I \text{ such that } \norm{A_i} \leq R_i, \norm{B_j} \leq R_j, \lambda_{(\mathbf{A},\mathbf{B},C^{(n)})} \in \mathcal{U}\}
\end{align*}
In other words, it is the projection onto the $J$-coordinates of a microstate space of $(\mathbf{x}, \mathbf{y})$ relative to $C^{(n)} \leadsto z$, or a space of microstates for $\mathbf{y}$ for which there exist compatible microstates for $\mathbf{x}$ (again relative to $C^{(n)} \leadsto z$).

In the case of finitely many variables, the $1$-bounded entropy is defined using $\eps$-covering numbers for the microstate spaces.  For infinitely many variables, one can proceed by looking at the finite-dimensional marginals.  Alternatively, we can use coverings by cylinder sets that are defined by looking at balls in finitely many coordinates, as we will do here.  These cylinder sets have a natural interpretation in the framework of uniform spaces in topology, because they form a fundamental system of neighborhoods for the uniform structure on $M_n(\C)_{sa}^I$ given as the product of the uniform structures on $M_n(\C)_{sa}$ obtained from the $2$-norm.  However, to minimize the technical background, we will state all our definitions directly for this particular case rather than importing the entire formalism of uniform structures.

Given $\mathbf{A} \in M_n(\C)_{sa}^I$, we define for each finite $F \subseteq I$ and $\eps > 0$,
\[
N_{F,\eps}(\mathbf{A}) = \{\mathbf{B} \in M_n(\C)_{sa}^I: \norm{A_i - B_i}_2 < \eps \text{ for all } i \in F\},
\]
and we refer to this set as the \emph{$(F,\eps)$-neighborhood centered at $\mathbf{A}$}.  Similarly, we define the \emph{$(F,\eps)$-neighborhood} of a set $\Omega \subseteq M_n(\C)_{sa}^I$
\[
N_{F,\eps}(\Omega) = \bigcup_{\mathbf{A} \in \Omega} N_{F,\eps}(\mathbf{A}),
\]
that is, the set of $\mathbf{B}$ such that there exists $\mathbf{A}\in \Omega$ with $\norm{A_i - B_i}_2 < \eps$ for all $i \in F$.

For a set $\Omega \subseteq M_n(\C)_{sa}^I$, we define the covering number
\[
K_{F,\eps}(\Omega) = \inf \{ |\Omega_0|: \Omega_0 \subseteq \Omega \text{ and } \Omega \subseteq N_{F,\epsilon}(\Omega_0)\}.
\]
In other words, this is the minimum number of points whose $(F,\eps)$-neighborhoods cover $\Omega$.  (This will be finite if $\Omega$ is contained in a product of $\norm{\cdot}_2$-balls $\{B_i(x_i,R_i)\}_{i \in I}$.)

\begin{defn}
Let $(\cM,\tau)$ be a diffuse tracial $\mathrm{W}^*$-algebra and $\cN \leq \cM$.  Fix some $z \in \cN_{sa}$ with diffuse spectrum, some $R_0 > \norm{z}$, and a sequence $C^{(n)} \in M_n(\C)$ such that $\norm{C^{(n)}} \leq R_0$ and ${C^{(n)}} \rightsquigarrow z$.
Fix a tuple $\mathbf{y} \in \cN_{sa}^J$ of generators for $\cN$ and a tuple $\mathbf{x} \in \cM_{sa}^I$ of generators for $\cM$.  Fix $\mathbf{R} \in (0,+\infty)^{I \sqcup J}$ and $R_i > \norm{x_i}$ for $i \in I$ and $R_j > \norm{y_j}$ for $j \in J$.  Then we define
\[
h(\cN: \cM) := \sup_{F,\eps} \inf_{\mathcal{U}} \limsup_{n \to \infty} \frac{1}{n^2} \log K_{F,\eps} \bigl(\Gamma_{\mathbf{R},n}(\mathbf{y}: \mathbf{x} | C^{(n)} \rightsquigarrow z; \mathcal{U}) \bigr),
\]
where the infimum is taken over all neighborhoods $\mathcal{U}$ of $\lambda_{(\mathbf{x},\mathbf{y},z)}$, and the supremum is taken over all $F \subseteq I$ finite and all $\eps > 0$.
\end{defn}

It is implicit from \cite[Theorem 3.2]{Jung2007} and explicitly shown in \cite[Theorem A.9]{Hayes2018} that $h(\cN: \cM)$ is well-defined in the sense that it is independent of the choice of generators $\mathbf{x}$ and $\mathbf{y}$, the choice of $z \in \cN_{sa}$, and the choice of microstates $C^{(n)}$.  The fact that it is independent of the choice of $\mathbf{R}$ follows from completely standard techniques (compare \cite[proof of Proposition 2.4]{VoiculescuFreeEntropy2} and \cite{BB2003}). We call $h(\cN:\cM)$ the \emph{$1$-bounded entropy of $\cN$ in the presence of $\cM$}. We write $h(\mathcal{M}):=h(\mathcal{M}:\mathcal{M})$ call this the \emph{$1$-bounded entropy of $\cM$.}

We list here some important properties of $1$-bounded entropy, along with pointers in the literature to where they are proved. All von Neumann algebras listed in the properties below are assumed to be tracial and embeddings are assumed to be trace-preserving.

\begin{list}{{\bf P\arabic{pcounter}:} ~ }{\usecounter{pcounter}}

\item $h(\mathcal{N}:\mathcal{M})\geq 0$ if $\mathcal{N}\leq \mathcal{M}$ and every von Neumann subalgebra of $\mathcal{M}$ with separable predual embeds into an ultrapower of $\mathcal{R}$, and $h(\cN:\cM)=-\infty$ if there exists a von Neumann subalgebra of $\cM$ with separable predual which does not embed into an ultrapower of $\mathcal{R}$. (Exercise from the definitions.)

\item $h(\cN_{1}:\cM_{1})\leq h(\cN_{2}:\cM_{2})$ if $\cN_{1}\leq \cN_{2}\leq \cM_{2}\leq \cM_{1}$, if $\cN_{1}$ is diffuse. \label{I:monotonicity of 1 bounded entropy} (Exercise from the definitions.)

\item $h(\cN:\cM)=0$ if $\cN\leq \cM$ and $\cN$ is diffuse and hyperfinite. \label{I:hyperfinite has 1-bdd ent zero}
(Exercise from Lemma \ref{lem:hyper folklore}, using the ``orbital" definition of $1$-bounded entropy given in \cite[Corollary A.6]{Hayes2018}.)

\item For $\cM$ diffuse, $h(\cM)<\infty$ if and only if $\cM$ is strongly $1$-bounded in the sense of Jung. (See \cite[Proposition A.16]{Hayes2018}.) \label{I:preserves SB}

\item $h(\cM)=\infty$ if $\cM = \mathrm{W}^*(x_{1},\cdots,x_{n})$ where $x_{j}\in \cM_{sa}$ for all $1\leq j\leq n$ and $\delta_{0}(x_{1},\cdots,x_{n})>1$. For example, this applies if $\cM=L(\F_{n})$, for $n>1$. (This follows from Property \ref{I:preserves SB} and \cite[Corollary 3.5]{Jung2007}).)

\item $h(\cN_{1}\vee \cN_{2}:\cM)\leq h(\cN_{1}:\cM)+h(\cN_{2}:\cM)$ if $\cN_{1},\cN_{2}\leq \cM$ and $\cN_{1}\cap \cN_{2}$ is diffuse. (See \cite[Lemma A.12]{Hayes2018} \label{I:subadditivity of 1 bdd ent}.)

\item Suppose that $(\cN_{\alpha})_{\alpha}$ is an increasing chain of diffuse von Neumann subalgebras of a von Neumann algebra $\cM$. Then
\[h\left(\bigvee_{\alpha}\cN_{\alpha}:M\right)=\sup_{\alpha}h(\cN_{\alpha}:M).\]
\label{I:increasing limits of 1bdd ent first variable} (See \cite[Lemma A.10]{Hayes2018}.)

\item $h(\cN:\cM)=h(\cN:\cM^{\omega})$ if $\cN\leq \cM$ is diffuse, and $\omega$ is a free ultrafilter on an infinite set. (See \cite[Proposition 4.5]{Hayes2018}.) \label{I:omegafying in the second variable}

\item $h(\mathrm{W}^*(N_{\cM}(\cN)):\cM)=h(\cN:\cM)$ if $\cN\leq\cM$ is diffuse.  Here $N_{\cM}(\cN)=\{u\in \mathcal{U}(\cM):u\cN u^{*}=\cN\}$. (This is a special case of \cite[Theorem 3.8]{Hayes2018}.) \label{I:passing to normalizers preserves entropy}

% \end{enumerate}
\end{list}

By \cite[Theorem 3.8 and Proposition 3.2]{Hayes2018}, we can replace the normalizer in Property \ref{I:passing to normalizers preserves entropy} with various other weakenings of the normalizer. For example, this works for the  quasi-normalizer, the wq-normalizer, or even the singular subspace. We refer the reader to \cite{Hayes2018} (in particular Theorem 3.8 and Proposition 3.2 of that paper) for a more detailed discussion.

% By \cite[Theorem 3.8 and Proposition 3.2]{Hayes2018}, we can replace the normalizer in Property \ref{I:passing to normalizers preserves entropy} with various other weakenings of the normalizer. For example, this works for the  quasi-normalizer as defined in \cite{IzumiLongoPopa, PimnserPopa, PopaQR} (ideas behind the quasi-normalizer trace back to \cite{PopaOrthoPairs}), wq-normalizer as defined in \cite{PopaStrongRigidity, PopaCohomologyOE, IPP, GalatanPopa}. In fact there is a single subspace of $L^{2}(\cM)$ called the ``singular subspace" so that the conclusion of \ref{I:passing to normalizers preserves entropy} holds with the normalizer replaced by the singular subspace, and so that the singular subspace contains all previously defined ``weakenings" of the normalizer. We refer the reader to \cite{Hayes2018} (in particular Theorem 3.8 and Proposition 3.2 of that paper) for a more detailed discussion.

We briefly give some examples of tracial von Neumann algebras with nonpositive $1$-bounded entropy (in particular $1$-bounded entropy zero if they satisfy the Connes embedding conjecture). The main starting point is Property \ref{I:hyperfinite has 1-bdd ent zero}.

\begin{example}
If $\cM$ has a Cartan subalgebra, then $h(\cM)\leq 0$. This follows from Properties \ref{I:hyperfinite has 1-bdd ent zero} and \ref{I:passing to normalizers preserves entropy}. If we analyze the proofs of these properties,
%\ref{I:hyperfinite has 1-bdd ent zero},\ref{I:passing to normalizers preserves entropy},
then the proof of the fact that $\cM$ having a Cartan implies $h(\cM)\leq 0$ is not substantially different than the proof of absence of Cartan for $L(\F_n)$, $n>1$ in \cite{Voiculescu1996}.
\end{example}

\begin{example}
If $\cM$ is non-prime, then $h(\cM)\leq 0$. To see this, suppose that $\cM=\cM_{1}\overline{\otimes}\cM_{2}$, where $\cM_{1},\cM_{2}$ are diffuse. Fix $\cA_{j}\leq \cM_{j}$ diffuse and abelian. Then $h(\cM_{1}\overline{\otimes}\cA_{2}:\cM),h(\cA_{1}\overline{\otimes}\cM_{2}:\cM)\leq 0$ by Properties \ref{I:passing to normalizers preserves entropy}, \ref{I:hyperfinite has 1-bdd ent zero}, and \ref{I:monotonicity of 1 bounded entropy}. Thus by Property \ref{I:subadditivity of 1 bdd ent},
\[h(\cM)=h(\cM:\cM)\leq h(\cM_{1}\overline{\otimes}\cA_{2}:\cM)+h(\cA_{1}\overline{\otimes}\cM_{2}:\cM)\leq 0.\]
This is similar to Ge's proof of primeness of free group factors in \cite{GePrime}.
\end{example}

\begin{example}
If $\cN\leq \cM$ and $\cN$ has diffuse center, then $h(\cN:\cM)\leq 0$. Indeed, by Properties \ref{I:passing to normalizers preserves entropy} and \ref{I:hyperfinite has 1-bdd ent zero},
\[
h(\cN:\cM) \leq h(W^{*}(N_{\cM}(Z(\cN))):\cM) \leq h(Z(\cN):\cM)\leq 0.
\]
\end{example}

\begin{example}
% If every non-zero direct summand of $\cM$ has property Gamma, then $h(\cM)\leq 0$.
Suppose there is a tracial von Neumann algebra $(\cM_{0},\tau_{0})$ which is either zero or has diffuse center, a countable (potentially empty) set $I$, and $\textrm{II}_{1}$-factors $(\cM_{i})_{i\in I}$ with property Gamma so that
\[\cM=\cM_{0}\oplus \bigoplus_{i\in I}\cM_{i}.\]
Then $h(\cM)\leq 0$.  To see this, note that  by \cite[Proposition 1.10]{DixmierGamma} property Gamma for every non-zero direct summand implies
that we can find a sufficiently large infinite set $J$, and a free ultrafilter $\omega$ on $J$, so that there is a diffuse, abelian $\cA\leq \cM'\cap \cM^{\omega}$. We then have by Properties \ref{I:omegafying in the second variable}, \ref{I:monotonicity of 1 bounded entropy}, \ref{I:passing to normalizers preserves entropy}, and \ref{I:hyperfinite has 1-bdd ent zero} that
\[
h(\cM) = h(\cM:\cM) = h(\cM:\cM^{\omega}) \leq h(\mathrm{W}^*(N_{\cM^{\omega}}(\cA)):\cM^{\omega}) = h(\cA:\cM^{\omega})\leq h(\cA:\cA) = h(\cA) = 0.
\]
This is similar to Voiculescu's proof that free group factors do not have Property Gamma in \cite{Voiculescu1996}. Note that the same proof shows that if $\cQ \leq \cM$ is diffuse and $\cQ'\cap \cM^{\omega}$ is diffuse, then $h(\cQ:\cM)\leq 0$.
\end{example}

More generally, any von Neumann algebra with Property (C') in the sense of Galatan-Popa \cite[Definition 3.6]{GalatanPopa}  has $1$-bounded entropy zero (see \cite[Corollary 4.8]{Hayes2018}). There are even examples of von Neumann algebras which have the Haagerup property, are strongly solid, and have the complete metric approximation property which have nonpositive $1$-bounded entropy (see \cite{Houdayerexotic, HoudShlStrongSolid}, as well as \cite[Corollary 4.13]{Hayes2018}).
We remark that it is not known if $h(\cM) < \infty$ implies $h(\cM)\leq 0$.  Thus there is \emph{a priori} a gap between  strongly $1$-bounded algebras and algebras with nonpositive $1$-bounded entropy.
% One important aspect of $1$-bounded entropy is that it singles out a collection of subalgebras of a given tracial von Neumann algebra which automatically, as a consequence of the general properties of $1$-bounded entropy, have strong absoprtion/singularity properties.
Another important consequence of the general properties of $1$-bounded entropy is the existence of Pinsker algebras.
Indeed, by Properties \ref{I:subadditivity of 1 bdd ent} and \ref{I:increasing limits of 1bdd ent first variable} if $\cN\leq \cM$ is diffuse and has $h(\cN:\cM)=0$, then there is a \emph{unique} Pinsker algebra $\mathcal{P} \leq \cM$ with $\cN \subseteq \mathcal{P}$. In particular, if $\mathcal{P}_{1}$, $\mathcal{P}_{2}$ are two Pinsker algebras and $\mathcal{P}_{1}\cap \mathcal{P}_{2}$ is diffuse, then $\mathcal{P}_1 = \mathcal{P}_{2}$.

The motivation and terminology for Pinsker algebras comes from ergodic theory. In the context of dynamical entropy of probability measure-preserving actions of groups it is well known that there is a maximal factor action which has nonpositive entropy, called the \emph{Pinsker factor}, and that the original action is a complete positive entropy extension of the Pinsker factor action. For the case of $\Z$, this was first established by Pinsker in \cite{PinskerPinskerFactor}. For sofic entropy this is a folklore result, and for Rokhlin entropy this is due to Seward \cite{SewardPinskerProd}.  We should note here that the Pinsker algebra as we have defined it is really more analogous to the \emph{outer Pinsker factor} that appears in the study of sofic entropy (implicit in \cite{KerrPartition} and explicitly defined in \cite{HayesProceedings}) and Rokhlin entropy (see \cite{SewardPinskerProd}).
% Nevertheless, we have elected to call this the \emph{Pinsker algebra} instead of the \emph{outer Pinsker algebra},
% since in ergodic theory
It appears that outer Pinsker factors have better permanence properties than Pinsker factors (see e.g., \cite{HayesProceedings, HayesPinskerProd, SewardPinskerProd, SewardKoopman}), and we expect the same to hold in the $1$-bounded entropy setting.
For example, a probability measure-preserving action always has Lebesgue absolutely continuous spectrum over its outer Pinsker factor (see \cite{HayesProceedings, SewardKoopman}). In the context of $1$-bounded entropy, the analogous result is if $\mathcal{P}\leq \cM$ is Pinsker, then $\mathcal{P}\leq \cM$ is \emph{coarse} in the sense of Popa \cite{PopaWeakInter}. This is merely a rephrasing of \cite[Theorem 3.8]{Hayes2018}.

\subsection{Concentration of Measure} \label{subsec:concentration}

As in Theorem \ref{thm:main}, we will be consider a random matrix tuple $\mathbf{X}^{(k)} \in M_{n(k)}(\C)_{sa}^I$ for a possibly infinite index set $I$.  The distribution of such a random variable is a Borel probability measure $\mu^{(k)}$ on the product space $M_{n(k)}(\C)_{sa}^I$, endowed with the product topology coming from the usual topology on $M_{n(k)}(\C)_{sa}$.

\begin{defn}
Given a probability measure $\mu$ on $M_n(\C)_{sa}^I$, a finite $F \subseteq I$, and $\eps > 0$, we define the \emph{concentration function} of $\mu$ by
\[
\alpha_\mu(F,\eps) = \sup \{ \mu(N_{F,\eps}(\Omega)^c): \Omega \subseteq M_n(\C)_{sa}^I \text{ Borel, } \mu(\Omega) \geq 1/2\},
\]
where $N_{F,\eps}$ is an $(F,\epsilon)$-neighborhood.
\end{defn}

We have slightly modified the usual definition of the concentration function for a probability measure on a metric space.  The concentration $\alpha_\mu(F,\eps)$ used here is simply the metric concentration function for the marginal on the coordinates indexed by $F$ (where the metric is given by the maximum of the $2$-norms of the coordinates).  Alternatively, $\alpha_\mu(F,\eps)$ can be viewed as the concentration function of a uniform structure (rather than a metric) on $M_n(\C)_{sa}^I$, namely the uniform structure given by the neighborhoods $N_{F,\eps}(\mathbf{A})$, which is the product of the uniform structures on $M_n(\C)_{sa}$ given by the $\norm{\cdot}_2$ metric.

\begin{defn} \label{defn:concentration}
Given a sequence $n(k) \to \infty$ and a sequence of probability measures $\mu^{(k)}$ on $M_{n(k)}(\C)_{sa}^I$, we say that $\mu^{(k)}$ \emph{has exponential concentration} if for every finite $F \subseteq I$ and every $\eps > 0$,
\[
\limsup_{k \to \infty} \frac{1}{n(k)^2} \log \alpha_{\mu^{(k)}}(F,\eps) < 0.
\]
\end{defn}

Intuitively, exponential concentration of measure says that if a sequence of Borel sets $\Omega_k \subseteq M_{n(k)}(\C)_{sa}^I$ have measure at least $1/2$, then their $(F,\eps)$-neighborhoods will include everything except for a set of exponentially small measure as $k \to \infty$.  As we will explain later, this concentration phenomenon occurs in many natural examples from random matrix theory.  Another consequence of concentration is that if $\Omega_k$ is not exponentially small, then the complement of the $(F,2\eps)$ neighborhood of $\Omega_k$ must be exponentially small, which follows from the next lemma.

\begin{lem} \label{lem:concentrationdichotomy}
Let $\mu$ be a probability measure on $M_n(\C)_{sa}^I$.  Then for every Borel set $\Omega$, every finite $F \subseteq I$, and every $\eps > 0$, we have
\[
\mu(\Omega) > \alpha_\mu(F,\eps) \implies \mu(N_{F,2\eps}(\Omega)) \geq 1 - \alpha_\mu(F,\eps).
\]
\end{lem}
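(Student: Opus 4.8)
The plan is to apply the defining property of the concentration function $\alpha_\mu(F,\eps)$ twice, bootstrapping the radius from $\eps$ to $2\eps$ via the triangle inequality for $(F,\eps)$-neighborhoods. The key intermediate claim is that under the hypothesis $\mu(\Omega) > \alpha_\mu(F,\eps)$, the set $N_{F,\eps}(\Omega)$ has measure strictly greater than $1/2$. Once this is known, applying the definition of $\alpha_\mu(F,\eps)$ with the admissible set $\Omega' = N_{F,\eps}(\Omega)$ gives $\mu(N_{F,\eps}(N_{F,\eps}(\Omega))^c) \le \alpha_\mu(F,\eps)$, and since $N_{F,\eps}(N_{F,\eps}(\Omega)) \subseteq N_{F,2\eps}(\Omega)$ by the triangle inequality in the $2$-norm on each coordinate $i \in F$, we conclude $\mu(N_{F,2\eps}(\Omega)) \ge 1 - \alpha_\mu(F,\eps)$, which is the assertion.

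To prove the intermediate claim I would argue by contradiction: if $\mu(N_{F,\eps}(\Omega)) \le 1/2$, then the closed set $\Psi := N_{F,\eps}(\Omega)^c$ satisfies $\mu(\Psi) \ge 1/2$ and so is an admissible competitor in the supremum defining $\alpha_\mu(F,\eps)$, giving $\mu(N_{F,\eps}(\Psi)^c) \le \alpha_\mu(F,\eps)$. The point is then that $\Omega$ is disjoint from $N_{F,\eps}(\Psi)$: the relation ``$\mathbf{B} \in N_{F,\eps}(\mathbf{A})$'' is symmetric in $\mathbf{A}$ and $\mathbf{B}$, so if some $\mathbf{x} \in \Omega$ lay in $N_{F,\eps}(\Psi)$ there would be $\mathbf{y} \in \Psi$ with $\mathbf{y} \in N_{F,\eps}(\mathbf{x}) \subseteq N_{F,\eps}(\Omega)$, contradicting $\mathbf{y} \in \Psi$. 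Hence $\Omega \subseteq N_{F,\eps}(\Psi)^c$ and $\mu(\Omega) \le \alpha_\mu(F,\eps)$, contradicting the hypothesis.

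The only genuinely technical point to check is measurability: the sets fed into the definition of $\alpha_\mu$ must be Borel. This is fine because for finite $F$ the neighborhood $N_{F,\eps}(\mathbf{A})$ is the product of open $\norm{\cdot}_2$-balls in the coordinates $i \in F$ with the full space in the remaining coordinates, hence open; an arbitrary union $N_{F,\eps}(\Omega) = \bigcup_{\mathbf{A} \in \Omega} N_{F,\eps}(\mathbf{A})$ is therefore open and its complement closed, so both are Borel. The only other subtlety is that the definition of $\alpha_\mu(F,\eps)$ requires the competitor set to have measure $\ge 1/2$, which is satisfied both by $\Psi$ and by $N_{F,\eps}(\Omega)$, so no strict-versus-nonstrict issue arises. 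I do not anticipate a real obstacle: this is the standard ``concentration propagates to neighborhoods'' argument adapted to the uniform-structure setting, and the whole proof is only a few lines.
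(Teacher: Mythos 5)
Your proof is correct and is essentially the paper's argument: the paper also shows $\Omega \subseteq N_{F,\eps}(N_{F,\eps}(\Omega)^c)^c$ via the symmetry of the neighborhood relation, deduces $\mu(N_{F,\eps}(\Omega)) \geq 1/2$ by the contrapositive of the defining property of $\alpha_\mu(F,\eps)$ (you phrase this as a contradiction, which is the same thing), and then applies that property once more together with the triangle inequality $N_{F,\eps}(N_{F,\eps}(\Omega)) \subseteq N_{F,2\eps}(\Omega)$. Your measurability remark about $N_{F,\eps}(\Omega)$ being open is a correct and harmless addition.
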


\begin{proof}
Suppose $\mu(\Omega) > \alpha_\mu(F,\eps)$.  Let $\Upsilon = N_{F,\eps}(\Omega)^c$.  Note that $\Omega \subseteq N_{F,\eps}(\Upsilon)^c$.  Thus, $\mu(N_{F,\eps}(\Upsilon)^c) > \alpha_\mu(F,\eps)$, which implies that $\mu(\Upsilon) < 1/2$, by applying the definition of $\alpha_\mu(F,\eps)$ to the set $\Omega$ in contrapositive.  Thus, $\mu(N_{F,\eps}(\Omega)) = \mu(\Upsilon^c) \geq 1/2$.  So by the definition of the concentration function, we have $\mu(N_{F,2\eps}(\Omega)^c) \leq \alpha_\mu(F,\eps)$.  Hence, $\mu(N_{F,2\eps}(\Omega)) \geq 1 - \alpha_\mu(F,\eps)$.
\end{proof}

\section{An $L^2$-continuous Functional Calculus} \label{sec:functionalcalculus}

Following the ideas of the second author from \cite{Jekel2018,Jekel2019,JekelThesis}, we define a notion of ``functions of several non-commuting real variables'' that is more flexible than the notions of non-commutative polynomials.  The space $\mathcal{F}_{\mathbf{R},\infty}$ which we define has the following useful properties:
\begin{itemize}
	\item Given $(\cM, \tau)$ and $\mathbf{x} \in \cM_{sa}^I$ with $\norm{x_i} \leq R_i$, every element of $\mathrm{W}^*(\mathbf{x})$ can be realized as $f(\mathbf{x})$ for some $f \in \mathcal{F}_{\mathbf{R},\infty}$ (see Proposition \ref{prop:realizationofoperators}).
	\item Given a tuple $\mathbf{f} \in \mathcal{F}_{\mathbf{R},\infty}$, the law of $\mathbf{f}(\mathbf{x})$ depends continuously on the law of $\mathbf{x}$ (see Proposition \ref{prop:pushforwardcontinuity}), which will mean that $\mathbf{f}$ will map microstate spaces for $\mathbf{x}$ into microstate spaces of $\mathbf{f}(\mathbf{x})$ (see Corollary \ref{cor:microstatemapping}).
	\item Given $f \in \mathcal{F}_{\mathbf{R},\infty}$, the evaluation $\mathbf{x} \mapsto f(\mathbf{x})$ is $L^2$-uniformly continuous in a certain sense (see Proposition \ref{prop:L2uniformcontinuity}), which will be helpful later for ``pushing forward'' concentration of measure (see Corollary \ref{cor:concentrationpushforward}).
\end{itemize}

Here we approach the definitions from a slightly different point of view than \cite{Jekel2019} and \cite{JekelThesis}.  Moreover, we generalize to the setting of infinite tuples without assuming embeddability into $\mathcal{R}^\omega$.  However, we make the simplifying restriction of only handling functions that are defined in a product of operator norm balls rather than on arbitrary self-adjoint tuples.

As explained in \cite[{\S 3.1}]{Jekel2019} and \cite[\S 13.1 - 13.2]{JekelThesis}, the space $\mathcal{F}_{\mathbf{R},\infty}$ is closely related to the trace polynomials used in previous work such as \cite{Rains1997,Cebron2013, DHK2013, Kemp2016, Kemp2017, DGS2016}.  Moreover, from the model-theoretic viewpoint of \cite{FHS2013,FHS2014a,FHS2014b}, the functions in $\mathcal{F}_{\mathbf{R},\infty}$ are certain quantifier-free definable functions in the language of tracial von Neumann algebras; see \cite[Remark 3.3]{Jekel2019} and \cite[\S 13.7]{JekelThesis}.  A vector-bundle viewpoint on this space is explained in \cite[\S 13.6]{JekelThesis}.  We also explain in Remark \ref{rem:tracialcompletion} how $\mathcal{F}_{\mathbf{R},\infty}$ can be viewed as the tracial completion of a certain $\mathrm{C}^*$-algebra with respect to a family of traces, similar to ideas in \cite{Ozawa2013} and \cite{BBSTWW2019}.

\subsection{Construction of the Space}

\begin{defn}
Fix an index set $I$ and $\mathbf{R} \in (0,+\infty)^I$.  Consider the space
\[
\mathcal{A}_{\mathbf{R}} = C(\Sigma_{\mathbf{R}}) \otimes \C\ip{t_i: i \in I}.
\]
Given $(\cM,\tau)$ and $\mathbf{x} \in \cM_{sa}^I$ with $\norm{x_i} \leq R_i$, we define the evaluation map
    \begin{align*}
        \ev_{\mathbf{x}}: \mathcal{A}_{\mathbf{R}} &\to \cM\\
        \phi \otimes p &\mapsto \phi(\lambda_{\mathbf{x}}) p(\mathbf{x}).
    \end{align*}
Then we define a semi-norm on $C(\Sigma_{\mathbf{R}}) \otimes \C\ip{t_i: i \in I}$ by
\[
\norm{f}_{\mathbf{R},2} = \sup_{(\cM,\tau), \mathbf{x}} \norm{\ev_{\mathbf{x}}(f)}_{L^2(\cM)},
\]
where the supremum is over all tracial $\mathrm{W}^*$-algebras $(\cM,\tau)$ and all $\mathbf{x}\in \cM_{sa}^I$ with $\norm{x_i} \leq R_i$. Denote by $\mathcal{F}_{\mathbf{R},2}$ the completion of $\mathcal{A}_{\mathbf{R}} / \{f \in \mathcal{A}_{\mathbf{R}}: \norm{f}_{\mathbf{R},2} = 0\}$.
\end{defn}

It is immediate that for every $(\cM,\tau)$, for every self-adjoint tuple $\mathbf{x} \in \cM_{sa}^I$ with $\norm{x_i} \leq R_i$, the evaluation map $\ev_{\mathbf{x}}: \mathcal{A}_{\mathbf{R}} \to \cM$ passes to a well-defined map $\mathcal{F}_{\mathbf{R},2} \to L^2(\cM)$, which we continue to denote by $\ev_{\mathbf{x}}$, and we will also write $f(\mathbf{x}) = \ev_{\mathbf{x}}(f)$.  Moreover, it is clear that $f(\mathbf{x}) := \ev_{\mathbf{x}}(f)$ always lies in $L^2(\mathrm{W}^*(\mathbf{x}))$ because this holds when $f$ is a simple tensor.

It will be convenient often to restrict our attention to elements of $\mathcal{F}_{\mathbf{R},2}$ that are bounded in operator norm, and we will show that these in fact form a $\mathrm{C}^*$-algebra.

\begin{defn}
For $f \in \mathcal{F}_{\mathbf{R},2}$, let us define
\[
\norm{f}_{\mathbf{R},\infty} = \sup_{(\cM,\tau), \mathbf{x}} \norm{\ev_{\mathbf{x}}(f)},
\]
and then set
\[
\mathcal{F}_{\mathbf{R},\infty} = \{f \in \mathcal{F}_{\mathbf{R},2}: \norm{f}_{\mathbf{R},\infty} < +\infty\}.
\]
\end{defn}

\begin{lem}
$\mathcal{F}_{\mathbf{R},\infty}$ is a $\mathrm{C}^*$-algebra with respect the norm $\norm{\cdot}_{\mathbf{R},\infty}$ and the multiplication and $*$-operation arising from the natural ones on simple tensors.
\end{lem}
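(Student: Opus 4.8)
**Proof plan for: $\mathcal{F}_{\mathbf{R},\infty}$ is a $\mathrm{C}^*$-algebra.**

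The plan is to verify the $\mathrm{C}^*$-axioms by pulling everything back to the defining family of evaluation maps $\ev_{\mathbf{x}}$ and using that each $\ev_{\mathbf{x}}$ is a $*$-homomorphism of the $*$-algebra $\mathcal{A}_{\mathbf{R}}$ into $\cM$. First I would observe that for simple tensors the multiplication $(\phi\otimes p)(\psi\otimes q) = (\phi\psi)\otimes(pq)$ and involution $(\phi\otimes p)^* = \overline{\phi}\otimes p^*$ make $\mathcal{A}_{\mathbf{R}}$ a unital $*$-algebra, and that $\ev_{\mathbf{x}}\colon \mathcal{A}_{\mathbf{R}}\to\cM$ is a unital $*$-homomorphism (this is immediate from $\ev_{\mathbf{x}}(\phi\otimes p) = \phi(\lambda_{\mathbf{x}})p(\mathbf{x})$ since $\phi(\lambda_{\mathbf{x}})\in\C$ is central). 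Consequently $\norm{fg}_{\mathbf{R},\infty}\le\norm{f}_{\mathbf{R},\infty}\norm{g}_{\mathbf{R},\infty}$, $\norm{f^*}_{\mathbf{R},\infty} = \norm{f}_{\mathbf{R},\infty}$, and the $\mathrm{C}^*$-identity $\norm{f^*f}_{\mathbf{R},\infty} = \norm{f}_{\mathbf{R},\infty}^2$ all follow from the corresponding facts in each $\cM$, by taking suprema over $(\cM,\tau,\mathbf{x})$ (for the $\ge$ direction of the $\mathrm{C}^*$-identity one uses $\norm{\ev_{\mathbf{x}}(f)}^2 = \norm{\ev_{\mathbf{x}}(f^*f)}\le\norm{f^*f}_{\mathbf{R},\infty}$ for each fixed $\mathbf{x}$, then takes the sup over $\mathbf{x}$). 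Similarly $\norm{f}_{\mathbf{R},\infty}=0$ forces $\ev_{\mathbf{x}}(f)=0$ for all $\mathbf{x}$, hence $f=0$ already in $\mathcal{F}_{\mathbf{R},2}$; so $\norm{\cdot}_{\mathbf{R},\infty}$ is a genuine norm on $\mathcal{F}_{\mathbf{R},\infty}$, and the algebraic operations descend from $\mathcal{A}_{\mathbf{R}}$ to $\mathcal{F}_{\mathbf{R},\infty}$ (one checks the operations are well-defined on the quotient by the $\norm{\cdot}_{\mathbf{R},2}$-null ideal, using that the null ideal is a $*$-ideal, and that multiplication is jointly $\norm{\cdot}_{\mathbf{R},\infty}$-bounded on the subalgebra where both factors have finite $\infty$-norm).

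Next I would address whether $\mathcal{F}_{\mathbf{R},\infty}$ actually sits inside $\mathcal{F}_{\mathbf{R},2}$ as claimed and is closed under the operations: elements of $\mathcal{A}_{\mathbf{R}}$ of finite $\infty$-norm form a $*$-subalgebra $\mathcal{A}_{\mathbf{R}}^{b}$ (in fact all of $\mathcal{A}_{\mathbf{R}}$, since simple tensors and hence finite sums have finite operator-norm evaluations — $\norm{\ev_{\mathbf{x}}(\phi\otimes p)}\le\norm{\phi}_{\infty}\norm{p(\mathbf{x})}$ and $p(\mathbf{x})$ is bounded by a polynomial in the $R_i$), and $\norm{f}_{\mathbf{R},2}\le\norm{f}_{\mathbf{R},\infty}$ always because each $\norm{\cdot}_{L^2(\cM)}\le\norm{\cdot}_{L^\infty(\cM)}$ in a tracial algebra. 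So $\mathcal{F}_{\mathbf{R},\infty}$ is precisely the set of $\norm{\cdot}_{\mathbf{R},2}$-limits of $\mathcal{A}_{\mathbf{R}}$-sequences that happen to be $\norm{\cdot}_{\mathbf{R},\infty}$-bounded along the way — but I should be careful here, since a priori the $\infty$-norm is only lower semicontinuous with respect to the $2$-norm, not continuous.

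The main obstacle is \textbf{completeness of $\mathcal{F}_{\mathbf{R},\infty}$ in the norm $\norm{\cdot}_{\mathbf{R},\infty}$}. Given a $\norm{\cdot}_{\mathbf{R},\infty}$-Cauchy sequence $(f_n)$ in $\mathcal{F}_{\mathbf{R},\infty}$, for each fixed $(\cM,\tau,\mathbf{x})$ the images $\ev_{\mathbf{x}}(f_n)$ form a Cauchy sequence in the $\mathrm{C}^*$-algebra $\cM$, hence converge in operator norm to some $y_{\mathbf{x}}\in\mathrm{W}^*(\mathbf{x})$; I would take the candidate limit $f$ to be the $\norm{\cdot}_{\mathbf{R},2}$-limit of $(f_n)$, which exists since $(f_n)$ is also $\norm{\cdot}_{\mathbf{R},2}$-Cauchy and $\mathcal{F}_{\mathbf{R},2}$ is complete by construction. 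Then one must show $\ev_{\mathbf{x}}(f) = y_{\mathbf{x}}$ for every $\mathbf{x}$ (this follows because $\ev_{\mathbf{x}}\colon\mathcal{F}_{\mathbf{R},2}\to L^2(\cM)$ is $\norm{\cdot}_{\mathbf{R},2}$-to-$\norm{\cdot}_{2}$ contractive, so $\ev_{\mathbf{x}}(f_n)\to\ev_{\mathbf{x}}(f)$ in $L^2(\cM)$, while $\ev_{\mathbf{x}}(f_n)\to y_{\mathbf{x}}$ in operator norm hence in $L^2$, and $L^2$-limits are unique), and then $\norm{\ev_{\mathbf{x}}(f)} = \norm{y_{\mathbf{x}}} = \lim_n\norm{\ev_{\mathbf{x}}(f_n)}\le\sup_n\norm{f_n}_{\mathbf{R},\infty}<\infty$ uniformly in $\mathbf{x}$, so $f\in\mathcal{F}_{\mathbf{R},\infty}$; finally $\norm{f_n - f}_{\mathbf{R},\infty} = \sup_{\mathbf{x}}\norm{\ev_{\mathbf{x}}(f_n) - y_{\mathbf{x}}}\le\limsup_{m}\norm{f_n - f_m}_{\mathbf{R},\infty}\to 0$ as $n\to\infty$ by the Cauchy property, using that $\norm{\ev_{\mathbf{x}}(f_n) - y_{\mathbf{x}}} = \lim_m\norm{\ev_{\mathbf{x}}(f_n) - \ev_{\mathbf{x}}(f_m)}\le\limsup_m\norm{f_n - f_m}_{\mathbf{R},\infty}$ for each $\mathbf{x}$. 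This establishes completeness, and combined with the first paragraph it shows $\mathcal{F}_{\mathbf{R},\infty}$ is a $\mathrm{C}^*$-algebra.
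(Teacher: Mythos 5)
Most of your plan is sound and runs parallel to the paper's argument: the evaluation maps $\ev_{\mathbf{x}}$ are indeed $*$-homomorphisms on $\mathcal{A}_{\mathbf{R}}$, the norm identities (submultiplicativity, $*$-invariance, the $\mathrm{C}^*$-identity) do follow by taking suprema over evaluations exactly as you say, and your completeness argument is correct and in fact more detailed than the paper's, which dismisses it as a standard exercise.

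The gap is in how the product of two \emph{arbitrary} elements of $\mathcal{F}_{\mathbf{R},\infty}$ gets defined in the first place. You say the operations ``descend from $\mathcal{A}_{\mathbf{R}}$'' using that multiplication is jointly $\norm{\cdot}_{\mathbf{R},\infty}$-bounded, and elsewhere you assert that $\mathcal{F}_{\mathbf{R},\infty}$ is precisely the set of $\norm{\cdot}_{\mathbf{R},2}$-limits of $\norm{\cdot}_{\mathbf{R},\infty}$-bounded sequences from $\mathcal{A}_{\mathbf{R}}$. But $\mathcal{F}_{\mathbf{R},\infty}$ is \emph{defined} as the set of elements of the $\norm{\cdot}_{\mathbf{R},2}$-completion with finite $\infty$-norm; the claim that every such element admits $\norm{\cdot}_{\mathbf{R},\infty}$-bounded approximants from $\mathcal{A}_{\mathbf{R}}$ is a Kaplansky-density-type statement that the paper only proves \emph{after} this lemma (in the remark on tracial completions), using continuous functional calculus in the $\mathrm{C}^*$-algebra $\mathcal{F}_{\mathbf{R},\infty}$ — i.e., using the very lemma you are proving. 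You flag the lower-semicontinuity issue yourself but then do not resolve it, and without bounded approximants the naive limit $f_ng_n$ of products of approximating sequences is not visibly Cauchy in $\norm{\cdot}_{\mathbf{R},2}$ (the cross terms need an $\infty$-bound on one factor). The paper circumvents this with the two mixed inequalities $\norm{fg}_{\mathbf{R},2}\leq\norm{f}_{\mathbf{R},\infty}\norm{g}_{\mathbf{R},2}$ and $\norm{fg}_{\mathbf{R},2}\leq\norm{f}_{\mathbf{R},2}\norm{g}_{\mathbf{R},\infty}$: first extend $g\mapsto fg$ from $\mathcal{A}_{\mathbf{R}}\times\mathcal{A}_{\mathbf{R}}$ to $\mathcal{A}_{\mathbf{R}}\times\mathcal{F}_{\mathbf{R},2}$ by $\norm{\cdot}_{\mathbf{R},2}$-continuity in $g$, then extend $f\mapsto fg$ to $\mathcal{F}_{\mathbf{R},2}\times\mathcal{F}_{\mathbf{R},\infty}$ by $\norm{\cdot}_{\mathbf{R},2}$-continuity in $f$, and only then verify $\norm{fg}_{\mathbf{R},\infty}\leq\norm{f}_{\mathbf{R},\infty}\norm{g}_{\mathbf{R},\infty}$. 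You need some version of this two-step extension (or an independent argument that $\ev_{\mathbf{x}}(f)\ev_{\mathbf{x}}(g)$ for all $\mathbf{x}$ is realized by an element of $\mathcal{F}_{\mathbf{R},2}$) before the supremum computations in your first paragraph have a well-defined object to apply to.
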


\begin{proof}
We can define a multiplication map $\mathcal{A}_{\mathbf{R}} \times \mathcal{A}_{\mathbf{R}} \to \mathcal{A}_{\mathbf{R}}$ by
\[
(\phi \otimes p, \psi \otimes q) \mapsto \phi \psi \otimes pq,
\]
and a $*$-operation $(\phi \otimes p)^* = \overline{\phi} \otimes p^*$.  Then the evaluation maps $\ev_{\mathbf{x}}$ are $*$-homomorphisms.

Note that for $f \in \mathcal{A}_{\mathbf{R}}$, we have $\norm{f}_{\mathbf{R},\infty} < +\infty$.  Moreover, $f, g \in \mathcal{A}_{\mathbf{R}}$,
\[
\norm{fg}_{\mathbf{R},2} \leq \norm{f}_{\mathbf{R},\infty} \norm{g}_{\mathbf{R},2}
\]
because $\norm{f(\mathbf{x}) g(\mathbf{x})}_2 \leq \norm{f(\mathbf{x})}_\infty \norm{g(\mathbf{x})}_2$ for every $\mathbf{x}$ coming from a tracial von Neumann algebra.  Therefore, the multiplication passes to a well-defined map
\[
\mathcal{A}_{\mathbf{R}} \times \mathcal{F}_{\mathbf{R},2} \to \mathcal{F}_{\mathbf{R},2}.
\]
On the other hand, given $f \in \mathcal{A}_{\mathbf{R}}$ and $g \in \mathcal{F}_{\mathbf{R},\infty}$, we can check that
\[
\norm{fg}_{\mathbf{R},2} \leq \norm{f}_{\mathbf{R},2} \norm{g}_{\mathbf{R},\infty},
\]
so that multiplication is well-defined $\mathcal{F}_{\mathbf{R},2} \times \mathcal{F}_{\mathbf{R},\infty} \to \mathcal{F}_{\mathbf{R},2}$.  Then by checking that $\norm{fg}_{\mathbf{R},\infty} \leq \norm{f}_{\mathbf{R},\infty} \norm{g}_{\mathbf{R},\infty}$, we see that $\mathcal{F}_{\mathbf{R},\infty}$ has a well-defined multiplication operation.

This multiplication operation is characterized by the fact that for self-adjoint tuples $\mathbf{x}$ from $(\cM,\tau)$ with $\norm{x_i} \leq R_i$, we have $(fg)(\mathbf{x}) = f(\mathbf{x}) g(\mathbf{x})$, since $fg$ is uniquely determined by its evaluation on self-adjoint tuples.  This easily implies associativity of multiplication, compatibility with the $*$-operation, and the $\mathrm{C}^*$-identity for the norm.  Completeness of $\mathcal{F}_{\mathbf{R},\infty}$ is also a standard exercise (every Cauchy sequence in $\norm{\cdot}_{\mathbf{R},\infty}$ would also be Cauchy in $\norm{\cdot}_{\mathbf{R},2}$, and so forth).
\end{proof}

\begin{prop} \label{prop:realizationofoperators}
Given $(\cM, \tau)$ and $\mathbf{x} \in \cM_{sa}^I$ with $\norm{x_i} \leq R_i$, the evaluation map $\ev_{\mathbf{x}}:\mathcal{F}_{\mathbf{R},2}
\to L^2(\mathrm{W}^*(\mathbf{x}))$ is surjective, and it restricts to a surjective $*$-homomorphism $\mathcal{F}_{\mathbf{R},\infty} \to \mathrm{W}^*(\mathbf{x})$.
\end{prop}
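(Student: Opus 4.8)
The plan is to prove surjectivity at the $L^2$-level first and then deduce the operator-norm statement. The key point is that the simple tensors $1 \otimes p$ with $p \in \C\ip{t_i : i \in I}$ already map onto a $\norm{\cdot}_2$-dense subspace of $L^2(\mathrm{W}^*(\mathbf{x}))$ for \emph{any particular} $(\cM,\tau)$ and $\mathbf{x}$, since $\mathrm{W}^*(\mathbf{x})$ is by definition the $\mathrm{SOT}$-closure (equivalently $\norm{\cdot}_2$-closure inside $L^2$) of the $*$-algebra generated by the $x_i$, and the latter is exactly $\{p(\mathbf{x}) : p \in \C\ip{t_i : i \in I}\}$. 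So the image of $\ev_\mathbf{x}$ restricted to $\C\ip{t_i : i \in I} \subseteq \mathcal{A}_{\mathbf{R}}$ is $\norm{\cdot}_2$-dense in $L^2(\mathrm{W}^*(\mathbf{x}))$; hence $\ev_\mathbf{x}(\mathcal{F}_{\mathbf{R},2})$ contains this dense subspace. But I must upgrade density to surjectivity, and here is where the completeness of $\mathcal{F}_{\mathbf{R},2}$ together with the definition of $\norm{\cdot}_{\mathbf{R},2}$ enters: since $\norm{\ev_\mathbf{x}(f)}_{L^2(\cM)} \leq \norm{f}_{\mathbf{R},2}$ by definition of the supremum defining $\norm{\cdot}_{\mathbf{R},2}$, the map $\ev_\mathbf{x} : \mathcal{F}_{\mathbf{R},2} \to L^2(\mathrm{W}^*(\mathbf{x}))$ is a contraction between Banach spaces with dense image. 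That alone does not give surjectivity — a contraction with dense image need not be surjective — so I need to manufacture preimages more carefully.

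To get genuine surjectivity I would use a standard ``summable series'' trick. Given $\xi \in L^2(\mathrm{W}^*(\mathbf{x}))$, build a sequence $p_1, p_2, \dots$ of non-commutative polynomials with $\norm{\xi - p_1(\mathbf{x})}_2 < 1/2$ and $\norm{\big(\xi - \sum_{j \leq m} (p_j - p_{j-1})(\mathbf{x})\big)}_2 < 2^{-m}$ — in other words telescoping partial sums of polynomials converging to $\xi$ rapidly in $L^2(\mathrm{W}^*(\mathbf{x}))$. The obstacle is that I want the \emph{difference} polynomials $q_j := p_j - p_{j-1}$ to have small $\norm{\cdot}_{\mathbf{R},2}$-norm, not merely small $\norm{q_j(\mathbf{x})}_2$; these are genuinely different because $\norm{\cdot}_{\mathbf{R},2}$ is a supremum over \emph{all} von Neumann algebras. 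This is the main technical hurdle, and resolving it cleanly is the crux of the proof. The right fix is to multiply by a suitable $C(\Sigma_{\mathbf{R}})$-bump function: choose $\phi_j \in C(\Sigma_{\mathbf{R}})$ with $\phi_j(\lambda_\mathbf{x}) = 1$ but $\phi_j$ supported in a tiny neighborhood $\mathcal{U}_j$ of $\lambda_\mathbf{x}$, chosen small enough that $\sup_{\lambda \in \mathcal{U}_j} \norm{q_j(\mathbf{y})}_{L^2(\cN)} < 2^{-j}$ whenever $\lambda_\mathbf{y} = \lambda \in \mathcal{U}_j$ (legitimate since $\lambda \mapsto \lambda(q_j^* q_j)^{1/2}$ is continuous on $\Sigma_{\mathbf{R}}$ and vanishes nowhere-near $\lambda_\mathbf{x}$ only because... well, it equals $\norm{q_j(\mathbf{x})}_2$ at $\lambda_\mathbf{x}$, so actually I should instead ensure this quantity is $< 2 \norm{q_j(\mathbf{x})}_2$ on $\mathcal{U}_j$, which is enough). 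Then $f := \sum_j \phi_j \otimes q_j$ converges in $\mathcal{F}_{\mathbf{R},2}$ (the norms are summable) and $\ev_\mathbf{x}(f) = \sum_j \phi_j(\lambda_\mathbf{x}) q_j(\mathbf{x}) = \sum_j q_j(\mathbf{x}) = \xi$, using continuity of $\ev_\mathbf{x}$.

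For the operator-norm statement: given $a \in \mathrm{W}^*(\mathbf{x})$, apply the $L^2$-surjectivity to get $f_0 \in \mathcal{F}_{\mathbf{R},2}$ with $\ev_\mathbf{x}(f_0) = a$; the issue is that $f_0$ need not lie in $\mathcal{F}_{\mathbf{R},\infty}$. I would remedy this by a truncation using continuous functional calculus inside the $\mathrm{C}^*$-algebra $\mathcal{F}_{\mathbf{R},\infty}$ — or more simply, by noting that the $*$-homomorphism $\ev_\mathbf{x}: \mathcal{F}_{\mathbf{R},\infty} \to \mathrm{W}^*(\mathbf{x})$ has $\norm{\cdot}$-closed image that is a $\mathrm{C}^*$-subalgebra containing all $p(\mathbf{x})$, hence containing the $\mathrm{C}^*$-algebra $\mathrm{C}^*(\mathbf{x})$; but $\mathrm{W}^*(\mathbf{x})$ may be strictly larger than $\mathrm{C}^*(\mathbf{x})$, so this is not yet enough. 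Instead: given self-adjoint $a \in \mathrm{W}^*(\mathbf{x})_{sa}$ with $\norm{a} \leq M$, take $f_0 \in \mathcal{F}_{\mathbf{R},2}$ with $\ev_\mathbf{x}(f_0) = a$, replace $f_0$ by $(f_0 + f_0^*)/2$ to assume $f_0 = f_0^*$, and then apply a bounded continuous truncation $\chi_M : \R \to [-M,M]$ with $\chi_M(t) = t$ for $|t| \leq M$; one checks that $\chi_M$ acts on the self-adjoint part of $\mathcal{F}_{\mathbf{R},2}$ in a way compatible with evaluation (because for each $\mathbf{x}$, $\ev_\mathbf{x}(\chi_M(f_0)) = \chi_M(a) = a$ via the ordinary Borel functional calculus — here I'd want a small lemma that $\chi_M$ applied in $\mathcal{F}_{\mathbf{R},2}$ lands in $\mathcal{F}_{\mathbf{R},\infty}$ with norm $\leq M$, which follows since $\norm{\ev_\mathbf{x}(\chi_M(f_0))} \leq M$ for all $\mathbf{x}$). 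For general (non-self-adjoint) $a$, decompose into real and imaginary parts. Finally, that $\ev_\mathbf{x}|_{\mathcal{F}_{\mathbf{R},\infty}}$ is a $*$-homomorphism is already established in the preceding lemma, so only surjectivity onto $\mathrm{W}^*(\mathbf{x})$ remains, which the truncation argument supplies. The main obstacle throughout is bridging the gap between the $\mathbf{x}$-specific $2$-norm and the universal $\norm{\cdot}_{\mathbf{R},2}$; the bump-function device in $C(\Sigma_{\mathbf{R}})$ is exactly what the tensor factor $C(\Sigma_{\mathbf{R}})$ was introduced to provide.
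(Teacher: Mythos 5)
Your argument is essentially the paper's: part (1) — the telescoping polynomial series with Urysohn bump functions $\phi_j \in C(\Sigma_{\mathbf{R}})$ supported near $\lambda_{\mathbf{x}}$ to tame the universal norm $\norm{\cdot}_{\mathbf{R},2}$ — is exactly the proof given there, and your identification of the gap between $\norm{q_j(\mathbf{x})}_2$ and $\norm{q_j}_{\mathbf{R},2}$ as the crux is correct.

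The one place your sketch needs shoring up is the truncation step. You propose to apply $\chi_M$ directly to $f_0 \in (\mathcal{F}_{\mathbf{R},2})_{sa}$, but $\mathcal{F}_{\mathbf{R},2}$ is only a Banach-space completion in a $2$-norm, not a $\mathrm{C}^*$-algebra, so $\chi_M(f_0)$ has no a priori meaning; the ``small lemma'' you defer is actually the main content of the second half of the argument. The paper resolves this by applying the cut-off $h \in C_0(\R)$ to the \emph{partial sums} $f_n$ of the series from part (1), which do lie in the $\mathrm{C}^*$-algebra $\mathcal{F}_{\mathbf{R},\infty}$, and then proving that every $h \in C_0(\R)$ is uniformly $\norm{\cdot}_2$-continuous on self-adjoints uniformly over all tracial $\mathrm{W}^*$-algebras (first for resolvents $(t+i)^{-1}$ and their $*$-polynomials, then by Stone--Weierstrass). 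This uniform continuity converts the $\norm{\cdot}_{\mathbf{R},2}$-Cauchyness of $(f_n)$ into $\norm{\cdot}_{\mathbf{R},2}$-Cauchyness of $(h(f_n))$, and since $\norm{h(f_n)}_{\mathbf{R},\infty} \leq \norm{z}$ uniformly, the limit lands in $\mathcal{F}_{\mathbf{R},\infty}$ and evaluates to $h(z) = z$. So your plan is salvageable, but only by routing the truncation through $\mathcal{F}_{\mathbf{R},\infty}$-approximants rather than through $f_0$ itself, and the uniform $L^2$-continuity of the $C_0(\R)$ functional calculus is the missing ingredient you would need to supply.
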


\begin{proof}
Let $\mathbf{x}$ be a self-adjoint tuple from $(\cM, \tau)$.  Let $z \in L^2(\mathrm{W}^*(\mathbf{x}))$.  Then there is a sequence of non-commutative polynomials $\{p_k\}_{k \in \N}$ such that $p_k(\mathbf{x}) \to z$ in $L^2(\mathrm{W}^*(\mathbf{x}))$.  By passing to a subsequence, we can assume without loss of generality that $\norm{p_{k+1}(\mathbf{x}) - p_k(\mathbf{x})}_2 < 1/2^k$ for $k \geq 1$.  Now define $\mathcal{U}_k \subseteq \Sigma_{\mathbf{R}}$ by
\[
\mathcal{U}_k = \{\lambda: \lambda((p_{k+1} - p_k)^2) < 1/2^k\}.
\]
Then $\mathcal{U}_k$ is an open subset of $\Sigma_{\mathbf{R}}$ containing $\lambda_{\mathbf{x}}$.  By Urysohn's lemma, there exists $\phi_k \in C(\Sigma_{\mathbf{R}})$ such that
\[
0 \leq \phi_{k} \leq 1,  \qquad \phi_{k}(\lambda_{\mathbf{x}}) = 1, \qquad \phi_{k}|_{\mathcal{U}_k^c} = 0.
\]
This implies that
\[
\norm{\phi_k \otimes (p_{k+1} - p_k)}_{\mathbf{R},2} \leq \frac{1}{2^k}.
\]
Therefore,
\[
1 \otimes p_1 + \sum_{k=1}^\infty \phi_k \otimes (p_{k+1} - p_k)
\]
converges absolutely in $\mathcal{F}_{\mathbf{R},2}$ to some function $f$.  By construction,
\[
f(\mathbf{x}) = p_1(\mathbf{x}) + \sum_{k=1}^\infty (p_{k+1}(\mathbf{x}) - p_k(\mathbf{x})) = z.
\]

Now we turn to the case of $\ev_{\mathbf{x}}: \mathcal{F}_{\mathbf{R},\infty} \to \mathrm{W}^*(\mathbf{x})$, which we already showed is a $*$-homomorphism.  Fix $z \in \mathrm{W}^*(\mathbf{x})$.  We can assume without loss of generality that $z$ is self-adjoint.  Choose a sequence of non-commutative polynomials $\{p_k\}$ and continuous functions $\phi_k$ as above.  Assume without loss of generality that $p_k = p_k^*$ and $\phi_k$ is real.  Then let
\[
f_n = 1 \otimes p_1 + \sum_{k=1}^n \phi_k \otimes (p_{k+1} - p_k),
\]
which is a self-adjoint element of $\mathcal{F}_{\mathbf{R},\infty}$.

Choose $h \in C_0(\R)$ satisfying $h(t) = t$ for $|t| \leq \norm{z}$ and $\norm{h}_{C_0(\R)} \leq \norm{z}$.  Since $\mathcal{F}_{\mathbf{R},\infty}$ is a $\mathrm{C}^*$-algebra, $h(f_n)$ is well defined and $(h(f_n))(\mathbf{x}) = h(f_n(\mathbf{x}))$.  We claim that $h(f_n)$ converges in $\norm{\cdot}_{\mathbf{R},2}$ to some $g \in \mathcal{F}_{\mathbf{R},\infty}$ and that $g(\mathbf{x}) = z$.

We will use the observation that every $h \in C_0(\R)$ is uniformly continuous with respect to $\norm{\cdot}_2$ in the following sense:  For every $\eps > 0$ there exists $\delta > 0$ such that if $(\cM, \tau)$ is a tracial von Neumann algebra and $a, b$ are self-adjoint operators in $\cM$, then $\norm{a - b}_2 < \delta$ implies $\norm{h(a) - h(b)}_2 < \eps$.  Clearly, this holds if $h(t)$ is the resolvent $(t + i)^{-1}$, or if $h$ is a $*$-polynomial in $(t + i)^{-1}$.  But $*$-polynomials in $(t + i)^{-1}$ are dense in $C_0(\R)$ by the Stone-Weierstrass theorem, so the claim holds for all $h \in C_0(\R)$ and hence for our particularly chosen $h$.

To show that $\{h(f_n)\}$ is Cauchy in $\norm{\cdot}_{\mathbf{R},2}$, fix $\eps > 0$.  By the $L^2$-uniform continuity of $h$, we may choose $\delta > 0$ such that $\norm{a - b}_2 < \delta$ implies $\norm{h(a) - h(b)}_2 < \eps$ where $a, b$ as above.  Since $\{f_n\}$ is Cauchy in $\norm{\cdot}_{\mathbf{R},2}$, we have $\norm{f_n - f_m}_{\mathbf{R},2} < \delta$.  So for every self-adjoint tuple $\mathbf{y} \in (\cN,\tau)_{sa}^I$ with $\norm{y_i} \leq R_i$, we have $\norm{f_n(\mathbf{y}) - f_m(\mathbf{y})}_2 < \delta$, so $\norm{h(f_n(\mathbf{y})) - h(f_m(\mathbf{y}))}_2 < \eps$, thus making $\norm{h(f_n) - h(f_m)}_{\mathbf{R},2} \leq \eps$.

So $\{h(f_n)\}$ converges in $\norm{\cdot}_{\mathbf{R},2}$ to some $g$.  Since $\norm{h(f_n)}_{\mathbf{R},\infty} \leq \norm{z}$ for all $n$, we have $\norm{g}_{\mathbf{R},\infty} \leq \norm{z}$.  Finally, since $f_n(\mathbf{x}) \to z$ in $\norm{\cdot}_2$, the $L^2$-uniform continuity of $h$ implies that $h(f_n(\mathbf{x})) \to h(z) = z$ in $\norm{\cdot}_2$, and therefore $g(\mathbf{x}) = z$ as desired.
\end{proof}

\begin{remark} \label{rem:tracialcompletion}
From the $\mathrm{C}^*$-algebraic viewpoint, the space $\mathcal{F}_{\mathbf{R},\infty}$ can be described as the separation-completion of a certain $\mathrm{C}^*$-algebra with respect to a family of traces.  Ozawa \cite[p.\ 351-352]{Ozawa2013} defined the completion of a $\mathrm{C}^*$-algebra with respect to the uniform $2$-norm over \emph{all} traces, but the definition still makes sense if we consider a \emph{subset} $\mathcal{S}$ of the trace space.  Specifically, let $\mathcal{C}$ be a $\mathrm{C}^*$-algebra and $\mathcal{S}$ a nonempty set of traces on $\mathcal{C}$. Then we define
\[
\norm{c}_{\mathcal{S},2} = \sup \{\tau(c^*c)^{1/2}: \tau \in \mathcal{S}\}.
\]
Then $\overline{\mathcal{C}}^{\mathcal{S}}$ is defined to be the set of sequences $(c_n)_{n \in \N}$ from $\mathcal{C}$ which are bounded in operator norm and Cauchy in $\norm{\cdot}_{\mathcal{S},2}$, modulo those sequences which go to zero in $\norm{\cdot}_{\mathcal{S},2}$.  This is a $\mathrm{C}^*$-algebra and there is a canonical map $\mathcal{C} \to \overline{\mathcal{C}}^{S}$.  This map could fail to be injective if the representations of $\mathcal{C}$ associated to traces in $\mathcal{S}$ are not sufficient to recover the operator norm on $\mathcal{C}$.  Thus, $\overline{\mathcal{C}}^{\mathcal{S}}$ is in general a separation-completion rather than a completion.  The idea of completing a $\mathrm{C}^*$-algebra with respect to a family of traces is related to current progress on the classification of $\mathrm{C}^*$-algebras and their $*$-homomorphisms; see for instance \cite{BBSTWW2019}.

We can describe $\mathcal{F}_{\mathbf{R},\infty}$ as a tracial separation-completion as follows.  Let $\mathcal{B}_{\mathbf{R}}$ be the universal $\mathrm{C}^*$-algebra generated by self-adjoint operators $(t_i)_{i \in I}$ with $\norm{t_i} \leq R_i$.  It is well-known that $\Sigma_{\mathbf{R}}$ is isomorphic to the space of (normalized) traces on $\mathcal{B}_{\mathbf{R}}$.  If $\lambda \in \Sigma_{\mathbf{R}}$, then $\tau_\lambda = \delta_\lambda \otimes \lambda$ is a trace on the $\mathrm{C}^*$-tensor product $C(\Sigma_{\mathbf{R}}) \otimes \mathcal{B}_{\mathbf{R}}$ (there is a unique $\mathrm{C}^*$-tensor product since $C(\Sigma_{\mathbf{R}})$ is commutative, whence nuclear).  If $\mathbf{x}$ is an $I$-tuple in $(\cM,\tau)$ with the law $\lambda$ and if $\phi \in C(\Sigma_{\mathbf{R}})$ and $p \in \C\ip{t_i: i \in I}$, then
\[
\tau_\lambda(\phi \otimes p) = \phi(\lambda) \tau(p(\mathbf{x})) = \tau[\ev_{\mathbf{x}}(\phi \otimes p)].
\]
Of course, this identity extends to the algebraic tensor product $C(\Sigma_{\mathbf{R}}) \otimes \C\ip{t_i: i \in I}$.  Hence, for $f$ in the algebraic tensor product,
\[
\norm{f}_{\mathbf{R},2} = \sup \{\tau_\lambda(f^*f)^{1/2}: \lambda \in \Sigma_{\mathrm{R}}\},
\]
which is the uniform $2$-norm associated to the family of traces $\{\tau_\lambda: \lambda \in \Sigma_{\mathbf{R}}\}$ on $C(\Sigma_{\mathbf{R}}) \otimes \mathcal{B}_{\mathbf{R}}$.  One can check that the $*$-homomorphism $C(\Sigma_{\mathbf{R}}) \otimes \C\ip{t_i: i \in I} \to \mathcal{F}_{\mathbf{R},\infty}$ extends to a $*$-homomorphism $\rho: C(\Sigma_{\mathbf{R}}) \otimes \mathcal{B}_{\mathbf{R}} \to \mathcal{F}_{\mathbf{R},\infty}$.  We claim that $\mathcal{F}_{\mathbf{R},\infty}$ is isomorphic to the separation-completion of $C(\Sigma_{\mathbf{R}}) \otimes \mathcal{B}_{\mathbf{R}}$ with respect to the family of traces $\{\tau_\lambda: \lambda \in \Sigma_{\mathbf{R}} \}$, such that $\rho$ corresponds to the canonical map from this $\mathrm{C}^*$-algebra into its separation-completion. The main thing to check is that every element in $\mathcal{F}_{\mathbf{R},\infty}$ can be approximated in $\norm{\cdot}_{\mathbf{R},2}$ by a sequence of elements in the image of $\rho$ that are bounded in $\norm{\cdot}_{\mathbf{R},\infty}$.  It is clear from the definition of $\mathcal{F}_{\mathbf{R},\infty}$ that there is some sequence of self-adjoints in the image of $\rho$ that approximates a given self-adjoint element of $\mathcal{F}_{\mathbf{R},\infty}$ in $\norm{\cdot}_{\mathbf{R},2}$.  To arrange boundedness of the sequence in $\norm{\cdot}_{\mathbf{R},\infty}$, we simply apply a cut-off function $h \in C_0(\R)$ as in the proof of Proposition \ref{prop:realizationofoperators} or of the Kaplansky density theorem.
\end{remark}

\subsection{Push-Forwards of Non-commutative Laws}

Now we turn our attention to the way that tuples from $\mathcal{F}_{\mathbf{R},\infty}$ push forward non-commutative laws.

\begin{defn}
Let $I$ and $I'$ be index sets.  Let $\mathbf{R} \in (0,+\infty)^I$ and $\mathbf{R}' \in (0,+\infty)^{I'}$.  We define
\[
\mathcal{F}_{\mathbf{R},\mathbf{R}'} = \{ \mathbf{f} = (f_i)_{i \in I'} \in (\mathcal{F}_{\mathbf{R},\infty})_{sa}^{I'}: \norm{f_i}_{\mathbf{R},\infty} \leq R_i' \text{ for all } i\in I'\}.
\]
\end{defn}

\begin{prop} \label{prop:pushforwardcontinuity}
Let $\mathbf{R} \in (0,+\infty)^I$ and $\mathbf{R}' \in (0,+\infty)^{I'}$.  Let $\mathbf{f} = (f_i)_{i \in I'} \in \mathcal{F}_{\mathbf{R},\mathbf{R}'}$.
\begin{enumerate}
	\item Given $(\cM,\tau)$ and $\mathbf{x} \in \cM_{sa}^I$ with $\norm{x_i} \leq R_i$, we set $\mathbf{f}(\mathbf{x}) = (f_i(\mathbf{x}))_{i \in I'}$.  Then $\lambda_{\mathbf{f}(\mathbf{x})}$ is uniquely determined by $\lambda_{\mathbf{x}}$.
	\item Let $\mathbf{f}_*$ be the ``push-forward'' mapping $\Sigma_{\mathbf{R}} \to \Sigma_{\mathbf{R}'}$ defined by $\mathbf{f}_* \lambda_{\mathbf{x}} = \lambda_{\mathbf{f}(\mathbf{x})}$ for all such tuples $\mathbf{x}$.  Then $\mathbf{f}_*$ is continuous.
\end{enumerate}
\end{prop}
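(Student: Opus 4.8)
The plan is to reduce both assertions to a single observation about a ``trace-evaluation'' functional on $\mathcal{F}_{\mathbf{R},2}$. For $g \in \mathcal{F}_{\mathbf{R},2}$ and $\lambda \in \Sigma_{\mathbf{R}}$, choose (via the GNS-type construction recalled in Section~\ref{sec:background}) any tracial $\mathrm{W}^*$-algebra $(\cM,\tau)$ and tuple $\mathbf{x} \in \cM_{sa}^I$ with $\norm{x_i} \leq R_i$ and $\lambda_{\mathbf{x}} = \lambda$, and consider $\Phi_g(\lambda) := \tau(\ev_{\mathbf{x}}(g))$, where $\tau$ is extended to $L^2(\cM)$ via $\tau(\xi) = \ip{\xi,1}$ so that $|\tau(\xi)| \leq \norm{\xi}_2$. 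The key claim is that $\Phi_g(\lambda)$ is independent of the choice of realization $(\cM,\tau,\mathbf{x})$ and that $\Phi_g \colon \Sigma_{\mathbf{R}} \to \C$ is continuous. For a simple tensor $g = \phi \otimes p \in \mathcal{A}_{\mathbf{R}}$ this is immediate, since $\tau(\ev_{\mathbf{x}}(\phi \otimes p)) = \phi(\lambda_{\mathbf{x}})\,\lambda_{\mathbf{x}}(p)$, and $\lambda \mapsto \phi(\lambda)$ is continuous by $\phi \in C(\Sigma_{\mathbf{R}})$ while $\lambda \mapsto \lambda(p)$ is continuous by definition of the topology on $\Sigma_{\mathbf{R}}$; extend to all of $\mathcal{A}_{\mathbf{R}}$ by linearity. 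For general $g$, pick $h_n \in \mathcal{A}_{\mathbf{R}}$ with $\norm{g - h_n}_{\mathbf{R},2} \to 0$; then by Cauchy--Schwarz $|\tau(\ev_{\mathbf{x}}(g)) - \tau(\ev_{\mathbf{x}}(h_n))| \leq \norm{\ev_{\mathbf{x}}(g - h_n)}_2 \leq \norm{g - h_n}_{\mathbf{R},2}$, a bound independent of $(\cM,\tau,\mathbf{x})$ and hence of $\lambda$. Thus $\Phi_{h_n} \to \Phi_g$ uniformly on $\Sigma_{\mathbf{R}}$, which simultaneously forces $\Phi_g$ to be well-defined and exhibits it as a uniform limit of continuous functions, hence continuous.

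With this in hand, apply it to the elements $q(\mathbf{f}) \in \mathcal{F}_{\mathbf{R},\infty}$ for $q \in \C\ip{s_i : i \in I'}$, where $q(\mathbf{f})$ denotes the result of substituting the self-adjoint operators $f_i$ into $q$ using the $\mathrm{C}^*$-algebra structure of $\mathcal{F}_{\mathbf{R},\infty}$ (well-defined since $\mathcal{F}_{\mathbf{R},\infty}$ is a unital algebra, with $1 = 1 \otimes 1$). Since $\ev_{\mathbf{x}} \colon \mathcal{F}_{\mathbf{R},\infty} \to \mathrm{W}^*(\mathbf{x})$ is a unital $*$-homomorphism (Proposition~\ref{prop:realizationofoperators}), it intertwines polynomial substitution: $\ev_{\mathbf{x}}(q(\mathbf{f})) = q(\mathbf{f}(\mathbf{x}))$. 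Moreover $f_i(\mathbf{x})$ is self-adjoint with $\norm{f_i(\mathbf{x})} \leq \norm{f_i}_{\mathbf{R},\infty} \leq R_i'$, so $\mathbf{f}(\mathbf{x})$ is a legitimate $I'$-tuple and $\lambda_{\mathbf{f}(\mathbf{x})} \in \Sigma_{\mathbf{R}'}$. For (1): $\lambda_{\mathbf{f}(\mathbf{x})}(q) = \tau(q(\mathbf{f}(\mathbf{x}))) = \tau(\ev_{\mathbf{x}}(q(\mathbf{f}))) = \Phi_{q(\mathbf{f})}(\lambda_{\mathbf{x}})$ depends only on $\lambda_{\mathbf{x}}$ for every $q$, so $\lambda_{\mathbf{f}(\mathbf{x})}$ is determined by $\lambda_{\mathbf{x}}$ --- which is exactly what makes $\mathbf{f}_*\colon \Sigma_{\mathbf{R}} \to \Sigma_{\mathbf{R}'}$ well-defined. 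For (2): the topology on $\Sigma_{\mathbf{R}'}$ is pointwise convergence on $\C\ip{s_i : i \in I'}$, so continuity of $\mathbf{f}_*$ amounts to continuity of $\lambda \mapsto (\mathbf{f}_*\lambda)(q) = \Phi_{q(\mathbf{f})}(\lambda)$ for each $q$, which holds by the key claim.

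I expect no serious obstacle here; the one point requiring genuine care is that in the approximation step the error bound $\norm{g - h_n}_{\mathbf{R},2}$ must be uniform over \emph{all} realizations $(\cM,\tau,\mathbf{x})$ of \emph{all} laws at once, since this uniformity is precisely what upgrades a pointwise limit of continuous functions to a continuous function and, at the same time, forces independence of the chosen realization. The infinite index sets $I$ and $I'$ cause no difficulty, as every element of $\mathcal{A}_{\mathbf{R}}$ and every polynomial $q$ involves only finitely many variables.
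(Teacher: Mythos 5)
Your proposal is correct and follows essentially the same route as the paper: reduce to showing that $\lambda \mapsto \tau(g(\mathbf{x}))$ is well-defined and continuous for $g = q(\mathbf{f}) \in \mathcal{F}_{\mathbf{R},\infty}$, then use the uniform bound $|\tau(g(\mathbf{x})) - \tau(h(\mathbf{x}))| \leq \norm{g-h}_{\mathbf{R},2}$ to pass from simple tensors (where the claim is immediate) to general elements of $\mathcal{F}_{\mathbf{R},2}$. Your write-up merely makes the uniform-convergence step more explicit than the paper does.
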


\begin{proof}
It suffices to show that for every non-commutative polynomial $p \in \C\ip{t_i: i \in I'}$, the quantity $\tau(p(\mathbf{f}(\mathbf{x})))$ is uniquely determined by $\lambda_{\mathbf{x}}$, and that it depends continuously on $\lambda_{\mathbf{x}}$.  Now since $\mathcal{F}_{\mathbf{R},\infty}$ is a $\mathrm{C}^*$-algebra, $f := p(\mathbf{f})$ is an element of $\mathcal{F}_{\mathbf{R},\infty} \subseteq \mathcal{F}_{\mathbf{R},2}$.  Thus, it suffices to show that for $f \in \mathcal{F}_{\mathbf{R},2}$, the quantity $\tau(f(\mathbf{x}))$ is uniquely determined by $\lambda_{\mathbf{x}}$ and depends continuously on it.  Now $|\tau(f(\mathbf{x}))| \leq \norm{f(\mathbf{x})}_2 \leq \norm{f}_{\mathbf{R},2}$, so we can reduce to the case where $f$ comes from a dense subset, say $\mathcal{A}_{\mathbf{R}}$ (or rather its image under the quotient map).  Then by linearity, we reduce to the case where $f$ is given by a simple tensor $\phi \otimes p$.  But in this case,
\[
\tau(f(\mathbf{x})) = \phi(\lambda_{\mathbf{x}}) \tau(p(\mathbf{x})) = \phi(\lambda_{\mathbf{x}}) \lambda_{\mathbf{x}}(p),
\]
which only depends on $\lambda_{\mathbf{x}}$, and which is given by the continuous function $\lambda \mapsto \phi(\lambda) \lambda(p)$ on $\Sigma_{\mathbf{R}}$.
\end{proof}

The elements of $\mathcal{F}_{\mathbf{R},\mathbf{R}'}$ can be used to map between microstate spaces in the following way.  Let $\mathbf{f} \in \mathcal{F}_{\mathbf{R},\mathbf{R}'}$.  If $\mathbf{A} \in M_n(\C)_{sa}^I$ satisfies $\norm{A_i} \leq R_i$, then $\mathbf{f}(\mathbf{A})$ is a well-defined element of $M_n(\C)_{sa}^{I'}$ (since $M_n(\C)$ is a tracial $\mathrm{W}^*$-algebra).

\begin{cor} \label{cor:microstatemapping}
Let $\mathbf{R} \in (0,+\infty)^I$, $\mathbf{R}' \in (0,+\infty)^{I'}$, and $\mathbf{f} \in \mathcal{F}_{\mathbf{R},\mathbf{R}'}$.  Let $\mathbf{x}$ be a self-adjoint tuple from $(\cM, \tau)$ with $\norm{x_i} \leq R_i$.
\begin{enumerate}
	\item For every neighborhood $\mathcal{V}$ of $\lambda_{(\mathbf{x},\mathbf{f}(\mathbf{x}))}$ in $\Sigma_{(\mathbf{R},\mathbf{R}')}$, there exists a neighborhood $\mathcal{U}$ of $\lambda_{\mathbf{x}}$ such that
	\[
	\mathbf{f} \left( \Gamma_{\mathbf{R},n}(\mathbf{x}; \mathcal{U}) \right) \subseteq \Gamma_{(\mathbf{R},\mathbf{R}'),n}(\mathbf{f}(\mathbf{x}): \mathbf{x}; \mathcal{V}) \text{ for all } n.
	\]
	\item Similarly, fix $R_0$, a $z \in \cM_{sa}$ with diffuse spectrum and microstates $(C^{(n)})_{n\in \mathbb{N}}$ for $z$.  Then for every neighborhood $\mathcal{V}$ of $\lambda_{(\mathbf{x},\mathbf{f}(\mathbf{x}),z)}$, there exists a neighborhood $\mathcal{U}$ of $\lambda_{(\mathbf{x},z)}$ such that
	\[
	\mathbf{f} \left( \Gamma_{\mathbf{R},n}(\mathbf{x} | C^{(n)} \rightsquigarrow z; \mathcal{U}) \right) \subseteq \Gamma_{(\mathbf{R},\mathbf{R}'),n}(\mathbf{f}(\mathbf{x}): \mathbf{x} | C^{(n)} \rightsquigarrow z; \mathcal{V}) \text{ for all } n.
	\]
\end{enumerate}
\end{cor}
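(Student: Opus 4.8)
The plan is to derive both parts of Corollary \ref{cor:microstatemapping} from Proposition \ref{prop:pushforwardcontinuity}, applied not to $\mathbf{f}$ itself but to the ``graph tuple'' whose coordinates are the identity functions $t_i$ ($i \in I$) together with the $f_i$ ($i \in I'$). The only inputs beyond Proposition \ref{prop:pushforwardcontinuity} are the trivial observations that $\norm{t_i}_{\mathbf{R},\infty} \leq R_i$ (since $\ev_{\mathbf{x}}(t_i) = x_i$ with $\norm{x_i} \leq R_i$) and that $\norm{f_i(\mathbf{A})} \leq \norm{f_i}_{\mathbf{R},\infty} \leq R_i'$ for every matricial tuple $\mathbf{A}$ with $\norm{A_i} \leq R_i$, because $(M_n(\C),\tau_n)$ is among the tracial $\mathrm{W}^*$-algebras appearing in the supremum defining $\norm{\cdot}_{\mathbf{R},\infty}$.

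For part (1), the graph tuple $(\mathbf{t},\mathbf{f}) = ((t_i)_{i\in I},(f_i)_{i\in I'})$ lies in $\mathcal{F}_{\mathbf{R},(\mathbf{R},\mathbf{R}')}$ by the above, so Proposition \ref{prop:pushforwardcontinuity} furnishes a continuous map $\mathbf{g} := (\mathbf{t},\mathbf{f})_* : \Sigma_{\mathbf{R}} \to \Sigma_{(\mathbf{R},\mathbf{R}')}$ with $\mathbf{g}(\lambda_{\mathbf{y}}) = \lambda_{(\mathbf{y},\mathbf{f}(\mathbf{y}))}$ for every self-adjoint $I$-tuple $\mathbf{y}$, from any tracial $\mathrm{W}^*$-algebra, with $\norm{y_i} \leq R_i$; in particular this holds for $\mathbf{y} = \mathbf{x}$ and for matricial tuples. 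Given a neighborhood $\mathcal{V}$ of $\lambda_{(\mathbf{x},\mathbf{f}(\mathbf{x}))}$, set $\mathcal{U} := \mathbf{g}^{-1}(\mathcal{V})$, which is a neighborhood of $\lambda_{\mathbf{x}}$ since $\mathbf{g}(\lambda_{\mathbf{x}}) = \lambda_{(\mathbf{x},\mathbf{f}(\mathbf{x}))} \in \mathcal{V}$. If $\mathbf{A} \in \Gamma_{\mathbf{R},n}(\mathbf{x};\mathcal{U})$ then $\norm{f_i(\mathbf{A})} \leq R_i'$ and $\lambda_{(\mathbf{A},\mathbf{f}(\mathbf{A}))} = \mathbf{g}(\lambda_{\mathbf{A}}) \in \mathcal{V}$; taking $\mathbf{A}$ itself as the witnessing $I$-tuple in the definition of the microstate space on the right, we get $\mathbf{f}(\mathbf{A}) \in \Gamma_{(\mathbf{R},\mathbf{R}'),n}(\mathbf{f}(\mathbf{x}):\mathbf{x};\mathcal{V})$, as desired.

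For part (2), I would rerun the same argument with $I$ replaced by $I \sqcup \{0\}$, the tuple $\mathbf{x}$ replaced by $(\mathbf{x},z)$, and radii $(\mathbf{R},R_0)$. Each $f_i$ is also an element of $\mathcal{F}_{(\mathbf{R},R_0),\infty}$ (via the $*$-homomorphism induced by the restriction map $\Sigma_{(\mathbf{R},R_0)} \to \Sigma_{\mathbf{R}}$ together with the inclusion of polynomial algebras), and as such does not involve the $t_0$-coordinate, so $f_i(\mathbf{A},B) = f_i(\mathbf{A})$ for any self-adjoint $B \in M_n(\C)$ with $\norm{B}\le R_0$. Part (1), applied in this enlarged setting, produces a neighborhood $\mathcal{U}$ of $\lambda_{(\mathbf{x},z)}$ such that $\mathbf{f}$ maps $\Gamma_{(\mathbf{R},R_0),n}((\mathbf{x},z);\mathcal{U})$ into the microstate space of $\mathbf{f}(\mathbf{x})$ in the presence of $(\mathbf{x},z)$. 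Now intersect the source with the set of tuples whose $0$th coordinate is exactly $C^{(n)}$: using $\norm{C^{(n)}} \le R_0$ and the definitions of the relative microstate spaces, the source slice is precisely $\Gamma_{\mathbf{R},n}(\mathbf{x} | C^{(n)}\rightsquigarrow z;\mathcal{U})$, and since $f_i(\mathbf{A},C^{(n)}) = f_i(\mathbf{A})$ the image lands in $\Gamma_{(\mathbf{R},\mathbf{R}'),n}(\mathbf{f}(\mathbf{x}):\mathbf{x} | C^{(n)}\rightsquigarrow z;\mathcal{V})$ (here $\mathcal{V}$ is identified with a neighborhood of $\lambda_{(\mathbf{x},\mathbf{f}(\mathbf{x}),z)}$ after the harmless reordering of the $\mathbf{x}$-, $\mathbf{f}(\mathbf{x})$-, and $z$-blocks matching the convention in the statement). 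This yields the claimed inclusion.

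The argument is largely bookkeeping; the step I expect to require the most care is matching up the various microstate-space conventions in part (2) — reordering the coordinate blocks and checking that restricting the $z$-block to the fixed matrices $C^{(n)}$ turns the ``absolute'' enlarged microstate spaces into the ``relative to $C^{(n)}\rightsquigarrow z$'' ones — rather than anything genuinely analytic.
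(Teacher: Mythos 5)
Your proposal is correct and follows essentially the same route as the paper: part (1) is exactly the paper's argument of applying Proposition \ref{prop:pushforwardcontinuity} to the graph tuple $(\id,\mathbf{f})$ and taking $\mathcal{U} = (\id,\mathbf{f})^{-1}(\mathcal{V})$, and part (2) is the paper's ``similar argument'' for $(\mathbf{a},\mathbf{f}(\mathbf{a}),\mathbf{c})$ depending continuously on $(\mathbf{a},\mathbf{c})$, which you have simply written out with the slicing at $C^{(n)}$ made explicit. The bookkeeping about operator-norm bounds, regarding $f_i$ as an element of $\mathcal{F}_{(\mathbf{R},R_0),\infty}$, and reordering the coordinate blocks is all handled correctly.
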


\begin{proof}
(1) It follows from the previous result that the non-commutative law of $(\mathbf{a}, \mathbf{f}(\mathbf{a}))$ depends continuously on the non-commutative law of $\mathbf{a}$.  This means that $\mathcal{U} = (\id, \mathbf{f})^{-1}(\mathcal{V})$ is open in $\Sigma_{\mathbf{R}}$.

(2) The argument is similar, using the fact that the law of $(\mathbf{a}, \mathbf{f}(\mathbf{a}), \mathbf{c})$ depends continuously on the law of $(\mathbf{a}, \mathbf{c})$.
\end{proof}

\subsection{$L^2$-uniform Continuity}

The elements of $\mathcal{F}_{\mathbf{R},2}$ are all ``$L^2$-uniformly continuous functions'' in the following sense.

\begin{prop} \label{prop:L2uniformcontinuity}
Let $I$ be an index set and $\mathbf{R} \in (0,+\infty)^I$, and let $f \in \mathcal{F}_{\mathbf{R},2}$.  Then for every $\eps > 0$ there exists a finite $F \subseteq I$ and a $\delta > 0$ such that for every $(\cM,\tau)$ and $\mathbf{x}$, $\mathbf{y} \in \cM_{sa}^I$ with $\norm{x_i}, \norm{y_i} \leq R_i$, if $\norm{x_i - y_i}_2 < \delta$ for all $i \in F$, then $\norm{f(\mathbf{x}) - f(\mathbf{y})}_2 < \epsilon$.
\end{prop}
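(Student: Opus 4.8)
The plan is to reduce to the case where $f$ is a simple tensor $\phi \otimes p$ with $\phi \in C(\Sigma_{\mathbf{R}})$ and $p \in \C\ip{t_i : i \in I}$, and then to split the $L^2$-modulus of continuity of $f$ into a ``polynomial'' contribution and a ``scalar'' contribution. For the reduction, I would first note that the property in question is stable under $\norm{\cdot}_{\mathbf{R},2}$-approximation: if $g$ satisfies the conclusion with $\eps$ replaced by $\eps/3$, witnessed by a finite $F$ and $\delta > 0$, and $\norm{f - g}_{\mathbf{R},2} < \eps/3$, then for any $(\cM,\tau)$ and $\mathbf{x},\mathbf{y}$ as in the statement with $\norm{x_i - y_i}_2 < \delta$ on $F$ one gets $\norm{f(\mathbf{x}) - f(\mathbf{y})}_2 \le \norm{f - g}_{\mathbf{R},2} + \norm{g(\mathbf{x}) - g(\mathbf{y})}_2 + \norm{f - g}_{\mathbf{R},2} < \eps$, since $\norm{(f-g)(\mathbf{z})}_2 \le \norm{f-g}_{\mathbf{R},2}$ for every $\mathbf{z} \in \cM_{sa}^I$ with $\norm{z_i}\le R_i$. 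Because the image of $\mathcal{A}_{\mathbf{R}}$ is dense in $\mathcal{F}_{\mathbf{R},2}$ and every element of $\mathcal{A}_{\mathbf{R}}$ is a finite sum of simple tensors, and because the property is evidently stable under finite sums (union the finite sets, take the smallest $\delta$, split $\eps$ over the terms), it suffices to prove the statement for $f = \phi \otimes p$.

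For such $f$ one has $f(\mathbf{x}) = \phi(\lambda_{\mathbf{x}}) p(\mathbf{x})$, hence
\[
\norm{f(\mathbf{x}) - f(\mathbf{y})}_2 \le |\phi(\lambda_{\mathbf{x}})|\,\norm{p(\mathbf{x}) - p(\mathbf{y})}_2 + |\phi(\lambda_{\mathbf{x}}) - \phi(\lambda_{\mathbf{y}})|\,\norm{p(\mathbf{y})}_2 .
\]
I would bound the polynomial part by a monomial-by-monomial telescoping argument: writing $p$ as a linear combination of words in the $t_i$ and using $\norm{abc}_2 \le \norm{a}\,\norm{b}_2\,\norm{c}$ together with the operator-norm bounds $\norm{x_i}, \norm{y_i} \le R_i$, one obtains a constant $C_q$ and a finite set $F_q \subseteq I$ (the letters appearing in $q$) with $\norm{q(\mathbf{x}) - q(\mathbf{y})}_2 \le C_q \sum_{i \in F_q} \norm{x_i - y_i}_2$ for every $q \in \C\ip{t_i : i \in I}$; the same estimate gives $\norm{p(\mathbf{y})}_2 \le \norm{p(\mathbf{y})} \le B_p$ for a constant $B_p$ depending only on $p$ and $\mathbf{R}$, while $|\phi(\lambda_{\mathbf{x}})| \le \norm{\phi}_{C(\Sigma_{\mathbf{R}})}$.

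The one genuinely nontrivial input, and what I expect to be the main (though mild) obstacle, is controlling $|\phi(\lambda_{\mathbf{x}}) - \phi(\lambda_{\mathbf{y}})|$ using only \emph{finitely many} coordinates. Here I would use that $\Sigma_{\mathbf{R}}$ is compact Hausdorff in the topology of pointwise evaluation on $\C\ip{t_i : i \in I}$, so $\phi$ is uniformly continuous for the unique compatible uniformity, whose entourages are generated by the seminorms $\lambda \mapsto \lambda(q)$, $q \in \C\ip{t_i : i \in I}$. Thus, given $\eps' := \eps/(2 B_p) > 0$, there are finitely many polynomials $q_1, \dots, q_m$ and an $\eta > 0$ so that $|\lambda(q_j) - \lambda'(q_j)| < \eta$ for all $j$ forces $|\phi(\lambda) - \phi(\lambda')| < \eps'$. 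Since $|\lambda_{\mathbf{x}}(q_j) - \lambda_{\mathbf{y}}(q_j)| = |\tau(q_j(\mathbf{x})) - \tau(q_j(\mathbf{y}))| \le \norm{q_j(\mathbf{x}) - q_j(\mathbf{y})}_2 \le C_{q_j}\sum_{i \in F_{q_j}}\norm{x_i - y_i}_2$ by Cauchy--Schwarz and the telescoping bound, I would then take $F = F_p \cup F_{q_1} \cup \dots \cup F_{q_m}$ and choose $\delta > 0$ small enough (in terms of $\eps$, $\norm{\phi}_\infty$, $B_p$, $\eta$, the constants $C_p, C_{q_j}$, and the sizes of the finite sets involved) so that, whenever $\norm{x_i - y_i}_2 < \delta$ for all $i \in F$, the first term in the display is $< \eps/2$ and, via the uniform continuity of $\phi$, the second term is $< B_p \cdot \eps' = \eps/2$. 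This yields $\norm{f(\mathbf{x}) - f(\mathbf{y})}_2 < \eps$, as required; the degenerate cases $p = 0$ or $\phi = 0$ are trivial.
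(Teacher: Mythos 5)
Your proof is correct, and its overall architecture (reduction to simple tensors via $\norm{\cdot}_{\mathbf{R},2}$-stability, the splitting of $\phi \otimes p$ into a polynomial term and a scalar term, and the telescoping estimate for $\norm{p(\mathbf{x})-p(\mathbf{y})}_2$) matches the paper's; the one place where you argue genuinely differently is the step you correctly identify as the crux, namely controlling $|\phi(\lambda_{\mathbf{x}})-\phi(\lambda_{\mathbf{y}})|$ through finitely many coordinates. The paper handles this by a Stone--Weierstrass argument: it lets $A$ be the set of $\phi \in C(\Sigma_{\mathbf{R}})$ for which $\phi \otimes 1$ has the desired modulus of continuity, checks that $A$ is a closed $*$-subalgebra containing $1$ and the moment functionals $\lambda \mapsto \lambda(p)$ (which separate points by definition of $\Sigma_{\mathbf{R}}$), and concludes $A = C(\Sigma_{\mathbf{R}})$. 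You instead invoke the fact that a compact Hausdorff space carries a unique compatible uniformity, so that $\phi$ is automatically uniformly continuous with respect to the uniformity generated by the evaluation seminorms $\lambda \mapsto \lambda(q)$, and then transfer $|\lambda_{\mathbf{x}}(q_j)-\lambda_{\mathbf{y}}(q_j)| \le \norm{q_j(\mathbf{x})-q_j(\mathbf{y})}_2$ back to the telescoping bound. Both routes are standard and complete; yours is arguably more direct and makes the quantitative dependence ($q_1,\dots,q_m$, $\eta$) explicit, while the paper's deliberately avoids importing the formalism of uniform structures (a choice it announces earlier in Section 1) at the cost of a slightly more indirect algebraic argument, which in any case needs essentially the same splitting inequality to verify that $A$ is closed under multiplication.
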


\begin{proof}
First, suppose that $f$ has the form $1 \otimes p$ where $p$ is a non-commutative polynomial (or in other words, $f(\mathbf{x}) = p(\mathbf{x})$).  Then it is a straightforward exercise to check this uniform continuity property (for instance, by handling each monomial explicitly).

Second, consider the case where $f$ has the form $\phi \otimes 1$ for some $\phi \in C(\Sigma_{\mathbf{R}})$.  Let $A$ be the set of all $\phi \in C(\Sigma_{\mathbf{R}})$ such that $\phi \otimes 1$ satisfies the above uniform continuity property.  One checks easily that $A$ is a $*$-subalgebra of $C(\Sigma_{\mathbf{R}})$.  Moreover, since $\norm{\phi \otimes 1}_{\mathbf{R},2}$ is simply $\norm{\phi}_{C(\Sigma_{\mathbf{R}})}$, we see that $A$ is closed in $C(\Sigma_{\mathbf{R}})$.

In light the first case, $A$ contains every function of the form $\phi(\lambda_{\mathbf{x}}) = \tau(p(\mathbf{x})))$ for $p \in \C\ip{t_i: i \in I}$.  This means that $A$ separates points in $\Sigma_{\mathbf{R}}$, by definition of $\Sigma_{\mathbf{R}}$.  Therefore, by the Stone-Weierstrass Theorem (since $A$ contains $1$), we have $A = C(\Sigma_{\mathbf{R}})$.  Thus, we have the desired continuity property for $\phi \otimes 1$.

Finally, we check that every function of the form $\phi \otimes p$ in $\mathcal{F}_{\mathbf{R},2}$ satisfies the desired continuity property by using Cases 1 and 2 and the inequality
\[
\norm{\phi(\lambda_{\mathbf{x}}) p(\mathbf{x}) - \phi(\lambda_{\mathbf{y}}) p(\mathbf{y})}_2 \leq \norm{\phi}_{C(\Sigma_{\mathbf{R}})} \norm{p(\mathbf{x}) - p(\mathbf{y})}_2 + |\phi(\lambda_{\mathbf{x}}) - \phi(\lambda_{\mathbf{y}})| \norm{1 \otimes p}_{\mathbf{R},2}.
\]
Linear combinations of these functions are dense by definition of $\mathcal{F}_{\mathbf{R},2}$, and hence the proof is complete.
\end{proof}

The $L^2$-uniform continuity has several consequences that we will use in our handling of microstate spaces.  The first is well known to experts.  We will deduce it from the previous lemma, although it is also easy to prove directly.

\begin{lem}
Let $I$ be an index set and $\mathbf{R} \in (0,+\infty)^I$.  Let $\mathcal{U} \subseteq \Sigma_{\mathbf{R}}$ be open and let $\mu \in \mathcal{U}$.  Then there exists an open $\mathcal{V} \ni \mu$, a finite $F \subseteq I$, and an $\delta > 0$ such that for any tracial von Neumann algebra $(\cM, \tau)$ and $\mathbf{x}, \mathbf{y} \in \cM_{sa}^I$ with $\norm{x_i}, \norm{y_i} \leq R_i$, if $\lambda_{\mathbf{x}} \in \mathcal{V}$ and $\norm{x_i - y_i}_2 < \delta$, then $\lambda_{\mathbf{y}^{(k)}} \in \mathcal{V}$.
\end{lem}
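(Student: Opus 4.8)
The plan is to reduce this to the $L^2$-uniform continuity of non-commutative polynomials, that is, to Proposition \ref{prop:L2uniformcontinuity} applied to the simple tensors $1 \otimes p \in \mathcal{F}_{\mathbf{R},2}$ (I interpret the conclusion as $\lambda_{\mathbf{y}} \in \mathcal{U}$). First I would use that $\Sigma_{\mathbf{R}}$ carries the topology of pointwise convergence on $\C\langle t_i : i \in I \rangle$ to find a basic neighborhood of $\mu$ inside $\mathcal{U}$ of the form
\[
\mathcal{U}_0 = \{\lambda \in \Sigma_{\mathbf{R}} : |\lambda(p_j) - \mu(p_j)| < \eps \text{ for } j = 1, \dots, m\}
\]
for some $p_1, \dots, p_m \in \C\langle t_i : i \in I \rangle$ and some $\eps > 0$; it then suffices to arrange that $\lambda_{\mathbf{y}}$ lands in $\mathcal{U}_0$.

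Next, for each $j$ I would apply Proposition \ref{prop:L2uniformcontinuity} to $1 \otimes p_j$ with tolerance $\eps/2$, obtaining a finite $F_j \subseteq I$ and a $\delta_j > 0$ such that, for every $(\cM,\tau)$ and $\mathbf{x}, \mathbf{y} \in \cM_{sa}^I$ with $\norm{x_i}, \norm{y_i} \leq R_i$, the condition $\norm{x_i - y_i}_2 < \delta_j$ for all $i \in F_j$ forces $\norm{p_j(\mathbf{x}) - p_j(\mathbf{y})}_2 < \eps/2$, and hence
\[
|\lambda_{\mathbf{x}}(p_j) - \lambda_{\mathbf{y}}(p_j)| = |\tau(p_j(\mathbf{x}) - p_j(\mathbf{y}))| \leq \norm{p_j(\mathbf{x}) - p_j(\mathbf{y})}_2 < \eps/2,
\]
using the elementary bound $|\tau(a)| \leq \norm{a}_2$. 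Then I would take $F = \bigcup_{j=1}^m F_j$, $\delta = \min_{1 \leq j \leq m} \delta_j$, and
\[
\mathcal{V} = \{\lambda \in \Sigma_{\mathbf{R}} : |\lambda(p_j) - \mu(p_j)| < \eps/2 \text{ for } j = 1, \dots, m\},
\]
which is an open neighborhood of $\mu$ contained in $\mathcal{U}_0$. A triangle inequality closes the argument: if $\lambda_{\mathbf{x}} \in \mathcal{V}$ and $\norm{x_i - y_i}_2 < \delta$ for all $i \in F$, then $|\lambda_{\mathbf{y}}(p_j) - \mu(p_j)| < \eps/2 + \eps/2 = \eps$ for every $j$, so $\lambda_{\mathbf{y}} \in \mathcal{U}_0 \subseteq \mathcal{U}$.

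I do not expect a genuine obstacle here: the argument is routine $\eps/2$ bookkeeping layered on top of Proposition \ref{prop:L2uniformcontinuity}. The only points that deserve a moment of care are reading off the basic open sets of $\Sigma_{\mathbf{R}}$ from the definition of its (pointwise-convergence) topology, halving $\eps$ consistently so that the triangle inequality closes, and recording the trivial estimate $|\tau(a)| \leq \norm{a}_2$ that converts $\norm{\cdot}_2$-control of $p_j(\mathbf{x}) - p_j(\mathbf{y})$ into control of the evaluated laws.
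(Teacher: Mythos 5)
Your proof is correct, and it takes a mildly but genuinely different route from the paper's. The paper invokes Urysohn's lemma to produce a single $\phi \in C(\Sigma_{\mathbf{R}})$ with $\phi(\mu)=1$ and $\supp \phi \subseteq \mathcal{U}$, sets $\mathcal{V} = \{\phi > 1/2\}$, and then applies the full strength of Proposition \ref{prop:L2uniformcontinuity} to $\phi \otimes 1$ --- the case whose proof required the Stone--Weierstrass argument. You instead unwind the topology of $\Sigma_{\mathbf{R}}$ directly into a basic neighborhood cut out by finitely many polynomial evaluations, and then only need the elementary ``first case'' of Proposition \ref{prop:L2uniformcontinuity} (uniform $\norm{\cdot}_2$-continuity of $\mathbf{x} \mapsto p(\mathbf{x})$ on operator-norm balls), together with $|\tau(a)| \leq \norm{a}_2$ and an $\eps/2$ triangle inequality. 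What your version buys is self-containedness at this point in the paper: it bypasses Urysohn and the Stone--Weierstrass-dependent portion of the proposition. What the paper's version buys is brevity, since a single function $\phi$ replaces the bookkeeping over $p_1,\dots,p_m$. Both proofs read the lemma's conclusion the same way --- the displayed ``$\lambda_{\mathbf{y}^{(k)}} \in \mathcal{V}$'' is a typo for ``$\lambda_{\mathbf{y}} \in \mathcal{U}$,'' which is what Corollary \ref{cor:perturbedconvergence} actually uses, and your reading is the right one.
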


\begin{proof}
By Urysohn's lemma, there exists $\phi \in C(\Sigma_{\mathbf{R}})$ such that $\phi(\mu) = 1$ and $\phi$ is supported in $\mathcal{U}$.  Let $\mathcal{V} = \{\lambda: \phi(\lambda) > 1/2\}$.  By the previous lemma, there exist $F \subseteq I$ finite and $\delta > 0$ such that for any $(\cM, \tau)$ and $\mathbf{x}, \mathbf{y} \in \cM_{sa}^I$ with $\norm{x_i}, \norm{y_i} \leq R_i$, if $\norm{x_i - y_i}_2 < \delta$ for $i \in I$ implies that $|\phi(\lambda_{\mathbf{x}}) - \phi(\lambda_{\mathbf{y}})| < 1/2$.  This choice of $\mathcal{V}$, $F$, and $\delta$ works.
\end{proof}

The analogous statement below for sequences of laws follows immediately.

\begin{cor} \label{cor:perturbedconvergence}
Given $I$ and $\mathbf{R} \in (0,+\infty)^I$.  Let $(\cM^{(k)}, \tau^{(k)})$ for $k \in \N$ be a sequence of tracial von Neumann algebras, and $\mathbf{x}^{(k)}, \mathbf{y}^{(k)} \in (\cM^{(k)})_{sa}^I$ with $\norm{x_i^{(k)}}, \norm{y_i^{(k)}} \leq R_i$.  If $\lambda_{\mathbf{x}^{(k)}} \to \lambda$ in $\Sigma_{\mathbf{R}}$ and if $\norm{x_i^{(k)} - y_i^{(k)}}_2 \to 0$ as $k \to \infty$ for each $i \in I$, then $\lambda_{\mathbf{y}} \to \lambda$.
\end{cor}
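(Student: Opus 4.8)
The plan is to obtain this immediately from the preceding lemma by unwinding what convergence in $\Sigma_{\mathbf{R}}$ means. Recall that $\Sigma_{\mathbf{R}}$ carries the topology of pointwise convergence on $\C\ip{t_i : i \in I}$, so $\lambda_{\mathbf{y}^{(k)}} \to \lambda$ is equivalent to: for every open neighborhood $\mathcal{U} \ni \lambda$ in $\Sigma_{\mathbf{R}}$, one has $\lambda_{\mathbf{y}^{(k)}} \in \mathcal{U}$ for all sufficiently large $k$. So I would fix an arbitrary such $\mathcal{U}$ and aim to produce a threshold $k_0$ beyond which $\lambda_{\mathbf{y}^{(k)}} \in \mathcal{U}$.

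Applying the previous lemma with $\mu = \lambda$ and this $\mathcal{U}$ yields an open set $\mathcal{V} \ni \lambda$, a \emph{finite} $F \subseteq I$, and a $\delta > 0$ such that whenever $\mathbf{x}, \mathbf{y}$ come from a tracial von Neumann algebra, satisfy the operator-norm bounds $\norm{x_i}, \norm{y_i} \leq R_i$, have $\lambda_{\mathbf{x}} \in \mathcal{V}$, and satisfy $\norm{x_i - y_i}_2 < \delta$ for all $i \in F$, then $\lambda_{\mathbf{y}} \in \mathcal{U}$. Next I would feed in the two hypotheses: since $\lambda_{\mathbf{x}^{(k)}} \to \lambda$ and $\mathcal{V}$ is open around $\lambda$, there is $k_1$ with $\lambda_{\mathbf{x}^{(k)}} \in \mathcal{V}$ for $k \geq k_1$; and since $F$ is finite with $\norm{x_i^{(k)} - y_i^{(k)}}_2 \to 0$ for each $i \in I$, there is $k_2$ with $\norm{x_i^{(k)} - y_i^{(k)}}_2 < \delta$ for all $i \in F$ and all $k \geq k_2$. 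For $k \geq \max(k_1,k_2)$, the lemma applied inside $\cM^{(k)}$ gives $\lambda_{\mathbf{y}^{(k)}} \in \mathcal{U}$, which is what was wanted.

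I do not expect any genuine obstacle: all the real work is already in the previous lemma (and, beneath it, in the $L^2$-uniform continuity of elements of $\mathcal{F}_{\mathbf{R},2}$ from Proposition \ref{prop:L2uniformcontinuity} together with a Stone--Weierstrass argument). The one point that deserves a word of care is the passage from ``for each $i$ separately'' to ``simultaneously for all $i \in F$'' when choosing $k_2$; this is legitimate precisely because the lemma was set up to return a \emph{finite} $F$, and it is the only place finiteness is used. If one prefers, the same argument can be phrased directly with test polynomials in finitely many of the variables $t_i$, but routing it through the already-proved lemma is cleanest.
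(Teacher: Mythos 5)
Your argument is correct and is exactly the intended one: the paper simply asserts that the corollary ``follows immediately'' from the preceding lemma, and your write-up is the routine unwinding of that implication (including the correct observation that finiteness of $F$ is what lets you choose a single threshold $k_2$). Nothing further is needed.
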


Another consequence of the $L^2$-uniform continuity is that exponential concentration of measure is preserved when we push forward  a sequence of probability measures on matrix tuples through a function $\mathbf{f} \in \mathcal{F}_{\mathbf{R},\mathbf{R}'}$.  The push-forward here is technically different from that of Proposition \ref{prop:pushforwardcontinuity}.  If $\mu^{(k)}$ is a probability measure on $M_{n(k)}(\C)_{sa}^I$ supported on matrix tuples $\mathbf{A}$ with $\norm{A_i} \leq R_i$, then $\mathbf{f}_* \mu^{(k)}$ is defined by using the evaluation of $\mathbf{f}$ on tuples $\mathbf{A}$ of $n(k) \times n(k)$ self-adjoint matrices satisfying the given operator norm bounds.  The following result on preservation of exponential concentration will of course be used later for the measures coming from the random matrix models in Theorem \ref{thm:main}.

\begin{cor} \label{cor:concentrationpushforward}
Let $\mathbf{R} \in (0,+\infty)^I$ and $\mathbf{R}' \in (0,+\infty)^{I'}$.  Let $\mu^{(k)}$ be a probability measure on $M_{n(k)}(\C)_{sa}^I$ supported on $\{\mathbf{A}: \norm{A_i} \leq R_i\}$, and let $\mathbf{f} \in \mathcal{F}_{\mathbf{R},\mathbf{R}'}$.  If $(\mu^{(k)})_{k \in \N}$ has exponential concentration, then so does $(\mathbf{f}_* \mu^{(k)})_{k\in \N}$.
\end{cor}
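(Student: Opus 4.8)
The plan is to reduce the exponential concentration of the push-forward measures to that of the original measures, using the $L^2$-uniform continuity of $\mathbf{f}$ supplied by Proposition \ref{prop:L2uniformcontinuity} to translate a neighborhood at scale $\eps'$ downstairs into a neighborhood at some scale $\delta$ upstairs. Write $D_k = \{\mathbf{A} \in M_{n(k)}(\C)_{sa}^I : \norm{A_i} \le R_i \text{ for all } i \in I\}$ for the domain on which $\mathbf{f}$ is evaluated, so $\mu^{(k)}(D_k) = 1$, and let $\Phi_k = \mathbf{f}\resto_{D_k} : D_k \to M_{n(k)}(\C)_{sa}^{I'}$, which is Borel (an $L^2$-limit of polynomial maps) and satisfies $(\Phi_k)_*\mu^{(k)} = \mathbf{f}_*\mu^{(k)}$. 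Note that, since $M_{n(k)}(\C)_{sa}$ is finite-dimensional, the operator-norm and $\norm{\cdot}_2$ topologies coincide there, so every set $N_{F,\eps}(\Omega)$ is open (a union of open balls) and hence Borel, and no measurability subtleties arise.

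First I would fix a finite $F' \subseteq I'$ and $\eps' > 0$. Applying Proposition \ref{prop:L2uniformcontinuity} to each $f_i$, $i \in F'$, with target precision $\eps'$, and then taking $F \subseteq I$ to be the (finite) union of the resulting finite subsets and $\delta > 0$ the minimum of the resulting thresholds, I obtain: whenever $\mathbf{A}, \mathbf{B} \in D_k$ satisfy $\norm{A_i - B_i}_2 < \delta$ for all $i \in F$, one has $\norm{f_i(\mathbf{A}) - f_i(\mathbf{B})}_2 < \eps'$ for all $i \in F'$; equivalently, $\Phi_k\bigl(N_{F,\delta}(\mathbf{A}) \cap D_k\bigr) \subseteq N_{F',\eps'}(\Phi_k(\mathbf{A}))$ for every $\mathbf{A} \in D_k$. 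From this and the inclusion $\Phi_k^{-1}(\Omega') \subseteq D_k$, the key containment
\[
N_{F,\delta}\bigl(\Phi_k^{-1}(\Omega')\bigr) \cap D_k \subseteq \Phi_k^{-1}\bigl(N_{F',\eps'}(\Omega')\bigr)
\]
holds for every Borel $\Omega' \subseteq M_{n(k)}(\C)_{sa}^{I'}$. Taking complements within $D_k$ and using $\mu^{(k)}(D_k) = 1$ gives
\[
(\mathbf{f}_*\mu^{(k)})\bigl(N_{F',\eps'}(\Omega')^c\bigr) = \mu^{(k)}\bigl(\Phi_k^{-1}(N_{F',\eps'}(\Omega')^c)\bigr) \le \mu^{(k)}\bigl(N_{F,\delta}(\Phi_k^{-1}(\Omega'))^c\bigr).
\]
If moreover $(\mathbf{f}_*\mu^{(k)})(\Omega') \ge 1/2$, then $\mu^{(k)}(\Phi_k^{-1}(\Omega')) \ge 1/2$, so the right-hand side is at most $\alpha_{\mu^{(k)}}(F,\delta)$ by the definition of the concentration function. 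Taking the supremum over all such $\Omega'$ yields $\alpha_{\mathbf{f}_*\mu^{(k)}}(F',\eps') \le \alpha_{\mu^{(k)}}(F,\delta)$, and then applying $\tfrac{1}{n(k)^2}\log(\cdot)$ and $\limsup_{k\to\infty}$, the right-hand side is negative by the exponential concentration of $(\mu^{(k)})_k$ at the finite set $F$ and scale $\delta$. Since $F'$ and $\eps'$ were arbitrary, this gives exponential concentration of $(\mathbf{f}_*\mu^{(k)})_k$.

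I expect no substantive analytic difficulty here: the real content — producing, from a target precision $\eps'$ on finitely many output coordinates, a finite set $F$ of input coordinates and a threshold $\delta$ controlling them — is entirely encapsulated in Proposition \ref{prop:L2uniformcontinuity}. The only point demanding care is the bookkeeping around the domain $D_k$, since $\mathbf{f}$ is defined only on operator-norm-bounded tuples: one must intersect with $D_k$ at each step and invoke $\mu^{(k)}(D_k) = 1$, and one must confirm Borel measurability of $\Phi_k$ and of the neighborhoods involved (both immediate in finite dimensions). There is also a minor check that the union of finitely many finite sets $F_i$ ($i \in F'$) stays finite, which it does precisely because $F'$ is finite.
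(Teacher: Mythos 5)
Your proof is correct and follows essentially the same route as the paper's: apply Proposition \ref{prop:L2uniformcontinuity} to each $f_i$, $i \in F'$, to obtain $(F,\delta)$, deduce the containment $N_{F,\delta}(\mathbf{f}^{-1}(\Omega')) \subseteq \mathbf{f}^{-1}(N_{F',\eps'}(\Omega'))$ (up to the domain restriction), and conclude $\alpha_{\mathbf{f}_*\mu^{(k)}}(F',\eps') \le \alpha_{\mu^{(k)}}(F,\delta)$. The only difference is that you make explicit the bookkeeping about the domain $D_k$ and Borel measurability, which the paper leaves implicit.
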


\begin{proof}
Fix a finite $F' \subseteq I'$ and $\eps > 0$.  Applying the previous proposition to each $f_i$ for $i \in F'$, we see that there exists $F \subseteq I$ and $\delta > 0$ such that for tuples of matrices $\mathbf{A}$ and $\mathbf{B}$ with $\norm{A_i} \leq R_i$ and $\norm{B_i} \leq R_i$, we have
\[
\norm{A_i - B_i}_2 < \delta \text{ for } i \in F \implies \norm{f_i(\mathbf{A}) - f_i(\mathbf{A})}_2 < \eps \text{ for } i \in F'.
\]
Therefore, if $\Omega \subseteq M_{n(k)}(\C)_{sa}^{I'}$ with $(\mathbf{f}_* \mu^{(k)})(\Omega) = \mu^{(k)}(\mathbf{f}^{-1}(\Omega)) \geq 1/2$, then $N_{F,\delta}(\mathbf{f}^{-1}(\Omega)) \subseteq \mathbf{f}^{-1}(N_{F',\eps}(\Omega))$, and consequently
\[
\mu^{(k)}(N_{F,\delta}(\mathbf{f}^{-1}(\Omega))^c) \geq (\mathbf{f}_* \mu^{(k)})( N_{F',\eps}(\Omega)^c).
\]
Since this holds for all Borel $\Omega$, we get
\[
\alpha_{\mathbf{f}_* \mu^{(k)}}(F',\eps) \leq \alpha_{\mu^{(k)}}(F,\delta),
\]
and hence
\[
\limsup_{k \to \infty} \frac{1}{n(k)^2} \log \alpha_{\mathbf{f}_* \mu^{(k)}}(F',\eps) \leq \limsup_{k \to \infty} \frac{1}{n(k)^2} \log \alpha_{\mu^{(k)}}(F,\delta) < 0. \qedhere
\]
\end{proof}

\begin{remark}
The space $\mathcal{F}_{\mathbf{R},\mathbf{R}'}$ can be used to slightly simplify the proof that $1$-bounded entropy is a $\mathrm{W}^*$-algebra invariant \cite[Theorem A.9]{Hayes2018}.  By a variant of Corollary \ref{cor:microstatemapping}, we can arrange a function $\mathbf{f}$ that maps one microstate space to the other, and the $L^2$-uniform continuity of $\mathbf{f}$ allows us to push forward a $\delta$-dense subset of the first microstate space to an $\eps$-dense subset of the second one.
\end{remark}

\section{Proof of Theorem \ref{thm:main}} \label{sec:Theorem1}

\subsection{Setup} \label{subsec:thm1setup}

We remind the reader of the setup of Theorem~\ref{thm:main}:  Let $(\cM,\tau)$ be a tracial $\mathrm{W}^*$-algebra and $\mathbf{x} \in \cM_{sa}^I$ be a set of self-adjoint generators indexed by $I$, and let $\mathbf{R} \in (0,+\infty)^I$ with $\norm{x_i} < R_i$.  Suppose that $\cP$ is a $\mathrm{W}^*$-subalgebra of $\cM$.  We assume $n(k) \to \infty$ and that $\mathbf{X}^{(k)}$ is an $I$-tuple of random $n(k) \times n(k)$ matrices, satisfying (1) - (4) below.  For the reader's convenience, we restate these conditions also in terms of the probability distribution $\mu^{(k)}$ of $\mathbf{X}^{(k)}$ (which is a probability measure on $M_{n(k)}(\C)_{sa}^I$).
\begin{enumerate}
	\item $\norm{X_i^{(k)}}_\infty \leq R_i$, or equivalently $\mu^{(k)}$ is supported on $\{\mathbf{A} \in M_{n(k)}(\C)_{sa}^I: \norm{A_i} \leq R_i\}$.
	\item $\tau_{n(k)}(p(\mathbf{X}^{(k)})) \to \tau(p(\mathbf{x}))$ in probability for every non-commutative polynomial $p$.  Equivalently, $\mu^{(k)}$ is \emph{asymptotically supported on the microstate spaces for $\mathbf{x}$}, meaning that
	\[
	\mu^{(k)}(\Gamma_{\mathbf{R},n(k)}(\mathbf{x}; \mathcal{U})) \to 1
	\]
	for every neighborhood $\mathcal{U}$ of $\lambda_{\mathbf{x}}$ in $\Sigma_{\mathbf{R}}$.
	\item The measures $\mu^{(k)}$ exhibit exponential concentration in the sense of Definition \ref{defn:concentration}.
	\item For each non-commutative polynomial $p \in \C\ip{t_i: i \in I}$, we have
	\[
	\lim_{k \to \infty} \norm*{ \E[p(\mathbf{X}^{(k)})] }_2 = \lim_{k \to \infty} \norm*{ \int p(\mathbf{A}) \,d\mu^{(k)}(\mathbf{A}) }_2 = \norm{E_{\cP}[p(\mathbf{x})]}_2.
	\]
	We refer to this condition as the \emph{external averaging property}.
\end{enumerate}
The conclusion to the theorem is that if $\cN \leq \cM$ such that $\cN \cap \cP$ is diffuse and $h(\cN: \cM) = 0$, then $\cN \subseteq \cP$.

\subsection{The External Averaging Property}

As the first ingredient in the proof, we provide several equivalent interpretations of the external averaging property.

\begin{lem} \label{lem:EAPequivalences}
Let $\cP \leq \cM = \mathrm{W}^*(\mathbf{x})$, let $\mathbf{R} \in (0,+\infty)^I$ with $\norm{x_i} \leq R_i$, and let $\mu^{(k)}$ be a sequence of probability measures on $M_{n(k)}(\C)_{sa}^I$ satisfying conditions (1) and (2) of \S \ref{subsec:thm1setup}.  Then the following are equivalent:
\begin{enumerate}
	\item For every non-commutative polynomial $p \in \C\ip{t_i: i \in I}$,
	\[
	\lim_{k \to \infty} \norm*{\int p\,d\mu^{(k)}}_2 = \norm{E_{\cP}(p(\mathbf{x}))}_2.
	\]
	\item For every pair of non-commutative polynomials $p,q\in \C\ip{t_i: i \in I}$,
	\[
	\lim_{k \to \infty} \int \norm*{p(\mathbf{A}) - \int q\,d\mu^{(k)}}_2^2\,d\mu^{(k)}(\mathbf{A}) = \norm{p(\mathbf{x}) - E_{\cP}(q(\mathbf{x}))}_2^2.
	\]
	\item For every $f \in \mathcal{F}_{\mathbf{R},\infty}$,
	\[
	\lim_{k \to \infty} \norm*{\int f \,d\mu^{(k)}}_2 = \norm{E_{\cP}(f(\mathbf{x}))}_2.
	\]
	\item For every pair $f, g \in \mathcal{F}_{\mathbf{R},\infty}$,
	\[
	\lim_{k \to \infty} \int \norm*{f(\mathbf{A}) - \int g\,d\mu^{(k)}}_2^2\,d\mu^{(k)}(\mathbf{A}) = \norm{f(\mathbf{x}) - E_{\cP}(g(\mathbf{x}))}_2^2.
	\]
\end{enumerate}
\end{lem}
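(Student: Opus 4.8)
The plan is to prove the implications $(3)\Rightarrow(1)$, $(1)\Leftrightarrow(2)$, $(3)\Leftrightarrow(4)$ and $(1)\Rightarrow(3)$, which together give the lemma. The implications $(3)\Rightarrow(1)$ and $(4)\Rightarrow(2)$ are immediate: for a non-commutative polynomial $p$ the element $1\otimes p$ lies in $\mathcal{F}_{\mathbf{R},\infty}$ with $(1\otimes p)(\mathbf{A})=p(\mathbf{A})$ for every matrix tuple $\mathbf{A}$ and $(1\otimes p)(\mathbf{x})=p(\mathbf{x})$, so statements (1) and (2) are exactly the restrictions of (3) and (4) to elements of this form.

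For $(1)\Leftrightarrow(2)$ I would expand the square inside the integral. Writing $v_r^{(k)}=\int r\,d\mu^{(k)}$ for a non-commutative polynomial $r$, one has
\[
\int\norm*{p(\mathbf{A})-v_q^{(k)}}_2^2\,d\mu^{(k)}(\mathbf{A})=\int\tau_{n(k)}\bigl((p^*p)(\mathbf{A})\bigr)\,d\mu^{(k)}(\mathbf{A})-2\re\,\ip{v_p^{(k)},v_q^{(k)}}+\norm{v_q^{(k)}}_2^2,
\]
where the inner product is that of $L^2(M_{n(k)}(\C))$. The first term tends to $\norm{p(\mathbf{x})}_2^2$ since $\tau_{n(k)}((p^*p)(\mathbf{X}^{(k)}))\to\tau((p^*p)(\mathbf{x}))$ in probability and the integrand is uniformly bounded, and the last tends to $\norm{E_{\cP}(q(\mathbf{x}))}_2^2$ by (1); for the middle term I would use the polarization identity $\re\,\ip{v_p^{(k)},v_q^{(k)}}=\tfrac14\bigl(\norm{v_p^{(k)}+v_q^{(k)}}_2^2-\norm{v_p^{(k)}-v_q^{(k)}}_2^2\bigr)$ together with (1) applied to $p+q$ and $p-q$, and the fact that $E_{\cP}$ is the orthogonal projection of $L^2(\cM)$ onto $L^2(\cP)$, to identify its limit as $\re\,\ip{p(\mathbf{x}),E_{\cP}(q(\mathbf{x}))}$. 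Adding the three limits gives $\norm{p(\mathbf{x})-E_{\cP}(q(\mathbf{x}))}_2^2$, which is (2); conversely (2) with $p=0$ is (1). The equivalence $(3)\Leftrightarrow(4)$ follows from the identical computation, the only extra ingredient being that $\tau_{n(k)}((f^*f)(\mathbf{X}^{(k)}))\to\tau((f^*f)(\mathbf{x}))$ in probability for $f\in\mathcal{F}_{\mathbf{R},\infty}$; this holds because $g\mapsto\tau(g(\mathbf{x}))$ depends continuously on $\lambda_{\mathbf{x}}$ for $g\in\mathcal{F}_{\mathbf{R},\infty}$ (Proposition \ref{prop:pushforwardcontinuity} and its proof) and $\lambda_{\mathbf{X}^{(k)}}\to\lambda_{\mathbf{x}}$ in probability.

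The real work is in $(1)\Rightarrow(3)$. Since $\mu^{(k)}$ is supported on tuples with $\norm{A_i}\le R_i$, we get the uniform estimates $\norm*{\int h\,d\mu^{(k)}}_2\le\norm{h}_{\mathbf{R},2}$ and $\norm{E_{\cP}(h(\mathbf{x}))}_2\le\norm{h}_{\mathbf{R},2}$ for every $h\in\mathcal{F}_{\mathbf{R},2}$, so an $\eps/3$ approximation argument reduces (3) to proving it for $h$ in the dense subspace $\mathcal{A}_{\mathbf{R}}=C(\Sigma_{\mathbf{R}})\otimes\C\ip{t_i:i\in I}$. Given $g=\sum_{j=1}^m\phi_j\otimes p_j\in\mathcal{A}_{\mathbf{R}}$, I would compare $\int g\,d\mu^{(k)}$ with $\int q\,d\mu^{(k)}$, where $q=\sum_{j=1}^m\phi_j(\lambda_{\mathbf{x}})\,p_j\in\C\ip{t_i:i\in I}$ is obtained by freezing the scalars $\phi_j(\lambda_{\mathbf{x}})$: from $g(\mathbf{A})-q(\mathbf{A})=\sum_j\bigl(\phi_j(\lambda_{\mathbf{A}})-\phi_j(\lambda_{\mathbf{x}})\bigr)p_j(\mathbf{A})$, the continuity of each $\phi_j$ on the compact space $\Sigma_{\mathbf{R}}$, and the hypothesis $\lambda_{\mathbf{X}^{(k)}}\to\lambda_{\mathbf{x}}$ in probability, one shows $\int\norm{g(\mathbf{A})-q(\mathbf{A})}_2\,d\mu^{(k)}(\mathbf{A})\to0$, whence $\norm*{\int g\,d\mu^{(k)}}_2=\norm*{\int q\,d\mu^{(k)}}_2+o(1)\to\norm{E_{\cP}(q(\mathbf{x}))}_2=\norm{E_{\cP}(g(\mathbf{x}))}_2$ by (1). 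The main obstacle is precisely this last reduction: one must interchange the outer integral $\int\,\cdot\,d\mu^{(k)}$ with the complex scalars $\phi_j(\lambda_{\mathbf{x}})$, control all error terms uniformly in $k$ by $\norm{\cdot}_{\mathbf{R},2}$ so that the density argument closes, and correctly feed convergence in probability of the laws $\lambda_{\mathbf{X}^{(k)}}$ into the bounded continuous functions $\phi_j$; the remaining manipulations are routine.
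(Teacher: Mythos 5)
Your proposal is correct and follows essentially the same route as the paper: the quadratic statements are obtained from the linear ones by expanding the square, using polarization for the cross term and convergence in law (plus uniform boundedness) for the $\lVert f\rVert_2^2$ term, and $(1)\Rightarrow(3)$ is proved exactly as in the paper by freezing the scalars $\phi_j(\lambda_{\mathbf{x}})$ to reduce to a polynomial and then closing with a density argument in $\lVert\cdot\rVert_{\mathbf{R},2}$. The only differences are cosmetic (the order in which the implications are chained), so there is nothing to add.
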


\begin{proof}
The overall structure of the proof will be (2) $\iff$ (1) $\iff$ (3) $\iff$ (4).  The implication (3) $\implies$ (1) is immediate because every polynomial is also in $\mathcal{F}_{\mathbf{R},\infty}$.  The implication (2) $\implies$ (1) follows by substituting $q = 0$ and similarly (4) $\implies$ (3) follows by substituting $g = 0$.

To show (1) $\implies$ (3), first suppose that $f$ is given by a sum of simple tensors, that is, $f(\mathbf{y}) = \sum_{j=1}^N \phi_j(\lambda_{\mathbf{y}}) q_j(\mathbf{y})$ for some non-commutative polynomials $q_j$ and $\phi_j \in C(\Sigma_{\mathbf{R}})$.  Define $\tilde{f}(\mathbf{y}) = \sum_{j=1}^N \phi_j(\lambda_{\mathbf{x}}) q_j(\mathbf{y})$, which is a non-commutative polynomial since $\phi_j$ has been replaced by the constant $\phi_j(\lambda_{\mathbf{x}})$. Since $\lambda_{\mathbf{X}^{(k)}} \to \lambda_{\mathbf{x}}$ in probability, we have
\[
\norm*{f(\mathbf{X}^{(k)}) - \tilde{f}(\mathbf{X}^{(k)})}_2 \to 0 \text{ in probability.}
\]
But because $\norm{f - \tilde{f}}_2$ is uniformly bounded, it also converges to zero in expectation, and hence
\[
\lim_{k \to \infty} \norm{\E(f(\mathbf{X}^{(k)}))}_2 = \lim_{k \to \infty} \norm{\E(\tilde{f}(\mathbf{X}^{(k)}))}_2 = \norm{E_{\cP}(\tilde{f}(\mathbf{x}))}_2 = \norm{E_{\cP}(f(\mathbf{x}))}_2,
\]
since $f(\mathbf{x}) = \tilde{f}(\mathbf{x})$.  Thus, (3) holds when $f$ is a finite linear combination of simple tensor.  But every $f \in \mathcal{F}_{\mathbf{R},2}$ (hence every $f \in \mathcal{F}_{\mathbf{R},\infty}$) can be approximated in $\norm{\cdot}_{\mathbf{R},2}$ by linear combinations of simple tensors, and thus by a straightforward approximation argument (3) extends to this case as well.

Next, let us show (3) $\implies$ (4).  Using the polarization identity for inner products, (3) implies that for all $f$, $g \in \mathcal{F}_{\mathcal{R},\infty}$,
\[
\lim_{k \to \infty} \ip*{ \int f\,d\mu^{(k)}, \int g\,d\mu^{(k)} }_2 = \ip*{E_{\cP}[f(\mathbf{x})], E_{\cP}[g(\mathbf{x})]}_2.
\]
Then note that
\[
\int \norm*{f(\mathbf{A}) - \int g \,d\mu^{(k)} }_2^2\,d\mu^{(k)}(\mathbf{A}) = \int \norm{f}_2^2\,d\mu^{(k)} - 2 \re \ip*{ \int f\,d\mu^{(k)}, \int g\,d\mu^{(k)} }_2 + \norm*{ \int g\,\mu^{(k)} }_2^2.
\]
It suffices to show that $\lim_{k \to \infty} \int \norm{f}_2^2\,d\mu^{(k)} = \norm{f(\mathbf{x})}_2^2$, because that would imply
\begin{align*}
\lim_{k \to \infty} \int \norm*{f(\mathbf{A}) - \int g \,d\mu^{(k)} }_2^2\,d\mu^{(k)}(\mathbf{A})
&= \norm*{f(\mathbf{x})}_2^2 - 2 \re \ip*{E_{\cP}[f(\mathbf{x})], E_{\cP}[g(\mathbf{x})]}_2 + \norm{E_{\cP}[g(\mathbf{x})]}_2^2 \\
&= \norm*{f(\mathbf{x}) - E_{\cP}[g(\mathbf{x})}_2^2.
\end{align*}
To prove that $\lim_{k \to \infty} \int \norm{f}_2^2\,d\mu^{(k)} = \norm{f(\mathbf{x})}_2^2$, note that $\norm{f(\mathbf{x})}_2$ is uniformly bounded when $\norm{x_i} \leq R_i$, so that the integrals and conditional expectations above are well-defined.  For self-adjoint tuples $\mathbf{y}$ bounded by $\mathbf{R}$, the expression $\norm{f(\mathbf{y})}_2^2 = \tau(f(\mathbf{y})^2)$ depends continuously on the law of $f(\mathbf{y})$ and hence depends continuously on the law of $\mathbf{y}$ by Proposition \ref{prop:pushforwardcontinuity}.  We assumed that the measures $\mu^{(k)}$ are asymptotically supported on microstates of $\mathbf{x}$.  In other words, for the associated random matrix tuple $\mathbf{X}^{(k)}$, the non-commutative law $\lambda_{\mathbf{X}^{(k)}}$ converges in probability to $\lambda_{\mathbf{x}}$.  Hence, $\norm{f(\mathbf{X}^{(k)})}_2^2 \to \norm{f(\mathbf{x})}_2^2$ in probability.  But it is also uniformly bounded, so convergence in probability implies convergence in expectation.  Thus, $\lim_{k \to \infty} \int \norm{f}_2^2\,d\mu^{(k)} = \norm{f(\mathbf{x})}_2^2$ holds as desired, concluding the proof of (3) $\implies$ (4).

The argument for (1) $\implies$ (2) is the same except that $f$ and $g$ are replaced by non-commutative polynomials $p$ and $q$.  All the facts that we used for $f, g \in \mathcal{F}_{\mathbf{R},\infty}$ hold in particular for non-commutative polynomials.
\end{proof}

In particular, the lemma shows that if $\mu^{(k)}$ has the external averaging property, then for $f \in \mathcal{F}_{\mathbf{R},\infty}$
\[
\lim_{k \to \infty} \int \norm*{ f(\mathbf{A}) - \int f\,d\mu^{(k)} }_2^2\,d\mu^{(k)}(\mathbf{A}) = \norm{f(\mathbf{x}) - E_{\cP}[f(\mathbf{x})]}_2^2,
\]
which follows by taking $g = f$ in (4).  This has following interpretation in terms of Hilbert space geometry.  Let us denote by $L^2(\mu^{(k)},M_{n(k)}(\C))$ the $L^2$-space of functions $(M_{n(k)}(\C)_{sa}^n, \mu^{(k)}) \to (M_{n(k)}(\C), \norm{\cdot}_2)$ (random matrices defined by the measure $\mu^{(k)}$).  The expectation (or integral with respect to $\mu^{(k)}$) is the orthogonal projection onto the subspace of deterministic matrices, and of course the $\mathrm{W}^*$-algebraic conditional expectation $E_{\cP}$ is the orthogonal projection on $L^2(\cP)$.  Thus, the external averaging property says that for $f \in \mathcal{F}_{\mathbf{R},\infty}$, the distance of $f(\mathbf{X}^{(k)})$ from deterministic matrices in $L^2(\mu^{(k)},M_{n(k)}(\C))$ converges to the distance from $f(\mathbf{x})$ to $L^2(\cP)$ in $L^2(\cM)$.  Hence, a function $f(\mathbf{x})$ will be in $\cP$ if and only if $f(\mathbf{X}^{(k)})$ is well-approximated by deterministic matrices in $L^2(\mu^{(k)},M_{n(k)}(\C))$, which we state precisely in the next proposition.

\begin{prop} \label{prop:approximatelyconstant}
Let $\cP \leq \cM = \mathrm{W}^*(X)$, and let $\mu^{(k)}$ be a sequence of measures satisfying (1), (2), and (4) from \S \ref{subsec:thm1setup}.  Let $z \in \mathrm{W}^*(\mathbf{x})_{sa}$ and recall $z = f(\mathbf{x})$ for some $f \in (\mathcal{F}_{\mathbf{R},\infty})_{sa}$ by Proposition \ref{prop:realizationofoperators}.  Then we have $z \in \cP$ if and only if there exists a sequence $(C^{(k)})_{k\in\mathbb{N}}$ of deterministic matrices such that $\norm{f - C^{(k)}}_{L^2(\mu^{(k)},M_{n(k)}(\C))} \to 0$.  Moreover, in this case, we can choose $(C^{(k)})_{k\in\mathbb{N}}$ to be bounded in operator norm.
\end{prop}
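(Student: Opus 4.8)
The plan is to read the proposition off from the Hilbert-space reformulation of the external averaging property recorded just before the statement. The key input is the identity obtained by taking $g = f$ in Lemma~\ref{lem:EAPequivalences}(4),
\[
\lim_{k \to \infty} \int \norm*{f(\mathbf{A}) - \int f\,d\mu^{(k)}}_2^2\,d\mu^{(k)}(\mathbf{A}) = \norm{f(\mathbf{x}) - E_{\cP}[f(\mathbf{x})]}_2^2,
\]
together with the observation that in $L^2(\mu^{(k)}, M_{n(k)}(\C))$ the map $\int \,\cdot\, d\mu^{(k)}$ is precisely the orthogonal projection onto the subspace of deterministic matrices: for deterministic $C$ one has $\ip{Y - \E Y, C}_{L^2(\mu^{(k)})} = \tau_{n(k)}(C^*\, \E[Y - \E Y]) = 0$. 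Hence the left-hand side above is exactly $\operatorname{dist}_{L^2(\mu^{(k)})}(f, \{\text{deterministic matrices}\})^2$, and the identity says this distance converges to $\norm{z - E_{\cP}[z]}_2$, where $z = f(\mathbf{x})$.

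Given this, I would handle the forward direction first: if $z \in \cP$ then $E_{\cP}[z] = z$, so the right-hand side vanishes, and $C^{(k)} := \int f\,d\mu^{(k)}$ satisfies $\norm{f - C^{(k)}}_{L^2(\mu^{(k)}, M_{n(k)}(\C))} \to 0$. For the ``moreover'' clause I would use condition~(1) of \S\ref{subsec:thm1setup}, which puts $\mu^{(k)}$ on $\{\mathbf{A} : \norm{A_i} \leq R_i\}$; on that set $\norm{f(\mathbf{A})} \leq \norm{f}_{\mathbf{R},\infty}$ because $(M_{n(k)}(\C), \tau_{n(k)})$ is a tracial $\mathrm{W}^*$-algebra, so $\norm{C^{(k)}} \leq \int \norm{f(\mathbf{A})}\,d\mu^{(k)}(\mathbf{A}) \leq \norm{f}_{\mathbf{R},\infty}$; thus this same choice is operator-norm bounded.

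For the converse, suppose some sequence of deterministic matrices $C^{(k)}$ has $\norm{f - C^{(k)}}_{L^2(\mu^{(k)}, M_{n(k)}(\C))} \to 0$. Since $\int f\,d\mu^{(k)}$ is the deterministic matrix closest to $f$ in $L^2(\mu^{(k)}, M_{n(k)}(\C))$, we get $\norm{f - \int f\,d\mu^{(k)}}_{L^2(\mu^{(k)}, M_{n(k)}(\C))} \leq \norm{f - C^{(k)}}_{L^2(\mu^{(k)}, M_{n(k)}(\C))} \to 0$, and feeding this into the displayed identity forces $\norm{z - E_{\cP}[z]}_2 = 0$, i.e.\ $z = E_{\cP}[z] \in \cP$.

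I do not expect a genuine obstacle here: the analytic content is entirely contained in Lemma~\ref{lem:EAPequivalences}, which is already established. The only points requiring a little care are the identification of $\int \,\cdot\, d\mu^{(k)}$ with the $L^2$-orthogonal projection onto constant matrices, and the elementary estimate $\norm{\E[\,\cdot\,]} \leq \E\norm{\cdot}$ used for the operator-norm bound in the ``moreover'' clause.
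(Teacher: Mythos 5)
Your proposal is correct and follows the paper's own argument essentially verbatim: both directions rest on the identity from Lemma \ref{lem:EAPequivalences}(4) with $g = f$, the identification of $\int \cdot\, d\mu^{(k)}$ with the orthogonal projection onto deterministic matrices, and the choice $C^{(k)} = \int f\, d\mu^{(k)}$ (bounded in operator norm since $f \in \mathcal{F}_{\mathbf{R},\infty}$). No gaps.
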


\begin{proof}
Suppose that $z = f(\mathbf{x}) \in \cP$.  Then we have
\[
\lim_{k \to \infty} \int \norm*{f(\mathbf{A}) - \int f\,d\mu^{(k)}}_2^2\,d\mu^{(k)}(\mathbf{A}) = \norm{f(\mathbf{x}) - E_{\cP}(f(\mathbf{x}))}_2^2 = 0.
\]
Therefore, we can set $C^{(k)} = \int f(\mathbf{A})\,d\mu^{(k)}(\mathbf{A})$, which is bounded in operator norm because $f$ is bounded in operator norm.

Conversely, suppose that such a sequence of matrices $(C^{(k)})_{k\in\mathbb{N}}$ exists.  Since the expectation of a random matrix is the orthogonal projection onto the subspace of deterministic matrices, we have
\[
\int \norm*{f(\mathbf{A}) - \int f\,d\mu^{(k)}}_2^2\,d\mu^{(k)}(\mathbf{A}) \leq \int \norm*{f(\mathbf{A}) - C^{(k)}}_2^2\,d\mu^{(k)}(\mathbf{A}) \to 0.
\]
Therefore,
\[
\norm{f(\mathbf{x}) - E_{\cP}(f(\mathbf{x}))}_2^2 = \lim_{k \to \infty} \int \norm*{f(\mathbf{A}) - \int f\,d\mu^{(k)}}_2^2\,d\mu^{(k)}(\mathbf{A}) = 0. \qedhere
\]
\end{proof}

\subsection{Microstate Collapse}

To motivate our next main ingredient (Proposition \ref{prop:microstatecollapse}), let us sketch our strategy for proving Theorem \ref{thm:main}.  Consider $\cP \leq \cM = \mathrm{W}^*(\mathbf{x})$ as in the theorem.  We need to show that if $\cN \cap \cP$ is diffuse and $h(\cN:\cM) = 0$, then $\cN \subseteq \cP$.  If $\mathbf{y}$ is a tuple of self-adjoint generators for $\cN$ and $\mathbf{y} = \mathbf{f}(\mathbf{x})$ for some $\mathbf{f} \in \mathcal{F}_{\mathbf{R},\mathbf{R}'}$, we want to show that each $y_i$ must be in $\cP$.  By Proposition \ref{prop:approximatelyconstant}, it suffices to show that $f_i(\mathbf{X}^{(k)})$ is well-approximated in $L^2(\mu^{(k)},M_{n(k)}(\C))$ by a sequence deterministic matrices $C_i^{(k)}$.  In other words, we want to show that most of the mass of $\mathbf{f}_* \mu^{(k)}$ is localized to small neighborhoods $N_{F,\eps}(\mathbf{C}^{(k)})$.

Since $\mathbf{f}_* \mu^{(k)}$ is a natural measure that is asymptotically supported on the microstate space of $\mathbf{y}$ in the presence of $\mathbf{x}$ (by Corollary \ref{cor:microstatemapping}, see proof of Proposition \ref{prop:microstatecollapse} below), what we want to prove is intuitively that ``most'' of the microstates for $\mathbf{y}$ in the presence of $\mathbf{x}$ (or at least those induced from microstates of $\mathbf{x}$) are close together.  Thus, the second main ingredient in the theorem is to show that exponential concentration of measure for $\mu^{(k)}$ together with $h(\cN:\cM) = 0$ causes such a ``collapse'' of the microstate space, which is the conclusion of the following proposition.  We state the proposition is a slightly more general setting, since we believe it has independent interest.

\begin{prop} \label{prop:microstatecollapse}
Let $(\cM,\tau)$ be generated by the self-adjoint tuple $\mathbf{x} \in \cM_{sa}^I$.  Let $\mathbf{y} \in \cM_{sa}^{I'}$, where $I'$ is an arbitrary index set.  Let $z \in \cM$ be a self-adjoint operator with diffuse spectrum.  Let $\cN = \mathrm{W}^*(\mathbf{y},z)$ and suppose that $h(\cN:\cM) = 0$.

Let $\mathbf{R} \in (0,+\infty)^I$, $\mathbf{R}' \in (0,+\infty)^{I'}$, and $R_0 \in (0,+\infty)$ be such that $\norm{x_i} < R_i$ for $i \in I$, $\norm{y_i} < R_i'$ for $i \in I'$, and $\norm{z} < R_0$.  Let $\mathbf{y} = \mathbf{f}(\mathbf{x})$ for some $\mathbf{f} \in \mathcal{F}_{\mathbf{R}, \mathbf{R}'}$.

Let $(C^{(k)})_{k\in\mathbb{N}}$ be a sequence of microstates for $z$ with $\norm{C^{(k)}} < R_0$.  Let $n(k) \to \infty$, and let $\mu^{(k)}$ be a probability measure on $M_{n(k)}(\C)^I$ supported on $\{\mathbf{A}: \norm{A_i} \leq R_i\}$.  Suppose that $\mu^{(k)}$ is asymptotically supported on the microstates of $\mathbf{x}$ relative to $C^{(k)} \rightsquigarrow z$; that is,
\[
\mu^{(k)}( \Gamma_{(\mathbf{R},R_0),n(k)}(\mathbf{x} | C^{(k)} \rightsquigarrow z; \mathcal{U}) ) \to 1
\]
for every neighborhood $\mathcal{U}$ of the non-commutative law of $\mathbf{x}$.  Furthermore, suppose that $\mu^{(k)}$ has exponential concentration.

Then there exist tuples $\mathbf{B}^{(k)} \in M_{n(k)}(\C)_{sa}^{I'}$ with $\norm{B_i^{(k)}} \leq R_i'$ such that for every finite $F \subseteq I'$ and every $\eps > 0$, we have
\[
\lim_{k \to \infty} (\mathbf{f}_* \mu^{(k)})(N_{F,\eps}(\mathbf{B}^{(k)})) = 1.
\]
\end{prop}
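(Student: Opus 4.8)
The plan is to push the random matrix models forward through $\mathbf{f}$ and then play the exponential concentration of measure against the vanishing of $h(\cN:\cM)$, so as to force the pushed-forward measures to collapse onto a single small ball in every finite set of coordinates. Write $\nu^{(k)} := \mathbf{f}_*\mu^{(k)}$, a Borel probability measure on $M_{n(k)}(\C)_{sa}^{I'}$ supported on $\{\mathbf{B}: \norm{B_i}\leq R_i'\}$ since $\norm{f_i}_{\mathbf{R},\infty}\leq R_i'$. Before starting I would make one harmless normalization: as $z$ has diffuse spectrum and lies in $\cM=\mathrm{W}^*(\mathbf{x})$, Proposition~\ref{prop:realizationofoperators} gives $f_0\in(\mathcal{F}_{\mathbf{R},\infty})_{sa}$ with $f_0(\mathbf{x})=z$ and $\norm{f_0}_{\mathbf{R},\infty}\leq R_0$; adjoining one index to $I'$ and replacing $\mathbf{y}$ by $(\mathbf{y},z)$, $\mathbf{f}$ by $(\mathbf{f},f_0)$, and $\mathbf{R}'$ by $(\mathbf{R}',R_0)$, I may assume $z$ is a coordinate of $\mathbf{y}$, so $\mathbf{y}$ generates $\cN$. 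This changes nothing, since the conclusion concerns only finite subsets of the original index set and is recovered by restricting the resulting tuples to those coordinates. With this normalization one computes $h(\cN:\cM)=0$ using $\mathbf{y}$ as generators of $\cN$, the coordinate $z$ as the designated diffuse element, and $\mathbf{x}$ as generators of $\cM$.

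The first step is to transport all three hypotheses to $\nu^{(k)}$. By Corollary~\ref{cor:concentrationpushforward}, $(\nu^{(k)})_k$ has exponential concentration. By Corollary~\ref{cor:microstatemapping}(2), for every neighborhood $\mathcal{V}$ of $\lambda_{(\mathbf{x},\mathbf{y},z)}$ there is a neighborhood $\mathcal{U}$ of $\lambda_{(\mathbf{x},z)}$ with $\mathbf{f}\bigl(\Gamma_{(\mathbf{R},R_0),n(k)}(\mathbf{x}\mid C^{(k)}\rightsquigarrow z;\mathcal{U})\bigr)\subseteq\Gamma^{(k)}(\mathcal{V})$, where $\Gamma^{(k)}(\mathcal{V})$ abbreviates the microstate space of $\mathbf{y}$ in the presence of $\mathbf{x}$ relative to $C^{(k)}\rightsquigarrow z$ for $\mathcal{V}$; since $\mu^{(k)}$ is asymptotically supported on $\Gamma_{(\mathbf{R},R_0),n(k)}(\mathbf{x}\mid C^{(k)}\rightsquigarrow z;\mathcal{U})$ for every $\mathcal{U}$, this yields $\nu^{(k)}(\Gamma^{(k)}(\mathcal{V}))\to1$ for every such $\mathcal{V}$. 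Finally, $h(\cN:\cM)=0$ means that for each finite $F\subseteq I'$, each $\rho>0$, and each $\delta>0$ there is a neighborhood $\mathcal{V}$ with $\limsup_k n(k)^{-2}\log K_{F,\rho}(\Gamma^{(k)}(\mathcal{V}))<\delta$.

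Next I would prove the collapse at a fixed scale. Fix finite $F\subseteq I'$ and $\rho>0$, and set $c:=-\limsup_k n(k)^{-2}\log\alpha_{\nu^{(k)}}(F,\rho)$, which is positive by exponential concentration. Taking $\delta=c/2$ in the previous step gives $\mathcal{V}$ with $K_{F,\rho}(\Gamma^{(k)}(\mathcal{V}))\leq e^{(c/2)n(k)^2}$ for all large $k$. Cover $\Gamma^{(k)}(\mathcal{V})$ by that many $(F,\rho)$-neighborhoods centered at points of $\Gamma^{(k)}(\mathcal{V})$; since $\nu^{(k)}(\Gamma^{(k)}(\mathcal{V}))\geq 1/2$ for large $k$, a pigeonhole produces a center $\mathbf{B}^{(k)}$ with
\[
\nu^{(k)}\bigl(N_{F,\rho}(\mathbf{B}^{(k)})\bigr)\ \geq\ \tfrac12\,e^{-(c/2)n(k)^2}\ >\ \alpha_{\nu^{(k)}}(F,\rho)
\]
for all large $k$, the last inequality because $\alpha_{\nu^{(k)}}(F,\rho)\leq e^{-(3c/4)n(k)^2}$ eventually. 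Lemma~\ref{lem:concentrationdichotomy} then upgrades this to $\nu^{(k)}(N_{F,2\rho}(\mathbf{B}^{(k)}))\geq 1-\alpha_{\nu^{(k)}}(F,\rho)\to 1$. Thus for every finite $F\subseteq I'$ and every $\eps>0$ there is a tuple, depending on $F$ and $\eps$, with each coordinate of operator norm at most the corresponding $R_i'$, whose $(F,\eps)$-neighborhood carries $\nu^{(k)}$-mass tending to $1$.

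To obtain a single tuple that works simultaneously for all $(F,\eps)$, I would take barycenters: set $B_i^{(k)}:=\int B_i\,d\nu^{(k)}(\mathbf{B})$, which lies in the operator-norm ball of radius $R_i'$ because that ball is closed and convex. Given finite $F\subseteq I'$ and $\eps>0$, pick a tuple $\mathbf{C}^{(k)}$ as above for $(F,\eps/8)$, so $\Omega^{(k)}:=N_{F,\eps/4}(\mathbf{C}^{(k)})$ has $\nu^{(k)}(\Omega^{(k)})\to 1$; then for $i\in F$,
\[
\norm{B_i^{(k)}-C_i^{(k)}}_2\ \leq\ \int\norm{B_i-C_i^{(k)}}_2\,d\nu^{(k)}(\mathbf{B})\ \leq\ \tfrac{\eps}{4}\,\nu^{(k)}(\Omega^{(k)})+2R_i'\bigl(1-\nu^{(k)}(\Omega^{(k)})\bigr)\ \longrightarrow\ \tfrac{\eps}{4},
\]
so for large $k$ it is below $\eps/2$, whence $\Omega^{(k)}\subseteq N_{F,\eps}(\mathbf{B}^{(k)})$ and $\nu^{(k)}(N_{F,\eps}(\mathbf{B}^{(k)}))\to 1$; discarding the auxiliary coordinate returns a tuple valid for $\mathbf{f}_*\mu^{(k)}$, as required. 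I expect the collapse step to be the real obstacle: it requires quantitatively balancing the \emph{subexponential} covering bound from $h(\cN:\cM)=0$ against the \emph{exponentially small} concentration function, so that some pigeonhole ball is forced to exceed the concentration function and Lemma~\ref{lem:concentrationdichotomy} can take over. The remaining points — matching the generating-set convention in the definition of $h$ (absorbing $z$ into $\mathbf{y}$) and passing from one ball per scale to a single tuple (the barycenter) — are routine bookkeeping.
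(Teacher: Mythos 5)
Your proof is correct and follows essentially the same route as the paper's: transport the hypotheses to $\mathbf{f}_*\mu^{(k)}$ via Corollaries \ref{cor:microstatemapping} and \ref{cor:concentrationpushforward}, pigeonhole a subexponential $(F,\rho)$-cover of the microstate space against the exponentially small concentration function, upgrade via Lemma \ref{lem:concentrationdichotomy}, and then pass to the barycenter $B_i^{(k)}=\int B_i\,d\nu^{(k)}$ to get one tuple for all $(F,\eps)$ (the paper writes this as $\E[f_i(\mathbf{X}^{(k)})]$). The only blemish is a harmless constant: applying Lemma \ref{lem:concentrationdichotomy} to $\Omega=N_{F,\rho}(\mathbf{B}^{(k)})$ yields mass $\geq 1-\alpha$ for $N_{F,3\rho}(\mathbf{B}^{(k)})$, not $N_{F,2\rho}(\mathbf{B}^{(k)})$, which is immaterial since $\rho$ is arbitrary.
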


\begin{proof}
First, observe that by Corollary \ref{cor:microstatemapping}, $\mathbf{f}_* \mu^{(k)}$ is asymptotically supported on the microstate spaces for $\mathbf{y}$ in the presence of $\mathbf{x}$ relative to $C^{(k)} \rightsquigarrow z$, that is for every neighborhood $\mathcal{U}$ of the law of $(\mathbf{x}, \mathbf{y}, z)$, we have
\[
\lim_{k \to \infty} (\mathbf{f}_* \mu^{(k)})(\Gamma_{\mathbf{R},k}(\mathbf{y}:\mathbf{x}| C^{(k)} \rightsquigarrow z; \mathcal{U})) = 1.
\]
Second, observe that $\mathbf{f}_* \mu^{(k)}$ has exponential concentration by Corollary \ref{cor:concentrationpushforward}.

Now fix $\eps > 0$ and a finite index set $F \subseteq I'$.  Let
\[
\eta := -\limsup_{k \to \infty} \frac{1}{n(k)^2} \log \alpha_{\mathbf{f}_* \mu^{(k)}}(F,\eps/3),
\]
which is strictly positive because $\mathbf{f}_* \mu^{(k)}$ has exponential concentration.  Because $h(\cN:\cM) = 0$, there exists a neighborhood $\mathcal{U}$ of the law of $(\mathbf{x},\mathbf{y},z)$ such that
\[
\limsup_{k \to \infty} \frac{1}{n(k)^2} \log K_{F,\eps}(\Gamma_{(\mathbf{R},\mathbf{R}',R_0),n(k)}(\mathbf{y}: \mathbf{x} | C^{(k)} \rightsquigarrow z; \mathcal{U})) < \frac{\eta}{4}.
\]
Thus, for sufficiently large $k$, we have
\[
K_{F,\eps/3}(\Gamma_{(\mathbf{R},\mathbf{R}',R_0),n(k)}(\mathbf{y}: \mathbf{x} | C^{(k)} \rightsquigarrow z; \mathcal{U})) <  e^{n(k)^2 \eta/4}
\]
and at the same time, since $\mathbf{f}_* \mu^{(k)}$ is asymptotically supported on these microstate spaces, we have
\[
(\mathbf{f}_* \mu^{(k)})(\Gamma_{(\mathbf{R},\mathbf{R}',R_0),n(k)}(\mathbf{y}: \mathbf{x} | C^{(k)} \rightsquigarrow z; \mathcal{U})) \geq 1/2.
\]
Thus, the microstate space can be covered by the $(F,\eps/3)$-neighborhoods of $e^{n(k)^2 \eta / 4}$ many points from the microstate space, while the measure of the whole is at least $1/2$.  So by the pigeonhole principle, there is some $\mathbf{B}^{(k,F,\eps)} \in M_{n(k)}(\C)_{sa}^{I'}$ in the microstate space such that
\[
(\mathbf{f}_* \mu^{(k)})(N_{F,\eps/3}(\mathbf{B}^{(k,F,\eps)})) \geq \frac{1}{2} e^{-n(k)^2 \eta / 4}.
\]
For sufficiently large $k$,
\[
(\mathbf{f}_* \mu^{(k)})(N_{F,\eps/3}(\mathbf{B}^{(k,F,\eps)})) > e^{-n(k)^2 \eta /2} > \alpha_{\mathbf{f}_* \mu^{(k)}}(F,\eps/3),
\]
where the last inequality follows from our choice of $\eta$.  Therefore, by Lemma \ref{lem:concentrationdichotomy}, we have for sufficiently large $k$ that
\begin{align*}
(\mathbf{f}_* \mu^{(k)})(N_{F,\eps}(\mathbf{B}^{(k,F,\eps)})) &= (\mathbf{f}_* \mu^{(k)})(N_{F,2\eps/3}(N_{F,\eps/3}(\mathbf{B}^{(k,F,\eps)}))) \\
&\geq 1 - \alpha_{\mathbf{f}_* \mu^{(k)}}(F,\eps/3) \\
&\geq 1 - e^{-n(k)^2 \eta / 2}.
\end{align*}

To complete the proof, we must arrange that the same $\mathbf{B}^{(k)}$ works for all $(F,\eps)$.  Let $\mathbf{X}^{(k)}$ be a random matrix tuple given by the probability distribution $\mu^{(k)}$ and let $B_i^{(k)} = \E[f_i(\mathbf{X}^{(k)})]$.  Then for every $(F,\eps)$, the probability that $\norm{f_i(\mathbf{X}^{(k)}) - B_i^{(k,F,\eps)}}_2 \leq \eps$ for $i \in F$ tends to $1$.  Since this random variable is also uniformly bounded, we have
\[
\limsup_{k \to \infty} \norm{\E[f_i(\mathbf{X}^{(k)})] - B_i^{(k,F,\eps)}}_2 \leq \eps.
\]
Thus, since $B_i^{(k)} = \E[f_i(\mathbf{X}^{(k)})]$, the probability that $\norm{f_i(\mathbf{X}^{(k)}) - B_i^{(k)}}_2 \leq 2 \eps$ for $i \in F$ tends to $1$.  Since $(F,\eps)$ was arbitrary, we are done.
\end{proof}

Now we finish the proof of our first main theorem.

\begin{proof}[Proof of Theorem \ref{thm:main}]
Assume the setup of \S \ref{subsec:thm1setup}.  In particular, suppose that $\cP \leq \cM = \mathrm{W}^*(\mathbf{x})$, and let $\mu^{(k)}$ be a sequence of random matrix measures satisfying (1) - (4).  Let $\cN$ be a subalgebra of $\cM$ with $\cN \cap \cP$ diffuse and $h(\cP:\cM) = 0$.  Let $\mathbf{y} \in \cN_{sa}^{I'}$ be a set of generators for $\cN$ and let $\mathbf{R}' \in (0,+\infty)^{I'}$ satisfy $\norm{y_i} < R_i'$ for $i \in I'$.

In order to evaluate $1$-bounded entropy and apply Proposition~\ref{prop:microstatecollapse}, we fix $z \in \cN \cap \cP$ self-adjoint with diffuse spectrum, and we obtain a sequence of microstates for $z$ as follows.  By Proposition~\ref{prop:realizationofoperators}, we can write $z = g(\mathbf{x})$ for some $g \in (\mathcal{F}_{\mathbf{R},\infty})_{sa}$.  Let $R_0 > \norm{g}_{\mathbf{R},\infty}$.  By Proposition~\ref{prop:approximatelyconstant}, since $z \in \cP$, there is a sequence $(C^{(k)})_{k \in\mathbb{N}}$ of matrices such that $\norm{g(\mathbf{X}^{(k)}) - C^{(k)}}_{L^2(\mu^{(k)},M_k(\C))} \to 0$, where $\mathbf{X}^{(k)}$ is again a random matrix tuple given by $\mu^{(k)}$.

In order to apply Proposition \ref{prop:microstatecollapse}, we want to check that $\mu^{(k)}$ is asymptotically supported on the microstate spaces for $\mathbf{x}$ relative to $C^{(k)} \rightsquigarrow z$.  This is equivalent to saying that the non-commutative law of $(\mathbf{X}^{(k)},C^{(k)})$ converges to $\lambda_{(\mathbf{x},z)}$ in probability.  But we know by Proposition \ref{prop:pushforwardcontinuity} that the non-commutative law of $(\mathbf{X}^{(k)}, g(\mathbf{X}^{(k)}))$ converges to $\lambda_{(\mathbf{x},g(\mathbf{x}))} = \lambda_{(\mathbf{x},z)}$ in probability.  Also, $\norm{g(\mathbf{X}^{(k)}) - C^{(k)}}_2 \to 0$ in probability.  Hence, by Corollary \ref{cor:perturbedconvergence}, we have $\lambda_{(\mathbf{X}^{(k)},C^{(k)})} \to \lambda_{(\mathbf{x},z)}$ in probability.

Therefore, $\mu^{(k)}$ satisfies the conditions needed for Proposition~\ref{prop:microstatecollapse}.  Choose some tuple $\mathbf{y} \in \cN_{sa}^{I'}$ such that $(\mathbf{y},z)$ generates $\cN$.  Let $\mathbf{R}'$ be a tuple of bounds for the operator norms of $y_i$, and let $\mathbf{f} \in \mathcal{F}_{\mathbf{R},\mathbf{R}'}$ such that $\mathbf{y} = \mathbf{f}(\mathbf{x})$.  Applying the proposition to this $\mathbf{y}$, there exists a sequence $\mathbf{B}^{(k)}$ such that for every finite $F \subseteq I'$ and $\eps > 0$,
\[
\lim_{k \to \infty} (\mathbf{f}_* \mu^{(k)})(N_{F,\eps}(\mathbf{B}^{(k)})) = 1.
\]
In particular, $\norm{f_i(\mathbf{X}^{(k)}) - B_i^{(k)}}_2 \to 0$ for every $i \in I'$.  So $\mathbf{y}_i = f_i(\mathbf{x})$ is in $\cP$ by Proposition \ref{prop:approximatelyconstant}.  But $(\mathbf{y},z)$ generates $\cN$, hence $\cN \subseteq \cP$.
\end{proof}

\section{Examples from Free Probability and Random Matrix Theory} \label{sec:examples}

\subsection{The Generator MASA in $L(\F_d)$ is a Pinsker Algebra}

In this section, we will deduce from Theorem \ref{thm:main} the following proposition.

\begin{prop} \label{prop:generatorMASA}
Let us express the von Neumann algebra $L(\F_d)$ associated to the free group as the tracial free product $L(\Z) * L(\F_{d-1})$.  Then $L(\Z)$ is a Pinsker algebra in $L(\F_d)$, that is, a maximal subalgebra with $1$-bounded entropy zero in the presence of $L(\F_d)$.
\end{prop}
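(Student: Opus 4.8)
The plan is to deduce the proposition from Theorem~\ref{thm:main} by producing random matrix models for $L(\F_d)$ that exhibit the free product structure $L(\F_d) = L(\Z) * L(\F_{d-1})$ at the level of matrices. Fix a bounded self-adjoint $x_1$ with diffuse spectrum that generates $L(\Z)$, and free semicircular elements $s_2, \dots, s_d$ which are freely independent and free from $x_1$; then $\mathbf{x} = (x_1, s_2, \dots, s_d)$ is a self-adjoint generating tuple for $W^*(x_1) * W^*(s_2, \dots, s_d) \cong L(\Z) * L(\F_{d-1})$, and $\cP := W^*(x_1) = L(\Z)$ is precisely the first free factor, so that $E_\cP$ is the canonical conditional expectation. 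For the matrix models take $X_1^{(k)} = D^{(k)}$ to be a \emph{deterministic} sequence of self-adjoint matrices with $\lambda_{D^{(k)}} \to \lambda_{x_1}$ (e.g.\ diagonal matrices sampling the spectral distribution of $x_1$, with $\norm{D^{(k)}}$ bounded), and take $X_2^{(k)}, \dots, X_d^{(k)}$ to be independent normalized GUE matrices, truncated by a fixed smooth cutoff so as to enforce a uniform operator-norm bound; the truncation changes each $X_j^{(k)}$ only on an event of probability $e^{-c n(k)}$ and so affects none of the limits below.

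Hypotheses~(1)--(3) of Theorem~\ref{thm:main} are then standard. (1) holds by construction. (2), convergence of $\lambda_{\mathbf{X}^{(k)}}$ to $\lambda_{\mathbf{x}}$ in probability, is Voiculescu's asymptotic freeness theorem for independent GUE matrices together with a deterministic matrix (which is asymptotically free from them); the convergence is in fact almost sure. (3), exponential concentration at scale $n(k)^2$, holds because the law of the GUE block is a Gaussian measure on $M_{n(k)}(\C)_{sa}^{d-1}$ whose covariance scales like $1/n(k)$, for which the Gaussian (log-Sobolev) concentration inequality bounds the concentration function with respect to the normalized $2$-norm by $\exp(-c\,n(k)^2\eps^2)$, cf.\ \cite[\S 2.3, \S 4.4]{AGZ2009}; the deterministic coordinate is a point mass, and the truncation, being $1$-Lipschitz in $\norm{\cdot}_2$, cannot destroy concentration.

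The real content is the external averaging property~(4): for every $p \in \C\ip{t_i : i \in I}$ one must have $\lim_k \norm{\E[p(\mathbf{X}^{(k)})]}_2 = \norm{E_\cP[p(\mathbf{x})]}_2$, where $\E$ averages only the GUE coordinates. I would establish this by a \emph{doubling} argument. Let $(\tilde X_2^{(k)}, \dots, \tilde X_d^{(k)})$ be an independent copy of the GUE block and $\tilde{\mathbf{X}}^{(k)} = (D^{(k)}, \tilde X_2^{(k)}, \dots, \tilde X_d^{(k)})$. Since the entrywise expectation commutes with the $*$-operation and $p(\mathbf{X}^{(k)})$, $p(\tilde{\mathbf{X}}^{(k)})$ are independent with the same distribution,
\[
\norm*{\E[p(\mathbf{X}^{(k)})]}_2^2 = \tau_{n(k)}\!\left(\E[p(\mathbf{X}^{(k)})]^*\,\E[p(\tilde{\mathbf{X}}^{(k)})]\right) = \E\!\left[\tau_{n(k)}\!\left(p(\mathbf{X}^{(k)})^*\,p(\tilde{\mathbf{X}}^{(k)})\right)\right].
\]
The tuple $(D^{(k)}, X_2^{(k)}, \dots, X_d^{(k)}, \tilde X_2^{(k)}, \dots, \tilde X_d^{(k)})$ is a deterministic matrix together with $2(d-1)$ independent GUEs, so by asymptotic freeness it converges in law (a.s., hence in probability, and with a uniform bound) to $(x_1, s_2, \dots, s_d, \tilde s_2, \dots, \tilde s_d)$, with the $s_j$, $\tilde s_j$ jointly free semicirculars, all free from $x_1$. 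Hence the right-hand side converges to $\tau\!\left(p(x_1, \mathbf{s})^*\, p(x_1, \tilde{\mathbf{s}})\right)$. Finally, since $x_1$, $\{s_j\}$, $\{\tilde s_j\}$ are freely independent, the algebras $W^*(x_1, \mathbf{s})$ and $W^*(x_1, \tilde{\mathbf{s}})$ are free with amalgamation over $\cP = W^*(x_1)$, so $E_\cP$ is multiplicative on a product of one element from each; together with $E_\cP[p(x_1, \tilde{\mathbf{s}})] = E_\cP[p(x_1, \mathbf{s})]$ (equality of $*$-distributions) this gives
\[
\tau\!\left(p(x_1,\mathbf{s})^*\,p(x_1,\tilde{\mathbf{s}})\right) = \tau\!\left(E_\cP[p(x_1,\mathbf{s})]^*\,E_\cP[p(x_1,\mathbf{s})]\right) = \norm{E_\cP[p(\mathbf{x})]}_2^2,
\]
which is exactly~(4). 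Alternatively~(4) could be checked by a direct Wick/genus computation, but the doubling argument seems cleaner.

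Granting~(1)--(4), Theorem~\ref{thm:main} shows that any $\cN \leq L(\F_d)$ with $h(\cN : L(\F_d)) = 0$ and $\cN \cap L(\Z)$ diffuse satisfies $\cN \subseteq L(\Z)$. Since $L(\Z)$ is diffuse and hyperfinite and $L(\F_d)$ is Connes-embeddable, Property~\ref{I:hyperfinite has 1-bdd ent zero} gives $h(L(\Z):L(\F_d)) = 0$; and if $L(\Z) \leq \cQ \leq L(\F_d)$ with $h(\cQ:L(\F_d)) = 0$, then $\cQ \cap L(\Z) = L(\Z)$ is diffuse, forcing $\cQ = L(\Z)$. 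Thus $L(\Z)$ is a Pinsker algebra in $L(\F_d)$. I expect the main obstacle to be hypothesis~(4): finding the correct deterministic-plus-Gaussian model and identifying its partial expectation with $E_{L(\Z)}$; hypotheses~(1)--(3) are routine facts from random matrix theory.
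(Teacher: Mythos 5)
Your proof is correct, and its architecture is the same as the paper's: a deterministic model for the $L(\Z)$ side, a unitarily invariant random model for the other free factor, concentration via a log--Sobolev inequality, and the external averaging property via exactly the doubling trick you describe (an independent copy of the random block, asymptotic freeness of the doubled family, and freeness with amalgamation of $\cM_1 * \cM_2$ and $\cM_1 * \tilde{\cM}_2$ over $\cM_1$, as in \cite[Proposition 4.1]{Hou07}). The one genuine difference is the choice of random model for the second factor: the paper (Proposition \ref{prop:freeproduct}) fixes deterministic microstates $\mathbf{Y}^{(k)}$ for $\cM_2$ and conjugates them by a Haar random unitary $U^{(k)}$, drawing concentration from the log--Sobolev constant of $U(n)$ \cite[Theorem 15]{Meckes2013}, whereas you use GUE blocks directly and draw concentration from the Gaussian log--Sobolev inequality. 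Your variant is perfectly adequate for the generator MASA, but it is tied to $L(\F_{d-1})$ being generated by free semicirculars; the Haar-conjugation model proves the same absorption statement for $\cM_1 * \cM_2$ with \emph{arbitrary} finitely generated Connes-embeddable factors, which is what the paper needs for the more general Proposition \ref{prop:freeproduct} and as a warm-up for Theorem \ref{thm:main2}. (Your GUE model is instead the quadratic special case of the convex-potential models of \S 5.2.) The only points deserving a little more care in a write-up are the ones you already flag: the operator-norm truncation must be checked to preserve both the convergence of $\E[p(\mathbf{X}^{(k)})]$ and concentration (it does, being $\norm{\cdot}_2$-Lipschitz and altering the matrices only on an exponentially small event), and passing from convergence in probability of $\tau_{n(k)}\bigl(p(\mathbf{X}^{(k)})^* p(\tilde{\mathbf{X}}^{(k)})\bigr)$ to convergence of its expectation uses the uniform boundedness supplied by that truncation.
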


In particular, this implies that $L(\Z)$ is a maximal amenable subalgebra and a maximal subalgebra with property Gamma.  Indeed, if $\cN \geq L(\Z)$ is amenable or Gamma, then $h(\cN: \cN) = 0$, and hence $h(\cN: L(\F_d)) \leq h(\cN: \cN) = 0$.  So if $L(\Z)$ is a Pinsker algebra, then $\cN \subseteq L(\Z)$.

Now $L(\Z)$ is diffuse abelian and has $h(L(\Z): L(\F_d)) = 0$.  So for $L(\Z)$ to be a Pinsker algebra, it would be sufficient to show that if $\cN \cap L(\Z)$ is diffuse and $h(\cN: L(\F_d)) = 0$, then $\cN \subseteq L(\Z)$, which is precisely the conclusion of Theorem \ref{thm:main}.  Actually, it is no more difficult to establish the conclusion of Theorem \ref{thm:main} for the free product of two arbitrary finitely generated Connes-embeddable von Neumann algebras $\cM_1$ and $\cM_2$ rather than only $L(\Z)$ and $L(\F_{d-1})$.

\begin{prop} \label{prop:freeproduct}
Let $\cM_1$ and $\cM_2$ be Connes-embeddable tracial $\mathrm{W}^*$-algebras, and suppose that $\cM_1$ is generated by $\mathbf{x} = (x_1,\dots,x_m)$ and $\cM_2$ is generated by $\mathbf{y} = (y_1,\dots,y_n)$.  Let $(\cM,\tau)$ be the (tracial) free product of $\cM_1$ and $\cM_2$.  Then there exist random matrix models for $\mathbf{x}$ and $\mathbf{y}$ satisfying the hypothesis of Theorem~\ref{thm:main} with $\cP = \cM_1$.  Hence, by that theorem, if $\cN\leq \cM$, and if $\cN \cap \cP$ is diffuse with $h(\cN: \cM) = 0$, then $\cN \subseteq \cP$.
\end{prop}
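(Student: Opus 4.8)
The plan is to construct the random matrix models explicitly from deterministic microstates for $\cM_1$ and $\cM_2$ twisted by a Haar-random unitary, and then verify conditions (1)--(4) of \S\ref{subsec:thm1setup} in turn, the fourth being the one requiring real work. Concretely: since $\cM_1$ and $\cM_2$ are Connes-embeddable and $\mathbf{x},\mathbf{y}$ are bounded, a standard diagonal argument yields a single sequence $n(k) \to \infty$ and deterministic tuples $\mathbf{A}^{(k)} \in M_{n(k)}(\C)_{sa}^m$, $\mathbf{A}'^{(k)} \in M_{n(k)}(\C)_{sa}^n$ with uniformly bounded operator norms, $\lambda_{\mathbf{A}^{(k)}} \to \lambda_{\mathbf{x}}$ and $\lambda_{\mathbf{A}'^{(k)}} \to \lambda_{\mathbf{y}}$. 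Take $U_k$ Haar-distributed on $U(n(k))$, set $\mathbf{X}^{(k)} = \mathbf{A}^{(k)}$ and $\mathbf{Y}^{(k)} = U_k \mathbf{A}'^{(k)} U_k^*$, and let $\mu^{(k)}$ be the law of $(\mathbf{X}^{(k)}, \mathbf{Y}^{(k)})$ on $M_{n(k)}(\C)_{sa}^{m+n}$. Condition (1) is then immediate, and condition (2) follows from Voiculescu's asymptotic freeness theorem \cite{VoicAsyFree,Voiculescu1998}: $(\mathbf{X}^{(k)},\mathbf{Y}^{(k)})$ converges in $*$-distribution, in probability (indeed almost surely), to a freely independent pair with marginals $\lambda_{\mathbf{x}}$, $\lambda_{\mathbf{y}}$, i.e.\ to $(\mathbf{x},\mathbf{y})$ in the tracial free product $\cM = \cM_1 * \cM_2$.

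For condition (3), since the $\mathbf{X}^{(k)}$-coordinates are deterministic and contribute nothing to the concentration function, it suffices to establish exponential concentration for the law of $\mathbf{Y}^{(k)}$. For any finite $F$, the map $U \mapsto (U A_j'^{(k)} U^*)_{j \in F}$ from $U(n(k))$, equipped with the normalized Hilbert--Schmidt metric $d(U,V) = \tau_{n(k)}((U-V)^*(U-V))^{1/2}$, into $M_{n(k)}(\C)_{sa}^F$ with the metric $\max_{j \in F} \norm{\cdot}_2$, is Lipschitz with constant at most $2 \max_j \norm{A_j'^{(k)}}$, which is bounded uniformly in $k$. Since Haar measure on $U(n)$ satisfies a logarithmic Sobolev inequality that gives exponential concentration at the scale $n^2$ for $d$ (see \cite[\S 4.4]{AGZ2009}), and push-forward under a uniformly Lipschitz map preserves this, $\mu^{(k)}$ has exponential concentration in the sense of Definition \ref{defn:concentration}.

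The heart of the argument, and the step I expect to be the main obstacle, is condition (4), the external averaging property with $\cP = \cM_1$. Fix a noncommutative $*$-polynomial $p$ in the $m+n$ variables and write $\tilde p = p^*$. Because $\E[\,\cdot\,]$ of a random matrix is deterministic, introducing an independent Haar copy $U_k'$ and using $\norm{M}_2^2 = \tau_{n(k)}(M^*M)$,
\[
\bigl\| \E[p(\mathbf{X}^{(k)},\mathbf{Y}^{(k)})] \bigr\|_2^2 = \E\Bigl[ \tau_{n(k)}\bigl( \tilde p(\mathbf{A}^{(k)}, U_k \mathbf{A}'^{(k)} U_k^*)\, p(\mathbf{A}^{(k)}, U_k' \mathbf{A}'^{(k)} U_k'^*) \bigr) \Bigr].
\]
By invariance of Haar measure I would replace $U_k'$ by $V_k U_k$ with $V_k$ Haar and independent of $U_k$; writing $\mathbf{B}^{(k)} = U_k \mathbf{A}'^{(k)} U_k^*$, the right-hand side becomes $\E[\tau_{n(k)}(\tilde p(\mathbf{A}^{(k)}, \mathbf{B}^{(k)})\, p(\mathbf{A}^{(k)}, V_k \mathbf{B}^{(k)} V_k^*))]$. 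Asymptotic freeness applied to the deterministic tuples $\mathbf{A}^{(k)}, \mathbf{A}'^{(k)}$ together with the independent Haar unitaries $U_k, V_k$ shows that $(\mathbf{A}^{(k)}, \mathbf{B}^{(k)}, V_k)$ converges in law in probability to a triple $(\mathbf{x}, \mathbf{b}, v)$ with $\mathbf{b}$ distributed as $\mathbf{y}$, $v$ a Haar unitary, $W^*(\mathbf{x})$ free from $W^*(\mathbf{b})$, and $v$ free from $W^*(\mathbf{x},\mathbf{b})$; since the traced quantity is uniformly bounded, convergence in probability upgrades to convergence of expectations, so the limit of the displayed quantity is $\tau(\tilde p(\mathbf{x},\mathbf{b})\, p(\mathbf{x}, v\mathbf{b}v^*))$.

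To finish, set $\mathbf{b}' = v\mathbf{b}v^*$. The standard fact that a Haar unitary free from a subalgebra conjugates it to a free one gives that $W^*(\mathbf{b}')$ is free from $W^*(\mathbf{x},\mathbf{b})$; together with freeness of $W^*(\mathbf{x})$ and $W^*(\mathbf{b})$ and associativity of freeness, $W^*(\mathbf{x})$, $W^*(\mathbf{b})$, $W^*(\mathbf{b}')$ are mutually free, with $\mathbf{b}, \mathbf{b}'$ both distributed as $\mathbf{y}$. The trace-preserving $*$-isomorphism $W^*(\mathbf{x},\mathbf{b}) \to W^*(\mathbf{x},\mathbf{b}')$ fixing $\mathbf{x}$ and sending $\mathbf{b} \mapsto \mathbf{b}'$ carries the orthogonal complement of $L^2(W^*(\mathbf{x}))$ in $L^2(W^*(\mathbf{x},\mathbf{b}))$ into the corresponding complement for $\mathbf{b}'$, and by the reduced-word structure of the free product $W^*(\mathbf{x}) * W^*(\mathbf{b}) * W^*(\mathbf{b}')$ these two complements are orthogonal in $L^2$; hence $\tau(\tilde p(\mathbf{x},\mathbf{b})\, p(\mathbf{x},\mathbf{b}')) = \norm{E_{W^*(\mathbf{x})}[p(\mathbf{x},\mathbf{b})]}_2^2$. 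Since $(\mathbf{x},\mathbf{b})$ has the same joint law as $(\mathbf{x},\mathbf{y})$ in $\cM$, this equals $\norm{E_{\cM_1}[p(\mathbf{x},\mathbf{y})]}_2^2$, establishing condition (4). With (1)--(4) verified, Theorem \ref{thm:main} applied with $\cP = \cM_1$ gives the conclusion. The delicate point is precisely the above: arranging the substitution $U_k' = V_k U_k$ so that the expectation over the twisting unitaries turns into an honest free-independence statement, and then recognizing the resulting trace as $\norm{E_{\cM_1}[\,\cdot\,]}_2^2$ through the conjugation lemma and the $L^2$-geometry of the free product.
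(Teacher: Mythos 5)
Your proposal is correct and follows essentially the same route as the paper: deterministic microstates twisted by a Haar unitary, concentration via the log-Sobolev inequality on $U(n)$ and Lipschitz push-forward, and the external averaging property via an independent second Haar copy together with asymptotic freeness. The only cosmetic difference is at the last step, where you justify $\tau(p(\mathbf{x},\mathbf{b})^*p(\mathbf{x},\mathbf{b}')) = \norm{E_{\cM_1}[p(\mathbf{x},\mathbf{y})]}_2^2$ by direct $L^2$-orthogonality of the complements of $L^2(W^*(\mathbf{x}))$ in the reduced free product, whereas the paper cites the equivalent fact that $\cM_1 * \cM_2$ and $\cM_1 * \tilde{\cM}_2$ are free with amalgamation over $\cM_1$.
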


This proposition is actually a special case of Theorem~\ref{thm:main2}, which we prove in the next section.  However, the proof of Proposition~\ref{prop:freeproduct} requires less technical preparation and already covers the case of the generator MASA in a free group factor.  Therefore, we will prove the Proposition here directly, both as an application of Theorem~\ref{thm:main} and as motivation for the proof of Theorem~\ref{thm:main2}.

\begin{proof}[Proof of Proposition~\ref{prop:freeproduct}]
Because $\cM_1$ and $\cM_2$ are Connes-embeddable, there exist (deterministic) tuples $\mathbf{X}^{(k)} = (X_1^{(k)}, \dots, X_m^{(k)})$ and $\mathbf{Y}^{(k)} = (Y_1^{(k)}, \dots, Y_n^{(k)})$ of $k \times k$ self-adjoint matrices satisfying:
\begin{itemize}
    \item $\norm{X_i^{(k)}} \leq \norm{x_i}$ and $\norm{Y_i^{(k)}} \leq \norm{y_i}$.
    \item $\lambda_{\mathbf{X}^{(k)}} \to \lambda_{\mathbf{x}}$ and $\lambda_{\mathbf{Y}^{(k)}} \to \lambda_{\mathbf{y}}$.
\end{itemize}
Let $U^{(k)}$ be a $k \times k$ Haar random unitary matrix, and consider the random matrix tuple $(\mathbf{X}^{(k)}, U^{(k)} \mathbf{Y}^{(k)} (U^{(k)})^*)$.  We claim that these random matrix models satisfy the hypothesis of Theorem \ref{thm:main} with respect to the generating set $(\mathbf{x},\mathbf{y})$ and the subalgebra $\cM_1 \leq \cM$.

(1) The random matrices are bounded in operator norm by construction.

(2) We must show that $\lambda_{(\mathbf{X}^{(k)}, U^{(k)} \mathbf{Y}^{(k)} (U^{(k)})^*)} \to \lambda_{(\mathbf{x},\mathbf{y})}$ in probability.  Using Voiculescu's asymptotic freeness theorem, specifically \cite[Corollary 2.13]{Voiculescu1998}, if $W^{(k)}$ is another independent Haar unitary, then the non-commutative law of $(W^{(k)} \mathbf{X}^{(k)} (W^{(k)})^*, U^{(k)} \mathbf{Y}^{(k)} (U^{(k)})^*)$ converges in probability to that of $(\mathbf{x},\mathbf{y})$.  But of course, the probability distribution of $\lambda_{(\mathbf{X}^{(k)}, U^{(k)} \mathbf{Y}^{(k)} (U^{(k)})^*)}$ is the same as that of $\lambda_{(W^{(k)} \mathbf{X}^{(k)} (W^{(k)})^*, U^{(k)} \mathbf{Y}^{(k)} (U^{(k)})^*)}$ by unitary invariance of non-commutative law and the fact that $(W^{(k)})^* U^{(k)}$ is also a Haar unitary.

(3) It is well known in random matrix theory that the Haar measure on the $n \times n$ unitary group satisfies exponential concentration of measure as $n \to +\infty$.  In particular, Meckes and Meckes gave explicit constants for the log-Sobolev inequality on the unitary group in \cite[Theorem 15]{Meckes2013}, which implies exponential concentration of measure for $U^{(k)}$ with rate $k^2$ with respect to the metric given by $\norm{\cdot}_2$.  But $(\mathbf{X}^{(k)}, U^{(k)} \mathbf{Y}^{(k)} (U^{(k)})^*)$ is a Lipschitz function of $U^{(k)}$ and hence also has exponential concentration of measure in the sense of Definition \ref{defn:concentration}.  (For details, refer to \S \ref{subsec:freeproductconcentration} below.)

(4) It remains to show the external averaging property. That is, for every non-commutative polynomial $p$ in $m + n$ variables, we have
\[
\lim_{k \to \infty} \norm{\E[p(\mathbf{X}^{(k)}, U^{(k)} \mathbf{Y}^{(k)} (U^{(k)})^*)]}_2 = \norm{E_{\cM_1}[p(\mathbf{x},\mathbf{y})]}_2,
\]
where $\E$ denotes the classical expectation and $E_{\cM_1}$ denotes the $\mathrm{W}^*$-algebraic conditional expectation.

Let $V^{(k)}$ be another Haar unitary independent from $U^{(k)}$.  Then we can rewrite
\begin{align*}
\norm{ \E[p(\mathbf{X}^{(k)}, U^{(k)} \mathbf{Y}^{(k)} (U^{(k)})^*)]}_2^2 &= \tau_{n(k)}( \E[p(\mathbf{X}^{(k)}, U^{(k)} \mathbf{Y}^{(k)} (U^{(k)})^*)]^* \E[p(\mathbf{X}^{(k)}, V^{(k)} \mathbf{Y}^{(k)} (V^{(k)})^*)]) \\
&= \E[ \tau_{n(k)}( p(\mathbf{X}^{(k)}, U^{(k)} \mathbf{Y}^{(k)} (U^{(k)})^*)^* p(\mathbf{X}^{(k)}, V^{(k)} \mathbf{Y}^{(k)} (V^{(k)})^*) ) ].
\end{align*}

Similar to the proof of (2), it follows from \cite[Corollary 2.13]{Voiculescu1998} that the non-commutative law of $(\mathbf{X}^{(k)}, U^{(k)} \mathbf{Y}^{(k)} (U^{(k)})^*, V^{(k)} \mathbf{Y}^{(k)} (V^{(k)})^*)$ converges in probability to the non-commutative law of $(\mathbf{x}, \mathbf{y}, \tilde{\mathbf{y}})$, where $\tilde{\mathbf{y}}$ is a copy of $\mathbf{y}$ freely independent from $\mathbf{x}$ and $\mathbf{y}$.  In other words, $(\mathbf{x},\mathbf{y},\tilde{\mathbf{y}})$ are natural generators for $\cM_1 * \cM_2 * \tilde{\cM}_2$, where $\tilde{\cM}_2$ is a copy of $\cM_2$.

Therefore,
\[
\lim_{k \to \infty} \norm{ \E[p(\mathbf{X}^{(k)}, U^{(k)} \mathbf{Y}^{(k)} (U^{(k)})^*)]}_2^2 = \tau\left( p(\mathbf{x}, \mathbf{y}) ^{*}p(\mathbf{x}, \tilde{\mathbf{y}})\right).
\]
Now note that in the algebra $\cM_1 * \cM_2 * \tilde{\cM}_2$, the two subalgebras $\cM_1 * \cM_2$ and $\cM_1 * \tilde{\cM}_2$ are freely independent with amalgamation over $\cM_1$ (see e.g. \cite[Proposition 4.1]{Hou07}). Therefore, we have
\[
E_{\cM_1}[p(\mathbf{x}, \mathbf{y})^* p(\mathbf{x}, \tilde{\mathbf{y}})] = E_{\cM_1}[p(\mathbf{x},\mathbf{y})]^* E_{\cM_1}[p(\mathbf{x}, \tilde{\mathbf{y}})]
\]
Given that $\cM_1 * \cM_2 \cong \cM_1 * \tilde{\cM}_2$, we have $E_{\cM_1}[p(\mathbf{x},\mathbf{y})] = E_{\cM_1}[p(\mathbf{x},\tilde{\mathbf{y}}]$.  Thus,
\begin{align*}
\lim_{k \to \infty} \norm{ \E[p(\mathbf{X}^{(k)}, U^{(k)} \mathbf{Y}^{(k)} (U^{(k)})^*)]}_2^2
&= \tau\left(E_{\cM_1}[p(\mathbf{x}, \mathbf{y})^* p(\mathbf{x}, \tilde{\mathbf{y}})]\right) \\
&= \tau\left( E_{\cM_1}[p(\mathbf{x},\mathbf{y}]^* E_{\cM_1}[p(\mathbf{x},\mathbf{y})]\right) \\
&= \norm{E_{\cM_1}[p(\mathbf{x},\mathbf{y})]}_2^2,
\end{align*}
which completes the proof of the external averaging property.
\end{proof}

\subsection{Random Matrix Models with Convex Interaction}

A standard random matrix model for the free group factor $L(\F_d)$ is the Gaussian unitary ensemble.  One can consider a tuple of $k \times k$ self-adjoint random matrices $\mathbf{S}^{(k)} = (S_1^{(k)}, \dots, S_d^{(k)})$ with the probability density
\[
\frac{1}{\zeta^{(k)}} e^{-k^2 \sum_{j=1}^d \tau_k(A_j^2) / 2}\,d\mathbf{A},
\]
on $M_k(\C)_{sa}^d$ where $d\mathbf{A}$ is Lebesgue measure and $\zeta^{(k)}$ is a normalizing constant.  Then $\lambda_{\mathbf{S}^{(k)}}$ converges in probability to the non-commutative law of $\mathbf{s} = (s_1,\dots,s_d)$ where the $s_j$'s are freely independent and each have a semicircular spectral distribution $(1/2\pi) \sqrt{4 - t^2} \mathbf{1}_{[-2,2]}(t)\,dt$; see \cite[Theorem 2.2]{Voiculescu1998}.

A natural generalization is a random matrix tuple $\mathbf{X}^{(k)}$ given by density
\[
\frac{1}{\zeta^{(k)}} e^{-k^2 V^{(k)}(\mathbf{A})}\,d\mathbf{A},
\]
where $V^{(k)}(\mathbf{A}) = \tau_k(p(\mathbf{A}))$ for some non-commutative polynomial $p$, such that $e^{-k^2 V^{(k)}}$ is integrable.  These models are much better understood when $V^{(k)}$ is close to the quadratic case, or is at least convex; see \cite{GMS2006,GS2009,GS2014}.  In particular, sufficient assumptions on $V^{(k)}$ will guarantee that $\lambda_{\mathbf{X}^{(k)}}$ converges in probability to $\lambda_{\mathbf{x}}$ for some non-commutative tuple $\mathbf{x}$, and that $\mathrm{W}^*(\mathbf{x}) \cong L(\F_d)$.

Our present goal is to explain how the conditional densities of these random matrix models naturally give rise to random matrix models satisfying the hypotheses of Theorem \ref{thm:main} for the subalgebra generated by a subset of our original generators (with $n(k) = k$).  Changing notation slightly, we consider a function $V^{(k)}: M_k(\C)_{sa}^{m+n} \to \R$ denoted $V^{(k)}(\mathbf{A},\mathbf{B})$ for $\mathbf{A} \in M_k(\C)_{sa}^m$ and $\mathbf{B} \in M_k(\C)_{sa}^n$.  Let $(\mathbf{X}^{(k)}, \mathbf{Y}^{(k)})$ be the corresponding random matrices, and suppose that $(\mathbf{x},\mathbf{y})$ is a tuple of non-commutative random variables describing the large $k$ limit.

The idea is to consider the probability distribution of $(\mathbf{X}^{(k)}, \mathbf{Y}^{(k)})$ conditioned on $\mathbf{X}^{(k)}$ being equal to some $\mathbf{A}^{(k)}$.  In other words, fix a particular sequence $\mathbf{A}^{(k)}$ in $M_k(\C)_{sa}^m$ with $\lambda_{\mathbf{A}^{(k)}} \to \lambda_{\mathbf{x}}$ and then choose $\mathbf{B}^{(k)}$ randomly according to the conditional distribution
\begin{equation} \label{eq:conditionaldistribution}
\frac{1}{\zeta^{(k)}(\mathbf{A}^{(k)})} e^{-k^2 V^{(k)}(\mathbf{A}^{(k)},\mathbf{B})}\,d\mathbf{B}.
\end{equation}
In order to obtain matrix models that are bounded in operator norm, we will replace $B_i^{(k)}$ with $\psi(B_i^{(k)})$ where $\psi: \R \to \R$ is a smooth bounded function satisfying $\psi(t) = t$ for $|t| \leq \norm{y_i}$.  This should not affect the convergence in law because $\psi(\mathbf{y}) = \mathbf{y}$.

Since we will invoke the results of \cite{Jekel2019}, we require the same technical setup.  Rather than assuming $V^{(k)}(\mathbf{A},\mathbf{B}) = \tau_k(p(\mathbf{A},\mathbf{B}))$, we assume that $V^{(k)}: M_k(\C)_{sa}^{m+n} \to \R$ is differentiable and the gradient $DV^{(k)}$ is \emph{asymptotically approximable by trace polynomials}.  In the notation of this paper, the asymptotic approximation condition means that for every $\mathbf{R} \in (0,+\infty)^{m+n}$, there exists $\mathbf{f} \in (\mathcal{F}_{\mathbf{R},2})_{sa}^{m+n}$ such that
\begin{equation} \label{eq:asymptoticapproximation}
\lim_{k \to \infty} \sup_{\substack{(\mathbf{A},\mathbf{B}) \in M_k(\C)_{sa}^{m+n} \\ \norm{(\mathbf{A},\mathbf{B})_i} \leq R_i}} \norm{DV^{(k)}(\mathbf{A},\mathbf{B}) - \mathbf{f}(\mathbf{A},\mathbf{B})}_2 = 0.
\end{equation}
We further assume that $V^{(k)}$ is invariant under unitary conjugation and that for some $0 < c < C$, the Hessian $HV^{(k)}$ satisfies $c \leq HV^{(k)} \leq C$ (see \cite[\S 1.3 and 2.1]{Jekel2019} for precise details).  Under these conditions, \cite[Theorem 4.1]{Jekel2018} shows that the random matrix models converge in non-commutative moments in probability to some $\mathbf{x}$.

\begin{remark}
Although the assumption $HV^{(k)} \leq C$ rules out the case where $V^{(k)}(\mathbf{A},\mathbf{B}) = \tau_k(p(\mathbf{A},\mathbf{B}))$ and $p$  is a non-commutative polynomial of degree greater than $2$, the condition \eqref{eq:asymptoticapproximation} gives us the flexibility to modify our $V^{(k)}$ outside an operator-norm ball, so that the random matrix models can still achieve the same limiting distribution as if $V^{(k)}(\mathbf{A},\mathbf{B}) = \tau_k(p(\mathbf{A},\mathbf{B}))$ when $p$ is non-commutative polynomial that is a small perturbation of a quadratic; see \cite[\S 8.3]{Jekel2018} or \cite[\S 18]{JekelThesis} for details.
\end{remark}

Now the following result from \cite{Jekel2019} describes the large $k$ behavior of the classical conditional expectation of a certain functions $f(\mathbf{X}^{(k)}, \mathbf{Y}^{(k)})$ given $\mathbf{X}^{(k)}$, and shows that it approximates the non-commutative conditional expectation $E_{\mathrm{W}^*(\mathbf{x})}[f(\mathbf{x},\mathbf{y})]$.  (Beware that $X$ and $Y$ are switched in that paper.)  These results are also contained in \S 15 of the thesis \cite{JekelThesis}.

\begin{thm}[{\cite[Theorem 5.9]{Jekel2019}}] \label{thm:conditionalexpectation}
Let $V^{(k)}: M_k(\C)_{sa}^{m+n} \to \R$ satisfy the hypotheses described above, and let $(\mathbf{X}^{(k)}, \mathbf{Y}^{(k)})$ be the random matrix models.  Let $f^{(k)}: M_k(\C)_{sa}^{m+n} \to M_k(\C)$ and suppose that $f^{(k)}$ is uniformly Lipschitz in $\norm{\cdot}_2$ and that $f^{(k)}$ is asymptotically approximable by trace polynomials.  Let $g^{(k)}(\mathbf{A})$ be the function defined by
\[
g^{(k)}(\mathbf{X}^{(k)}) = \E[f^{(k)}(\mathbf{X}^{(k)},\mathbf{Y}^{(k)}) | \mathbf{X}^{(k)}].
\]
Then $g^{(k)}$ is also asymptotically approximable by trace polynomials.  In fact, if $\mathbf{R}$ is a tuple of bounds for the operator norms of $\mathbf{x}$, then there exists $g \in \mathcal{F}_{\mathbf{R},2}$ satisfying
\[
\lim_{k \to \infty} \sup_{\substack{(\mathbf{A},\mathbf{B}) \in M_k(\C)_{sa}^{m+n} \\ \norm{(\mathbf{A},\mathbf{B})_i} \leq R_i}} \norm{g^{(k)}(\mathbf{A},\mathbf{B}) - g(\mathbf{A},\mathbf{B})}_2 = 0
\]
and $g(\mathbf{x}) = E_{\mathrm{W}^*(\mathbf{x})}[f(\mathbf{x},\mathbf{y})]$.
\end{thm}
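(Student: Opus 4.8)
The plan is to prove the theorem in two stages: a concentration-of-measure stage showing that $g^{(k)}$ records the value of $f^{(k)}$ at a typical configuration and that the whole model concentrates, and a transport stage producing the limit $g$ and identifying it with $E_{\mathrm{W}^*(\mathbf{x})}[f(\mathbf{x},\mathbf{y})]$. Observe first that $g^{(k)}$ depends only on the tuple $\mathbf{A}\in M_k(\C)_{sa}^m$, so we regard it as a function there.

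For the concentration stage, note that the conditional law of $\mathbf{Y}^{(k)}$ given $\mathbf{X}^{(k)}=\mathbf{A}$ has density proportional to $e^{-k^2V^{(k)}(\mathbf{A},\mathbf{B})}\,d\mathbf{B}$, and since $HV^{(k)}(\mathbf{A},\cdot)\geq c$ uniformly, the Bakry--\'Emery criterion yields a log-Sobolev (hence Poincar\'e) inequality for this measure with the standard random-matrix scaling: for any $h$ on $M_k(\C)_{sa}^n$ that is $L$-Lipschitz for $\max_i\norm{\cdot}_2$ with values in $(M_k(\C),\norm{\cdot}_2)$, one has $\int\norm{h-\int h}_2^2\leq C'L^2/k^2$, uniformly in $k$ and $\mathbf{A}$. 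Taking $h=f^{(k)}(\mathbf{A},\cdot)$ gives $\E\norm{f^{(k)}(\mathbf{X}^{(k)},\mathbf{Y}^{(k)})-g^{(k)}(\mathbf{X}^{(k)})}_2^2=O(1/k^2)$; taking $h$ a trace polynomial (and using the unconditional convexity likewise) shows $\tau_k(q(\mathbf{X}^{(k)},\mathbf{Y}^{(k)}))$ concentrates around its mean, which converges to $\tau(q(\mathbf{x},\mathbf{y}))$ by \cite[Theorem 4.1]{Jekel2018}. Differentiating the Gibbs average in $\mathbf{A}$ and bounding the resulting covariance by this variance estimate and $HV^{(k)}\leq C$ shows, moreover, that $g^{(k)}$ is uniformly (in $k$) Lipschitz for $\max_i\norm{\cdot}_2$.

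The heart of the proof is constructing $g$. Using the transport machinery of \cite{Jekel2018} (equivalently, the conditional Dyson--Schwinger equation characterizing the conditional free Gibbs state), the bounds $c\leq HV^{(k)}\leq C$ together with the approximability \eqref{eq:asymptoticapproximation} of $DV^{(k)}$ produce a map $T^{(k)}\colon M_k(\C)_{sa}^m\times M_k(\C)_{sa}^n\to M_k(\C)_{sa}^n$ that is uniformly Lipschitz in $\norm{\cdot}_2$ with $k$-independent constants, is asymptotically approximable by trace polynomials in its two arguments, and has the property that $T^{(k)}(\mathbf{A},\mathbf{S}^{(k)})$ has the conditional law of $\mathbf{Y}^{(k)}$ given $\mathbf{X}^{(k)}=\mathbf{A}$, where $\mathbf{S}^{(k)}$ is an independent standard Gaussian tuple. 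Then
\[
g^{(k)}(\mathbf{A})=\E_{\mathbf{S}^{(k)}}\bigl[f^{(k)}\bigl(\mathbf{A},T^{(k)}(\mathbf{A},\mathbf{S}^{(k)})\bigr)\bigr].
\]
Since the class of functions asymptotically approximable by trace polynomials is closed under composition (with $f^{(k)}$ Lipschitz and the Gaussian tails under control), the integrand is asymptotically approximable by trace polynomials in $(\mathbf{A},\mathbf{S})$; and by Wick's formula the Gaussian expectation in $\mathbf{S}$ of a trace polynomial in $(\mathbf{A},\mathbf{S})$ is a trace polynomial in $\mathbf{A}$. Hence $g^{(k)}$ is asymptotically approximable by trace polynomials, and completeness of $\mathcal{F}_{\mathbf{R},2}$ gives a single $g\in\mathcal{F}_{\mathbf{R},2}$ with $\sup_{\norm{A_i}\leq R_i}\norm{g^{(k)}(\mathbf{A})-g(\mathbf{A})}_2\to 0$.

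Finally, to identify $g$: inserting a noncommutative polynomial $a$ in $m$ variables and using the tower property, $\E\,\tau_k\bigl(a(\mathbf{X}^{(k)})^*g^{(k)}(\mathbf{X}^{(k)})\bigr)=\E\,\tau_k\bigl(a(\mathbf{X}^{(k)})^*f^{(k)}(\mathbf{X}^{(k)},\mathbf{Y}^{(k)})\bigr)$; letting $k\to\infty$ via the concentration and moment convergence of the first stage and the uniform convergence $g^{(k)}\to g$, we get $\langle a(\mathbf{x}),g(\mathbf{x})\rangle_2=\langle a(\mathbf{x}),f(\mathbf{x},\mathbf{y})\rangle_2$ for every polynomial $a$, so $g(\mathbf{x})$ and $f(\mathbf{x},\mathbf{y})$ have the same $L^2(\mathrm{W}^*(\mathbf{x}))$-projection; as $g(\mathbf{x})\in\mathrm{W}^*(\mathbf{x})$ by Proposition~\ref{prop:realizationofoperators}, this is exactly $g(\mathbf{x})=E_{\mathrm{W}^*(\mathbf{x})}[f(\mathbf{x},\mathbf{y})]$. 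The main obstacle is the third paragraph: producing $T^{(k)}$ with dimension-uniform estimates and the approximability-by-trace-polynomials property. This is where the two-sided Hessian bound and the structure of $DV^{(k)}$ are indispensable, and it is the step relying most heavily on the PDE/transport techniques of \cite{Jekel2018,Jekel2019}; the rest is concentration of measure and bookkeeping.
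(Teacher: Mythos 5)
First, a point of orientation: this theorem is not proved in the paper at all --- it is quoted verbatim from \cite[Theorem 5.9]{Jekel2019} and used as a black box, so there is no internal proof to compare your argument against. What you have written is an attempted reconstruction of the proof from the cited work.

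As such a reconstruction, your first and last stages are sound. The concentration stage is correct: the conditional density \eqref{eq:conditionaldistribution} together with $HV^{(k)}(\mathbf{A},\cdot)\geq c$ gives a dimension-uniform log-Sobolev/Poincar\'e inequality via Bakry--\'Emery, and the variance and Lipschitz estimates for $g^{(k)}$ follow as you describe. The identification $g(\mathbf{x})=E_{\mathrm{W}^*(\mathbf{x})}[f(\mathbf{x},\mathbf{y})]$ via the tower property and testing against polynomials $a(\mathbf{x})$ is also correct (the fact that $g(\mathbf{x})\in L^2(\mathrm{W}^*(\mathbf{x}))$ is immediate from the definition of $\mathcal{F}_{\mathbf{R},2}$, not from Proposition \ref{prop:realizationofoperators}, but that is cosmetic; one also needs the standard tail estimates to evaluate elements of $\mathcal{F}$ on the unbounded Gibbs ensemble). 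The genuine gap is exactly where you flag it: the existence of a conditional transport map $T^{(k)}$ from a Gaussian tuple to the conditional law, uniformly Lipschitz in $k$ and asymptotically approximable by trace polynomials, is not a consequence one can simply ``use'' --- it is essentially the main technical theorem of \cite{Jekel2019}, and asserting it amounts to assuming a statement at least as strong as the one to be proved. The argument in \cite{Jekel2018,Jekel2019} in fact obtains the conclusion without first building a single transport map: one runs the Langevin semigroup in the $\mathbf{B}$-variables associated to $V^{(k)}(\mathbf{A},\cdot)$, shows by an iteration scheme that each finite-time solution operator preserves uniform Lipschitzness and asymptotic approximability by trace polynomials, and uses the lower Hessian bound to get exponential convergence to the conditional expectation uniformly in $k$; completeness of the trace-polynomial closure then yields $g$. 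Your Wick-formula remark also deserves care: the Gaussian average of a trace polynomial in $(\mathbf{A},\mathbf{S})$ is a trace polynomial in $\mathbf{A}$ only with $k$-dependent (genus-expansion) coefficients, which is adequate for asymptotic approximability but not literally what you wrote. So the proposal correctly identifies the architecture of the proof but leaves its analytic core unproven.
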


This is the key to establishing the external averaging property and thus getting random matrix models satisfying the assumptions of Theorem \ref{thm:main}.

\begin{prop} \label{prop:convexpotentials}
Let $V^{(k)}: M_k(\C)_{sa}^d \to \R$ satisfy the assumptions of \cite{Jekel2019} (explained above).  Let $(\mathbf{x},\mathbf{y})$ be the $(m+n)$-tuple non-commutative random variables describing the large $k$ limit of the corresponding random matrix tuples $(\mathbf{X}^{(k)}, \mathbf{Y}^{(k)})$, and let $\mathbf{R}$ and $\mathbf{S}$ satisfy $\norm{x_i} < R_i$ and $\norm{y_i} < S_i$.

Let $\mathbf{A}^{(k)}$ be a deterministic tuple with $\norm{A_i^{(k)}} \leq R_i$ and $\lambda_{\mathbf{A}^{(k)}} \to \lambda_{\mathbf{x}}$.  Let $\mathbf{B}^{(k)}$ be a random matrix tuple chosen according to the conditional distribution \eqref{eq:conditionaldistribution} of $\mathbf{Y}^{(k)}$ given $\mathbf{X}^{(k)} = \mathbf{A}^{(k)}$.  For $i = 1,\ldots, n$, pick $\psi_i \in C_c^\infty(\R;\R)$ with $|\psi_i| \leq S_i$ and $\psi_i(t) = t$ for $|t| \leq \norm{y_i}$, and denote $\psi(\mathbf{B}^{(k)}) = (\psi_1(B_1^{(k)}),\dots,\psi_n(B_n^{(k)}))$.

Then the matrix models $(\mathbf{A}^{(k)},\psi(\mathbf{B}^{(k)}))$ satisfy the hypotheses of Theorem \ref{thm:main} with respect to $\mathrm{W}^*(\mathbf{x}) \subseteq \mathrm{W}^*(\mathbf{x},\mathbf{y})$, the generating set $(\mathbf{x},\mathbf{y})$, and the operator norm bounds $(\mathbf{R},\mathbf{S})$.
\end{prop}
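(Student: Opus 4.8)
The plan is to verify, one at a time, the four hypotheses (1)--(4) of Theorem~\ref{thm:main} for the random tuple $(\mathbf{A}^{(k)},\psi(\mathbf{B}^{(k)}))$ relative to $\cP=\mathrm{W}^*(\mathbf{x})$, treating them in the order (1), (3), (2), (4), with conditions (2) and (4) both reduced to the already-cited Theorem~\ref{thm:conditionalexpectation}. Throughout I will write $\nu^{(k)}$ for the conditional distribution \eqref{eq:conditionaldistribution} of $\mathbf{B}^{(k)}$ on $M_k(\C)_{sa}^n$ (with $\mathbf{A}^{(k)}$ fixed), and I will fix further cutoffs $\tilde\psi_i\in C_c^\infty(\R;\R)$ with $\tilde\psi_i(t)=t$ for $|t|\le R_i$, so that for a non-commutative polynomial $p$ the function $f_p^{(k)}(\mathbf{A},\mathbf{B}):=p(\tilde\psi(\mathbf{A}),\psi(\mathbf{B}))$ is bounded, uniformly Lipschitz in $\norm{\cdot}_2$ on all of $M_k(\C)_{sa}^{m+n}$, asymptotically approximable by trace polynomials (being, on each operator-norm ball, the evaluation of a fixed element of $\mathcal{F}_{(\mathbf{R},\mathbf{S}),\infty}$ independent of $k$), and satisfies $f_p^{(k)}(\mathbf{A}^{(k)},\mathbf{B})=p(\mathbf{A}^{(k)},\psi(\mathbf{B}))$ (since $\norm{A_i^{(k)}}\le R_i$) together with $f_p(\mathbf{x},\mathbf{y})=p(\mathbf{x},\mathbf{y})$ (since $\tilde\psi(\mathbf{x})=\mathbf{x}$, $\psi(\mathbf{y})=\mathbf{y}$). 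Introducing these cutoffs is exactly the bookkeeping needed to feed $f_p^{(k)}$ into Theorem~\ref{thm:conditionalexpectation}.

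Condition (1) is immediate, since $\norm{A_i^{(k)}}\le R_i$ by hypothesis and $\norm{\psi_i(B_i^{(k)})}\le S_i$ because $|\psi_i|\le S_i$. For condition (3) I would use the uniform convexity hypothesis: since $c\le HV^{(k)}$, the conditional potential $\mathbf{B}\mapsto V^{(k)}(\mathbf{A}^{(k)},\mathbf{B})$ has Hessian $\ge c$, so $\mathbf{B}\mapsto k^2 V^{(k)}(\mathbf{A}^{(k)},\mathbf{B})$ has Hessian $\ge k^2 c$; by the Bakry--Émery criterion, $\nu^{(k)}$ satisfies a log-Sobolev inequality with constant $O(1/k^2)$, hence exhibits exponential concentration at the scale $k^2$ in the $\norm{\cdot}_2$ metric (see, e.g., \cite[\S 4.4]{AGZ2009}). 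Since each map $B_i\mapsto\psi_i(B_i)$ is globally Lipschitz in $\norm{\cdot}_2$ (with the Lipschitz constant of $\psi_i$ on $\R$), the argument of Corollary~\ref{cor:concentrationpushforward}---with global Lipschitzness replacing the operator-norm support hypothesis, since $\nu^{(k)}$ need not be supported on a ball---shows that the law of $\psi(\mathbf{B}^{(k)})$ again has exponential concentration, and adjoining the deterministic coordinates $\mathbf{A}^{(k)}$ does not affect this. This gives (3); it is precisely here that convexity of $V^{(k)}$ is essential.

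For condition (2) I would apply Theorem~\ref{thm:conditionalexpectation} to the scalar-valued functions $(\mathbf{A},\mathbf{B})\mapsto\tau_k(f_p^{(k)}(\mathbf{A},\mathbf{B}))\cdot 1$: the corresponding limit is the constant $E_{\cP}[\tau(p(\mathbf{x},\mathbf{y}))\cdot 1]=\tau(p(\mathbf{x},\mathbf{y}))\cdot 1$, and evaluating the uniform convergence at $\mathbf{A}^{(k)}$ yields $\E[\tau_k(p(\mathbf{A}^{(k)},\psi(\mathbf{B}^{(k)})))]\to\tau(p(\mathbf{x},\mathbf{y}))$. Since $\mathbf{B}\mapsto\tau_k(p(\mathbf{A}^{(k)},\psi(\mathbf{B})))$ is bounded and uniformly $\norm{\cdot}_2$-Lipschitz, the exponential concentration of $\nu^{(k)}$ from the previous step forces this random scalar to concentrate about its mean, so $\tau_k(p(\mathbf{A}^{(k)},\psi(\mathbf{B}^{(k)})))\to\tau(p(\mathbf{x},\mathbf{y}))$ in probability, which is (2). (Alternatively, (2) follows from the convergence in moments of the models established in \cite[Theorem 4.1]{Jekel2018}, together with the fact that replacing $\mathbf{B}^{(k)}$ by $\psi(\mathbf{B}^{(k)})$ does not change the limiting law since $\psi(\mathbf{y})=\mathbf{y}$.)

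Finally, for the external averaging property (4) I would apply Theorem~\ref{thm:conditionalexpectation} to the $M_k(\C)$-valued $f_p^{(k)}$, obtaining $g\in\mathcal{F}_{\mathbf{R},2}$ with $g(\mathbf{x})=E_{\cP}[p(\mathbf{x},\mathbf{y})]$ and $\sup_{\norm{A_i}\le R_i}\norm{g^{(k)}(\mathbf{A})-g(\mathbf{A})}_2\to 0$, where $g^{(k)}(\mathbf{A}^{(k)})=\E[f_p^{(k)}(\mathbf{A}^{(k)},\mathbf{B}^{(k)})]=\E[p(\mathbf{A}^{(k)},\psi(\mathbf{B}^{(k)}))]$ is exactly the quantity appearing in (4). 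It remains to check $\norm{g^{(k)}(\mathbf{A}^{(k)})}_2\to\norm{g(\mathbf{x})}_2$: the uniform convergence and $\norm{A_i^{(k)}}\le R_i$ give $\norm{g^{(k)}(\mathbf{A}^{(k)})-g(\mathbf{A}^{(k)})}_2\to 0$, and to see $\norm{g(\mathbf{A}^{(k)})}_2\to\norm{g(\mathbf{x})}_2$ I would approximate $g$ in $\norm{\cdot}_{\mathbf{R},2}$ by an element $\tilde g\in\mathcal{A}_{\mathbf{R}}$ (a finite sum of simple tensors, hence operator-norm bounded), for which $\norm{\tilde g(\mathbf{A}^{(k)})}_2\to\norm{\tilde g(\mathbf{x})}_2$ by Proposition~\ref{prop:pushforwardcontinuity} since $\lambda_{\mathbf{A}^{(k)}}\to\lambda_{\mathbf{x}}$, and then let the approximation error tend to $0$. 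Theorem~\ref{thm:main} then gives the asserted absorption conclusion. I expect the only genuine difficulty to be the exponential-concentration input for the conditional Gibbs measure $\nu^{(k)}$ in step (3)---which truly requires the uniform convexity of $V^{(k)}$, the reason that hypothesis is imposed---together with the routine but somewhat fiddly task of arranging the cutoffs $\tilde\psi,\psi$ so that the functions fed into Theorem~\ref{thm:conditionalexpectation} meet its Lipschitz and trace-polynomial-approximability hypotheses; the remaining content is a direct application of \cite{Jekel2018, Jekel2019}.
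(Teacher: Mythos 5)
Your proposal is correct and follows essentially the same route as the paper: condition (1) is immediate, (3) comes from Bakry--\'Emery/log-Sobolev/Herbst for the conditional Gibbs measure followed by a Lipschitz push-forward, and (2) and (4) are both obtained by feeding cut-off versions of the polynomial (your $\tilde\psi$ playing the role of the paper's $\phi$) into Theorem \ref{thm:conditionalexpectation} and evaluating the resulting uniform convergence at $\mathbf{A}^{(k)}$. Your explicit approximation argument for $\norm{g(\mathbf{A}^{(k)})}_2 \to \norm{g(\mathbf{x})}_2$ and your remark that Corollary \ref{cor:concentrationpushforward} must be run with global Lipschitzness (since $\nu^{(k)}$ is not compactly supported) fill in details the paper leaves implicit, but do not change the argument.
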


\begin{proof}
Condition (1) of the theorem holds because we chose $A_i^{(k)}$ to be bounded in operator norm by $R_i$, and the function $\psi_i$ to be bounded by $S_i$.

To check the exponential concentration hypothesis (3), recall that $HV^{(k)} \geq c$.  It follows by restriction that also $H[V^{(k)}(\mathbf{A}^{(k)}, \cdot)] \geq c$.  Then using the standard machinery of the Bakry-Emery condition, the log-Sobolev inequality, and Herbst's argument, we obtain exponential concentration for the conditional distribution \eqref{eq:conditionaldistribution}.  For details, see \cite[\S 2.3 and \S 4.4]{AGZ2009}.

Next, we check (2) that $\lambda_{(\mathbf{A}^{(k)}, \psi(\mathbf{B}^{(k)}))}$ converges in probability to $\lambda_{\mathbf{x},\mathbf{y}}$.  We want to show that for every non-commutative polynomial $p$ in $m+n$ variables, we have $\tau_k[p(\mathbf{A}^{(k)},\psi(\mathbf{B}^{(k)}))] \to \tau[p(\mathbf{x},\mathbf{y})]$ in probability.  But due to the concentration of measure, it suffices to show that $\E \tau_k[p(\mathbf{A}^{(k)},\psi(\mathbf{B}^{(k)}))] \to \tau[p(\mathbf{x},\mathbf{y})]$.

Choose a smooth cut-off function $\phi_i$ with $\phi_i(t) = t$ for $|t| \leq R_i$ and let $\phi(\mathbf{A}) = (\phi_1(A_1),\dots,\phi_m(A_m))$ (analogous to the choice of $\psi$).  Define $f^{(k)}(\mathbf{A},\mathbf{B}) := \tau_k(p(\phi(\mathbf{A}), \psi(\mathbf{B})))$.  It follows from the discussion in \cite[\S 8.3]{Jekel2018} and \cite[Lemma 3.14]{Jekel2019} that $\phi$ and $\psi$ are $\norm{\cdot}_2$-Lipschitz and asymptotically approximable by trace polynomials, and hence the same is true for $f^{(k)}$ (since the images of $\phi$ and $\psi$ are contained in an operator norm ball, and $p$ is $\norm{\cdot}_2$-Lipschitz on the operator norm ball).  Thus, we can apply Theorem \ref{thm:conditionalexpectation} to $f^{(k)}$.  If $g^{(k)}$ and $g$ are as in the theorem, then
\[
g^{(k)}(\mathbf{A}^{(k)}) = \E[f^{(k)}(\mathbf{A}^{(k)},\psi(\mathbf{B}^{(k)}))].
\]
(using the fact that $\phi(\mathbf{A}^{(k)}) = \mathbf{A}^{(k)}$) and
\[
g(\mathbf{x}) = E_{\mathrm{W}^*(\mathbf{x})}[p(\mathbf{x},\mathbf{y})]
\]
(using the fact that $\phi(\mathbf{x}) = \mathbf{x}$ and $\psi(\mathbf{y}) = \mathbf{y}$).  We also know from the theorem that $\norm{g^{(k)}(\mathbf{A}^{(k)}) - g(\mathbf{A}^{(k)})}_2 \to 0$.  Therefore, using the convergence of $\lambda_{\mathbf{A}^{(k)}} \to \lambda_{\mathbf{x}}$,
\[
\E \circ \tau_k[f^{(k)}(\mathbf{A}^{(k)},\psi(\mathbf{B}^{(k)}))] = \tau_k[g^{(k)}(\mathbf{A}^{(k)})]^{1/2} \to \tau[g(\mathbf{x})]^{1/2} = \tau \circ E_{\mathrm{W}^*(\mathbf{x})}[p(\mathbf{x},\mathbf{y})] = \tau[p(\mathbf{x},\mathbf{y})].
\]
This completes the proof of (2).

The external averaging property (4) follows by the same token because with the notation as above, we have
\[
\norm{\E[f^{(k)}(\mathbf{A}^{(k)},\psi(\mathbf{B}^{(k)}))]}_2 = \tau_k[g^{(k)}(\mathbf{A}^{(k)})^* g^{(k)}(\mathbf{A}^{(k)})]^{1/2} \to \tau[g(\mathbf{x})^* g(\mathbf{x})]^{1/2} = \norm{E_{\mathrm{W}^*(\mathbf{x})}[p(\mathbf{x},\mathbf{y})]}_2. \qedhere
\]
\end{proof}

Therefore, by Theorem \ref{thm:main}, we have the following result.

\begin{cor}
Let $(\mathbf{x},\mathbf{y})$ be the tuple of non-commutative random variables given above.  Let $\cM = \mathrm{W}^*(\mathbf{x},\mathbf{y})$ and $\cP = \mathrm{W}^*(\mathbf{x})$.  Then if $\cN\leq \cM$,  $\cN \cap \cP$ is diffuse and $h(\cN:\cM) = 0$, then $\cN \subseteq \cP$.
\end{cor}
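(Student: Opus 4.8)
The plan is to recognize this corollary as a direct application of Theorem~\ref{thm:main} once the random matrix models of Proposition~\ref{prop:convexpotentials} are in hand. First I would produce a deterministic sequence $\mathbf{A}^{(k)} \in M_k(\C)_{sa}^m$ with $\norm{A_i^{(k)}} \leq R_i$ and $\lambda_{\mathbf{A}^{(k)}} \to \lambda_{\mathbf{x}}$: since the law of $\mathbf{X}^{(k)}$ converges to $\lambda_{\mathbf{x}}$ in probability, one may extract microstates for $\mathbf{x}$ from the supports of the distributions of $\mathbf{X}^{(k)}$ and then compose with a smooth cutoff $\phi_i$ satisfying $\phi_i(t) = t$ for $|t| \le \norm{x_i}$, which enforces the operator-norm bound without disturbing the limiting moments since $\phi(\mathbf{x}) = \mathbf{x}$. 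Next I would draw $\mathbf{B}^{(k)}$ from the conditional distribution \eqref{eq:conditionaldistribution} of $\mathbf{Y}^{(k)}$ given $\mathbf{X}^{(k)} = \mathbf{A}^{(k)}$ and form $\psi(\mathbf{B}^{(k)})$ with cutoffs $\psi_i$ as in Proposition~\ref{prop:convexpotentials}.

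By Proposition~\ref{prop:convexpotentials}, the tuple $(\mathbf{A}^{(k)}, \psi(\mathbf{B}^{(k)}))$ then satisfies hypotheses (1)--(4) of Theorem~\ref{thm:main} relative to the generating tuple $(\mathbf{x},\mathbf{y})$, the inclusion $\cP = \mathrm{W}^*(\mathbf{x}) \subseteq \mathrm{W}^*(\mathbf{x},\mathbf{y}) = \cM$, and the operator-norm bounds $(\mathbf{R},\mathbf{S})$. Theorem~\ref{thm:main} applies verbatim and yields precisely the assertion: every $\cN \leq \cM$ with $\cN \cap \cP$ diffuse and $h(\cN:\cM) = 0$ satisfies $\cN \subseteq \cP$. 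The only point requiring any care is matching the hypotheses of Theorem~\ref{thm:main} with the list verified in Proposition~\ref{prop:convexpotentials}; the genuine analytic content --- the external averaging property obtained from Theorem~\ref{thm:conditionalexpectation} and the exponential concentration obtained from the Bakry-Emery and Herbst machinery --- has already been established there, so no new obstacle arises.
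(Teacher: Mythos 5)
Your proposal is correct and follows exactly the route the paper intends: the corollary is stated as an immediate consequence of Proposition~\ref{prop:convexpotentials} combined with Theorem~\ref{thm:main}, and your extra care in producing the deterministic tuple $\mathbf{A}^{(k)}$ (extracting microstates from the convergence in probability and applying a cutoff to enforce the operator-norm bounds) simply makes explicit a hypothesis the paper leaves implicit.
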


In particular, this provides another proof for the case when $(\mathbf{x},\mathbf{y})$ is a free semicircular family, that is, the case of the free group factor which we discussed in the previous subsection.  Perhaps disappointingly, the application of this corollary turns out to not be any wider than the free group factor case.  Indeed, it was shown in \cite[Theorem 8.11]{Jekel2019} (which is also included in \cite[Theorem 17.1.9]{JekelThesis}) that under the same set of hypotheses, there is an isomorphism $\phi: \mathrm{W}^*(\mathbf{x},\mathbf{y}) \to L(\F_{m+n})$ that maps $\mathrm{W}^*(\mathbf{x})$ to the canonical copy of $L(\F_m)$ inside $L(\F_{m+n})$.  Nonetheless, given that the result about conditional expectations was somewhat easier to prove in \cite{Jekel2019} than the isomorphism result, we suspect that the technique described in this section will have applications to other situations in random matrix theory where the isomorphism to the free group factor setting is not true (or at least not known).

For instance, does this result about conditional expectation extend to the case where $V^{(k)}$ is not semi-concave, or even not convex?  We conjecture that to establish the convergence of the non-commutative law in the large $k$ limit and the external averaging property (which is weaker than the conclusion of Theorem \ref{thm:conditionalexpectation}), it should be sufficient to assume that $DV^{(k)}$ is asymptotically approximable by trace polynomials and globally $\norm{\cdot}_2$-Lipschitz, and that the distribution of $\mathbf{X}^{(k)}$ and the conditional distribution of $\mathbf{Y}^{(k)}$ given $\mathbf{X}^{(k)}$ satisfy the log-Sobolev inequality with dimension-independent constants (after the appropriate normalization).  If this is true, then Proposition \ref{prop:convexpotentials} would generalize to this case, hence Theorem \ref{thm:main} could be applied to produce examples of subalgebras $\cP \leq \cM$ that absorb other subalgebras of $1$-bounded entropy zero.

Another question is whether Proposition \ref{prop:convexpotentials} can generalize to measures that are cut off to an operator norm ball.  Specifically, suppose that $V^{(k)}$ is defined on the operator norm ball $\{\mathbf{A}: \norm{A_j} \leq R\}$, that $V^{(k)}(\mathbf{A}) - (c/2) \norm{\mathbf{A}}_2^2$ is convex, and that $DV^{(k)}$ is asymptotically approximable by trace polynomials on that ball.  Let $\mu^{(k)}$ be the measure supported on $\{\mathbf{A}: \norm{A_j} \leq R\}$ given by the density proportional to $e^{-k^2 V^{(k)}(\mathbf{A})} \,d\mathbf{A}$.  Then if there is a limiting non-commutative law $\lambda_{\mathbf{x}}$ such that $\norm{x_i}$ is \emph{strictly less} than $R$, we conjecture that something similar to Theorem \ref{thm:conditionalexpectation} will hold for the conditional expectation, perhaps after restricting to a smaller operator-norm ball, and hence Proposition \ref{prop:convexpotentials} would generalize to this case, without the need for the cut-off function $\psi$.

\section{Proof of Theorem \ref{thm:main2}} \label{sec:Theorem2}

\subsection{Overview} \label{subsec:freeproductoverview}

To establish Theorem \ref{thm:main2} in the separable case, we consider the amalgamated free product $(\cM,\tau) = (\cM_1,\tau_1) *_{(\cD,\tau_{\cD})} (\cM_2,\tau_2)$, where $(\cM_1,\tau_1)$ and $(\cM_2,\tau_2)$ are tracial $\mathrm{W}^*$-algebras with separable predual that can embed into $\mathcal{R}^\omega$, and where $(\cD,\tau_{\cD})$ is an atomic $\mathrm{W}^*$-algebra which is a common tracial $\mathrm{W}^*$-subalgebra of both $\cM_1$ and $\cM_2$.

Choose self-adjoint tuples $\mathbf{x}_1 \in (\cM_1)_{sa}^{I_1}$ and $\mathbf{x}_2 \in (\cM_2)_{sa}^{I_2}$ which generate $\cM_1$ and $\cM_2$ respectively, where $I_1$ and $I_2$ are countable index sets.  We use the following construction of random matrix models for $\mathbf{x} = (\mathbf{x}_1,\mathbf{x}_2)$, due to Brown--Dykema--Jung \cite{BDJ2008}:
\begin{enumerate}
	\item We choose integers $n(k) \to \infty$ and unital $*$-homomorphisms $\pi^{(k)}: \cD \to M_{n(k)}(\C)$ which are asymptotically trace-preserving as $k \to \infty$.
	\item We choose a sequence of (deterministic) microstates $\mathbf{X}_\ell^{(k)}$ for $\mathbf{x}_\ell$ for each $\ell = 1, 2$ and arrange that they are compatible with our chosen maps $\pi^{(k)}: \cD \to M_{n(k)}(\C)$ (see Lemma \ref{lem:freeproductmicrostates}).
	\item Let $\pi^{(k)}(\cD)'$ be the commutant of $\pi^{(k)}(\cD)$ in $M_{n(k)}(\C)$.  Let $U^{(k)}$ be a Haar random unitary from $\pi^{(k)}(\cD)'$.
	\item Then our random matrix models for generating tuple $\mathbf{x} = (\mathbf{x}_1,\mathbf{x}_2)$ in the free product will be $\mathbf{X}^{(k)} = (\mathbf{X}_1^{(k)}, U^{(k)} \mathbf{X}_2^{(k)} (U^{(k)})^*)$.
\end{enumerate}
Adapting the arguments of \cite{BDJ2008}, we will verify that the random matrix models satisfy all the hypotheses of Theorem \ref{thm:main} for $\cM_1 \leq \cM$, that is, boundedness in operator norm, convergence of the non-commutative moments in probability, exponential concentration, and the external averaging property.  At the end of the discussion, we will describe how to deduce the general case of the theorem from the separable case.

\subsection{Construction of $*$-homomorphisms on $\cD$ and Compatible Microstates} \label{subsec:freeproductmicrostates}

Now we construct the $*$-homomorphisms $\pi^{(k)}: \cD \to M_{n(k)}$ and compatible microstates for $\mathbf{x}_1$ and $\mathbf{x}_2$.

Recall that $\cD$ being atomic means that it is a direct sum of type I factors.  Since it has a faithful normal trace, all these factors must be matrix algebras, and there must be countably many of them.  It is well known that in this case, there exist natural numbers $n(k)$ and unital $*$-homomorphisms $\pi^{(k)}: \cD \to M_{n(k)}(\C)$ such that $\tau_{n(k)} \circ \pi^{(k)} \to \tau_{\cD}$ pointwise.  We recall an explicit construction of $\pi^{(k)}$ here, because the decomposition of $\pi^{(k)}(\cD)$ and its commutant in $M_{n(k)}(\C)$ will be used in the next section to check exponential concentration for Haar unitaries in $\pi^{(k)}(\cD)'$.

We assumed $\mathcal{D}$ is atomic, that is, a direct sum of matrix algebras.  So there are natural numbers $\{r(j)\}_{j \in J}$, where $J = \{1,\dots,N\}$ or $J = \N$, and positive weights $\{\gamma(j)\}_{j \in J}$ with $\sum_{j \in J} \gamma(j) = 1$ such that
\[
(\mathcal{D}, \tau_{\mathcal{D}}) = \left( \overline{\bigoplus}_{j \in J} M_{r(j)}(\C), \overline{\bigoplus}_{j \in J} \gamma(j) \tau_{r(j)} \right).
\]
For each $k$, we define
\[
m(j,k) = \floor*{\frac{k \gamma(j)}{r(j)}}.
\]
Note that for each $k$, $m(j,k)$ is nonzero for only finitely many values of $j$, because in the case where $J$ is infinite, we have $\gamma(j) \to 0$ as $j \to \infty$.  We define
\[
n(k) = \sum_{j \in J} r(j) m(j,k),
\]
Then we have $r(j) m(j,k) / k \leq \gamma(j)$ and $r(j) m(j,k) / k \to \gamma(j)$ as $k \to \infty$.  It follows that $n(k) / k \leq 1$ and $n(k) / k \to 1$ as $k \to \infty$, and in particular $n(k) \to \infty$.

We define $\pi^{(k)}: \cD \to M_{n(k)}(\C)$ by
\[
\pi^{(k)} \left( \bigoplus_{j \in J} z_j \right) = \bigoplus_{\substack{j \in J \\ m(j,k) > 0}} z_j \otimes I_{m(j,k)},
\]
or more explicitly, it is the block diagonal matrix
\[
\pi^{(k)} \left( \bigoplus_{j \in J} z_j \right) = \diag \bigl( \underbrace{z_1, \dots, z_1}_{m(1,k)}, \underbrace{z_2, \dots, z_2}_{m(2,k)}, \underbrace{z_3, \dots, z_3}_{m(3,k)}, \dots \bigr),
\]
where the $r(j) \times r(j)$ block $z_j$ is repeated $m(j,k)$ times.  Since only finitely many $m(j,k)$'s are positive for a given $k$, there are finitely many terms, and overall the matrix has size $n(k) = \sum_{j \in J} r(j) m(j,k)$.

To check that $\tau_{n(k)} \circ \pi^{(k)} \to \tau_{\cD}$ pointwise, it suffices to verify this on dense subset of $\cD$.  But if $z = \bigoplus_{j \in J} z_j \in \cD$ with only finitely many $z_j$'s nonzero, then
\[
\tau_{n(k)} \circ \pi^{(k)} \left( \bigoplus_{j \in J} z_j \right) = \sum_{j \in J} \frac{r(j) m(j,k)}{n(k)} \tau_{r(j)}(z_j) \to \sum_{j \in J} \gamma(j) \tau_{r(j)}(z_j) = \tau_{\cD} \left( \bigoplus_{j \in J} z_j \right).
\]

Next, we have to create microstates for $\mathbf{x}_\ell$ compatible with the chosen maps $\pi^{(k)}$.

\begin{lem} \label{lem:freeproductmicrostates}
For each $\ell = 1, 2$, let $\mathbf{x}_\ell$ be an $I_\ell$-tuple of generators for $\cM_\ell$, which we assumed to be tracially embeddable into $\mathcal{R}^\omega$.  Let $\mathbf{R} \in (0,+\infty)^{I_1 \sqcup I_2}$ with $\norm{(x_\ell)_i} \leq R_i$ for $i \in I_\ell$.  Let $\mathbf{d} = (d_j)_{j \in J}$ be a tuple of operators whose span is dense in $\cD$.  Then there exists a (deterministic) tuple $\mathbf{X}_\ell^{(k)} \in M_{n(k)}(\C)_{sa}^{I_\ell}$ such that $(\mathbf{X}_\ell^{(k)}, \pi^{(k)}(\mathbf{d}))$ converges in non-commutative $*$-moments to $(\mathbf{x}_\ell, \mathbf{d})$.
\end{lem}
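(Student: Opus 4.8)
The plan is to produce the microstates abstractly from the embedding $\cM_\ell \hookrightarrow \mathcal{R}^\omega$, and then to rotate the $\mathbf{d}$-part of these microstates onto the prescribed homomorphism $\pi^{(k)}$, exploiting that the hyperfinite algebra $\cD$ has essentially unique microstates (Lemma \ref{lem:hyper folklore}). First I would use that $\cM_\ell$ embeds tracially into $\mathcal{R}^\omega$ to obtain matricial microstates for the pair $(\mathbf{x}_\ell,\mathbf{d})$; after replacing each microstate matrix by its image under a clamping (hence Lipschitz) functional calculus, which perturbs the non-commutative law negligibly, these obey the operator-norm bounds $\norm{(x_\ell)_i}\le R_i$ and $\norm{d_j}$. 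A block-diagonal amplification-and-padding argument converts a microstate of size $m$ into one of any size $N$ large enough compared to $m$, at the cost of an error $O(m/N)$, so a diagonalization over finite sets of monomials with $\eps\downarrow 0$ yields deterministic tuples $\mathbf{A}^{(k)}\in M_{n(k)}(\C)_{sa}^{I_\ell}$ with $\norm{A_i^{(k)}}\le R_i$ and $\mathbf{E}^{(k)}\in M_{n(k)}(\C)_{sa}^{J}$ with $\norm{E_j^{(k)}}\le\norm{d_j}$, of the \emph{prescribed} size $n(k)$, with $(\mathbf{A}^{(k)},\mathbf{E}^{(k)})\to(\mathbf{x}_\ell,\mathbf{d})$ in $*$-moments.

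Next I would observe that $\pi^{(k)}(\mathbf{d})$ is itself a sequence of microstates for $\mathbf{d}$: for every non-commutative monomial $w$ we have $\tau_{n(k)}(w(\pi^{(k)}(\mathbf{d})))=(\tau_{n(k)}\circ\pi^{(k)})(w(\mathbf{d}))\to\tau_{\cD}(w(\mathbf{d}))$ by the convergence $\tau_{n(k)}\circ\pi^{(k)}\to\tau_{\cD}$ established above, and $\norm{\pi^{(k)}(d_j)}\le\norm{d_j}$. So $\mathbf{E}^{(k)}$ and $\pi^{(k)}(\mathbf{d})$ are two sequences of $n(k)\times n(k)$ matrix tuples, uniformly operator-norm bounded, both converging in law to the law of $\mathbf{d}$ inside the hyperfinite, separable-predual algebra $\cD=\mathrm{W}^*(\mathbf{d})$. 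Since $J$ is countable, Lemma \ref{lem:hyper folklore} then supplies unitaries $V^{(k)}\in M_{n(k)}(\C)$ with $\norm{V^{(k)}E_j^{(k)}(V^{(k)})^*-\pi^{(k)}(d_j)}_2\to 0$ for each $j\in J$.

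I would then set $\mathbf{X}_\ell^{(k)}:=V^{(k)}\mathbf{A}^{(k)}(V^{(k)})^*$, which still has $\norm{X_{\ell,i}^{(k)}}\le R_i$. Since conjugating by a unitary preserves traces of words, $(\mathbf{X}_\ell^{(k)},V^{(k)}\mathbf{E}^{(k)}(V^{(k)})^*)$ converges in $*$-moments to $(\mathbf{x}_\ell,\mathbf{d})$. Passing from $V^{(k)}\mathbf{E}^{(k)}(V^{(k)})^*$ to $\pi^{(k)}(\mathbf{d})$ moves each coordinate by $o(1)$ in $\norm{\cdot}_2$ with all operator norms bounded, and a standard telescoping estimate — bounding $|\tau_{n(k)}(w(\mathbf{a},\mathbf{b}))-\tau_{n(k)}(w(\mathbf{a},\mathbf{b}'))|$ by the length of $w$ times a product of the operator-norm bounds times $\max_j\norm{b_j-b_j'}_2$ — shows the $*$-moments of $(\mathbf{X}_\ell^{(k)},\pi^{(k)}(\mathbf{d}))$ have the same limit. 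This gives $(\mathbf{X}_\ell^{(k)},\pi^{(k)}(\mathbf{d}))\to(\mathbf{x}_\ell,\mathbf{d})$ in $*$-moments, which is the assertion.

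The step I expect to require the most care is the first one: Connes-embeddability only supplies microstates along some (possibly sparse) sequence of sizes, so to land at \emph{exactly} the prescribed $n(k)$ the amplification must use a microstate of size $m=m(k)$ growing slowly enough — e.g.\ $m(k)^2\le n(k)$ — that the leftover padding block has vanishing relative trace. Beyond this, the work is bookkeeping with operator-norm bounds, needed both to apply Lemma \ref{lem:hyper folklore} and to run the telescoping estimate. The conceptual heart — that a hyperfinite subalgebra has essentially unique microstates, which is exactly why $\cD$ must be atomic here — is packaged into Lemma \ref{lem:hyper folklore}, so I can invoke it as a black box.
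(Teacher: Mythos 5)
Your proposal is correct and follows essentially the same route as the paper: obtain microstates for the joint tuple $(\mathbf{x}_\ell,\mathbf{d})$ at the prescribed sizes $n(k)$ by block-diagonal amplification and padding of the microstates furnished by Connes-embeddability, then use Lemma \ref{lem:hyper folklore} (hyperfiniteness of $\cD$) to find unitaries $V^{(k)}$ aligning the $\mathbf{d}$-coordinates with $\pi^{(k)}(\mathbf{d})$, conjugate the $\mathbf{x}_\ell$-coordinates by the same $V^{(k)}$, and conclude via stability of laws under small $\norm{\cdot}_2$-perturbations (your telescoping estimate is exactly the content of Corollary \ref{cor:perturbedconvergence}, which the paper invokes at this step). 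The minor differences — your clamping functional calculus for the operator-norm bounds versus the paper's direct use of laws in $\Sigma_{\mathbf{T}}$ — are cosmetic.
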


\begin{proof}
Fix $\ell$.  It will be convenient to use the set of generators $(\mathbf{x}_\ell, \mathbf{d})$ for $\cM_\ell$ rather than $\mathbf{x}_\ell$, and we use the tuple of operator norm bounds $\mathbf{T} \in (0,+\infty)^{I_\ell \sqcup J}$ given by $T_i = R_i$ for $i \in I_\ell$ and $T_j = 2 \norm{d_j}$ for $j \in J$.  We claim that there exist tuples $\mathbf{Z}^{(k)} \in M_{n(k)}(\C)_{sa}^{I_\ell \sqcup J}$ satisfying $\norm{Z_i^{(k)}} \leq T_i$ for all $i \in I_\ell \sqcup J$ and $\lambda_{\mathbf{Z}^{(k)}} \to \lambda_{(\mathbf{x}_\ell, \mathbf{d})}$.

We assumed that $\cM_\ell$ is embeddable into $\mathcal{R}^\omega$, and it is well-known that a separable tracial von Neumann algebra can be embedded into $\mathcal{R}^\omega$ if and only if it can be embedded into $\prod_{n \to \omega} M_n(\C)$.  Indeed, both conditions are equivalent to the non-commutative law of the generators, in this case $(\mathbf{x}_\ell, \mathbf{d})$, being in the closure in $\Sigma_{\mathbf{T}}$ of the set of non-commutative laws of matrix tuples.  Thus, there exist tuples $\mathbf{Y}^{(j)}$ of $\tilde{n}(j)\times \tilde{n}(j)$ matrices, %in $M_{\tilde{n}(j)}(\C)$,
$j \in \N$, such that $\norm{Y_i^{(j)}} \leq R_i$ and $\lambda_{\mathbf{Y}^{(j)}} \to \lambda_{(\mathbf{x}_\ell, \mathbf{d})}$ as $j \to \infty$.

But we can modify our sequence as follows to obtain another sequence of matrix tuples (indexed by $k$ rather than $j$) with the prescribed size $n(k) \times n(k)$.  Choose $j(k)$ such that $j(k) \to \infty$ and $\tilde{n}(j(k)) / n(k) \to 0$.  Then by integer division, we write $n(k) = q(k) \tilde{n}(j(k)) + r(k)$ with $0 \leq r(k) < j(k)$, and we set
\[
Z_i^{(k)} := (Y_i^{(k)})^{\oplus q(k)} \oplus 0_{r(k) \times r(k)} \in M_{n(k)}(\C)_{sa}.
\]
It is an exercise to verify that $\lambda_{\mathbf{Z}^{(k)}} \to \lambda_{(\mathbf{x}_{\ell}, \mathbf{d})}$.

In particular, the non-commutative law of $\mathbf{Z}^{(k)}|_J$ converges to $\lambda_{\mathbf{d}}$.  Meanwhile, the non-commutative law of $\pi^{(k)}(\mathbf{d})$ also converges to $\lambda_{\mathbf{d}}$, and the algebra $\cD = \mathrm{W}^*(\mathbf{d})$ is hyperfinite.  So by Lemma \ref{lem:hyper folklore}, there exists a sequence $V^{(k)}$ of unitaries such that
\[
\lim_{k \to \infty} \norm{ V^{(k)} Z_j^{(k)} (V^{(k)})^* - \pi^{(k)}(d_j) }_2 = 0 \qquad \text{ for } j \in J.
\]
Now let $\mathbf{X}_\ell^{(k)} = V^{(k)} \mathbf{Z}^{(k)}|_I (V^{(k)})^*$.  The non-commutative law of $V^{(k)} \mathbf{Z}^{(k)} (V^{(k)})^*$ converges to that of $(\mathbf{x}_\ell, \mathbf{d})$, and also each entry of the tuple $V^{(k)} \mathbf{Z}^{(k)} (V^{(k)})^* - (\mathbf{X}_\ell^{(k)}, \pi^{(k)}(\mathbf{d}))$ goes to zero in $\norm{\cdot}_2$ (the $I$-indexed entries are already zero).  Thus, by Corollary \ref{cor:perturbedconvergence}, the non-commutative law of $(\mathbf{X}_\ell^{(k)}, \pi^{(k)}(\mathbf{d}))$ converges to $\lambda_{(\mathbf{x}_\ell,\mathbf{d})}$.
\end{proof}

\subsection{Exponential Concentration} \label{subsec:freeproductconcentration}

We want to show that the Haar random unitary from $\pi^{(k)}(\cD)'$ exhibits exponential concentration as $k \to \infty$ (Lemma \ref{lem:commutantconcentration} below).  The finite-dimensional algebra $\pi^{(k)}(\cD)$ is a direct sum of matrix algebras, and hence its unitary group is a direct product of unitary groups.  Our proof of concentration relies on the log-Sobolev inequality, its behavior under products, and the log-Sobolev constants of the unitary groups.

To state the log-Sobolev inequality, we treat the unitary groups as Riemannian manifolds.  Since a unitary matrix can be written as $e^{iA}$ for $A$ self-adjoint, the tangent space at each point of $U(M_n(\C))$ can be identified with $M_n(\C)_{sa}$.  The \emph{standard Riemannian metric} on $U(M_n(\C))$ is the one defined by using the inner product $\Tr(A^*B)$ for $A, B \/in M_n(\C)_{sa}$ with the standard trace (so $\Tr(I) = n$).

\begin{defn}
A probability measure $\mu$ on a Riemannian manifold $M$ is said to satisfy the \emph{log-Sobolev inequality with constant $c>0$} if for all $f \in C_c^\infty(M,\R)$, we have
\[
\int_M f(x)^2 \log \frac{f(x)^2}{\int f^2\,d\mu} \,d\mu(x) \leq 2c \int_M \norm{Df(x)}^2\,d\mu(x),
\]
where $Df$ and $\norm{Df}$ denote the gradient of $f$ and its norm with respect to the Riemannian metric.
\end{defn}

\begin{thm}[{see \cite[Theorem 15]{Meckes2013}}]
The unitary group $U(M_n(\C))$ with the standard Riemannian metric satisfies the log-Sobolev inequality with constant $c = 6/n$.
\end{thm}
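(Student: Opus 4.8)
This is \cite[Theorem 15]{Meckes2013}, so strictly one may just cite it; but let me sketch how the proof goes and where the work lies. The plan is to derive the log-Sobolev inequality from a Ricci curvature lower bound via the Bakry--Émery criterion: if $(M,g)$ is a compact Riemannian manifold with $\operatorname{Ric} \geq \rho\, g$ for some $\rho > 0$, then its normalized volume measure (here, Haar measure) satisfies the log-Sobolev inequality with constant $1/\rho$. This is precisely the machinery recalled in \cite[\S 2.3 and \S 4.4]{AGZ2009} (Bakry--Émery $\Rightarrow$ log-Sobolev, with Herbst's argument then giving Gaussian-type concentration). Everything thus reduces to a lower bound on the Ricci curvature of $U(M_n(\C))$ in the metric induced by $\Tr(A^*B)$.

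The second step is the curvature computation, a standard fact about bi-invariant metrics. First record the identity $\operatorname{Ric}(X,X) = -\tfrac14 B(X,X)$, valid for any bi-invariant metric on a compact Lie group, where $B$ is the Killing form; this follows from the classical formula for the curvature tensor of a bi-invariant metric together with the definition of $B$. Then decompose $\mathfrak{u}(n) = \mathfrak{su}(n) \oplus \R(iI)$, which is simultaneously orthogonal for $\Tr(A^*B)$ and a Lie-algebra direct sum, and invoke the known Killing form of $\mathfrak{su}(n)$, namely $B(X,Y) = 2n\,\Tr(XY)$. Since for $Z$ skew-Hermitian one has $\|Z\|^2 = -\Tr(Z^2)$ in this metric, the metric restricted to $\mathfrak{su}(n)$ is $\langle Z,Z\rangle = -\tfrac1{2n} B(Z,Z)$, so $\operatorname{Ric}(Z,Z) = \tfrac n2 \langle Z,Z\rangle$. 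Hence $\operatorname{Ric} \geq \tfrac n2\, g$ on $SU(M_n(\C))$, and Bakry--Émery gives a log-Sobolev inequality there with constant of order $1/n$.

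The remaining issue, which I expect to be the genuine technical point, is the central direction $\R(iI)$, on which the Ricci curvature vanishes, so that Bakry--Émery does not apply to $U(M_n(\C))$ itself; one must treat the scalar part separately and combine it carefully with the special-unitary estimate so as to arrive at the clean constant $6/n$, and for this bookkeeping I would defer to \cite{Meckes2013}. We note, however, that the scalar direction is harmless for the uses made in this paper: the Lipschitz functions of the Haar unitary whose concentration we need --- for instance $U \mapsto U X U^*$ for a fixed matrix $X$ --- are invariant under multiplying $U$ by a scalar, hence factor through $PU(M_n(\C))$, whose Ricci curvature is bounded below by $\tfrac n2\, g$ exactly as for $SU(M_n(\C))$, so the estimate with constant of order $1/n$ already suffices for our concentration arguments.
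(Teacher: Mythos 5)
The paper offers no proof of this statement---it is quoted verbatim from \cite[Theorem 15]{Meckes2013}---so the only question is whether your sketch of that reference's argument is sound, and it is. The Bakry--\'Emery route is exactly the one Meckes--Meckes take, your curvature computation for $SU(M_n(\C))$ is correct ($\operatorname{Ric}=\tfrac n2 g$, hence log-Sobolev constant $2/n$ there), and you correctly isolate the center as the only obstruction. For the record, the bookkeeping you defer is resolved in \cite{Meckes2013} by the observation that, although the central circle $\{e^{i\theta}I\}$ has length $2\pi\sqrt n$, it meets $SU(M_n(\C))$ in the $n$ equally spaced $n$-th roots of unity, so the quotient circle $U(M_n(\C))/SU(M_n(\C))$ has circumference only $2\pi/\sqrt{n}$ in the induced metric; writing $U(M_n(\C))$ as an (essentially isometric, $n$-to-one) image of $\mathbb{T}\times SU(M_n(\C))$ and tensorizing the two log-Sobolev inequalities yields the constant $6/n$. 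One caveat on your closing remark: while it is true that every use of concentration in this paper ultimately passes through conjugation maps $U\mapsto UXU^*$, which factor through $PU(M_n(\C))$, Lemma \ref{lem:commutantconcentration} as stated asserts concentration for \emph{arbitrary} Borel subsets of the full unitary group of $\pi^{(k)}(\cD)'$; to bypass the full $U(n)$ theorem while keeping that lemma intact you would also want to record that every unitary lies within geodesic distance $\pi/\sqrt{n}$ of $SU(M_n(\C))$ (equivalently, rephrase the lemma for scalar-invariant sets only and adjust Corollary \ref{cor:freeproductconcentration} accordingly).
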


\begin{remark} \label{rem:unitarygeodesic}
The geodesic distance function $d(U,V)$ on the unitary group that arises from this Riemannian metric is not the same as the Hilbert-Schmidt distance $d_{\text{HS}}(U,V) = \Tr((U - V)^*(U - V))^{1/2}$.  However, we have
\[
d_{\text{HS}}(U,V) \leq d(U,V) \leq \frac{\pi}{2} d_{\text{HS}}(U,V).
\]
See for instance \cite[Lemma 3.9.1]{Blower2009}.  We will continue to work with the geodesic distance.
\end{remark}

We need the following facts about the log-Sobolev inequality and concentration.

\begin{obs} \label{obs:logSobolevscaling}
Let $M$ be a Riemannian manifold and suppose $\mu$ is a probability measure satisfying the log-Sobolev inequality with constant $c$.  Fix $t > 0$, and let $\tilde{M}$ be the same manifold with the rescaled Riemannian metric $\ip{x,y}_{\tilde{M}} = t \ip{x,y}_M$ where $x$ and $y$ are tangent vectors at a point $p \in M$.  Then $\mu$ satisfies the log-Sobolev inequality with constant $ct$.
\end{obs}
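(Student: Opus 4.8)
The plan is to reduce everything to tracking how the gradient and its norm transform under the metric rescaling, noting that the left-hand side of the log-Sobolev inequality---the entropy functional $\int_M f^2 \log(f^2 / \int f^2\,d\mu)\,d\mu$---depends only on the measure $\mu$ and not on the Riemannian structure at all. So only the Dirichlet-energy term $\int_M \norm{Df}^2\,d\mu$ on the right-hand side changes, and I just need to compute the change by the scaling factor $t$.

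First I would recall that the gradient $Df$ of a smooth function is characterized by $df_p(v) = \ip{Df(p), v}_p$ for every tangent vector $v$ at $p$, where $\ip{\cdot,\cdot}_p$ is the Riemannian metric. Writing $\widetilde{D}f$ for the gradient computed with respect to $\ip{\cdot,\cdot}_{\tilde M} = t\ip{\cdot,\cdot}_M$, the identity $df_p(v) = \ip{\widetilde{D}f(p),v}_{\tilde M,p} = t\ip{\widetilde{D}f(p),v}_{M,p}$ combined with $df_p(v) = \ip{Df(p),v}_{M,p}$ forces $\widetilde{D}f = t^{-1} Df$ pointwise. Consequently the $\tilde M$-norm of this gradient satisfies
\[
\norm{\widetilde{D}f(p)}_{\tilde M}^2 = t\,\norm{\widetilde{D}f(p)}_M^2 = t \cdot t^{-2}\,\norm{Df(p)}_M^2 = t^{-1}\,\norm{Df(p)}_M^2,
\]
equivalently $\norm{Df(p)}_M^2 = t\,\norm{\widetilde{D}f(p)}_{\tilde M}^2$.

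Then I would substitute this into the hypothesized log-Sobolev inequality for $\mu$ on $M$: for every $f \in C_c^\infty(M,\R)$,
\[
\int_M f(x)^2 \log \frac{f(x)^2}{\int f^2\,d\mu}\,d\mu(x) \leq 2c \int_M \norm{Df(x)}_M^2\,d\mu(x) = 2c\,t \int_M \norm{\widetilde{D}f(x)}_{\tilde M}^2\,d\mu(x),
\]
which is precisely the log-Sobolev inequality on $\tilde M$ with constant $ct$. I do not anticipate any genuine obstacle here; the only point requiring care is the direction of the scaling (that enlarging the metric by $t$ \emph{shrinks} the gradient by $t^{-1}$ but enlarges its squared norm measured in the new metric by $t^{-1}$ as well, with a net factor of $t$ appearing when one re-expresses the $M$-energy in terms of the $\tilde M$-energy), and noting that $f \in C_c^\infty(M,\R) = C_c^\infty(\tilde M,\R)$ since the two manifolds have the same smooth structure.
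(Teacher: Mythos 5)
Your proof is correct, and since the paper states this as an Observation without proof, your computation is exactly the intended verification: the entropy term is metric-independent, the gradient scales as $\widetilde{D}f = t^{-1}Df$, and hence $\norm{Df}_M^2 = t\,\norm{\widetilde{D}f}_{\tilde M}^2$, giving the constant $ct$. The scaling direction you derive is also consistent with how the observation is applied in Lemma \ref{lem:commutantconcentration}, where rescaling the standard metric on $U(M_{m(j,k)}(\C))$ by $r(j)/n(k)$ turns the constant $6/m(j,k)$ into $6r(j)/n(k)m(j,k)$.
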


\begin{lem}[{see \cite[Corollary 5.7]{Ledoux2001}}] \label{lem:logSobolevproduct}
Let $M_1$, \dots, $M_n$ be Riemannian manifolds, and let $\mu_j$ be a probability measure on $M_j$ satisfying the log-Sobolev inequality with constant $c_j$.  Let $M = M_1 \times \dots \times M_n$ with product Riemannian metric.  Then the product measure $\mu = \mu_1 \otimes \dots \otimes \mu_n$ on $M$ satisfies the log-Sobolev inequality with constant $c = \max(c_1,\dots,c_n)$.
\end{lem}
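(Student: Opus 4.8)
The plan is to deduce this from the \emph{tensorization} (subadditivity) of relative entropy. For a probability measure $\nu$ on a measurable space $X$ and a nonnegative measurable $g$ with $g\log g\in L^1(\nu)$, write $\mathrm{Ent}_\nu(g)=\int_X g\log g\,d\nu-\bigl(\int_X g\,d\nu\bigr)\log\bigl(\int_X g\,d\nu\bigr)$. The first point is merely notational: the left-hand side of the log-Sobolev inequality as stated is exactly $\mathrm{Ent}_\mu(f^2)$, so the claim to be proved is that $\mathrm{Ent}_\mu(f^2)\le 2c\int_M\norm{Df}^2\,d\mu$ for every $f\in C_c^\infty(M,\R)$, with $c=\max_j c_j$.

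The heart of the matter is the subadditivity of entropy for product measures: if $\mu=\mu_1\otimes\cdots\otimes\mu_n$ on $M=M_1\times\cdots\times M_n$ and $g\ge 0$ is measurable with $\mathrm{Ent}_\mu(g)<\infty$, then
\[
\mathrm{Ent}_\mu(g)\ \le\ \sum_{j=1}^n\int_M\mathrm{Ent}_{\mu_j}\bigl(g(x_1,\dots,x_{j-1},\,\cdot\,,x_{j+1},\dots,x_n)\bigr)\,d\mu(x),
\]
where the inner entropy is computed in the $j$-th variable with the remaining coordinates held fixed. I would prove this by induction on $n$; the case $n=2$ is the only substantive step, and it follows from the Gibbs variational formula $\mathrm{Ent}_\nu(g)=\sup\{\int gh\,d\nu:\int e^h\,d\nu\le 1\}$ together with Fubini's theorem (split an admissible test function $h$ for $\mu_1\otimes\mu_2$ and estimate the two resulting pieces). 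This is exactly the content of \cite[Corollary 5.7]{Ledoux2001}, from which the lemma is quoted.

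Granting the subadditivity, the conclusion is a short computation. Fix $f\in C_c^\infty(M,\R)$ and apply the displayed inequality to $g=f^2$; note $\mathrm{Ent}_\mu(f^2)<\infty$ since $f^2$ is bounded with compact support. For each $j$ and each fixed choice of the other coordinates, the slice $x_j\mapsto f(\dots,x_j,\dots)$ lies in $C_c^\infty(M_j,\R)$, so the one-dimensional log-Sobolev inequality for $\mu_j$ gives
\[
\mathrm{Ent}_{\mu_j}\bigl(f^2(\dots,\,\cdot\,,\dots)\bigr)\ \le\ 2c_j\int_{M_j}\norm{D_jf}^2\,d\mu_j,
\]
where $D_j$ is the gradient in the $M_j$-direction. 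Summing over $j$, integrating against $\mu$, and using $c_j\le c$,
\[
\mathrm{Ent}_\mu(f^2)\ \le\ \sum_{j=1}^n 2c_j\int_M\norm{D_jf}^2\,d\mu\ \le\ 2c\int_M\sum_{j=1}^n\norm{D_jf}^2\,d\mu\ =\ 2c\int_M\norm{Df}^2\,d\mu,
\]
the last equality because the product Riemannian metric splits the squared gradient norm orthogonally as $\norm{Df}^2=\sum_j\norm{D_jf}^2$. Hence $\mu$ satisfies the log-Sobolev inequality with constant $c$. The only genuine obstacle is the $n=2$ case of the entropy subadditivity via the Gibbs variational principle; the induction on the number of factors, the behaviour of product Riemannian metrics, and the restriction of test functions to slices are all routine bookkeeping.
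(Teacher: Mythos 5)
Your proof is correct. The paper does not actually prove this lemma — it is quoted directly from Ledoux's book — and your argument (tensorization of entropy via the Gibbs variational formula, applied slicewise, combined with the orthogonal splitting $\norm{Df}^2=\sum_j\norm{D_jf}^2$ for the product metric) is precisely the standard proof given in that reference, so there is nothing to compare beyond noting that you have supplied the details the paper delegates to the citation. One small bookkeeping remark: in the two-factor case of the subadditivity, after splitting the test function $h$ you obtain $\mathrm{Ent}_{\mu_1}$ applied to the \emph{marginal} $\int g(\cdot,y)\,d\mu_2(y)$ rather than the averaged slice entropies, and you need the convexity and homogeneity of $g\mapsto\mathrm{Ent}_\nu(g)$ (itself immediate from the variational formula) to pass to $\int\mathrm{Ent}_{\mu_1}(g(\cdot,y))\,d\mu_2(y)$; this is routine but worth flagging since you compressed that step into "estimate the two resulting pieces."
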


\begin{lem}[{see \cite[Corollary 5.4]{Ledoux2001}}] \label{lem:logSobolevconcentration}
Suppose $\mu$ is a probability measure on the Riemannian manifold $M$.  Let $d$ be the geodesic distance on $M$, that is the metric obtained from infimizing the lengths of paths (measured using the Riemannian metric), and let $N_\eps(\Omega)$ denote the $\eps$-neighborhood of a set.  Let $\alpha_\mu$ be the metric concentration function defined by
\[
\alpha_\mu(\eps) = \sup \{ \mu(N_\eps(\Omega)^c): \Omega \text{ \rm Borel with } \mu(\Omega) \geq 1/2\}.
\]
If $\mu$ satisfies the log-Sobolev inequality with constant $c$, then
\[
\alpha_\mu(\eps) \leq e^{-\eps^2 / 8c}.
\]
\end{lem}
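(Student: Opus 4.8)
The plan is to prove this via the classical Herbst argument: the log-Sobolev inequality forces every $1$-Lipschitz function to have sub-Gaussian fluctuations about its mean, and one then applies this to the distance-to-$\Omega$ function. \textbf{Step 1: sub-Gaussian tails for Lipschitz functions.} First I would show that if $F\colon M\to\R$ is bounded and $1$-Lipschitz with respect to the geodesic distance (so $\norm{DF}\le 1$ almost everywhere), then $\mu\bigl(F\ge\int F\,d\mu+t\bigr)\le e^{-t^2/(2c)}$ for all $t\ge 0$, and symmetrically for $-F$. To do this I would set $H(\lambda)=\int_M e^{\lambda F}\,d\mu$ and apply the log-Sobolev inequality to $f=e^{\lambda F/2}$. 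Since $Df=(\lambda/2)e^{\lambda F/2}\,DF$, the right-hand side of the inequality is at most $2c\cdot(\lambda^2/4)\int e^{\lambda F}\,d\mu=(c\lambda^2/2)H(\lambda)$, while the left-hand side equals $\lambda H'(\lambda)-H(\lambda)\log H(\lambda)$. Dividing by $\lambda^2 H(\lambda)$ gives $\frac{d}{d\lambda}\bigl[\lambda^{-1}\log H(\lambda)\bigr]\le c/2$ for $\lambda>0$; since $\lambda^{-1}\log H(\lambda)\to\int F\,d\mu$ as $\lambda\downarrow 0$ (because $H(0)=1$ and $H'(0)=\int F\,d\mu$), integrating yields $\log H(\lambda)\le\lambda\int F\,d\mu+c\lambda^2/2$. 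Markov's inequality then gives $\mu(F\ge\int F\,d\mu+t)\le e^{-\lambda t+c\lambda^2/2}$, and optimizing at $\lambda=t/c$ produces the claimed estimate; the case of $-F$ is identical.

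\textbf{Step 2: from tails to the concentration function.} Next I would fix a Borel set $\Omega$ with $\mu(\Omega)\ge 1/2$ and take $F(x)=d(x,\Omega)$, which is $1$-Lipschitz, nonnegative, and satisfies $\mu(F=0)\ge\mu(\Omega)\ge 1/2$ (one truncates $F$ at a large level $R$ to make it bounded, which changes neither $\{F\ge\eps\}$ once $R\ge\eps$ nor the bounds below). Applying the $-F$ tail bound with $t=\int F\,d\mu$ and using $\mu(F\le 0)\ge 1/2$ forces $e^{-(\int F\,d\mu)^2/(2c)}\ge 1/2$, hence $\int F\,d\mu\le\sqrt{2c\log 2}$. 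I would then split into two cases. If $\eps\ge 2\sqrt{2c\log 2}$, then $\int F\,d\mu\le\eps/2$ and
\[
\mu(N_\eps(\Omega)^c)=\mu(F\ge\eps)\le e^{-(\eps-\int F\,d\mu)^2/(2c)}\le e^{-(\eps/2)^2/(2c)}=e^{-\eps^2/(8c)}.
\]
If instead $\eps<2\sqrt{2c\log 2}$, then $\eps^2/(8c)<\log 2$, so $e^{-\eps^2/(8c)}>1/2$, while trivially $\mu(N_\eps(\Omega)^c)\le 1-\mu(\Omega)\le 1/2$ since $\Omega\subseteq N_\eps(\Omega)$. In either case $\mu(N_\eps(\Omega)^c)\le e^{-\eps^2/(8c)}$, and taking the supremum over $\Omega$ gives $\alpha_\mu(\eps)\le e^{-\eps^2/(8c)}$.

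I do not expect a genuine obstacle here, but two bookkeeping points will need care. The first is that the log-Sobolev inequality is only assumed for $f\in C_c^\infty(M,\R)$, while $e^{\lambda F/2}$ with $F$ merely Lipschitz is neither smooth nor (on a non-compact $M$) compactly supported; I would deal with this by a standard truncation-and-mollification approximation, entirely routine in the compact setting of interest here (products of unitary groups). The second, and the place where one must be slightly careful, is extracting the precise constant $1/(8c)$ rather than something weaker: the factor $2$ lost in passing from the median of $F$ (namely $0$) to its mean reappears, squared, in the exponent once it is combined with the crude bound $\alpha_\mu(\eps)\le 1/2$ valid for small $\eps$.
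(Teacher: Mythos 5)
Your proposal is correct and follows essentially the same route as the paper: the Herbst argument gives sub-Gaussian tails for Lipschitz functions, which is then applied to the distance-to-$\Omega$ function. The only (cosmetic) difference is in the final step, where the paper uses the capped function $f=\min(d(\cdot,\Omega)/\eps,1)$, whose mean is bounded by $1/2$ directly from $\mu(\Omega)\ge 1/2$, thereby avoiding your separate estimate $\int F\,d\mu\le\sqrt{2c\log 2}$ and the case split on the size of $\eps$.
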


\begin{proof}[Sketch of proof]
First, the argument of Herbst (see \cite[Theorem 5.3]{Ledoux2001} or \cite[Lemma 2.3.3]{AGZ2009}) shows that for the Lipschitz $f: M \to \R$ and $\lambda \geq 0$,
\[
A_\lambda := \int_M e^{\lambda (f(x) - \int f\,d\mu)}\,d\mu(x) \leq e^{c \lambda^2 \norm{f}_{\Lip}^2 / 2}.
\]
Second, one shows that for $\delta > 0$,
\[
\mu( \{x: f(x) - \smallint f\,d\mu \geq \delta \} ) \leq e^{-\delta^2 / 2c \norm{f}_{\Lip}^2}.
\]
This follows from the previous estimate by substituting $\lambda = \delta / c \norm{f}_{\Lip}^2$ and using Markov's inequality.

Finally, to estimate the concentration function $\alpha_\mu$, fix a Borel set $\Omega$ with $\mu(\Omega) \geq 1/2$.  Define
\[
f(x) = \min\left( \frac{d(x,\Omega)}{\eps},1 \right).
\]
Since $f \leq 1$ and $\mu(\Omega) \geq 1/2$, we have $\int f\,d\mu \leq 1/2$.  Also, $f$ is $1/\eps$-Lipschitz.  Thus,
\[
\mu(N_\eps(\Omega)^c) = \mu( \{x: f(x) =
1\}) \leq
\mu( \{x: f(x) - \smallint f\,d\mu \geq 1/2\} )
\leq e^{-(1/2)^2 / 2c \norm{f}_{\Lip}^2}
\leq e^{-\eps^2 / 8c}. \qedhere
\]
\end{proof}

\begin{lem} \label{lem:commutantconcentration}
Let $G^{(k)}$ be the unitary group of $\pi^{(k)}(\cD)'$, equipped with the Riemannian metric associated to the normalized trace $\tau_{n(k)}$ (in other word, identify the tangent space at a point with the self-adjoints of $\pi^{(k)}(\cD)'$ and use the inner product associated to the normalized trace from $M_{n(k)}(\C)$).  Let $\nu^{(k)}$ be the Haar measure on $G^{(k)}$, and let $\alpha_{\nu^{(k)}}$ denote the concentration function.  Then for every $\eps > 0$,
\[
\limsup_{k \to \infty} \frac{1}{n(k)^2} \log \alpha_{\nu^{(k)}}(\eps) < 0.
\]
\end{lem}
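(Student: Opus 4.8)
\emph{Structure of $G^{(k)}$ and per-factor estimates.} Since $\cD = \overline{\bigoplus}_{j\in J} M_{r(j)}(\C)$ and $\pi^{(k)}$ acts on the $j$-th block with multiplicity $m(j,k)$, the commutant of $\pi^{(k)}(\cD)$ in $M_{n(k)}(\C)$ is $\bigoplus_{j\,:\,m(j,k)>0} I_{r(j)}\otimes M_{m(j,k)}(\C)$. Thus, as a Riemannian manifold, $G^{(k)}$ is the product $\prod_{j\,:\,m(j,k)>0} U(M_{m(j,k)}(\C))$, where the $j$-th factor carries $r(j)/n(k)$ times the standard Riemannian metric of $U(M_{m(j,k)}(\C))$ — this is what the inner product $\tau_{n(k)}(A^*B)$ restricts to on the self-adjoint part of the $j$-th block — and $\nu^{(k)}$ is the corresponding product of Haar measures. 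By the Meckes--Meckes bound together with Observation~\ref{obs:logSobolevscaling}, the $j$-th factor satisfies the log-Sobolev inequality with constant $6r(j)/(m(j,k)\,n(k))$; by Remark~\ref{rem:unitarygeodesic} its geodesic diameter is at most $\pi\sqrt{r(j)\,m(j,k)/n(k)}$. I will also use that $r(j)\,m(j,k)\le k\gamma(j)$ and, when $m(j,k)\ge 1$, $m(j,k)\ge k\gamma(j)/(2r(j))$, both immediate from $m(j,k)=\floor{k\gamma(j)/r(j)}$.

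\emph{Splitting off the small factors.} Bounding the log-Sobolev constant of $\nu^{(k)}$ directly by $\max_j 6r(j)/(m(j,k)n(k))$ via Lemma~\ref{lem:logSobolevproduct} is too weak when $\cD$ is infinite-dimensional, since for every $k$ there is in general an active index $j$ with $m(j,k)=1$, making that constant of order $1/n(k)$ rather than $1/n(k)^2$. The point is that such small-multiplicity factors are harmless for a different reason: they occupy a vanishing fraction of $n(k)$. Fix $\eps>0$; we may assume $\eps$ is smaller than the diameter of $G^{(k)}$ (which is at most $\pi$), else $\alpha_{\nu^{(k)}}(\eps)=0$. Since $\sum_j\gamma(j)=1$, the set $\{j:\gamma(j)\ge\delta'\}$ is finite for each $\delta'>0$ and increases to $J$ as $\delta'\downarrow 0$, so $\sum_{j:\gamma(j)<\delta'}\gamma(j)\to 0$; choose $\delta'>0$ with $2\pi^2\sum_{j:\gamma(j)<\delta'}\gamma(j)<(\eps/4)^2$ and put $S=\{j:\gamma(j)\ge\delta'\}$, a fixed finite set with $R:=\max_{j\in S}r(j)<\infty$. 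For $k$ large enough that every $j\in S$ is active, factor $G^{(k)}=G^{(k)}_{\mathrm{big}}\times G^{(k)}_{\mathrm{sml}}$ and $\nu^{(k)}=\nu^{(k)}_{\mathrm{big}}\otimes\nu^{(k)}_{\mathrm{sml}}$, where $G^{(k)}_{\mathrm{big}}$ consists of the factors indexed by $S$ and $G^{(k)}_{\mathrm{sml}}$ of those indexed by the active $j\notin S$.

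\emph{Estimating the pieces and combining.} Using the diameter bound, $r(j)m(j,k)\le k\gamma(j)$, and $n(k)/k\to 1$, the space $G^{(k)}_{\mathrm{sml}}$ has diameter at most $\pi\bigl(\sum_{j:\gamma(j)<\delta'}r(j)m(j,k)/n(k)\bigr)^{1/2}<\eps/4$ for all large $k$. For $j\in S$, the bound $m(j,k)\ge k\gamma(j)/(2r(j))\ge k\delta'/(2R)$ yields $6r(j)/(m(j,k)n(k))\le C_1/n(k)^2$ for a constant $C_1=C_1(\eps,\cD)$ independent of $k$ (here the finiteness of $S$ and $r(j)\le R$ are exactly what make $C_1$ finite); hence by Lemma~\ref{lem:logSobolevproduct} and Lemma~\ref{lem:logSobolevconcentration}, $\alpha_{\nu^{(k)}_{\mathrm{big}}}(t)\le\exp(-n(k)^2 t^2/(8C_1))$ for every $t>0$. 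Now let $\Omega\subseteq G^{(k)}$ be Borel with $\nu^{(k)}(\Omega)\ge 1/2$. By Fubini, the measurable set $A=\{x\in G^{(k)}_{\mathrm{big}}:\nu^{(k)}_{\mathrm{sml}}(\Omega_x)>0\}$ has $\nu^{(k)}_{\mathrm{big}}(A)\ge 1/2$; as $G^{(k)}_{\mathrm{sml}}$ has diameter $<\eps/4$, each nonempty fiber $\Omega_x$ is $(\eps/4)$-dense there, so $A\times G^{(k)}_{\mathrm{sml}}\subseteq N_{\eps/4}(\Omega)$ and therefore $N_{\eps/2}(A)\times G^{(k)}_{\mathrm{sml}}\subseteq N_\eps(\Omega)$ by the triangle inequality. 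Consequently $\nu^{(k)}(N_\eps(\Omega)^c)\le\nu^{(k)}_{\mathrm{big}}(N_{\eps/2}(A)^c)\le\alpha_{\nu^{(k)}_{\mathrm{big}}}(\eps/2)\le\exp(-n(k)^2\eps^2/(32C_1))$; taking the supremum over such $\Omega$ and then $\limsup_k$ gives $\limsup_k\frac{1}{n(k)^2}\log\alpha_{\nu^{(k)}}(\eps)\le-\eps^2/(32C_1)<0$, as required.

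\emph{Main obstacle.} The heart of the matter is the failure noted above: for infinite-dimensional atomic $\cD$, the log-Sobolev constant of the full Haar measure on $G^{(k)}$ is not $O(1/n(k)^2)$, so the real work is to isolate the small-multiplicity factors and exploit that — because $\cD$ is held fixed while $k\to\infty$ — they take up an asymptotically negligible part of $n(k)$; this is precisely what allows their poor concentration to be replaced by a good diameter estimate in the combination step.
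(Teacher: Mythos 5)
Your proof is correct and follows essentially the same route as the paper's: decompose $G^{(k)}$ as a product of unitary groups, truncate to the finitely many heavy summands, apply the Meckes--Meckes log-Sobolev bound (rescaled and combined via the product lemma) to that part, and absorb the remaining factors via a diameter estimate coming from $\sum_j \gamma(j)=1$. The only differences are cosmetic: you select the heavy block by a weight threshold rather than by the first $N$ indices, and your Fubini argument for the set $A$ is a slightly more careful substitute for the paper's use of the projection $\Omega'$.
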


\begin{proof}
Let us first compute the unitary group of $\pi^{(k)}(\cD)'$ more explicitly.  The image of $\pi^{(k)}(\cD)$ can be expressed as
\[
\pi^{(k)}(\cD) = \bigoplus_{j \in J} M_{r(j)} \otimes I_{m(j,k)} \subseteq \bigoplus_{j \in J} M_{r(j)}(\C) \otimes M_{m(j,k)}(\C) \cong \bigoplus_{j \in J} M_{r(j) m(j,k)}(\C) \subseteq M_{n(k)}(\C),
\]
where the inclusions and identifications at each step are the standard ones.  It is well known (and easy to verify) that the commutant of this subalgebra is
\[
\pi^{(k)}(\cD)' = \bigoplus_{j \in J} I_{r(j)} \otimes M_{m(j,k)}(\C) \subseteq \bigoplus_{j \in J} M_{r(j) m(j,k)}(\C) \subseteq M_{n(k)}(\C).
\]
Hence,
\[
G^{(k)} \cong \prod_{\substack{j \in J \\ m(j,k) > 0}} U(M_{m(j,k)}(\C)),
\]
and of course the Haar measure on $G^{(k)}$ is the product of the Haar measures on the individual factors.  As in the case of the unitary groups, we identify the tangent space with the self-adjoints of $\pi^{(k)}(\cD)'$, or the direct sum of $M_{m(j,k)}(\C)_{sa}$ over $j$ with $m(j,k) > 0$.  Then the Riemannian metric is given by
\[
\ip*{\bigoplus_j A_j, \bigoplus_j B_j} =  \frac{1}{n(k)} \sum_{\substack{j \in J \\ m(j,k) > 0}} r(j) \Tr(A_j B_j).
\]
As in Remark \ref{rem:unitarygeodesic}, the corresponding geodesic distance on $G^{(k)}$ is bounded above and below by constants times the normalized Hilbert-Schmidt distance
\[
\norm{(U_j - V_j)_j}_2 = d_{\text{HS}}((U_j)_j, (V_j)_j) = \left( \frac{1}{n(k)} \sum_{\substack{j \in J \\ m(j,k) > 0}} r(j) \Tr((U_j - V_j)^*(U_j - V_j)), \right)^{1/2}.
\]

Since the number of direct summands is unbounded as $k \to \infty$ if $J$ is infinite, we will not study the log-Sobolev inequality on $G^{(k)}$ itself, but rather we will truncate to a fixed number of summands.  Fix $\eps > 0$.  Because the weights $\gamma(j)$ in our direct sum decomposition sum to $1$, there exists a finite $N \in \N$ such that
\[
\sum_{j=1}^N \gamma(j) > 1 - \frac{\eps^2}{4} \left( \frac{\pi}{2} \right)^2.
\]
Moreover, since $m(j,k) r(j) / n(k) \to \gamma(j)$ as $k \to \infty$, we know that for sufficiently large $k$,
\[
\sum_{j=1}^N \frac{m(j,k) r(j)}{n(k)} > 1 - \frac{\eps^2}{4} \left( \frac{\pi}{2} \right)^2.
\]
It follows that for $U = (U_j)_j$ and $V = (V_j)_j$ in $G^{(k)}$, if $U_j = V_j$ for $j \leq N$, then $d(U,V) \leq (2 / \pi) \norm{U - V}_2 < \eps / 2$.  Define
\[
G_{\leq N}^{(k)} = \prod_{j=1}^N U(M_{m(j,k)}(\C)), \qquad G_{>N}^{(k)} = \prod_{j > N} U(M_{m(j,k)}(\C))
\]
with the associated Riemannian metrics given by the weights $m(j,k) r(j) / n(k)$, and denote the associated Haar measures by $\nu_{\leq N}^{(k)}$ and $\nu_{>N}^{(k)}$.

Now, since $U(M_{m(j,k)}(\C))$ with the satisfies log-Sobolev with constant $6/m(j,k)$ with the standard Riemannian metric, if we rescale the metric by $r(j) / n(k)$, then the log-Sobolev inequality holds with constant $6 r(j) / n(k) m(j,k)$.  Then the direct product $G_{\leq N}^{(k)}$ satisfies the log-Sobolev inequality with constant
\[
c_N^{(k)} := \max \left\{ \frac{6 r(j)}{n(k) m(j,k)}: j = 1,\dots, N \right\}.
\]
Fix a Borel set $\Omega \subseteq G^{(k)}$ with $\nu^{(k)}(\Omega) \geq 1/2$.  Let $\Omega'$ be the projection of $\Omega$ onto $G_{\leq N}^{(k)}$.  By our choice of $N$, we have
\[
N_{\eps/2}(\Omega) \supseteq \Omega' \times G_{>N}^{(k)}.
\]
Here $\Omega'$ is a subset of $G_{\leq N}^{(k)}$ with Haar measure at least $1/2$.

On the other hand, by applying Lemma \ref{lem:logSobolevconcentration} on $G_{\leq N}^{(k)}$, we have
\[
\nu^{(k)}(N_\eps(\Omega)^c) \leq \nu_{\leq N}^{(k)}(N_{\eps/2}(\Omega')^c) \leq \alpha_{\nu_{\leq N}}^{(k)}(\eps/2) \leq e^{-\eps^2 / 32 c_N^{(k)}}.
\]
Hence,
\[
\frac{1}{n(k)^2} \log \alpha_{\nu^{(k)}}(\eps) \leq -\frac{\eps^2}{32 n(k)^2 c_N^{(k)}}.
\]
But
\[
n(k)^2 c_N^{(k)} = \max \left\{ \frac{6 r(j) n(k)}{m(j,k)}: j = 1,\dots, N \right\},
\]
and
\[
\frac{6 r(j) n(k)}{m(j,k)} = 6 r(j)^2 \frac{n(k)}{r(j) m(j,k)} \longrightarrow \frac{6r(j)^2}{\gamma(j)} < +\infty,
\]
so that $\limsup_{k \to \infty} n(k)^2 c_N^{(k)} < +\infty$, which implies that $\limsup_{k \to \infty} n(k)^{-2} \log \alpha_{\nu^{(k)}}(\eps) < 0$.
\end{proof}

\begin{cor} \label{cor:freeproductconcentration}
Let $\mathbf{X}_\ell^{(k)}$ be the microstates for $\mathbf{x}_\ell$, where $\ell = 1,2$, chosen in \S \ref{subsec:freeproductmicrostates}, and let $U^{(k)}$ be a Haar random unitary from $\pi^{(k)}(\cD)'$.  Then the random matrix tuple $\mathbf{X}^{(k)} = (\mathbf{X}_1^{(k)}, U^{(k)} \mathbf{X}_2^{(k)} (U^{(k)})^*)$ satisfies the exponential concentration property of Definition \ref{defn:concentration}.
\end{cor}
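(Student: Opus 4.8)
The plan is to realize the random tuple $\mathbf{X}^{(k)}$ as a uniformly (in $k$) Lipschitz image of the Haar unitary $U^{(k)}\in G^{(k)}$, and then transport the exponential concentration of the Haar measure $\nu^{(k)}$ established in Lemma~\ref{lem:commutantconcentration} through that map, by essentially the argument of Corollary~\ref{cor:concentrationpushforward}. Concretely, I would fix a finite set $F\subseteq I_1\sqcup I_2$ and $\eps>0$, put $F_2=F\cap I_2$, and consider the map
\[
\Phi^{(k)}\colon G^{(k)}\to M_{n(k)}(\C)_{sa}^{I_1\sqcup I_2},\qquad \Phi^{(k)}(U)=(\mathbf{X}_1^{(k)},\,U\mathbf{X}_2^{(k)}U^*),
\]
which is continuous, hence Borel, and satisfies $\mathbf{X}^{(k)}=\Phi^{(k)}(U^{(k)})$, so that the distribution of $\mathbf{X}^{(k)}$ is $\Phi^{(k)}_*\nu^{(k)}$.

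The key elementary estimate is that $\Phi^{(k)}$ is Lipschitz in the relevant coordinates with a constant independent of $k$. The first block $\mathbf{X}_1^{(k)}$ is deterministic, so it contributes nothing; for $i\in F_2$, the standard computation
\[
\norm{U(X_2^{(k)})_iU^*-V(X_2^{(k)})_iV^*}_2\le 2\norm{(X_2^{(k)})_i}\,\norm{U-V}_2\le 2R_i\,d(U,V)
\]
holds, where $d$ is the geodesic metric on $G^{(k)}$, using $\norm{U-V}_2\le d(U,V)$ (Remark~\ref{rem:unitarygeodesic}) and the operator-norm bound $\norm{(X_2^{(k)})_i}\le R_i$ furnished by the construction in Lemma~\ref{lem:freeproductmicrostates}. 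Setting $L=2\max_{i\in F_2}R_i$ (and $L=1$ if $F_2=\varnothing$), this shows that $d(U,V)<\eps/L$ forces $\Phi^{(k)}(V)\in N_{F,\eps}(\Phi^{(k)}(U))$.

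It then follows exactly as in Corollary~\ref{cor:concentrationpushforward} that for any Borel $\Omega\subseteq M_{n(k)}(\C)_{sa}^{I_1\sqcup I_2}$ with $(\Phi^{(k)}_*\nu^{(k)})(\Omega)\ge 1/2$, the geodesic $(\eps/L)$-neighborhood of $(\Phi^{(k)})^{-1}(\Omega)$ lies inside $(\Phi^{(k)})^{-1}(N_{F,\eps}(\Omega))$, whence
\[
\alpha_{\Phi^{(k)}_*\nu^{(k)}}(F,\eps)\le \alpha_{\nu^{(k)}}(\eps/L),
\]
the right-hand side being the geodesic metric concentration function of $\nu^{(k)}$ on $G^{(k)}$ appearing in Lemma~\ref{lem:commutantconcentration}. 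Taking $\limsup_k \tfrac{1}{n(k)^2}\log(\cdot)$ of both sides and invoking that lemma yields $\limsup_k \tfrac{1}{n(k)^2}\log\alpha_{\Phi^{(k)}_*\nu^{(k)}}(F,\eps)<0$; since $F$ and $\eps$ were arbitrary, this is precisely the exponential concentration of Definition~\ref{defn:concentration}.

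No step here should present a genuine obstacle; the only point requiring care is bookkeeping between the two notions of neighborhood --- the geodesic $\eps$-balls on the manifold $G^{(k)}$ used in Lemma~\ref{lem:commutantconcentration} and the $(F,\eps)$-neighborhoods $N_{F,\eps}$ on the matrix-tuple space of Definition~\ref{defn:concentration} --- which is exactly what the uniform-in-$k$ Lipschitz bound above handles, together with noting that $L$ and the bounds $R_i$ do not depend on $k$.
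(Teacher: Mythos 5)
Your proof is correct and is essentially the paper's own argument: both realize $\mathbf{X}^{(k)}$ as a uniformly-in-$k$ Lipschitz image of the Haar unitary on $G^{(k)}$ (with constant $2R_i$ for the conjugation coordinates, comparing the normalized Hilbert--Schmidt and geodesic metrics via Remark \ref{rem:unitarygeodesic}) and then transport the concentration of Lemma \ref{lem:commutantconcentration} through the pushforward exactly as in Corollary \ref{cor:concentrationpushforward}. No gaps.
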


\begin{proof}
For $i \in I_2$, we have $f_i^{(k)}: U^{(k)} \mapsto U^{(k)} (X_2^{(k)})_i (U^{(k)}))^*$ is $2R_i$-Lipschitz with respect to $\norm{\cdot}_2$ because $\norm{(X_2^{(k)})_i} \leq R_i$ for $i \in I_2$.  But the normalized Hilbert-Schmidt norm is bounded by the geodesic distance on $G^{(k)}$, hence the function is also $2R_i$-Lipschitz if we use the geodesic distance in the domain.  Of course, for $i \in I_1$, the constant function $f_i^{(k)}: U^{(k)} \mapsto (X_1^{(k)})_i$ is trivially Lipschitz.  Now, $U^{(k)}$ has exponential concentration by the previous lemma. Thus, using the same argument as in Corollary \ref{cor:concentrationpushforward}, the exponential concentration is preserved when we push forward by the tuple of functions $(f_i^{(k)})_{i \in I_1 \sqcup I_2}$, since each $f_i^{(k)}$ is $\norm{\cdot}_2$-uniformly continuous with estimates independent of $k$.
\end{proof}

\subsection{Asymptotic Freeness with Amalgamation}

The key point in establishing the convergence in moments and external averaging property (hypotheses (2) and (4) of Theorem \ref{thm:main}) will be the following theorem. In the case where $\cD$ is finite-dimensional, this is a restatement of \cite[Theorem 3.9]{BDJ2008}, and we merely extend it to the atomic case by an approximation argument.  Note that this theorem considers countably many algebras $\cM_\ell$ rather than only two algebras $\cM_1$ and $\cM_2$ as in \S \ref{subsec:freeproductoverview}.

\begin{thm} \label{thm:asymptoticfreeness}
Let $\{\cM_\ell, \tau_{\cM_\ell})\}_{\ell \in \N}$, $(\cM_2, \tau_{\cM_2})$, \dots be tracial von Neumann algebras that contain common atomic subalgebra $(\cD, \tau_{\cD})$ in a trace-preserving way.  Let $\cM$ be the free product of $\{\cM_\ell\}_{\ell \in \N}$ with amalgamation over $\cD$, and let us view $\cM_\ell$ as a subalgebra of $\cM$ in the canonical way.

For each $\ell$, let $\mathbf{x}_\ell \in (\cM_\ell)_{sa}^{I_\ell}$ be a generating tuple for $\cM_\ell$ with $I_\ell$ an index set.  Let $I = \bigsqcup_{\ell \in \N} I_\ell$ and let $\mathbf{x}$ be the $I$-tuple $(\mathbf{x}_\ell)_{\ell \in \N}$.  Suppose $\mathbf{R} \in (0,+\infty)^I$ with $\norm{x_i} \leq R_i$ for $i \in I_\ell$.

Let $n(k) \to \infty$.  Let $\pi^{(k)}: \cD \to M_{n(k)}(\C)$ be a $*$-homomorphism satisfying $\tau_{n(k)} \circ \pi^{(k)} \to \tau_{\cD}$.  Let $\mathbf{d} \in \cD^J$ be a tuple of operators whose span is dense in $\cD$, obtained as the union of spanning sets for each of the finite-dimensional matrix algebras that are direct summands of $\cD$.  For each $\ell$, let $\mathbf{X}_\ell^{(k)} \in M_{n(k)}(\C)_{sa}^{I_\ell}$ with $\norm{(X_\ell)_i} \leq R_i$ and such that $(\mathbf{X}_\ell^{(k)}, \pi^{(k)}(\mathbf{d})) \to (\mathbf{x}_\ell,\mathbf{d})$ in non-commutative $*$-moments.  Let $\{U_\ell^{(k)}\}_{\ell \in \N}$ be a family of independent Haar unitaries from $\pi^{(k)}(\cD)'$.

Let $\mathbf{X}^{(k)}$ be the $I$-tuple $(U_\ell^{(k)} \mathbf{X}_\ell^{(k)} (U_\ell^{(k)})^*)_{\ell \in \N}$.  Then $\lambda_{\mathbf{X}^{(k)}} \to \lambda_{\mathbf{x}}$ in $\Sigma_{\mathbf{R}}$ in probability as $k \to \infty$.
\end{thm}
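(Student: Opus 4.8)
The plan is to reduce to the convergence of the means and the vanishing of the fluctuations, handle the fluctuations with the concentration already established, and obtain the convergence of means from the finite-dimensional case \cite[Theorem~3.9]{BDJ2008} by compressing to a finite-dimensional amalgam. Since $I=\bigsqcup_\ell I_\ell$ is countable, $\Sigma_{\mathbf{R}}$ is metrizable, so it suffices to show that for each non-commutative polynomial $p$ --- which involves only finitely many variables, hence only finitely many of the $\cM_\ell$ --- we have $\tau_{n(k)}(p(\mathbf{X}^{(k)}))\to\tau(p(\mathbf{x}))$ in probability, and by Chebyshev's inequality this follows once we know $\E[\tau_{n(k)}(p(\mathbf{X}^{(k)}))]\to\tau(p(\mathbf{x}))$ and $\operatorname{Var}(\tau_{n(k)}(p(\mathbf{X}^{(k)})))\to 0$. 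The variance estimate is immediate: the $\mathbf{X}_\ell^{(k)}$ are deterministic and bounded in operator norm, so $(U_\ell^{(k)})_\ell\mapsto\tau_{n(k)}(p(\mathbf{X}^{(k)}))$ is $\norm{\cdot}_2$-Lipschitz with constant independent of $k$, while the law of $(U_\ell^{(k)})_\ell$ is a product of Haar measures on unitary groups of $\pi^{(k)}(\cD)'$ satisfying the log-Sobolev inequality with constant $O(1/n(k))$ by the proof of Lemma~\ref{lem:commutantconcentration}; Lemma~\ref{lem:logSobolevproduct} and Herbst's argument then give concentration at scale $n(k)^2$ (this is the computation in Corollary~\ref{cor:freeproductconcentration}), so in particular the variance tends to $0$.

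For the convergence of means, write $\cD=\overline{\bigoplus}_{j\in J}M_{r(j)}(\C)$ as in \S\ref{subsec:freeproductmicrostates}, let $p_N\in\cD$ be the central projection onto the first $N$ summands, $\cD_N=p_N\cD p_N=\bigoplus_{j\le N}M_{r(j)}(\C)$, and $c_N=\tau_{\cD}(p_N)\to 1$. On the matrix side set $q_N^{(k)}=\pi^{(k)}(p_N)$; this is a central projection of $\pi^{(k)}(\cD)$ with $\tau_{n(k)}(q_N^{(k)})\to c_N$, and since $q_N^{(k)}\in\pi^{(k)}(\cD)$ it commutes with $\pi^{(k)}(\cD)'$, hence with every $U_\ell^{(k)}$. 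Using the explicit form of $\pi^{(k)}(\cD)$ and $\pi^{(k)}(\cD)'$ from \S\ref{subsec:freeproductconcentration}, one checks that in the corner $\cA^{(k)}:=q_N^{(k)}M_{n(k)}(\C)q_N^{(k)}$ the data $\pi^{(k)}|_{\cD_N}$, $\widetilde{\mathbf{X}}_\ell^{(k)}:=q_N^{(k)}\mathbf{X}_\ell^{(k)}q_N^{(k)}$ and $\widetilde U_\ell^{(k)}:=U_\ell^{(k)}q_N^{(k)}$ satisfy the hypotheses of \cite[Theorem~3.9]{BDJ2008} for the finite-dimensional amalgam $\cD_N$ with the renormalized trace $\widetilde\tau_{n(k)}:=\tau_{n(k)}(\cdot)/\tau_{n(k)}(q_N^{(k)})$: indeed $\widetilde\tau_{n(k)}\circ\pi^{(k)}|_{\cD_N}\to\tau_{\cD_N}$; the $\widetilde{\mathbf{X}}_\ell^{(k)}$ are still bounded by $\mathbf{R}$ and $(\widetilde{\mathbf{X}}_\ell^{(k)},\pi^{(k)}(\mathbf{d}_N))$ converges in $*$-moments to $(p_N\mathbf{x}_\ell p_N,\mathbf{d}_N)$ (using the given convergence $(\mathbf{X}_\ell^{(k)},\pi^{(k)}(\mathbf{d}))\to(\mathbf{x}_\ell,\mathbf{d})$, that $p_N$ lies in the span of $\mathbf{d}$, and $\tau_{n(k)}(q_N^{(k)})\to c_N$); and $\widetilde U_\ell^{(k)}$ is a Haar random unitary of $\pi^{(k)}(\cD_N)'\cap\cA^{(k)}=\pi^{(k)}(\cD)'q_N^{(k)}$, the different $\ell$'s remaining independent because $U(\pi^{(k)}(\cD)')=\prod_j U(M_{m(j,k)}(\C))$ and compression by $q_N^{(k)}$ merely discards the factors $j>N$. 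Writing $\mathcal{B}_\ell=\mathrm{W}^*(p_N\mathbf{x}_\ell p_N,\mathbf{d}_N)\le p_N\cM_\ell p_N$ and $\widetilde{\mathbf{X}}^{(k)}=(\widetilde U_\ell^{(k)}\widetilde{\mathbf{X}}_\ell^{(k)}(\widetilde U_\ell^{(k)})^*)_\ell$, \cite[Theorem~3.9]{BDJ2008} then yields that $\lambda_{\widetilde{\mathbf{X}}^{(k)}}$ converges, with respect to $\widetilde\tau_{n(k)}$, in probability to the law of the canonical generators of $*_{\cD_N}\mathcal{B}_\ell$.

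Finally we compare moments. For a monomial $w$ of length $m$, expanding $q_N^{(k)}w(\mathbf{X}^{(k)})q_N^{(k)}$ by inserting $q_N^{(k)}+(1-q_N^{(k)})$ between consecutive letters and commuting $q_N^{(k)}$ past the $U_\ell^{(k)}$ gives $q_N^{(k)}w(\mathbf{X}^{(k)})q_N^{(k)}=w(\widetilde{\mathbf{X}}^{(k)})+(\text{terms each having a factor }1-q_N^{(k)})$, and pulling that factor out in $\norm{\cdot}_2$ bounds each error term, so $\bigl|\tau_{n(k)}(w(\mathbf{X}^{(k)}))-\tau_{n(k)}(w(\widetilde{\mathbf{X}}^{(k)}))\bigr|\le C_w\sqrt{1-\tau_{n(k)}(q_N^{(k)})}$ for a constant $C_w$ depending only on $w$ and $\mathbf{R}$. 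The identical computation inside $\cM=*_{\cD}\cM_\ell$, with $p_N$ in place of $q_N^{(k)}$, gives $\bigl|\tau(w(\mathbf{x}))-\tau(w(p_N\mathbf{x}_1 p_N,\dots))\bigr|\le C_w\sqrt{1-c_N}$, where $w(p_N\mathbf{x}_1p_N,\dots)$ is the corresponding moment of the generators of $*_{\cD_N}\mathcal{B}_\ell$ computed inside $p_N\cM p_N=*_{\cD_N}(p_N\cM_\ell p_N)$; since the trace-preserving inclusion $*_{\cD_N}\mathcal{B}_\ell\hookrightarrow p_N\cM p_N$ rescales the trace by $c_N$, and since the previous paragraph (converting convergence in probability to convergence of means via boundedness) gives $\E[\widetilde\tau_{n(k)}(w(\widetilde{\mathbf{X}}^{(k)}))]\to$ that same moment, we obtain
\[
\limsup_{k\to\infty}\bigl|\E[\tau_{n(k)}(w(\mathbf{X}^{(k)}))]-\tau(w(\mathbf{x}))\bigr|\le 2C_w\sqrt{1-c_N}
\]
for every $N$. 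Letting $N\to\infty$ and summing over the monomials of $p$ completes the convergence of means, and with the variance bound, the theorem. I expect the main obstacle to be exactly this compression bookkeeping, together with the verification of the hypotheses of \cite[Theorem~3.9]{BDJ2008} in the corner --- in particular that $U_\ell^{(k)}q_N^{(k)}$ is genuinely a Haar unitary of the relative commutant $\pi^{(k)}(\cD_N)'\cap\cA^{(k)}$ --- and the standard identification $p_N\cM p_N=*_{\cD_N}(p_N\cM_\ell p_N)$ underpinning the algebra-side moment comparison.
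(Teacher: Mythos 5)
Your proposal is correct and follows essentially the same route as the paper: truncate $\cD$ by the central projection $p_N$ onto finitely many direct summands, apply \cite[Theorem 3.9]{BDJ2008} to the compressed data in the corner $q_N^{(k)}M_{n(k)}(\C)q_N^{(k)}$ (using that $q_N^{(k)}$ commutes with the $U_\ell^{(k)}$ and that $p_N\cM p_N = *_{p_N\cD p_N}(p_N\cM_\ell p_N)$), and control the truncation error per monomial by $O(\sqrt{1-\tau(p_N)})$. The one packaging difference is your detour through means and variances via concentration; this is harmless but unnecessary, since the truncation estimates are deterministic $\norm{\cdot}_2$-bounds valid for every realization of the unitaries and BDJ already delivers convergence in probability in the corner, so the paper simply carries convergence in probability through the whole argument.
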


\begin{proof}
The case where $\cD$ is finite-dimensional and the index sets $I_\ell$ are finite follows from \cite[Theorem 3.9]{BDJ2008}.  %We remark that Brown-Dykema-Jung use the notation $E_k$ for what we would call $(\pi^{(k)})^{-1} \circ E^{(k)} \circ \E$ (if $\cD$ is finite-dimensional, then $\pi^{(k)}$ is invertible for sufficiently large $k$).
The extension to infinite index sets $I_\ell$ is automatic.  Indeed, for $\lambda_{\mathbf{X}^{(k)}}$ to converge to $\lambda_{\mathbf{x}}$ in probability means that for every $f \in \C\ip{t_i: i \in \bigsqcup_\ell I_\ell}$, we have $\tau_{n(k)}(f(\mathbf{X}^{(k)})) \to \tau(f(\mathbf{x}))$.  But $f$ only depends on finitely many of the $(\mathbf{x}_\ell)_i$'s so the convergence follows from the case for finite index sets.

Now we must extend from finite-dimensional $\cD$ to atomic $\cD$.  As in the proof of Lemma \ref{lem:commutantconcentration}, for general atomic $\cD$, we must approximate by truncating to finitely many direct summands.  Given $\eps > 0$, choose $N$ large enough that $\sum_{j=1}^N \gamma(j) > 1 - \eps^2$.  Let $p \in \cD$ be the (central) projection onto the first $N$ direct summands.  Let $P^{(k)} = \pi^{(k)}(p)$ be the $k$th matrix approximation of $p$.  Then $\tau_{\cD}(p) > 1 - \eps^2$ and hence $\tau_{n(k)}(P^{(k)}) \geq 1 - \eps^2$ for sufficiently large $k$.  It follows that for every matrix $C \in M_{n(k)}(\C)$, we have
\begin{equation} \label{eq:projectionerror}
\norm*{C - P^{(k)} C P^{(k)}}_2 \leq \norm*{(1 - P^{(k)}) C}_2 + \norm*{P^{(k)}C(1 - P^{(k)})}_2 \leq 2 \eps \norm{C}.
\end{equation}

To show that $\tau_{n(k)}(f(\mathbf{X}^{(k)})) \to \tau(f(\mathbf{x}))$ in probability for all $f \in \C\ip{t_i: i \in \bigsqcup_\ell I_\ell}$, it suffices to consider the case where $f$ is a monomial, say
\[
f(t_i: i \in \bigsqcup_\ell I_\ell) = \prod_{j=1}^J t_{i(j)},
\]
where the terms in the product are indexed from left to right.  Let $\ell(j)$ be the index with $i(j) \in I_{\ell(j)}$, so that
\[
f(\mathbf{X}^{(k)}) = \prod_{j=1}^J U_{\ell(j)}^{(k)} X_{i(j)}^{(k)} (U_{\ell(j)}^{(k)})^*,
\]
where $X_{i(j)}^{(k)}$ is the $i(j)$-indexed element of the tuple $\mathbf{X}_{\ell(j)}^{(k)}$.  Next, we replace each term in the product with its truncation by $P^{(k)}$ and estimate the error for each ``swap'' by \eqref{eq:projectionerror} to obtain
\begin{equation} \label{eq:truncationerror1}
\norm*{f(\mathbf{X}^{(k)}) - \prod_{j=1}^k P^{(k)} U_{\ell(j)}^{(k)} P^{(k)} X_{i(j)} P^{(k)} (U_{\ell(j)}^{(k)})^* P^{(k)} }_2 \leq (2 \eps)(3J) \prod_{j=1}^J \norm{X_{i(j)}^{(k)}}
\leq 6 \eps J \prod_{j=1}^J R_{i(j)}^{(k)},
\end{equation}
since the total number of swaps is $3J$ and $\norm{X_{i(j)}^{(k)}} \leq R_{i(j)}$.  Analogously,
\begin{equation} \label{eq:truncationerror2}
\norm*{f(\mathbf{x}) - \prod_{j=1}^J u_{\ell(j)} \mathbf{x}_{i(j)} u_{\ell(j)}^* }_2 \leq 6 \eps J \prod_{j=1}^J R_{i(j)}^{(k)}.
\end{equation}

Next, we will evaluate the limit of the trace of the truncated version of $f(\mathbf{X}^{(k)})$ by applying the finite-dimensional case of the theorem to the algebras $p \cM_\ell p$ and the finite-dimensional subalgebra $p \cD p$.  We consider each of the compressions $p \cM_\ell p$, $p \cD p$, and $p \cM p$ as tracial von Neumann algebras by renormalizing the trace from the ambient algebra by $1 / \tau(p)$, as is standard.  Note that $\{p \cM_\ell p\}_{\ell \in \N}$ are freely independent in $p \cM p$ with amalgamation over $p \cD p$; this is an exercise to check from the definition of free independence and the fact that $p \in \cD$.

Similarly, we consider the truncated matrix algebra $P^{(k)} M_{n(k)}(\C) P^{(k)}$ with the trace renormalized by $1 / \tau_{n(k)}(P^{(k)})$.  Note that $\{P^{(k)} U_\ell^{(k)} P^{(k)}\}_{\ell \in \N}$ are independent Haar unitaries from $\pi^{(k)}(p\cD p)' \cap P^{(k)} M_{n(k)}(\C) P^{(k)}$.  The matrix tuples $(P^{(k)} \mathbf{X}_\ell^{(k)} P^{(k)}, P^{(k)} \pi^{(k)}(\mathbf{d}) P^{(k)})$ converge in non-commutative $*$-moments to $(\mathbf{x}_\ell, p \mathbf{d} p)$ with respect to the renormalized traces on the truncated algebras (of course, since $\tau_{n(k)}(P^{(k)}) \to \tau(p)$, it would be equivalent to show this convergence with respect to the traces on the ambient algebras).  But we chose $\mathbf{d}$ to contain a spanning set for each of the direct summands of $\cD$, and hence $p \in \Span(\mathbf{d})$.  Therefore, any $*$-polynomial $f$ in $(p\mathbf{x}_\ell p, p \mathbf{d} p)$ can be expressed as a $*$-polynomial $g$ in $(\mathbf{x}_\ell, \mathbf{d})$ such that we also have $f(P^{(k)} \mathbf{X}_\ell^{(k)} P^{(k)}, P^{(k)} \pi^{(k)} (\mathbf{d}) P^{(k)}) = g(\mathbf{X}_\ell^{(k)}, \pi^{(k)}(\mathbf{d}))$ (this uses the fact that $\pi^{(k)}$ is a $*$-homomorphism).

All these observations mean that the case of the theorem where $\cD$ is finite-dimensional can be applied to the truncated version of $f(\mathbf{X}^{(k)})$, and so we obtain that in probability,
\[
\lim_{k \to \infty} \frac{1}{\tau_{n(k)}(P^{(k)})} \tau_{n(k)} \left[ \prod_{j=1}^k P^{(k)} U_{\ell(j)}^{(k)} P^{(k)} X_{i(j)} P^{(k)} (U_{\ell(j)}^{(k)})^* P^{(k)} \right] = \frac{1}{\tau(p)} \tau\left[ \prod_{j=1}^J pu_{\ell(j)} x_{i(j)} u_{\ell(j)}^* \right].
\]
Of course, since $\tau_{n(k)}(P^{(k)}) \to \tau(p) > 0$, we can remove the terms $1 / \tau_{n(k)}(P^{(k)})$ and $1 / \tau(p)$ from the equation.  We combine this with \eqref{eq:truncationerror1} and \eqref{eq:truncationerror2} to conclude that in probability,
\[
\limsup_{k \to \infty} \left| \tau_{n(k)}[f(\mathbf{X}^{(k)})] - \tau[f(\mathbf{x})] \right| \leq 12 \eps J \prod_{j=1}^J R_{i(j)}^{(k)}.
\]
Since $\eps$ was arbitrary, we are finished.
\end{proof}

\begin{remark} \label{rem:finitelymanyfree}
Clearly, the same theorem is true if we only have finitely many algebras $\cM_\ell$ rather than countably many.
\end{remark}

\begin{remark} \label{rem:removefirstunitary}
The conclusion of the theorem still holds if we replace the first unitary $U_1^{(k)}$ by the identity.  This is because the non-commutative law is invariant under unitary conjugation, so we can conjugate $\mathbf{X}^{(k)}$ by $(U_1^{(k)})^*$ and note that $\{ (U_1^{(k)})^* U_\ell^{(k)}\}_{\ell \geq 2}$ has the same probability distribution as $\{U_\ell^{(k)}\}_{\ell \geq 2}$.
\end{remark}

\subsection{Conclusion to the Proof}

\begin{proof}[Proof of Theorem \ref{thm:main2}] ~

{\bf The separable case:} First, assume that $\cM_1$ and $\cM_2$ are have separable predual.  We retain all the setup and notation from \S \ref{subsec:freeproductoverview} and \S \ref{subsec:freeproductmicrostates}.  We want to show that random matrix tuples $\mathbf{X}^{(k)} = (\mathbf{X}_1^{(k)}, U^{(k)} \mathbf{X}_2^{(k)} (U^{(k)})^*)$ satisfy the hypotheses of Theorem \ref{thm:main} with respect to $\cM_1 \subseteq \cM = \cM_1 *_{\cD} \cM_2$ and the chosen generators $\mathbf{x} = (\mathbf{x}_1, \mathbf{x}_2)$ and operator norm bounds $\mathbf{R}$.  We check each of the four hypotheses of Theorem \ref{thm:main} in turn.

(1) By construction $\norm{(X_1)_i^{(k)}} \leq R_i$ for $i \in I_1$ and $\norm{U^{(k)} (X_2)_i^{(k)} (U^{(k)})^*} = \norm{(X_2)_i^{(k)}} \leq R_i$ for $i \in I_2$.

(2) The fact that $\lambda_{\mathbf{X}^{(k)}} \to \lambda_{\mathbf{x}}$ follows from Theorem \ref{thm:asymptoticfreeness} and the two remarks given above.  In other words, we apply the version of the theorem for two algebras with the first unitary replaced by the identity.

(3) Corollary \ref{cor:freeproductconcentration} showed that these random matrix models have exponential concentration.

(4) To check the external averaging property, let $f \in \C\ip{t_i: i \in I_1 \sqcup I_2}$, and we will show that
\[
\lim_{k \to \infty} \norm{\E[f(\mathbf{X}^{(k)})]}_2 = \norm{E_{\cM_1}[f(\mathbf{x})]}_2.
\]
We write
\begin{align*}
\norm{\E[f(\mathbf{X}^{(k)})]}_2^2 &= \tau_{n(k)}\left[\E[f(\mathbf{X}_1^{(k)}, U^{(k)} \mathbf{X}_2^{(k)} (U^{(k)})^*)]^* \E[f(\mathbf{X}_1^{(k)}, V^{(k)} \mathbf{X}_2^{(k)} (V^{(k)})^*)] \right] \\
&= \E \circ \tau_{n(k)} \left[f(\mathbf{X}_1^{(k)}, U^{(k)} \mathbf{X}_2^{(k)} (U^{(k)})^*)^* f(\mathbf{X}_1^{(k)}, V^{(k)} \mathbf{X}_2^{(k)} (V^{(k)})^*) \right]
\end{align*}
where $V^{(k)}$ is an independent Haar unitary from $\pi^{(k)}(\cD)'$.

Let $\cM_3$ be an isomorphic copy of $\cM_2$ and let $\mathbf{x}_3$ be the tuple corresponding to $\mathbf{x}_2$.  Let $\cQ = \cM_1 * \cM_2 * \cM_3$.  Applying Theorem \ref{thm:asymptoticfreeness} to the free product of $\cM_1$, $\cM_2$, and $\cM_3$ (again using the two remarks following the theorem), we obtain
\[
\lim_{k \to \infty} \E \circ \tau_{n(k)} \left[f(\mathbf{X}_1^{(k)}, U^{(k)} \mathbf{X}_2^{(k)} (U^{(k)})^*)^* f(\mathbf{X}_1^{(k)}, V^{(k)} \mathbf{X}_2^{(k)} (V^{(k)})^*) \right] = \tau_{\cQ}[f(\mathbf{x}_1, \mathbf{x}_2)^* f(\mathbf{x}_1, \mathbf{x}_3)].
\]
There is a canonical isomorphism
\[
\cQ \cong (\cM_1 *_{\cD} \cM_2) *_{\cM_1} (\cM_1 *_{\cD} \cM_3),
\]
or in other words, $\cM_1 *_{\cD} \cM_2$ and $\cM_1 *_{\cD} \cM_3$ are freely independent in $\cQ$ with amalgamation over $\cM_1$ (see \cite[Proposition 4.1]{Hou07}).  This implies that
\[
E_{\cM_1}[f(\mathbf{x}_1, \mathbf{x}_2)^* f(\mathbf{x}_1, \mathbf{x}_3)] = E_{\cM_1}[f(\mathbf{x}_1, \mathbf{x}_2)]^* E_{\cM_1}[f(\mathbf{x}_1, \mathbf{x}_3)].
\]
Of course, using the obvious isomorphism $\cM_1 * \cM_2 \cong \cM_1 * \cM_3$, we have $E_{\cM_1}[f(\mathbf{x}_1,\mathbf{x}_3)] = E_{\cM_1}[f(\mathbf{x}_1,\mathbf{x}_2)]$.  Thus, we have
\[
\tau_{\cQ}[f(\mathbf{x}_1, \mathbf{x}_2)^* f(\mathbf{x}_1, \mathbf{x}_3)] = \tau[E_{\cM_1}[f(\mathbf{x}_1, \mathbf{x}_2)]^* E_{\cM_1}[f(\mathbf{x}_1, \mathbf{x}_2)]],
\]
and hence
\[
\lim_{k \to \infty} \norm{\E[f(\mathbf{X}_1^{(k)}, U^{(k)} \mathbf{X}_2^{(k)} (U^{(k)})^*)]}_2^2 = \norm{E_{\cM_1}[f(\mathbf{x}_1,\mathbf{x}_2)]}_2^2.
\]

Therefore, the hypotheses of Theorem \ref{thm:main} are satisfied, and so Theorem \ref{thm:main} implies Theorem \ref{thm:main2} in the separable case.

{\bf The general case:} Now suppose that $\cM = \cM_1 *_{\cD} \cM_2$ where $\cD$ is atomic and $\cM_1$ and $\cM_2$ are not necessarily separable.  Suppose that $\cN \cap \cM_1$ is diffuse and $h(\cN: \cM) = 0$.  It suffices to show that if $\cN^0$ is a diffuse separable subalgebra of $\cN$, then $\cN^0 \subseteq \cM_1$.

Note that $h(\cN^0: \cM) \leq h(\cN: \cM) = 0$ by Property 2 of $h$ from \S \ref{subsec: 1 bounded entropy properties}.  Letting $\mathbf{x} \in (\cN^0)_{sa}^I$ be a generating tuple for $\cN^0$ and $\mathbf{y}$ be a generating tuple for $\cM$, this means that for every finite $F \subseteq I$ and $\eps > 0$ and $\delta > 0$, there exists a neighborhood $\mathcal{U}$ of $\lambda_{\mathbf{x},\mathbf{y}}$ such that
\[
\limsup_{n \to \infty} \frac{1}{n^2} \log K_{F,\eps}(\Gamma_{\mathbf{R},n}(\mathbf{x}:\mathbf{y}|C^{(n)} \rightsquigarrow z; \mathcal{U})) < \delta.
\]
Recall $\inf_{\cU}$ of the above expression will increase if $\eps$ becomes smaller and $F$ becomes larger.  Thus, by iterating over countably many values of $F$, $\eps$, and $\delta$, we see that only countably many neighborhoods $\cU$ are needed to witness that the $\sup_{F, \eps} \inf_{\cU}$ of the above expression is zero.  And each neighborhood only specifies conditions on finitely many generators of $\cM$ Thus, there exists a separable subalgebra $\cM^0$ such that $h(\cN^0: \cM^0) = 0$.

We can choose separable subalgebras $\cM_1^0 \leq \cM_1$ and $\cM_2^0 \leq \cM_2$ such that $\cM^0 \leq \cM_1^0 *_{\cD} \cM_2^0$, and hence $h(\cN^0: \cM_1^0 *_{\cD} \cM_2^0) = 0$.  By making $\cM_1^0$ larger if necessary, we can arrange that $\cN^0 \cap \cM_1^0$ is diffuse. Then it follows from the separable case of the theorem that $\cN^0 \subseteq \cM_1^0 \subseteq \cM_1$ as desired.
\end{proof}

\begin{remark}
While much of the proof could be adapted to the case of amenable $\cD$, Theorem \ref{thm:main2} does not hold for $\cD$ amenable, as we saw in the introduction. For example, suppose $\cP_{i}$, $i=1,2$, is a Pinsker algebra in $\cM_{i}$, $i=1,2$, and that $\cD$ is a common diffuse subalgebra of $\cP_{1}$ and $\cP_{2}$. If $\cD \neq \cP_{2}$, then $\cP_{1}$ is \emph{not} Pinsker in $\cM=\cM_{1}*_{D}\cM_{2}$. To see this, note that  $\cP_{1}\cap \cP_{2}$ is diffuse by virtue of containing $\cD$. Thus $h(\cP_{1}*_{\cD}\cP_{2}:\cM)\leq 0$ by Property \ref{I:subadditivity of 1 bdd ent}, while on the other hand we know $\cP_{1}*_{\cD}\cP_{2}\neq \cP_{1}$ since $\cD\ne \cP_{2}$ . The place where the proof of Theorem \ref{thm:main2} would break down is in achieving exponential concentration of measure. Indeed, even using matrix approximations for an amenable algebra, the exponential concentration on the relative commutants of $\pi^{(k)}(\cD)$ in the matrix models decays in the limit. Thus we lose the exponential concentration of measure when $\cD$ is amenable and non-atomic.
\end{remark}

\section{Further Remarks} \label{sec:remarks}

\subsection{The External Averaging Property and Ultraproducts} \label{subsec:ultraproducts}

The external averaging property (hypothesis (4) from Theorem \ref{thm:main}) can alternatively be stated in terms of a commuting square property \cite{Popa1983, Popa1990} for embeddings into ultraproducts.

Let $(\cM,\tau)$ be a tracial $\mathrm{W}^*$-algebra, let $\mathbf{x}$ be an $I$-tuple of self-adjoint generators, and let $\mathbf{R} \in (0,+\infty)^I$ with $\norm{x_i} \leq R_i$.  Suppose that $\mathbf{X}^{(k)}$ is an $I$-tuple of random $n(k) \times n(k)$ self-adjoint matrices satisfying $\norm{X_i^{(k)}} \leq R_i$ and $\lambda_{\mathbf{X}^{(k)}} \to \lambda_{\mathbf{x}}$ in probability.

Let us realize the $\mathbf{X}^{(k)}$'s on the same probability space $\Omega$.  Then $X_i^{(k)}$ can be viewed as an element of $L^\infty(\Omega,M_{n(k)}(\C)) = L^\infty(\Omega) \otimes M_{n(k)}(\C)$.  Let $\omega$ be a free ultrafilter on $\N$, and consider the tracial $\mathrm{W}^*$-ultraproduct
\[
\prod_{k \to \omega} L^\infty(\Omega, M_{n(k)}(\C)).
\]
Let $y_i$ be the element of the ultraproduct represented by $\{X_i^{(k)}\}_{k \in \N}$.  Then $\mathbf{y} = (y_i)_{i \in I}$ has the same non-commutative law as $\mathbf{x}$, and hence there is an embedding $\rho_\omega: \cM \to \prod_{k \to \omega} L^\infty(\Omega,M_{n(k)}(\C))$ given by $\rho(x_i) = y_i$.

We also remark that there is a canonical inclusion $M_{n(k)}(\C) \to L^\infty(\Omega, M_{n(k)}(\C))$, mapping $A$ to the deterministic function $A$ on $\Omega$, and hence we have an inclusion
\[
\iota_\omega: \prod_{k \to \omega} M_{n(k)}(\C) \to \prod_{k \to \omega} L^\infty(\Omega, M_{n(k)}(\C)).
\]

\begin{prop}
Let $(\cM, \tau)$, $\mathbf{x}$, $\mathbf{R}$, $\mathbf{X}^{(k)}$, $\mathbf{y}$ be as above, and let $\cP \subseteq \cM$ be a $\mathrm{W}^*$-subalgebra.  Then the following are equivalent:
\begin{enumerate}
    \item $\mathbf{X}^{(k)}$ satisfies the external averaging property for $\cP \subseteq \cM$.
    \item For every free ultrafilter $\omega$ on $\N$, the embedding $\rho_\omega: \cM \to \prod_{k \to \omega} L^\infty(\Omega, M_{n(k)}(\C))$ described above satisfies
    \[
    \rho_\omega(\cP) = \rho_\omega(\cM) \cap \iota_\omega \left( \prod_{k \to \omega} M_{n(k)}(\C) \right),
    \]
    and we have commutativity of the diagram
    \[
    \begin{tikzcd}
    \cM \arrow{r}{\rho_\omega} \arrow{d}{E_{\cP}} & \prod_{k \to \omega} L^\infty(\Omega,M_{n(k)}(\C)) \arrow{d}{E_\omega} \\
    \cP \arrow{r}{\rho_\omega} & \prod_{k \to \omega} M_{n(k)}(\C),
    \end{tikzcd}
    \]
    where $E$ denotes the trace-preserving conditional expectation.
\end{enumerate}
\end{prop}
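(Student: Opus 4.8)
The plan is to identify every object appearing in (2) concretely in terms of representative sequences and then to run an elementary Hilbert-space nearest-point argument, with Lemma~\ref{lem:EAPequivalences} doing the real work. First I would realize the random tuples $\mathbf{X}^{(k)}$ on a common probability space $\Omega$ and abbreviate $\cN^{(k)} := L^\infty(\Omega, M_{n(k)}(\C))$, with trace $\E \circ \tau_{n(k)}$, and $\cN^\omega := \prod_{k \to \omega} \cN^{(k)}$, with trace $\tau_\omega = \lim_{k \to \omega} \E \circ \tau_{n(k)}$. The classical expectation $\E \colon \cN^{(k)} \to M_{n(k)}(\C)$ is the trace-preserving conditional expectation onto the deterministic matrices, equivalently the $\norm{\cdot}_2$-orthogonal projection of $\cN^{(k)}$ onto $M_{n(k)}(\C)$; since it is contractive for both $\norm{\cdot}_\infty$ and $\norm{\cdot}_2$, it induces a well-defined map $\cN^\omega \to \prod_{k \to \omega} M_{n(k)}(\C)$ sending $(Z^{(k)})_{k \to \omega}$ to $(\E[Z^{(k)}])_{k \to \omega}$, and I would check that composing this with $\iota_\omega$ is exactly $E_\omega$, the trace-preserving conditional expectation onto $\iota_\omega(\prod_{k\to\omega} M_{n(k)}(\C))$ --- the point being that an ultralimit of orthogonal projections is again an orthogonal projection, so $E_\omega$ is the $L^2$-projection onto that von Neumann subalgebra. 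In particular, for a non-commutative polynomial $p$, the element $\rho_\omega(p(\mathbf{x}))$ is represented by $(p(\mathbf{X}^{(k)}))_k$ and $E_\omega(\rho_\omega(p(\mathbf{x})))$ by $(\E[p(\mathbf{X}^{(k)})])_k$.

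The crux is then to relate the external averaging property to an $L^2$-distance. An element of $\iota_\omega(\prod_{k\to\omega} M_{n(k)}(\C))$ is represented by a $\norm{\cdot}_\infty$-bounded sequence of deterministic matrices, and because $\E[p(\mathbf{X}^{(k)})]$ is the nearest deterministic matrix to $p(\mathbf{X}^{(k)})$ in $L^2(\mu^{(k)}, M_{n(k)}(\C))$, I would obtain
\[
\operatorname{dist}_{L^2(\cN^\omega)}\bigl(\rho_\omega(p(\mathbf{x})),\ \iota_\omega(\prod\nolimits_{k\to\omega} M_{n(k)}(\C))\bigr)^2 = \lim_{k\to\omega} \int \norm*{p(\mathbf{A}) - \E[p(\mathbf{X}^{(k)})]}_2^2 \, d\mu^{(k)}(\mathbf{A}).
\]
Under hypothesis (1), Lemma~\ref{lem:EAPequivalences}(2) with $q = p$ identifies the right-hand side with $\norm{p(\mathbf{x}) - E_{\cP}(p(\mathbf{x}))}_2^2 = \operatorname{dist}_{L^2(\cM)}(p(\mathbf{x}), \cP)^2$ (as an ordinary limit, hence unchanged along any free ultrafilter). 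Since polynomials in $\mathbf{x}$ are $\norm{\cdot}_2$-dense in $L^2(\cM)$, since $\rho_\omega$ is trace-preserving and therefore $L^2$-isometric, and since the distance to a fixed closed subspace is $1$-Lipschitz, this upgrades to $\operatorname{dist}_{L^2(\cN^\omega)}(\rho_\omega(a),\ \iota_\omega(\prod_{k\to\omega} M_{n(k)}(\C))) = \operatorname{dist}_{L^2(\cM)}(a, \cP)$ for every $a \in \cM$.

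From this distance identity the rest of (1) $\Rightarrow$ (2) is formal. Applying it to $a \in \cP$ and using that $\iota_\omega(\prod_{k\to\omega} M_{n(k)}(\C))$ is $L^2$-closed gives $\rho_\omega(\cP) \subseteq \iota_\omega(\prod_{k\to\omega} M_{n(k)}(\C))$; applying it to an $a$ with $\rho_\omega(a)$ in that subalgebra forces $\operatorname{dist}_{L^2(\cM)}(a,\cP) = 0$, hence $a \in \cP$; combining, $\rho_\omega(\cP) = \rho_\omega(\cM) \cap \iota_\omega(\prod_{k\to\omega} M_{n(k)}(\C))$. For the commuting square, fix $a \in \cM$: then $\rho_\omega(E_{\cP}(a))$ lies in $\iota_\omega(\prod_{k\to\omega} M_{n(k)}(\C))$ by the inclusion just established, and
\[
\norm{\rho_\omega(a) - \rho_\omega(E_{\cP}(a))}_2 = \norm{a - E_{\cP}(a)}_2 = \operatorname{dist}_{L^2(\cM)}(a, \cP) = \operatorname{dist}_{L^2(\cN^\omega)}\bigl(\rho_\omega(a),\ \iota_\omega(\prod\nolimits_{k\to\omega} M_{n(k)}(\C))\bigr),
\]
so by uniqueness of the nearest point in a closed subspace, $\rho_\omega(E_{\cP}(a))$ is the $L^2$-orthogonal projection of $\rho_\omega(a)$ onto that subalgebra, i.e.\ $E_\omega \circ \rho_\omega = \rho_\omega \circ E_{\cP}$.

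For (2) $\Rightarrow$ (1) I would fix a non-commutative polynomial $p$ and a free ultrafilter $\omega$, invoke commutativity of the diagram to write $E_\omega(\rho_\omega(p(\mathbf{x}))) = \rho_\omega(E_{\cP}(p(\mathbf{x})))$, and take $\norm{\cdot}_2$ of both sides --- using that $\iota_\omega$ and $\rho_\omega$ are trace-preserving --- to get $\lim_{k\to\omega} \norm{\E[p(\mathbf{X}^{(k)})]}_2 = \norm{E_{\cP}(p(\mathbf{x}))}_2$. Since this holds for every free ultrafilter and the real sequence $(\norm{\E[p(\mathbf{X}^{(k)})]}_2)_k$ is bounded, its ordinary limit exists and equals $\norm{E_{\cP}(p(\mathbf{x}))}_2$, which is precisely the \emph{external averaging property}. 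The one genuinely non-routine step I expect is the setup in the first paragraph, namely pinning down that the conditional expectation $E_\omega$ onto $\iota_\omega(\prod_{k\to\omega} M_{n(k)}(\C))$ is implemented on representative sequences by the classical expectations $\E$; once that identification is in hand, the rest is just Lemma~\ref{lem:EAPequivalences} and Hilbert-space geometry.
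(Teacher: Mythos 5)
Your argument is correct and follows essentially the same route as the paper's proof: both rest on Lemma \ref{lem:EAPequivalences} together with the fact that the conditional expectation $E_\omega$ onto $\iota_\omega\bigl(\prod_{k\to\omega} M_{n(k)}(\C)\bigr)$ is implemented on representative sequences by the classical expectations $\E$, and then reduce everything to Hilbert-space geometry of nearest points. The only organizational difference is that you handle general elements of $\cM$ by polynomial density plus the $1$-Lipschitz property of the distance function, where the paper writes $z=f(\mathbf{x})$ with $f\in\mathcal{F}_{\mathbf{R},\infty}$ and invokes parts (3)--(4) of the lemma directly; both are fine.
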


\begin{proof}
(1) $\implies$ (2).  Fix $z \in \cM$ and write $z = f(\mathbf{x})$ for some $f \in \mathcal{F}_{\mathbf{R},\infty}$.  By Lemma \ref{lem:EAPequivalences}, we have $f(z) \in \cP$ if and only if
\[
\lim_{k \to \infty} \norm{f(\mathbf{X}^{(k)}) -\E[f(\mathbf{X}^{(k)})]}_{L^2(\Omega,L^2(M_{n(k)}(\C)),\tau_{n(k)})} = 0.
\]
But since ultraproducts commute with trace-preserving conditional expectation, this occurs if and only if
\[
\norm{f(\mathbf{y}) - E_\omega[f(\mathbf{y})]}_2 = 0.
\]
This proves that $\rho_\omega(\cP) = \rho_\omega(\cM) \cap \iota_\omega ( \prod_{k \to \omega} M_{n(k)}(\C) )$.

To check the commuting square condition, it suffices to consider $z \in \cP$ (which we already did) and $z \perp \cP$.  By Lemma \ref{lem:EAPequivalences}, $z \perp \cP$ if and only if
\[
\lim_{k \to \omega} \norm{\E[f(\mathbf{X}^{(k)})]}_2 = 0.
\]
But this is equivalent to $\norm{E_\omega[f(\mathbf{y})]}_2 = 0$.

(2) $\implies$ (1).  Fix $f \in \mathcal{F}_{\mathbf{R},\infty}$.  Then for every free ultrafilter $\omega$, we have
\[
\lim_{k \to \omega} \norm{\E[f(\mathbf{X}^{(k)})]}_2 = \norm{E_\omega[f(\mathbf{y})]}_2 = \norm{E_{\cP}[f(\mathbf{x})]}_2,
\]
where the last equality follows from the commutativity of the diagram and the fact that $\rho_\omega(\mathbf{x}) = \mathbf{y}$.  Since the ultrafilter $\omega$ was arbitrary, we have $\lim_{k \to \infty} \norm{\E[f(\mathbf{X}^{(k)})]}_2 = \norm{E_{\cP}[f(\mathbf{x})]}_2$.
\end{proof}

\subsection{Change of Generators and Passing to Subalgebras} \label{subsec:changeofgenerators}

Now we show that the hypotheses of Theorem \ref{thm:main} are independent of the choice of generators for $\cM$.  More precisely, if there exist measures satisfying these hypotheses for one choice of generators $\mathbf{x}$ for $\cM$, and if $\mathbf{y}$ is another choice of generators for $\cM$, then there also exist measures satisfying these hypotheses for $\mathbf{y}$.  This is the special case of Proposition \ref{prop:independentofgenerators} below where we take $\cQ = \cM$.

Another special case worth pointing out is if $\cP \leq \cQ \leq \cM$; in this case, the proposition says that if such measures $\mu^{(k)}$ for the inclusion $\cP \leq \cM$, then they also exist for the inclusion $\cP \leq \cQ$ when $\cQ$ is an intermediate subalgebra between $\cP$ and $\cM$.

\begin{prop} \label{prop:independentofgenerators}
Let $(\cM,\tau)$ be a tracial $\mathrm{W}^*$-algebra, let $\cP$ be a $\mathrm{W}^*$-subalgebra.  Suppose that $\mathbf{x} \in \cM_{sa}^I$ generates $\cM$, let $\mathbf{y} \in \cM_{sa}^{I'}$, and let $\cP = \mathrm{W}^*(\mathbf{y})$.  Suppose that $\cQ$ satisfies the commuting square condition that $E_{\cP} \circ E_{\cQ} = E_{\cP \cap \cQ}$.

Let $\mathbf{R} \in (0,+\infty)^I$ and $\mathbf{R}' \in (0,\infty)^{I'}$ be bounds for the operator norms of $x_i$ and $y_i$ respectively, and $\mathbf{f} \in \mathcal{F}_{\mathbf{R},\mathbf{R}'}$ be a function such that $\mathbf{y} = \mathbf{f}(\mathbf{x})$.

Suppose that $\{\mu^{(k)}\}$ is a sequence of probability measures on $M_{n(k)}(\C)_{sa}^I$ satisfying the hypotheses (1) - (4) with respect to $\cP \subseteq \cM$ and $\mathbf{x}$ and $\mathbf{R}$.  Then $\{\mathbf{f}_* \mu^{(k)}\}$ satisfies hypotheses (1) - (4) with respect to $\cP \cap \cQ \subseteq \cQ$ and $\mathbf{y}$ and $\mathbf{R}'$.
\end{prop}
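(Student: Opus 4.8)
The plan is to verify, one at a time, the four hypotheses of \S\ref{subsec:thm1setup} for the pushed-forward measures $\mathbf{f}_*\mu^{(k)}$, now relative to the inclusion $\cP\cap\cQ\subseteq\cQ$ (recall $\cQ=\mathrm{W}^*(\mathbf{y})$), the generating tuple $\mathbf{y}$, and the bounds $\mathbf{R}'$.  Hypotheses (1)--(3) are essentially formal, being already packaged in Section~\ref{sec:functionalcalculus}; the only substantive point is hypothesis (4), the external averaging property, where the commuting-square condition $E_\cP\circ E_\cQ=E_{\cP\cap\cQ}$ is used.

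For (1): since $\mathbf{f}\in\mathcal{F}_{\mathbf{R},\mathbf{R}'}$ means $\norm{f_i}_{\mathbf{R},\infty}\leq R_i'$, and $\mu^{(k)}$ is supported on $\{\mathbf{A}:\norm{A_i}\leq R_i\}$, every tuple in the support of $\mathbf{f}_*\mu^{(k)}$ has the form $\mathbf{f}(\mathbf{A})$ with $\norm{f_i(\mathbf{A})}\leq\norm{f_i}_{\mathbf{R},\infty}\leq R_i'$; hence $\mathbf{f}_*\mu^{(k)}$ is supported on $\{\mathbf{B}:\norm{B_i}\leq R_i'\}$.  For (2): by hypothesis $\mu^{(k)}(\Gamma_{\mathbf{R},n(k)}(\mathbf{x};\mathcal{U}))\to 1$ for every neighborhood $\mathcal{U}$ of $\lambda_{\mathbf{x}}$, so given a neighborhood $\mathcal{V}$ of $\lambda_{\mathbf{y}}=\mathbf{f}_*\lambda_{\mathbf{x}}$ in $\Sigma_{\mathbf{R}'}$, Corollary~\ref{cor:microstatemapping}(1) yields a neighborhood $\mathcal{U}$ of $\lambda_{\mathbf{x}}$ with $\mathbf{f}\bigl(\Gamma_{\mathbf{R},n}(\mathbf{x};\mathcal{U})\bigr)\subseteq\Gamma_{\mathbf{R}',n}(\mathbf{y};\mathcal{V})$ for all $n$, whence
\[
(\mathbf{f}_*\mu^{(k)})\bigl(\Gamma_{\mathbf{R}',n(k)}(\mathbf{y};\mathcal{V})\bigr)\ \geq\ \mu^{(k)}\bigl(\Gamma_{\mathbf{R},n(k)}(\mathbf{x};\mathcal{U})\bigr)\ \longrightarrow\ 1.
\]
For (3): this is exactly Corollary~\ref{cor:concentrationpushforward}, since $\mathbf{f}\in\mathcal{F}_{\mathbf{R},\mathbf{R}'}$ and $\mu^{(k)}$ has exponential concentration.

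For (4) we must show, for every $q\in\C\ip{t_i:i\in I'}$, that $\lim_{k\to\infty}\norm{\E[q(\mathbf{f}(\mathbf{X}^{(k)}))]}_2=\norm{E_{\cP\cap\cQ}(q(\mathbf{y}))}_2$, where $\mathbf{X}^{(k)}\sim\mu^{(k)}$ and we use the change of variables $\int q\,d(\mathbf{f}_*\mu^{(k)})=\E[q(\mathbf{f}(\mathbf{X}^{(k)}))]$.  Since $\mathcal{F}_{\mathbf{R},\infty}$ is a $\mathrm{C}^*$-algebra and $\mathbf{f}\in(\mathcal{F}_{\mathbf{R},\infty})_{sa}^{I'}$, the element $g:=q(\mathbf{f})$ lies in $\mathcal{F}_{\mathbf{R},\infty}$, with $g(\mathbf{x})=q(\mathbf{y})$ and $g(\mathbf{A})=q(\mathbf{f}(\mathbf{A}))$ for matrix tuples $\mathbf{A}$.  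Because $\mu^{(k)}$ satisfies (1), (2) and the external averaging property for polynomials, Lemma~\ref{lem:EAPequivalences} (implication $(1)\Rightarrow(3)$) upgrades the averaging identity to all of $\mathcal{F}_{\mathbf{R},\infty}$; applying it to $g$,
\[
\lim_{k\to\infty}\norm*{\int q\,d(\mathbf{f}_*\mu^{(k)})}_2\ =\ \lim_{k\to\infty}\norm*{\int g\,d\mu^{(k)}}_2\ =\ \norm{E_\cP(g(\mathbf{x}))}_2\ =\ \norm{E_\cP(q(\mathbf{y}))}_2.
\]
Finally, $q(\mathbf{y})\in\cQ$, so $E_\cQ(q(\mathbf{y}))=q(\mathbf{y})$ and hence $E_\cP(q(\mathbf{y}))=(E_\cP\circ E_\cQ)(q(\mathbf{y}))=E_{\cP\cap\cQ}(q(\mathbf{y}))$ by the commuting-square hypothesis (and the conditional expectation of $\cQ$ onto $\cP\cap\cQ$ is the restriction of the one on $\cM$, by uniqueness of trace-preserving conditional expectations).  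This is precisely hypothesis (4) for $\mathbf{f}_*\mu^{(k)}$ relative to $\cP\cap\cQ\subseteq\cQ$, completing the verification.

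The argument is short because the analytic content lives already in Section~\ref{sec:functionalcalculus} and Lemma~\ref{lem:EAPequivalences}; the step requiring care is (4), where one must observe that $q(\mathbf{f})$ is a genuine element of the $\mathrm{C}^*$-algebra $\mathcal{F}_{\mathbf{R},\infty}$ (not merely of $\mathcal{F}_{\mathbf{R},2}$), so that Lemma~\ref{lem:EAPequivalences} applies, and then invoke the commuting square to collapse $E_\cP$ to $E_{\cP\cap\cQ}$ on the subalgebra $\cQ$.  In the two special cases flagged before the statement---$\cQ=\cM$ (change of generators) and $\cP\leq\cQ\leq\cM$ (passing to an intermediate subalgebra)---one has $\cP\cap\cQ=\cP$ and the commuting square holds automatically, so the proposition covers both.
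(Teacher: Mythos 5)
Your proof is correct and follows essentially the same route as the paper's: (1) and (3) are verified identically, (4) uses the same key observation that $q\circ\mathbf{f}\in\mathcal{F}_{\mathbf{R},\infty}$ together with the upgrade of the external averaging property from polynomials to $\mathcal{F}_{\mathbf{R},\infty}$ via Lemma \ref{lem:EAPequivalences} and the commuting square to replace $E_{\cP}$ by $E_{\cP\cap\cQ}$ on $\cQ$. The only cosmetic difference is in (2), where you verify asymptotic support on microstate spaces via Corollary \ref{cor:microstatemapping} while the paper cites the equivalent push-forward continuity of laws (Proposition \ref{prop:pushforwardcontinuity}).
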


\begin{proof}
(1) We have $\norm{f_i(\mathbf{a})}_\infty \leq R_i'$ for all $i \in I'$ and any self-adjoint tuple $\mathbf{a}$ with $\norm{a_i} \leq R_i$ for $i \in I$.  In particular, this holds when we evaluate $f_i$ on the random matrix tuple $\mathbf{X}^{(k)}$ associated to $\mu^{(k)}$.

(2) Since $\lambda_{\mathbf{X}^{(k)}} \to \lambda_{\mathbf{x}}$ in probability, we have $\lambda_{\mathbf{f}(\mathbf{X}^{(k)})} \to \lambda_{\mathbf{f}(\mathbf{x})}$ in probability by Proposition \ref{prop:pushforwardcontinuity}.

(3) Exponential concentration is inherited by $\mathbf{f}_* \mu^{(k)}$ by Corollary \ref{cor:concentrationpushforward}.

(4) Let $q \in \C\ip{t_i: i \in I'}$.  Then $q \circ \mathbf{f} \in \mathcal{F}_{\mathbf{R},\infty}$ since $\mathcal{F}_{\mathbf{R},\infty}$ is a $\mathrm{C}^*$-algebra.  Therefore,
\begin{align*}
\lim_{k \to \infty} \norm*{ \int q\,d(\mathbf{f}_*\mu^{(k)}) }_2
&= \lim_{k \to \infty} \norm*{ \int q \circ \mathbf{f} \,d\mu^{(k)} }_2  \\
&= \norm{ E_{\cP}[q(\mathbf{f}(\mathbf{x}))] }_2 = \norm{ E_{\cP}[q(\mathbf{y})] }_2 \\
&= \norm{ E_{\cP} \circ E_{\cQ}[q(\mathbf{y})] }_2 = \norm{ E_{\cP \cap \cQ}[q(\mathbf{y})] }_2,
\end{align*}
where we applied the external averaging property for $\mu^{(k)}$ and the commuting square condition for $\cP \cap \cQ$.
\end{proof}

\subsection{Separability Issues}

Note that the proof of Theorem \ref{thm:main} did not require any separability assumptions on $\cP$ and $\cM$.  However, in the non-separable case, we should not expect there to exist a \emph{sequence} of random matrix models in general, since indeed $\cM$ might not embed into the ultraproduct $\prod_{n \to \omega} M_n(\C)$ when $\omega$ is a free ultrafilter on $\N$.

From an abstract point of view, the natural adaptation to the non-separable case would be to use a \emph{net} of random matrix models.  That is, we take $k$ to be an index from some directed set $K$, and then consider random matrix tuples $\mathbf{X}^{(k)}$ indexed by $k \in K$.  We replace the limits as $k \to \infty$ in $\N$ by limits as over $k \in K$, and then the proof of Theorem \ref{thm:main} will go through without further changes.  The same is true for the statements in \ref{subsec:ultraproducts} and \ref{subsec:changeofgenerators}; we leave the details to the reader.

The proof of Theorem \ref{thm:main2} could also be modified to construct a net of random matrix models for $\cM_1 *_{\cD} \cM_2$ directly without going through the reduction to the separable case.  This only requires modification of the construction of microstates for $\cM_1$ and $\cM_2$ in \S \ref{subsec:freeproductmicrostates} to obtain a net of $*$-homomorphisms and microstates indexed by $k$ in some directed set $K$ rather than $k \in \N$.  The results about concentration and asymptotic freeness still apply if we arrange that $n(k) \to \infty$ over $k \in K$.

However, for the typical situations that arise in random matrix theory, one would only need to consider \emph{separable} algebras and \emph{sequences} of random matrix models.  Moreover, as we saw in the proof of Theorem \ref{thm:main2}, the von Neumann algebraic applications can often be reduced to the separable case because $1$-bounded entropy is something defined through finitary approximations to begin with.

We remark that the existence of a net of random matrix models satisfying the hypotheses of Theorem \ref{thm:main} for a given generating set $\mathbf{x} \in \cM_{sa}^I$ is equivalent to following statement:  For every finite $F \subseteq I$, there exists a sequence $\mathbf{X}^{(k)}$ of random $F$-tuples of $n(k) \times n(k)$ matrices satisfying (1) operator norm bounds, (2) convergence of $\lambda_{\mathbf{X}^{(k)}}$ to $\lambda_{\mathbf{x}|_{F}}$ in probability, (3) exponential concentration, and (4) for every $f \in \C\ip{t_i: i \in F}$,
\[
\lim_{k \to \infty} \norm{\E[f(\mathbf{X}^{(k)}]}_2 = \norm{E_{\cP}[f(\mathbf{x}|_I)]}_2.
\]

\subsection{A Free Decomposability Conjecture}

So far, we have not found any examples \emph{other} than amalgamated free products where Theorem \ref{thm:main} applies.  In fact, it is conceivable, based on an analogy with ergodic theory, that the only way the hypotheses of Theorem \ref{thm:main} can be satisfied is if there exists such an amalgamated free product decomposition.  Therefore, we propose the following conjecture.

\begin{conj*}\label{C:free decomposability}
Let $\cP\leq \cM$ be an inclusion of tracial von Neumann algebras. Suppose there exist generators $(x_{i})_{i\in I}$, a sequence $n(k)\to\infty$ of natural numbers, and self-adjoint $n(k)\times n(k)$ random matrices $(X^{(k)}_{i})_{i\in I}$ satisfying (1)-(4) of Theorem \ref{thm:main}. Then there is von Neumann subalgebra $\cN \leq \cM$, and an atomic $\cD \leq \cM$ so that $\cN,\cP$ are freely independent with amalgamation over $\cD$, and so that $\cM = \cN \vee \cP$.
\end{conj*}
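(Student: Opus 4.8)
The plan is to mimic, inside the tracial von Neumann algebra setting, the Ornstein-theoretic phenomenon that a process of completely positive entropy over a factor which additionally satisfies a strong mixing/concentration hypothesis (the almost blowing-up property, or finite determination) is a \emph{relatively Bernoulli extension} of that factor; in the noncommutative free-probabilistic dictionary, ``relatively Bernoulli extension'' should be replaced by ``amalgamated free product with $\cP$ as one free factor over an atomic base.'' As usual, one first reduces to the separable case exactly as in the proof of Theorem~\ref{thm:main2}: a diffuse separable subalgebra of $\cM$ together with the relevant finitely many generators, the corresponding cut-off microstate conditions, and the algebra $\cP$, are all captured inside a separable subalgebra on which the hypotheses persist, and the net-valued version of the construction in \S\ref{subsec:changeofgenerators} then recovers the general case.

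Next I would normalize the random matrix models so that the ``$\cP$-part'' becomes deterministic. Writing a generating tuple for $\cP$ as $\mathbf{g}(\mathbf{x})$ for some $\mathbf{g} \in \mathcal{F}_{\mathbf{R},\mathbf{R}'}$, Proposition~\ref{prop:approximatelyconstant} shows that $\mathbf{g}(\mathbf{X}^{(k)})$ is $L^2$-close to a deterministic tuple $\mathbf{A}^{(k)}$, which is a sequence of microstates for $\cP$; using exponential concentration one may pass to the law of $\mathbf{X}^{(k)}$ conditioned on the event $\{\mathbf{g}(\mathbf{X}^{(k)}) \approx \mathbf{A}^{(k)}\}$ without losing hypotheses (1)--(4). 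This displays $\mathbf{X}^{(k)}$ as a ``random fibre over the deterministic base $\mathbf{A}^{(k)}$.'' The candidate for $\cN$ is then the von Neumann algebra generated by $\cP$ together with a single generic fibre (with the $\cP$-coordinates identified with those of the base), and the candidate for the atomic algebra $\cD$ should measure how much of the randomness of $\mathbf{X}^{(k)}$ is forced to commute with $\mathbf{A}^{(k)}$ — heuristically $\cD$ is the atomic part of an appropriate relative commutant. Identifying $\cD$ correctly and abstractly is already a delicate point, since in an amalgamated free product $\cP *_{\cD} \cQ$ the subalgebra $\cD$ need not lie in $Z(\cP)$ or in the commutant of $\cP$.

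The core step is to upgrade the external averaging property --- a statement only about first moments of the random matrices --- to the assertion that, after conditioning on the base, an \emph{independent} second fibre $\widetilde{\mathbf{X}}^{(k)}$ (produced, say, by a fresh Haar-unitary twist inside the commutant of the base) becomes asymptotically freely independent from the first fibre \emph{with amalgamation over a common atomic algebra}, in the sense of \cite{BDJ2008}. Indeed the computation $\norm{\E[p(\mathbf{X}^{(k)})]}_2^2 = \E\,\tau_{n(k)}\!\left(p(\mathbf{X}^{(k)})^{*} p(\widetilde{\mathbf{X}}^{(k)})\right)$ used to verify the external averaging property in Theorem~\ref{thm:main2} shows that this property is exactly the ``shadow'' of such amalgamated freeness on pair correlations; the problem is to run the implication in reverse and recover \emph{all} higher mixed moments of $(\mathbf{X}^{(k)}, \widetilde{\mathbf{X}}^{(k)})$. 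Once asymptotic freeness of the two fibres over $\cD$ is established, Voiculescu-type moment computations identify $\mathrm{W}^*(\mathbf{x})$ with $\cP *_{\cD} \cN$, and $\cM = \cP \vee \cN$ holds by construction.

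The main obstacle is precisely this last upgrade: exponential concentration, convergence in law, and a first-moment identity do not a priori pin down the joint distribution of two independent fibres, so the randomness need not be ``as free as possible'' without additional structure. Overcoming this would seem to require a genuine analog, at the level of matricial microstate spaces, of Ornstein's very-weak-Bernoulli / finite determination technology: one must convert the quantitative mixing supplied by concentration into a structural free-product decomposition. I expect a successful argument to proceed by first proving, via a relative version of the ultraproduct description in \S\ref{subsec:ultraproducts}, that the embedding $\rho_\omega$ realizes $\cM$ inside $\prod_{k \to \omega} L^\infty(\Omega, M_{n(k)}(\C))$ as the von Neumann algebra generated by the ``deterministic'' copy of $\cP$ and a ``noise'' subalgebra which is free from it with amalgamation over the atomic algebra arising from the block structure of the approximating matrices, and then descending this decomposition back to $\cM$.
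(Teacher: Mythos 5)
The statement you are trying to prove is not a theorem of the paper at all: it is stated there as an open conjecture (the ``Free Decomposability Conjecture'' in Section \ref{sec:remarks}), and the authors offer only the Ornstein-theoretic analogy --- the same analogy you invoke --- as motivation, with no proof. So there is no argument in the paper to compare yours against, and your proposal must stand on its own. It does not, and to your credit you essentially say so: the central step, namely upgrading the external averaging property (which by the polarization computation in Lemma \ref{lem:EAPequivalences} controls only pair correlations of two independent copies of the model) to control of \emph{all} higher mixed moments of two independent fibres, and hence to asymptotic freeness with amalgamation in the sense of \cite{BDJ2008}, is left entirely open. This is not a technical detail to be filled in later; it is exactly the content of the conjecture. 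A first- and second-moment identity together with exponential concentration and convergence in law simply do not determine the joint distribution of two independent fibres, and no mechanism is proposed for ruling out random matrix models that satisfy (1)--(4) of Theorem \ref{thm:main} without any underlying amalgamated free product structure.

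Several auxiliary steps are also unjustified as written. The ``fresh Haar-unitary twist inside the commutant of the base'' used to manufacture the second fibre presupposes that the randomness of $\mathbf{X}^{(k)}$ is of Haar-conjugation type with a compatible block structure --- but that is precisely the structure the conjecture asserts must exist, so it cannot be assumed; the hypotheses only provide an abstract sequence of random self-adjoint tuples. The identification of the atomic algebra $\cD$ is explicitly left heuristic (``the atomic part of an appropriate relative commutant''), yet $\cD$ is part of the data the conjecture demands. Finally, the claim that one may condition $\mu^{(k)}$ on the event $\{\mathbf{g}(\mathbf{X}^{(k)}) \approx \mathbf{A}^{(k)}\}$ ``without losing hypotheses (1)--(4)'' needs proof: exponential concentration of a measure does not in general pass to its conditional distributions (the conditional measure can concentrate on a thin set whose neighborhoods behave badly), and the external averaging property is stated for the unconditioned expectation $\E[p(\mathbf{X}^{(k)})]$, not for conditional expectations given the base. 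What you have written is a reasonable research program, consistent with the authors' own speculation, but it is not a proof.
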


This conjecture is motivated by the following analogy with ergodic theory. Suppose $\Z \actson^{T} (X,\mu)$ is a probability measure-preserving action, and suppose we are given a measurable map $\alpha\colon X\to A$ where $A$ is a finite set. Assume further that $\alpha$ is a \emph{generator}, i.e. $\bigvee_{n}T^{n}(\{f\circ \alpha:f\in \C^{A}\})=L^{\infty}(X,\mu)$. In this case, the partition $\{1_{\alpha^{-1}(\{a\})}:a\in A\}$ is analogous to the $(x_{i})_{i\in I}$ in the free probabilistic context. Moreover, there is a natural interpretation of \emph{microstates} for such actions (see \cite{Bow}).

We can then form a natural analogue of having a sequence of measures which is asymptotically supported on the microstate spaces, satisfies exponential concentration of measure, and has the external averaging property. In the setting of probability measure-preserving $\Z$-actions, the existence of a sequence of measures which satisfies the analogue of these three properties is equivalent to the \emph{almost blowing up property}  (first defined in \cite[Definition 4]{MartonShieldsABUP} under the name ``blowing up property"). This property is one of the conditions from \emph{Ornstein theory} (see \cite{OrnClassify1, OrnClassify2, OrnWeiss, OrnYaleLectures, ShieldsBern}) which guarantees that the $\Z$-action is isomorphic to a Bernoulli shift. (This is shown in \cite[3.2.2]{MartonShieldsABUP}.) This relation to concentration of measure and the action being Bernoulli was recently used to great effect in \cite{AustinWeakPinsker} to solve the weak Pinsker conjecture.

So in the context of free probability, it should be the case that these three conditions (asymptotic concentration on microstates, exponential concentration of measure, and the external averaging property)  imply that $\cM$ is ``freely Bernoulli over $\cP$.''  Note that free products may be viewed as a free analogue of Bernoulli shifts. For example if we consider the infinite free product $P^{*\Z}$ and if $\Z\curvearrowright P^{*\Z}$ by free shifts, then $P^{*\Z}\rtimes \Z\cong P*L(Z)$.  So one interpretation of being ``freely Bernoulli over $\cP$" is being free complemented with amalgamation over an atomic subalgebra. So Conjecture \ref{C:free decomposability} may be viewed as the natural free probability analogue of the fact from Ornstein theory that the almost blowing up property for a $\Z$-action implies the action is Bernoulli.

\newcommand{\etalchar}[1]{$^{#1}$}
\providecommand{\bysame}{\leavevmode\hbox to3em{\hrulefill}\thinspace}
\providecommand{\MR}{\relax\ifhmode\unskip\space\fi MR }
% \MRhref is called by the amsart/book/proc definition of \MR.
\providecommand{\MRhref}[2]{%
  \href{http://www.ams.org/mathscinet-getitem?mr=#1}{#2}
}
\providecommand{\href}[2]{#2}

%
%
%\bibliographystyle{amsalpha}
%\bibliography{Pinsker}

\end{document}